\documentclass[11pt]{amsart}

\usepackage{amsmath,amssymb,amsfonts,amsthm,graphicx}
\usepackage[all]{xy}

\usepackage{pinlabel}

\usepackage[usenames,dvipsnames]{color}

\usepackage[usenames,dvipsnames]{color}

\newtheorem{dummy}{dummy}[section]
\newtheorem{lemma}[dummy]{Lemma}
\newtheorem{theorem}[dummy]{Theorem}

\newtheorem{proposition}[dummy]{Proposition}

\theoremstyle{definition}
\newtheorem{definition}[dummy]{Definition}

\newtheorem{example}[dummy]{Example}
\newtheorem{remark}[dummy]{Remark}

\newtheorem{question}[dummy]{Question}
\newtheorem{construction}[dummy]{Construction}

\numberwithin{equation}{section}

\newcommand{\R}{\mathbb {R}}

\newcommand{\C}{\mathbb {C}}
\newcommand{\Z}{\mathbb {Z}}
\newcommand{\F}{\mathbb {F}}

\newcommand{\aug}{\mathit{Aug}}

\newcommand{\alg}{\mathcal{A}}
\newcommand{\algCW}{\mathcal{A}^{\mathit{CW}}}

\newcommand{\rk}{\mathit{rk}}
\newcommand{\coeff}{K}
\def\fig#1{\raisebox{-2.2ex}{\includegraphics[height=5.2ex]{#1}}}

\begin{document}

\title[Skein relations for Legendrian surfaces]{Skein relations for invariants of Legendrian surfaces}

\author{Dan Rutherford}
\address{Ball State Unversity}

\author{Michael Sullivan}
\address{University of Massachusetts Amherst}

\begin{abstract}  
For Legendrian surfaces in $1$-jet spaces, 
we consider augmentation number invariants that together generalize the ruling polynomial of $1$-dimensional Legendrian knots and establish skein relations for these invariants.  The main skein relation involves four ways of resolving a Legendrian with a double point: perturbing the double point to a contractible Reeb chord in two different ways, and removing the double point via the two Lagrangian surgeries.  We apply the skein relations in examples, and use them to provide an alternate proof of the relation between augmentations of Legendrian 2-weaves and face colorings due to Casals, Murphy, and Sackel.  
\end{abstract}

\maketitle

{\small \tableofcontents}

\section{Introduction}

The Legendrian contact homology (abbrv. LCH) dg-algebra (abbrv. DGA) is a much applied invariant of Legendrian submanifolds defined via $J$-holomorphic curve theory.  In many cases of interest, the homology of the LCH dg-algebra is infinite dimensional, even in individual grading components.  One method for extracting numerical invariants from the LCH dg-algebra is to make (normalized) counts of homomorphisms to a simpler dg-algebra such as a finite field.  In this article, we establish skein relations for augmentation number invariants of Legendrian surfaces defined via this procedure, and apply these skein relations to make computations.

To state our main results, we consider a Legendrian submanifold $\Lambda$ in standard contact $\R^{2n+1}$, or more generally in a $1$-jet space, with vanishing Maslov class and equipped with a choice of $\Z$-valued Maslov potential, i.e. {\it $\Z$-graded Legendrian submanifolds}. 
  (See Section \ref{sec:LCHdgalgebra} for definitions.)  We work with the version of the LCH dg-algebra $(\alg(\Lambda), \partial)$ whose underlying algebra $\alg(\Lambda)$ is the free associative algebra over $\Z[H_1(\Lambda)]$ generated by the Reeb chords of $\Lambda$ as originally constructed in \cite{EES05a,EES05c,EES07}.   With $q$ a prime power,
we let $\mathbb{F}_q$ denote the finite field of order $q$.  An $\mathbb{F}_q$-valued {\it augmentation}
is a dg-algebra (here algebra means $\Z$-algebra)
 homomorphism $\epsilon: (\alg(\Lambda), \partial) \rightarrow (\mathbb{F}_q,0)$, i.e. a unital, grading preserving ring homomorphism satisfying $\epsilon \circ \partial =0$.  Denote by $V(\Lambda, \mathbb{F}_q)$ the set of all $\mathbb{F}_q$-valued augmentations, called the {\it augmentation variety}.
Following
\footnote{In \cite{NgSab} attention is restricted to the case $q=2$, so that the $(q-1)^{\mathit{rank}(H_0(\Lambda))-\mathit{rank}(H_1(\Lambda))}$ term is equal to $1$ and is not present.  The choice of normalization used in defining $\mathit{Aug}_\Lambda(q)$ here differs slightly from the ones used in \cite{HR} and \cite{NRSS}.}  Ng and Sabloff \cite{NgSab}, Legendrian isotopy invariant {\it augmentation numbers}, $\mathit{Aug}_\Lambda(q)$, can be defined by 
\begin{equation} \label{eq:1dimskeinrel}
\mathit{Aug}_\Lambda(q) = (q-1)^{\mathit{rank}(H_0(\Lambda))-\mathit{rank}(H_1(\Lambda))}q^{-\chi^*(\alg(\Lambda))/2}\cdot |V(\Lambda, \mathbb{F}_q)|
\end{equation}
where 
$\chi^*(\alg(\Lambda))$ 
is a {\it shifted Euler characteristic} defined via the degree distribution of Reeb chords.  (See Section \ref{sec:DGAaug} for the detailed definition.)

In the case when $\Lambda \subset \R^3$ is a $1$-dimensional Legendrian knot, the augmentation numbers are shown in \cite{NgSab,HR} to arise as specializations of the ($0$-graded) ruling polynomial $R^0_\Lambda(z) \in \Z[z^{\pm1}]$ via
\begin{equation} \label{eq:augruling}
(q-1)^{-\mathit{rank}(H_0(\Lambda))} \mathit{Aug}_\Lambda(q) = R^0_\Lambda(z)|_{z = q^{1/2}-q^{-1/2}}.
\end{equation}  
The ruling polynomials $R^0_\Lambda(z)$ are defined in terms of counts of combinatorial decompositions of the front projection of $\Lambda$ called ($0$-graded) {\it normal rulings}, see \cite{ChP,F,R}, and are moreover known to satisfy a skein relation  
\begin{align}
	\labellist
	\small
	\pinlabel $r$ [l] at 130 96
	\pinlabel $s$ [l] at 130 16
	\endlabellist
	\raisebox{-.6cm}{\includegraphics[scale=.4]{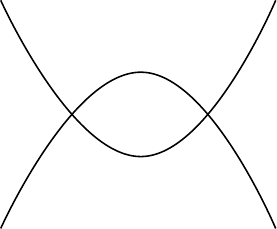}}  \quad - 
	\labellist
	\small
	\pinlabel $r$ [l] at 130 96
	\pinlabel $s$ [l] at 130 16
	\endlabellist
	\quad \raisebox{-.6cm}{\includegraphics[scale=.4]{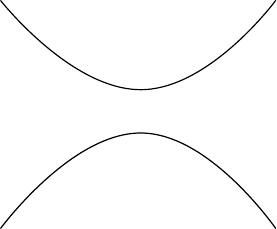}} 
	\quad =  \quad z \left[
	\labellist
	\small
	\pinlabel $s$ [l] at 130 96
	\pinlabel $s$ [l] at 130 16
	\endlabellist 
	\delta_{r,s}\raisebox{-.6cm}{\includegraphics[scale=.4]{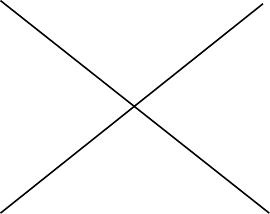}} \quad -  
	\labellist
	\small
	\pinlabel $s+1$ [l] at 130 96
	\pinlabel $s$ [l] at 130 16
	\endlabellist
	\quad \delta_{r,s+1}\raisebox{-.6cm}{\includegraphics[scale=.4]{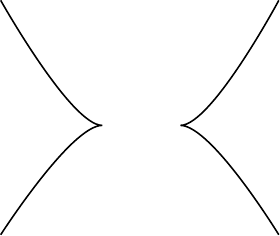}} \quad \quad \quad \right]  \label{eq:1dimSR}
\end{align}
involving four Legendrians whose front projections differ in a disk as pictured with $r,s \in \Z$ used to indicate the value of the Maslov potential, and $\delta_{i,j}$ the Kronecker delta.  
In fact, the skein relation (\ref{eq:1dimSR}) along with the vanishing of $R^0$ on stabilized Legendrian knots and a choice of normalization on the Legendrian unknot uniquely characterizes $R^0$ as a Legendrian isotopy invariant; see \cite{R, Ru2}.  

In this article we establish analogous skein relations for the augmentation numbers of Legendrian surfaces.  
We work with $\Z$-graded Legendrian surfaces in a $1$-jet space $J^1S$ with orientable base surface $S$.  The case of $S = \R^2$ is standard contact $\R^5$.  Recall that $J^1S = T^*S \times \R$ comes with a {\it front projection}, $\pi_{xz}: J^1S \rightarrow J^0S = S \times \R$, a {\it Lagrangian projection}, $\pi_{xy}:J^1S \rightarrow T^*S$, and a {\it base projection}, $\pi_{x}:J^1S \rightarrow S$.  These projections are reviewed in Section \ref{sec:Zgraded}. 

\medskip

\noindent{\bf Color convention.}  When depicting front or base projections of Legendrian surfaces, we use \color{red} red \color{black} to indicate cusp edges (aka front singularities of type $A_2$), and \color{blue} blue \color{black} to indicate crossing arcs (aka front singularities of type $A_1^2$).

\medskip

 The statement of the skein relations involves a {\bf shifted parity function}, $\eta$, defined by
\begin{equation}  \label{eq:parity}
\eta:\Z \rightarrow \{\pm 1\}, \quad \quad \eta(k) = \left\{ \begin{array}{cr} (-1)^k, & k \geq 0 \\\  (-1)^{k+1}, & k<0. \end{array}\right.
\end{equation}

\begin{theorem} \label{thm:main} For any prime power $q$, the $0$-graded augmentation number $\mathit{Aug}_\Lambda(q)$ is a Legendrian isotopy invariant of $\Z$-graded Legendrian surfaces in $1$-jet spaces that satisfies the following skein relations:
\begin{enumerate}
\item[(SR1)] We have 
\begin{align*}
\labellist
\small
\pinlabel $r$ [l] at 130 116
\pinlabel $s$ [l] at 130 36
\endlabellist
q^{\eta(s-r+1)/2} \raisebox{-1cm}{\includegraphics[scale=.4]{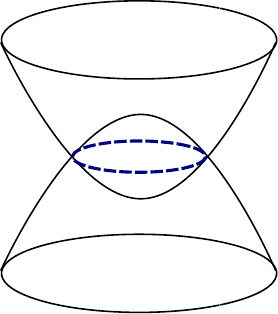}} & \quad - 
\labellist
\small
\pinlabel $r$ [l] at 130 116
\pinlabel $s$ [l] at 130 36
\endlabellist
 \quad q^{\eta(r-s-1)/2} \raisebox{-1cm}{\includegraphics[scale=.4]{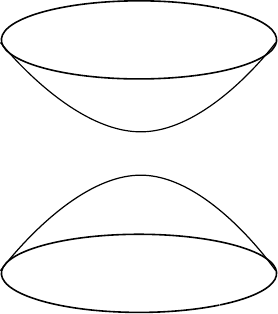}} \\
 & = \delta_{r,s+1} (q-1)^2 \left[
\labellist
\small
\pinlabel $s+1$ [l] at 130 116
\pinlabel $s$ [l] at 130 36
\endlabellist 
\raisebox{-1cm}{\includegraphics[scale=.4]{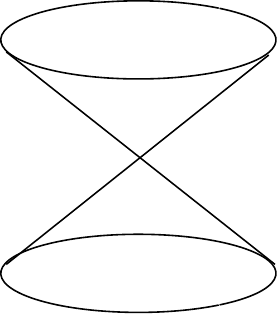}} \quad -  
\labellist
\small
\pinlabel $s+1$ [l] at 130 116
\pinlabel $s$ [l] at 130 36
\endlabellist
\quad \raisebox{-1cm}{\includegraphics[scale=.4]{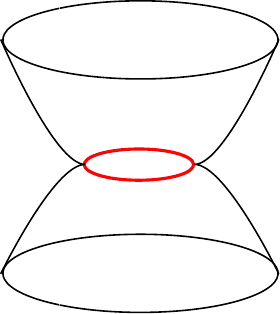}} \quad \quad \quad \right]
\end{align*}
pictured in the front projection where $r, s \in \Z$ denote the values of the Maslov potential.  Note that the first Legendrian pictured on the RHS has a cone point front singularity.

\item[(SR2)]  We have
\[
q^{\eta(r-s)/2} \,\raisebox{-1cm}{\includegraphics[scale=.5]{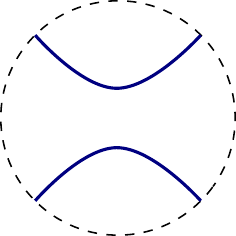}} \quad - 
\quad q^{\eta(s-r)/2} \, \raisebox{-1cm}{\includegraphics[scale=.5]{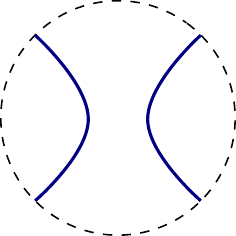}} = \delta_{r,s} (q-1)^2 \left[
\raisebox{-1cm}{\includegraphics[scale=.5]{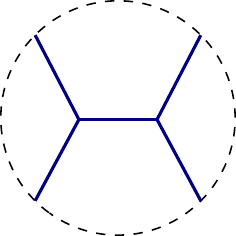}} \quad -  
\quad \raisebox{-1cm}{\includegraphics[scale=.5]{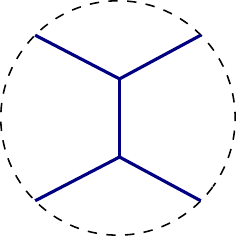}} \right]
\]
where the base projection of the crossing locus between two adjacent sheets is pictured; the trivalent vertices are projections of $D_4^-$ front singularities; and $r$ and $s$ denote the value of the Maslov potential on the two crossing sheets such that in the first term on the LHS the sheet with Maslov potential $r$ appears above (in the $z$-direction) the sheet with Maslov potential $s$ at the center of the pictured disk.

\item[(SR3)]  Suppose that $\Lambda_1, \Lambda_2 \subset J^1S$ are such that $\Lambda_2$ is obtained from $\Lambda_1$ via an ambient Legendrian $0$-surgery, i.e. the front projections are related by a modification within a disk as pictured:  
\[
\Lambda_1 \quad = \quad \raisebox{-1cm}{\includegraphics[scale=.5]{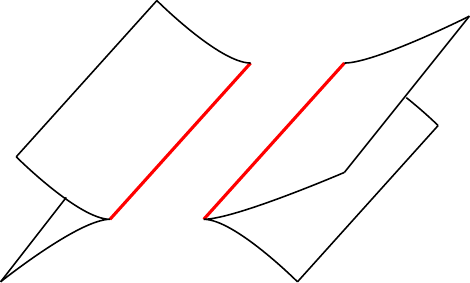}}, \quad \quad \quad \Lambda_2 \quad = \quad \raisebox{-1cm}{\includegraphics[scale=.5]{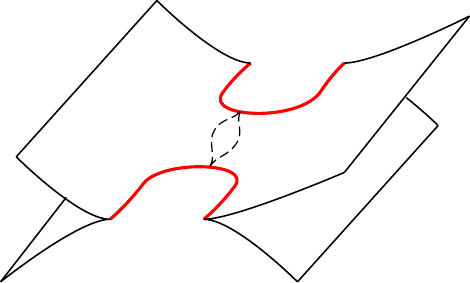}}
\]
Then,
\[
\aug_{\Lambda_2}(q) = \frac{q^{1/2}}{q-1} \aug_{\Lambda_1}(q) = (q^{1/2}-q^{-1/2})^{-1} \aug_{\Lambda_1}(q). 
\]

\item[(SR4)] The invariant vanishes on any Legendrian surface that appears as pictured within some disk in the front projection: 
\[
\raisebox{-1cm}{\includegraphics[scale=.5]{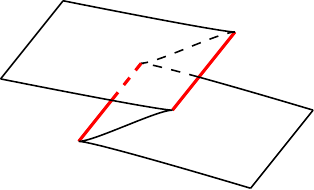}} \quad = \quad 0
\]

\end{enumerate}

Moreover, when $\Lambda_1, \Lambda_2 \subset J^1S$ are Legendrian surfaces with disjoint base projections we have 
\begin{equation} \label{eq:productrule}
\aug_{\Lambda_1 \sqcup \Lambda_2}(q) = \aug_{\Lambda_1}(q) \cdot \aug_{\Lambda_2}(q),
\end{equation}
 and the Legendrian unknot $U$ satisfies  
\begin{equation}  \label{eq:Uvalue}
\aug_U(q) = q^{1/2}-q^{-1/2}.
\end{equation}
\end{theorem}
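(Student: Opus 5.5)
The plan is to pass from the holomorphic-curve DGA to a combinatorial model and then reduce each skein relation to a count of augmentations in a local model. For invariance, I would use that the LCH DGA is invariant up to stable tame isomorphism. A stabilization adds two generators $a,b$ with $\partial a=b$, of degrees $k$ and $k-1$. It multiplies $|V(\Lambda,\F_q)|$ by $q$ when $k-1=0$, since $\epsilon(a)$ is free and $\epsilon(b)=0$. This factor of $q$ is exactly cancelled by the change in $q^{-\chi^*/2}$; the shifted parity $\eta$ in the definition of $\chi^*$ is designed to make this work. The factors $(q-1)^{\rk H_0-\rk H_1}$ are topological, so they are unchanged. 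The product rule (\ref{eq:productrule}) holds because, for disjoint base projections, the DGA is the free product with no mixed chords. The unknot value (\ref{eq:Uvalue}) follows because its standard front has a single Reeb chord of degree $2$ with $\partial=0$. Hence $|V|=1$, $\rk H_0=1$, $\rk H_1=0$, and the shifted Euler characteristic contributes $q^{-1/2}$, giving $(q-1)q^{-1/2}$.

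For the skein relations I would use the cellular (CW) model $\algCW$ for fronts with a compatible polygonal decomposition, which is stable tame isomorphic to $\alg(\Lambda)$. In this model, augmentations of $\Lambda$ are described by local data: matrices attached to the cells of $S$, satisfying local equations on each edge and face. The count $|V|$ then becomes a sum, over choices of data on the complement of the disk, of a local count inside the disk.

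\textbf{(SR1) and (SR2).} Choose the decomposition so that the four fronts agree outside a small disk $D$ and differ inside $D$ by a bounded number of cells. Fix boundary data on $\partial D$. The contractible-chord terms on the LHS add a chord of degree $r-s-1$ or $s-r+1$; (SR2) works similarly with the crossing locus. The local counts are then:
\begin{itemize}
\item For the contractible-chord terms, the local count is either $q$ or $1$ times the count with the chord removed, according to whether that chord has degree $0$ and whether its differential vanishes on the boundary data. The prefactors $q^{\eta(\cdot)/2}$ together with the change in $\chi^*$ normalize these so that the difference survives only when $r=s+1$.
\item For the surgery resolutions, the $\rk H_1$ and $\rk H_0$ changes produce the factor $(q-1)^2$. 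The local count splits according to whether a certain boundary matrix entry is zero or nonzero, which matches the splitting of the contractible-chord counts.
\end{itemize}
I would organize this as a comparison of fibers of the restriction map to boundary data. I expect this to be the main obstacle: getting the normalizations ($\eta$, the Euler characteristic shifts, and the topology changes) to agree exactly across the four terms, and verifying that the cellular equations near a cone point or a $D_4^-$ singularity give the claimed fiber sizes.

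\textbf{(SR3) and (SR4).} For (SR3), $0$-surgery adds a single chord of degree $1$ (or cancels generators) without changing the augmentation count up to a $q$-power. It changes $\rk H_0$ or $\rk H_1$ by one, giving the factor $(q-1)^{-1}$, and the extra $q^{1/2}$ comes from $\chi^*$. For (SR4), the pictured configuration, a stabilization-type fold, produces a generator $c$ with $\partial c=1$ plus terms in the ideal, so that $\epsilon\circ\partial=0$ forces $1=0$. Hence there are no augmentations and $\aug=0$.
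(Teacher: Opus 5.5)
\textbf{There is a genuine gap in (SR1), the core of the theorem.} Your treatment of invariance, the product rule, the unknot value and (SR4) is essentially the paper's. One slip: a stabilization changes $|V|$ only when the new generator $a$ has degree $0$, not when $k-1=0$. Localizing via the cellular DGA is also the right strategy for (SR1).

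The problem is your first bullet, which is false. In the reduced cellular model the contractible chord $c$ of the first term has $\partial c=0$, but it enters other differentials. For example, $J_{C_1}\partial C_1$ contains $-(\Theta^{\mu_1}_k+B_1)^{-1}\bigl(I+A_1(cE_{k+1,k})+(cE_{k+1,k})A_1\bigr)(\Theta^{\mu_1}_k+B_1)$. Likewise $a$ enters $\partial A_1$ and $\partial B_1$ for the second term. So the fibre over $\epsilon(c)=\bar c$ genuinely depends on $\bar c$.

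A quick check shows your claim cannot hold. If each LHS term were $q$ (or $1$) times a count independent of the chord's value, both would reduce to the common specialization $\alg(\Lambda_1)|_{c=0}\cong\alg(\Lambda_2)|_{a=0}$, and the LHS would vanish identically. But in the cone-point torus example the LHS equals $-q^{1/2}(q^{1/2}-q^{-1/2})\neq 0$.

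The missing idea is a matching of \emph{specializations}. Besides the $c=0$ / $a=0$ match, one needs three further identifications:
\begin{enumerate}
\item For $\bar c\neq0$, $\alg(\Lambda_1)|_{c=\bar c}\cong\alg(\Lambda_3)|_{\lambda=1}$, via the coefficient rescaling $\mu\mapsto\bar c\,\mu$ and conjugation of $A_1$ by $\mathrm{diag}(1,\dots,\bar c,\dots,1)$.
\item For $\bar a\neq 0$, $\alg(\Lambda_2)|_{a=\bar a}$ is stably isomorphic to $\alg(\Lambda_4)|_{\lambda=1}$, after cancelling the $a^1_{i,k},a^1_{i,k+1},a^1_{k,j},a^1_{k+1,j}$.
\item Crucially, in the Connected Case $\alg(\Lambda_3)|_{\lambda_3=\bar\lambda}$ and $\alg(\Lambda_4)|_{\lambda_4=\bar\lambda}$ are stably isomorphic for $\bar\lambda\neq1$.
\end{enumerate}
So the surgery terms do not individually match the nonzero-chord fibres. $\Lambda_3$ has augmentations with $\lambda\neq 1$ that have no counterpart in $\Lambda_1$, and the relation holds only after these cancel against those of $\Lambda_4$.

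Your accounting of $(q-1)^2$ as coming from the changes in $\rk H_0$ and $\rk H_1$ also fails in the Disconnected Case. There topology supplies only one factor, and the $q-1$ nonzero chord values supply the other. Finally, the cone-point term needs an extension of the cellular DGA (resolving the cone into four swallowtails, then a long cancellation argument), which you only flag as an obstacle.

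\textbf{(SR2) and (SR3).} For (SR2), a direct computation near $D_4^-$ points runs into the same non-genericity as the cone point, and you give no way around it. The paper avoids this entirely: it constructs Legendrian isotopies from the four (SR2) fronts to four (SR1) fronts with $r'=s+1$, $s'=r$, so (SR2) follows formally from (SR1).

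For (SR3), in the Connected Case the $0$-surgery adds a handle, so $\rk H_1$ rises by two, not one, and the augmentation count is not preserved. What actually produces $q^{1/2}/(q-1)$ is this: after cancellation, the only new generator is $b$ with $|b|=1$ and $\pm\partial b=1-\lambda$, i.e.\ $\pm(1-e^{m})$ with $m$ the belt circle of the handle. This forces that coefficient to be $1$, while the other new coefficient is free and contributes a factor $q-1$. Your two misstatements happen to cancel, but that differential is the real content of the argument and is absent from your proposal.
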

Let us expand a bit on each of the relations (SR1)-(SR4) and discuss the contact geometric significance of the terms involved.  The skein relation (SR1) is a clear analogue of the $1$-dimensional relation (\ref{eq:1dimSR}), as  the local $2$-dimensional front projections in (SR1) are obtained from spinning the $1$-dimensional front projections from (\ref{eq:1dimSR}) around a vertical line of symmetry.  For an alternate perspective (that applies equally well in the $1$-dimensional case), the local front projection with a tangential double point
\begin{equation}  \label{eq:doublept}
\raisebox{-1cm}{\includegraphics[scale=.4]{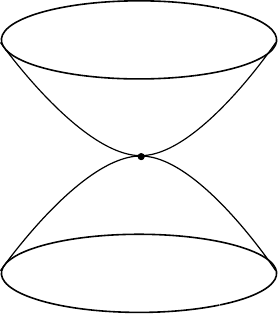}}
\end{equation}
lifts to a portion of an immersed Legendrian with a single double point.  The four terms in (SR1) correspond to four ways to resolve the double point as follows:  The two terms on the left hand side arise from perturbing the immersed Legendrian (without changing its topology) to become embedded with the double point replaced by a Reeb chord of arbitrarily small action (aka length).  Note that these two fronts are connected via a Legendrian regular homotopy whose only non-embedded moment occurs when passing through the singular Legendrian (\ref{eq:doublept}).  In particular, the Reeb chords visible in the left hand side of (SR1) are contractible in the sense of \cite{EHK}.  For both of these fronts, while the double point is removed from the Legendrian lift to $J^1S$, a double point persists in the Lagrangian projection.  In contrast, the Legendrians on the right side of (SR1) have the Lagrangian double point (and also the corresponding Legendrian Reeb chord) removed altogether, but change the topology of the Legendrian.  These two fronts are known as the two Lagrangian surgeries (due to Polterovich \cite{Pol}) on the singular Lagrangian/Legendrian (\ref{eq:doublept}).  See \cite[Theorem 6.3]{CMP19}.

Viewed from the front projection the (contractible) Reeb chord found in the terms on the left side of (SR1) appears as an index 2 or index 0 critical point of the {\it local difference function} associated to the pair of pictured sheets; this is the function $f_u(x)-f_l(x)$ defined along the base projection of the pictured disks where the equations $z=f_u(x)$ and $z=f_l(x)$ define the sheets of the front projection that contain the upper and lower endpoints of the Reeb chord.  The case of a contractible Reeb chord appearing in the front projection as an index $1$ critical point of $f_u-f_l$ appears in (SR2).  The Reeb chord appears at the center of the disks on the LHS, and, following \cite[Theorem 4.21]{CZ}, the Legendrians with $D_4^-$ front singularities on the RHS of (SR2) arise from the Lagrangian surgeries on the singular Legendrian arising from shrinking the Reeb chords on the left to length $0$.
In Section \ref{sec:SR2}, we provide a self contained deduction of (SR2) from (SR1) by applying Legendrian surface Reidemeister moves from \cite{Arnold, Goryunov} to illustrate Legendrian isotopies transforming the four fronts of (SR2) to four front projections related as in (SR1).  We include this argument, in part, to add more examples of non-trivial Legendrian surface isotopies constructed via Riedemeister moves into the literature.  See also \cite{CZ} for a host of useful Legendrian isotopies constructed via surface Reidemeister moves.

The $0$-surgery that appears in (SR3) is a particular case of the ambient Legendrian surgeries introduced by Dimitroglou Rizell in \cite{Rizell}; see also \cite{BST}.  The construction of \cite{Rizell} includes an exact Lagrangian cobordism from $\Lambda_1$ to $\Lambda_2$ consisting of a single index $1$ handle attachment.  

Turning to (SR4), we note that the portion of the front pictured is the product of an interval with  a zig-zag stabilization in the front projection of a $1$-dimensional Legendrian knot, cf. \cite{FT}.  In the case that a surface $\Lambda \subset J^1S$ has only one connected component, the presence of such a zig-zag after a Legendrian isotopy is equivalent to $\Lambda$ being a loose Legendrian surface in the sense of Murphy \cite{Murphy}.

\subsection{Computations with the skein relations}  
In Sections \ref{sec:firstex} and \ref{sec:2weaves}, we illustrate the use of the skein relations (SR1)-(SR4) to compute $\aug_\Lambda(q)$ for some classes of Legendrian surfaces including (i) the infinite class of surfaces introduced by Dmitroglou Rizell in \cite{Rizell1},  
and (ii) all surfaces in $\R^5$ arising as satellites of Legendrian $2$-weaves in $J^1S^2$ with the Legendrian unknot.  We state here the result in the case of (ii), referring to Section \ref{sec:2weaves} for more details.  

Recall that following Treumann-Zaslow \cite{TZ} an 
embedded graph with trivalent vertices,  $G \subset S^2$, specifies a two-sheeted Legendrian $\widetilde{\Lambda}_G \subset J^1S^1$ whose front projection has crossing arcs above $G$ with $D_4^-$-singularities above the trivalent vertices.  
Such surfaces are called {\it Legendrian $2$-weaves} in the terminology of \cite{CZ}.  Taking the Legendrian satellite with the standard Legendrian unknot then produces a Legendrian surface $\Lambda_G \subset \R^5$.    
In \cite{TZ}, it is shown that the chromatic polynomial $P(G^*,x)$ of the dual graph to $G$ (the values of this polynomial at positive integers count face colorings of $G$) can be recovered from the Legendrian isotopy type of $\widetilde{\Lambda}_G$.  This is achieved by relating face colorings to micro-local rank $1$ constructible sheaves with singular support determined by $\widetilde{\Lambda}_G$.    The work \cite{CMS} then illustrates the augmentation and sheaf correspondence \cite{NRSSZ} for $\Lambda_G$ by (i) combinatorially constructing from $G$ a dg-algebra $\mathcal{A}_G$ equivalent to the LCH dg-algebra of $\Lambda_G$ \cite{CJN, ScSh} and (ii) relating augmentations of $\mathcal{A}_G$ with face colorings.   See also \cite{Sackel} for a study of higher rank representations of a fully non-commutative version of $\mathcal{A}_G$. 

In Section \ref{sec:2weaves} we achieve the following augmentation number calculation, consistent with that of \cite{CMS}, using only the skein relations of Theorem  \ref{thm:main}.  

\begin{theorem}[cf. \cite{CMS}]  \label{thm:2weaves} For any such $G \subset S^2$ we have
	\begin{equation} \label{eq:2weaves}
		\aug_{\Lambda_G}(q) = \frac{(q-1)^{6}}{q^{3}(q+1)} \left[ \frac{q^{1/2}}{(q-1)^2} \right]^{Z(G)}  P(G^*, q+1)
	\end{equation}
	where $Z(G) = \chi(S^2 \setminus G)$ is 
	the Euler characteristic of the complement of $G$ and and $P(G^*,x)$ is the chromatic polynomial of the dual graph $G^*$.
\end{theorem}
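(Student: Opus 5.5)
The plan is to induct on the number of edges of $G$ and mirror the deletion--contraction recursion for the chromatic polynomial of $G^*$ with skein relation (SR2). For an edge $e$ of $G$ with dual edge $e^*$ we have
\[
P(G^*,x) = P(G^*\setminus e^*,x) - P(G^*/e^*,x).
\]
Deleting $e^*$ from $G^*$ corresponds to contracting $e$ in $G$, so that its two trivalent endpoints merge into a $4$-valent vertex that must then be resolved. Contracting $e^*$ corresponds to deleting $e$ in $G$, which merges the two faces adjacent to $e$. The two terms on the right side of (SR2) are the two ways of reconnecting the crossing locus near an edge: the "H" picture $\Lambda_G$ itself and the rotated "H" picture $\Lambda_{G'}$, where $G'$ is obtained by a Whitehead flip at $e$. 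The left side of (SR2) involves two fronts in which the crossing locus is resolved into two disjoint arcs, with a saddle Reeb chord between the sheets. In the $2$-weave the two crossing sheets have Maslov potentials differing by $0$ after the satellite, so $\delta_{r,s}=1$ and $\eta(0)=1$.

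I would first check the potentials, and then look at the pair of LHS fronts. Their crossing loci agree with $G$ with $e$ deleted and replaced by arcs (a "smoothing"). In one of them the sheets are swapped in a small disk, giving a front that I expect to be isotopic to a weave with a crossing circle. I would evaluate that crossing circle using (SR1) specialized at potentials $r=s+1$, together with the unknot value \eqref{eq:Uvalue}, (SR3) and (SR4). Concretely, a small crossing circle bounding a disk on two sheets should contribute a fixed multiplicative factor. I would compute that factor from (SR1), where one side is the cone-point/surgery term, combined with (SR3). Repeating these local reductions expresses $\aug_{\Lambda_G}$ as a linear combination over graphs with fewer edges, or over graphs with a bivalent or degenerate configuration that simplifies to fewer faces.

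The main obstacle is the bookkeeping. I need to verify that the recursion produced by (SR2) plus the local circle evaluation is exactly deletion--contraction with $x=q+1$, up to the normalization factor $[q^{1/2}/(q-1)^2]^{Z(G)}$. Here $Z(G)$ is the number of faces, so each merging or splitting of faces changes the exponent by one, and this factor should absorb the coefficients $(q-1)^2$ and $q^{\pm1/2}$. I would check this against the RHS of \eqref{eq:2weaves} by verifying that the proposed formula $F(G)$ satisfies the same linear recursion that (SR2) imposes, which amounts to comparing coefficients.

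Finally I would prove the base case. This is the graph $G=\emptyset$, or a single circle-free configuration such as the theta graph, where $G^*$ has $2$ and $3$ vertices. For the base case I would compute $\Lambda_G$ directly: for a weave with no crossings, the satellite is two parallel unknots that are stacked but not disjoint in base projection. One option is to compute it by an isotopy to a front with a standard Reeb chord and apply (SR1) with (SR3) and \eqref{eq:Uvalue}; the other is to evaluate it via the product rule \eqref{eq:productrule} after an isotopy. I would match the result with $\frac{(q-1)^6}{q^3(q+1)}\,[q^{1/2}/(q-1)^2]^{Z}\,P(G^*,q+1)$, using $P(K_1,x)=x$ and $Z=2$.
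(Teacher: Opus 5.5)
Your reading of the ingredients is right. With $r=s=0$, (SR2) is the weave counterpart of deletion--contraction for $G^*$, i.e.\ of the coloring identity (P1): it relates the ``H'' graph $G$, its Whitehead flip $G'$, and the two smoothings at $e$.

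\textbf{Main gap: the induction does not close.} (SR2) only expresses $\aug_{\Lambda_G}-\aug_{\Lambda_{G'}}$ in terms of the two smoothings. Since $G'$ has exactly as many vertices and edges as $G$, an induction on the number of edges cannot absorb the $\Lambda_{G'}$ term, and ``repeating these local reductions'' has no reason to terminate. Two ideas are missing.
\begin{itemize}
\item A second complexity that the flip decreases. The paper runs an outer induction on $N$, the number of vertices plus the number of circle components, and an inner induction on the number $m$ of vertices of an \emph{innermost} face $P$. For $m>2$, flipping an edge of $P$ produces an innermost $(m-1)$-gon, while both smoothing terms have smaller $N$.
\item Relations that evaluate the terminal cases $m=0,1,2$. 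Besides the circle relation, the essential one is the bigon relation: $\aug_{\Lambda_G}=\frac{q^{1/2}}{q-1}\aug_{\Lambda_{G_0}}$ when $G$ is obtained from $G_0$ by inserting an innermost bigon. It comes from (SR3) together with the fact (\cite{CZ}, Theorem 4.10) that inserting a bigon changes the weave by an ambient Legendrian $0$-surgery.
\end{itemize}
Your plan never uses (SR3) in this way. Without a bigon relation, or a substitute such as a direct proof that weaves of graphs with a monogon have vanishing invariant, flips at an innermost bigon or monogon just cycle between the two. With these pieces the argument becomes: both sides of (\ref{eq:2weaves}) satisfy the flip, bigon and circle relations; these relations determine a graph invariant from its value at $G=\emptyset$; and the two values at $\emptyset$ agree.

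\textbf{Local errors.}
\begin{itemize}
\item The left-hand fronts of (SR2) are already the $2$-weaves of the two smoothings. No ``swapped disk'' or crossing circle appears there.
\item Crossing circles do arise, since smoothings can create circle components. But both sheets of a $2$-weave carry Maslov potential $0$, so (SR1) applies with $r=s=0$, not $r=s+1$. The cone-point and surgery terms are killed by $\delta_{r,s+1}=0$, and the relation reads $q^{-1/2}\aug(\text{with circle})=q^{1/2}\aug(\text{without circle})$. This is a factor of $q$, matching $P\mapsto(x-1)P$ at $x=q+1$; (SR3) plays no role here.
\item $Z(G)$ is not the number of faces. Adding a circle adds a face but leaves $\chi(S^2\setminus G)$ unchanged; in general $Z(G)=2+V(G)/2$, with $V(G)$ the number of vertices. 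Your recursion unavoidably produces disconnected graphs and circle components, so the bookkeeping must use $\chi(S^2\setminus G)$ and disconnected $G$ must be allowed throughout.
\item Base case: your second option fails. The product rule cannot be applied to $U\cup U'$, whose base projections overlap, and it is not isotopic to the split link. Instead, (SR1) with $r=0$, $s=1$ (again with vanishing right side) gives $\aug_{U\cup U'}=q^{-1}\aug_U^2=(q-1)^2/q^2$. This matches the right side of (\ref{eq:2weaves}) with $P(K_1,x)=x$ and $Z=2$, and no (SR3) is needed.
\end{itemize}
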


\begin{remark}
	While \cite{CMS} restricts to the case where $G$ is connected, for our skein relation based proof to go through it is crucial to enlarge considerations to allow disconnected $G$.  We note also that the results of Sackel in \cite{CMS, Sackel} include a computation of the isomorphism type of the augmentation variety of $\Lambda_G$ and its higher rank analog over any field $\mathbb{F}$. 
\end{remark}

\subsection{Further questions}  Notice that from (\ref{eq:augruling}) it is clear that the augmentation numbers, viewed collectively as a function of $q$, generalize the ruling polynomial invariant of $1$-dimensional Legendrian knots to Legendrians of higher dimensions.  In the case of $1$-dimensional Legendrian knots in $\R^3 = J^1\R$ the skein relations analogous to (S1)-(S4) together with the value on the unknot uniquely characterize the ruling polynomial.  Indeed, \cite{R} provides an algorithm for reducing any Legendrian front projection to a combination of stabilized Legendrians and multiples of the Legendrian unknot via application of the skein relations.  

\begin{question}  Do the skein relations from Theorem \ref{thm:main} uniquely determine the augmentation numbers, $\mathit{Aug}_\Lambda(q)$, for Legendrian surfaces in $\R^5$? 
\end{question}

Recall that for a $1$-dimensional Legendrian knot $K \subset J^1\R$  the ruling polynomial is a Laurent polynomial in $z = q^{1/2}-q^{-1/2}$ with non-negative integer coefficients that arises as a sum over $0$-graded normal rulings of $K$ with each normal ruling contributing a weight of the form $z^{j}$, cf. \cite{R}.

\begin{question}  Do the augmentation numbers arise from a Laurent polynomial or rational function (perhaps belonging to a restricted subring of $\mathbb{Q}(q^{1/2})$) in $q^{1/2}$ evaluated at prime powers?  
\end{question}

\begin{question}  Can the $Aug_\Lambda(q)$ be computed from a weighted sum over normal rulings (appropriately defined) of the front projection of $\Lambda$?
\end{question}

In \cite{NRSS}, the augmentation numbers defined by normalized, naive counts of augmentations are given a more naturally Legendrian isotopy invariant interpretation as the homotopy cardinality of an $A_\infty$-category of augmentations, i.e. as a weighted count of moduli of augmentations.  See also \cite{GH} for the even graded case and \cite{Murray} for the case of higher rank augmentations.  Alternatively, one can take the homotopy cardinality of an equivalent (via \cite{NRSSZ}) dg-category of sheaves \cite{STZ}.  

\begin{question}
For Legendrian surfaces, how are the augmentation numbers $\mathit{Aug}_\Lambda(q)$ related to (homotopy) cardinalities of Legendrian invariant augmentation \cite{EL} and sheaf \cite{STZ} categories?  
\end{question}

\subsection{Methods and organization}  Establishing the skein relations of Theorem \ref{thm:main} relies on being able to compare augmentations of the LCH dg-algebras of Legendrian surfaces whose front projections differ locally in a disk.  
To do so, we employ the cellular LCH dg-algebra from \cite{RuSu1,RuSu2,RuSu25} suitably extended to $\Z[H_1(\Lambda)]$ coefficients \cite{RuSu5}.  The cellular DGA provides an explicit model for the LCH DGA based on a cellular decomposition of the base projection of $\Lambda$.  Portions of $\Lambda$ lying above closed cells correspond to sub-DGAs, and this effectively localizes the computation.  
For added flexibility in computing augmentation numbers, we work with a ``multi-curve'' variant of the LCH DGA for surfaces (see Section \ref{sec:multiDGA}) that is analogous to the ``multi-base pointed'' DGA for $1$-dimensional Legendrian knots \cite{NgR}.  

\medskip

The methods of this article rely on the following results whose proofs will be detailed in the article in preparation \cite{RuSu5}:
\begin{itemize}
	\item[(i)] the Legendrian isotopy invariance of the multi-curve version of the LCH dg-algebra for surfaces and its relation with the usual LCH DGA over $\Z[H_1(\Lambda)]$, see Definition \ref{def:multicurveDGA} and Proposition \ref{prop:multiDGA}, and 
	\item[(ii)] the isomorphism (Theorem \ref{thm:CWmulti}) between the multi-curve version of the cellular DGA, defined in Section \ref{sec:CWDGA}, and the multi-curve LCH dg-algebra.
\end{itemize}
The presentation of these topics in the current article is self contained with only the proofs of Proposition \ref{prop:multiDGA} and Theorem \ref{thm:CWmulti} referenced to \cite{RuSu5}.  The detailed proof of Theorem \ref{thm:CWmulti} involves integrating the multi-curves into the setup of \cite{RuSu1, RuSu2, RuSu25}, which is relatively straightforward, and a more delicate argument to extend the isomorphism from coefficients in $\mathbb{F}_2$ to coefficients in $\Z$.   In the case of augmentations to $\mathbb{F}_2$, which is sufficient for the results of this article in the case $q=2$, no results from \cite{RuSu5} are needed as only \cite{RuSu1, RuSu2, RuSu25} are required.  For fields of characteristic $2$, i.e., the cases $q= 2^\ell$ for $\ell \geq 1$, it is sufficient to work with $\mathbb{F}_2[H_1(\Lambda)]$ coefficients, and this simplifies the proof of Theorem \ref{thm:CWmulti} significantly by avoiding considerations involving signs.

\medskip

The remainder of the article is organized as follows.  Section \ref{sec:DGAaug} presents needed algebraic preliminaries on dg-algebras and their augmentations.  Section \ref{sec:LCHdgalgebra} discusses the multi-curve LCH dg-algebra 
and defines the augmentation numbers $\aug_\Lambda(q)$ in this setting.  A cellular computation of the multi-curve DGA then appears in Section \ref{sec:CWDGA}.   With these preliminaries out of the way, the next three sections provide the proof of Theorem \ref{thm:main}.  Section \ref{sec:5} establishes (SR1) with the details of the computation of the DGA of the term with the cone point deferred until the appendix (Section \ref{sec:Cone}).  Proposition \ref{prop:specialiso} may be of some independent interest as it provides isomorphisms between various specializations of the DGAs of the four Legendrians from (SR1).  Section \ref{sec:SR2} recalls from \cite{Arnold, Goryunov} local modifications of Legendrian surfaces (``Legendrian surface moves'') that can be realized by Legendrian isotopy, then applies them to establish (SR2) from (SR1).  Finally, (SR3), (SR4), and the remaining identities (\ref{eq:productrule}) and (\ref{eq:Uvalue}) are established in Section \ref{sec:7}.
The remaining two Sections \ref{sec:firstex} and \ref{sec:2weaves} present example computations of augmentation numbers using the skein relations; this includes the proof of Theorem \ref{thm:2weaves}.  These sections may be read independently of the rest of the article.

\subsection{Acknowledgements}  The first author received support from grant 429536 from the Simons Foundation.  The second author received support from grant SFI-MPS-TSM-00013380 from the Simons Foundation.

\section{DGAs and augmentations}  \label{sec:DGAaug}

The dg-algebras arising in the setting of Legendrian contact homology are equipped with explicit generating sets; have differentials that preserve a filtration by action; and have their stable tame isomorphism types as Legendrian isotopy invariants.  We review here this algebraic setting and define augmentation numbers for such DGAs.

\subsection{DGAs and augmentations}  \label{sec:DGAsubsec}

Let $R$ be a coefficient ring which we assume to have one of the following forms:
\begin{itemize}
\item a group ring $\Z[H_1(\Lambda)]$ where $\Lambda$ is a surface,
 \item a Laurent polynomial ring $\Z[t_1^{\pm1}, \ldots, t_\ell^{\pm 1}]$, or 
\item a Laurent polynomial ring of the form $\mathbb{F}[t_1^{\pm1}, \ldots, t_\ell^{\pm 1}]$ where $\mathbb{F}$ is a field (finite or infinite).
\end{itemize}
When it is necessary to distinguish, we say $R$ is {\bf defined over $\Z$} when $R$ has the form $\Z[H_1(\Lambda)]$ or $\Z[t_1^{\pm1}, \ldots, t_\ell^{\pm 1}]$, and say $R$ is {\bf defined over $\F$} when $R$ has the form $\mathbb{F}[t_1^{\pm1}, \ldots, t_\ell^{\pm 1}]$.  We use $\coeff = \Z$ or $\mathbb{F}$ to denote this subring of $R$.  
Given a set $\{x_1, \ldots, x_n\}$, we denote by $R\langle x_1, \ldots, x_n\rangle$ the free unital  associative $R$-algebra
generated by $x_1, \ldots, x_n$. 
This is the algebra of non-commutative polynomials in indeterminants $x_1, \ldots, x_n$ with coefficients in $R$; as an $R$-module it is free with basis consisting of all finite length words in the $x_i$ (including the empty word $1$ which is the identity element).  
With the coefficient ring $R \cong R \cdot 1 \subset R\langle x_1, \ldots, x_n\rangle$ placed\footnote{In the setting of LCH dg-algebras, the condition of the coefficient ring $R = \Z[H_1(\Lambda)]$ having graded degree $0$ arises when working with Legendrians with vanishing Maslov class.} in graded degree $0$, making a choice of degree for each generator, notated as $|x_i| \in \Z, 1 \leq i \leq n$,  and extending to products as  $|x_{i_1} \cdots x_{i_s}| = |x_{i_1}| + \cdots + |x_{i_s}|$ 
produces a $\Z$-grading,
\[
R\langle x_1, \ldots, x_n\rangle = \oplus_{k \in \Z} A_k, \quad A_{k_1} \cdot A_{k_2} \subset A_{k_1+k_2}.
\]
Here, $A_k$ is the $R$-submodule of $R\langle x_1, \ldots, x_n\rangle$ spanned by words in the $x_i$ of degree $k$.

\begin{definition} \label{def:DGAs}
	\begin{itemize}
		\item
	  A {\bf dg-algebra} (abbrv. {\bf DGA}) {\bf over $R$} is a pair $(\alg, \partial)$ consisting of a $\Z$-graded $R$-algebra, $\alg = \oplus_{k \in \Z} \alg_k$, and a differential, $\partial : \alg \rightarrow \alg$, that is a degree $-1$ graded derivation.  Explicitly, $\partial$ is an $R$-module homomorphism satisfying $\partial^2=0$, $\partial(A_k) \subset A_{k-1}$, and  $\partial(xy) = \partial(x) y + (-1)^{|x|}x \partial(y)$ for all homogeneous elements $x,y \in \alg$.  

\item A {\bf based dg-algebra over $R$} is a dg-algebra $(\alg, \partial)$ equipped with an explicit choice of free generating set $\mathcal{B} = \{x_1, \ldots, x_n\} \subset \alg$ consisting of elements of homogeneous degree such that $R \langle x_1, \ldots, x_n\rangle \cong \alg$.

\item A based dg-algebra is called {\bf triangular} if there is an ordering of the generating set $\mathcal{B} = \{x_1, \ldots, x_n\}$ such that for all $1\leq i \leq n$ we have $\partial x_i \in R\langle x_1, \ldots, x_{i-1}\rangle$.  

\item The {\bf homomorphisms} and {\bf isomorphisms} of dg-algebras over $R$ we consider are unital $R$-algebra homomorphisms that preserve grading and commute with differentials.  Sometimes we consider dg-algebra homomorphisms over the subring $\coeff\subset R$ that are only required to preserve the scalar multiplication by elements of $\coeff$.

\item  An {\bf elementary automorphism} of a based $R$-algebra $(\mathcal{A}, \mathcal{B} = \{x_1,\ldots, x_n\})$ is a unital $R$-algebra automorphism $\Phi:\mathcal{A} \rightarrow \mathcal{A}$ that for some $1\leq j \leq n$ has the form
\[
\begin{array}{lcl} \Phi(x_i) = x_i,  & \quad & \mbox{for $i \neq j$, and} \\
\Phi(x_j) = \alpha x_j + w, & \quad &\mbox{with $\alpha \in R^*$ and $w \in R\langle x_1,\ldots, \widehat{x_j},\ldots, x_n\rangle$.}
\end{array}
\]

\item Let $\coeff$ be $\Z$ or (a field) $\mathbb{F}$ such that the coefficient ring $R$ is defined over $\coeff$.  A {\bf coefficient automorphism} of the based algebra $(\mathcal{A}, \mathcal{B})$ is a $\coeff$-algebra automorphism $\Phi:\mathcal{A} \rightarrow \mathcal{A}$ that restricts to a $\coeff$-algebra automorphism of $R$ and has $\Phi(x_i) = x_i$ for all $x_i \in \mathcal{B}$.

\item A dg-algebra isomorphism, $\Phi:(\alg_1, \partial_1) \rightarrow (\alg_2, \partial_2)$, between based dg-algebras over $R$ is called {\bf tame} (resp. {\bf semi-tame}) if $\Phi$ can be factored into a composition of elementary automorphisms (resp. elementary automorphisms and coefficient automorphisms) of $\alg_1$  followed by an isomorphism between $\alg_1$ and $\alg_2$ induced by a bijection of the generating sets.  
Note that a tame isomorphism is an isomorphism of $R$-algebras, while a semi-tame isomorphism may only be an isomorphism of $\coeff$-algebras.

\item A {\bf stabilization} of a based dg-algebra $((\alg, \partial), \mathcal{B})$ is a based dg-algebra $((S\alg, \partial'), \mathcal{B}')$ with generating set of the form $\mathcal{B}' = \mathcal{B} \sqcup\{y_1, \ldots, y_r, z_1, \ldots, z_r\}$ where $|y_j| = |z_j|+1$ for $1 \leq j \leq r$ and with differential satisfying $\partial'x_i = \partial x_i$ for all $x_i \in \mathcal{B}$  and $\partial'y_j = z_j$ for $1\leq j \leq r$.

\item Two based dg-algebras are {\bf stable tame}  (resp. {\bf semi-tame}) {\bf isomorphic} over $R$ if they have stabilizations that are tamely (resp. semi-tamely) isomorphic as unital dg-algebras over $R$.

\end{itemize}
We note that stable tame isomorphism provides an equivalence relation on the class of based dg-algebras over $R$ as does stable semi-tame ismorphism.
\end{definition}

The following method is useful for producing stable tame isomorphic dg-algebras in the triangular case.

\begin{proposition} \label{prop:cancel} Let $(\alg, \partial)$ be a triangular dg-algebra with ordered generating set $(\mathcal{B}, \leq)$.  Suppose that $\{y_1, \ldots, y_r, z_1, \ldots, z_r\} \subset \mathcal{B}$ has the property that for all $1 \leq j \leq r$, $\partial y_j$ has the form
	\begin{equation} \label{eq:yzvw} 
	\partial y_j = \alpha z_j + v+w,  
	\end{equation}
	where $\alpha \in R^*$ is an invertible element; $v \in \mathcal{I}(y_1, \ldots, y_{j-1}, \partial y_1, \ldots, \partial y_{j-1})$ belongs to the two sided ideal generated by $y_1, \ldots, y_{j-1}, \partial y_1, \ldots, \partial y_{j-1}$; and $w \in R\langle x \in \mathcal{B} \,|\, x < z_j\rangle$.  
Then, the quotient dg-algebra  $\mathcal{A}/\mathcal{I}(y_1, \ldots, y_{r}, \partial y_1, \ldots, \partial y_{r})$	is based and triangular with respect to the generating set $\mathcal{B} \setminus \{y_1, \ldots, y_r, z_1, \ldots, z_r\}$ and is stable tame isomorphic to $(\alg, \partial)$ over $R$.
\end{proposition}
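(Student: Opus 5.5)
We argue by induction on $r$, reducing to the case $r=1$: cancel the pair $y_1,z_1$ first, check that the resulting quotient and the remaining pairs $y_j,z_j$ ($j\geq 2$) again satisfy the hypotheses, and repeat.

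For $r=1$ we have $\partial y_1 = \alpha z_1 + w$ with $w\in R\langle x<z_1\rangle$; the ideal term $v$ vanishes since $j=1$. The plan is to build a tame automorphism that turns $(\alg,\partial)$ into a stabilization of a DGA on $\mathcal{B}\setminus\{y_1,z_1\}$. Define the elementary automorphism
\[
\Phi(z_1)=\alpha z_1+w=\partial y_1, \qquad \Phi(x)=x \text{ for } x\neq z_1 .
\]
Since $w$ does not involve $z_1$, this is elementary. Set $\partial'=\Phi^{-1}\partial\Phi$. Then
\[
\partial' y_1=\Phi^{-1}(\partial y_1)=z_1, \qquad \partial' z_1=\Phi^{-1}\partial\partial y_1=0 .
\]
For any other generator $x$, $\partial' x=\Phi^{-1}(\partial x)$ is $\partial x$ with $z_1$ replaced by $\alpha^{-1}(z_1-w)$, so $y_1$ and $z_1$ may still appear in it.

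Next we remove $y_1$ and $z_1$ from the differentials of the other generators, proceeding in increasing order of the generators and applying further elementary automorphisms. Write $\pi$ for the projection $\alg\to\alg/\mathcal{I}(y_1,z_1)$, which is identified with $R\langle \mathcal{B}\setminus\{y_1,z_1\}\rangle$. For each $x$ in turn we choose $\Psi(x)=x+h_x$, where $h_x$ lies in the ideal generated by $y_1$ and $z_1$. The element $h_x$ is built by the standard homotopy formula
\[
H(u\,z_1\,u')=\pm\,u\,y_1\,u' ,
\]
extended as a derivation-type homotopy satisfying $\partial H+H\partial=\mathrm{id}-\iota\pi$ on the relevant words, so that $\partial(x+h_x)$ lies in the subalgebra generated by the new generators and by $\mathcal{B}\setminus\{y_1,z_1\}$. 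This is the standard argument that shows a DGA is tamely isomorphic to a stabilization; triangularity ensures the construction terminates.

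The result is a stabilization of $(\alg/\mathcal{I}(y_1,\partial y_1),\bar\partial)$, and this quotient is based on $\mathcal{B}\setminus\{y_1,z_1\}$. It is triangular because $\bar\partial x=\pi(\partial x)$ only involves generators that are smaller in the original order.

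For the inductive step, we check that in the quotient each $\partial y_j$ for $j\ge2$ still has the form $\alpha z_j+\bar v+\bar w$. The terms $y_1$ and $\partial y_1$ die in the quotient, so the ideal condition on $v$ passes to the quotient. The condition on $w$ also passes, since $\pi$ kills generators rather than introducing larger ones.

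The main obstacle is making the homotopy/elementary-automorphism step fully rigorous, with correct signs over $\Z$ and with $\alpha$ a unit. In particular the filtration must be arranged so that each correcting map is genuinely elementary, meaning $h_x$ does not involve $x$. I would handle this by ordering the corrections along the triangular order and using the fact that $\partial$ lowers that order. If this becomes messy, a fallback is to cite the analogous lemma in the Legendrian contact homology literature (e.g. the cancellation lemma in Chekanov's or in Ekholm–Etnyre–Sullivan's work).
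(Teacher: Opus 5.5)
Your proposal is correct and follows the standard argument that the paper defers to (\cite[Theorem 2.1]{RuSu1}). You induct on $r$, and the hypothesis on $v$ is exactly what survives passing to $\alg/\mathcal{I}(y_1,\partial y_1)$; for $r=1$ you apply the elementary automorphism $z_1\mapsto\partial y_1$ and then a Chekanov-style destabilization, correcting the generators one at a time along the triangular order. The details you leave implicit are routine. The homotopy $H$ must act only on the first occurrence of $y_1$ or $z_1$ in a word; a genuine derivation would make $\partial H+H\partial$ a letter-counting operator rather than $\mathrm{id}-\iota\pi$. Also, the composite automorphism fixes $y_1$, sends $z_1$ to $\partial y_1$, and changes each remaining generator only by an element of $\mathcal{I}(y_1,\partial y_1)$, so the induced differential on $R\langle\mathcal{B}\setminus\{y_1,z_1\}\rangle$ is exactly the quotient differential.
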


\begin{proof}  See \cite[Theorem 2.1]{RuSu1} for a proof with $R=\Z/2\Z$ that modifies in a straightforward manner to the current setting. 
\end{proof}

\subsection{Specializations of DGAs}  \label{sec:special}  Let $(\alg, \partial_\alg)$ be a based dg-algebra with generating set $\mathcal{B}$ defined over a coefficient ring of the form $R = \Z[t_1^{\pm1}, \ldots, t_\ell^{\pm1}]$.  We will make use of the following two specialization constructions.

\medskip

\begin{construction} \label{cons:spec1} (\textit{Specializing a generator})  Let $b \in \mathcal{B}$ be a generator satisfying $\partial b=0$.  Given a field  $\mathbb{F}$  choose a value, $\overline{b} \in \mathbb{F}$, with the restriction that $\overline{b}=0$ if $|b| \neq 0$.  We then have a {\bf specialization} $(\mathcal{A}|_{b= \overline{b}}, \partial')$ that is the based dg-algebra defined as follows:
\begin{itemize}
\item The coefficient ring is $R' = \mathbb{F}[t_1^{\pm1}, \ldots, t_\ell^{\pm1}]$. 
\item The generating set is $\mathcal{B}' = \mathcal{B} \setminus \{b\}$.
\item For any $x \in \mathcal{B}'$, the differential $\partial'x$ is obtained from $\partial_\alg x$ by replacing all occurrences of $b$ with $\overline{b}$, i.e.  $\partial' x = (\partial_\alg x)|_{b= \overline{b}}$.
\end{itemize}
\end{construction}

Alternatively, $(\mathcal{A}|_{b= \overline{b}}, \partial')$ may be constructed from $(\mathcal{A}, \partial_\alg)$ by first tensoring with $\mathbb{F}$ and then quotienting by the $2$-sided ideal, $\mathcal{I}$, generated by $b -\overline{b}$.  Notice that, with the above requirements on $b$ and $\overline{b}$, $\mathcal{I}$ is a graded, differential ideal so that the quotient is indeed a dg-algebra. 

\medskip

\begin{construction} \label{cons:spec2}  (\textit{Specializing the coefficient ring})  Fix some $t_i \in R = \Z[t_1^{\pm1}, \ldots, t_\ell^{\pm1}]$.  Given a field $\mathbb{F}$ and a choice of non-zero $\overline{\lambda} \in \mathbb{F}^*$, we form the {\bf specialization} $(\mathcal{A}|_{t_i= \overline{\lambda}}, \partial')$ 
		as a based dg-algebra over $R' = \mathbb{F}[t_1^{\pm1}, \ldots, \widehat{t_i}, \ldots, t_\ell^{\pm1}]$ with generating set $\mathcal{B}$ and with the differentials $\partial'x$, $x \in \mathcal{B}$, obtained from $\partial_\alg x$ by replacing all occurrences of $t_i$  with $\overline{\lambda}$.
\end{construction}

\subsection{Augmentation numbers of based dg-algebras}

Let $\mathbb{F}$ denote a field.  Let $(\alg, \partial)$ be a based dg-algebra with coefficient ring $R$ (as above) where $R$ is defined over $\coeff$ with $\coeff= \Z$ or $\coeff=\mathbb{F}$.

\begin{definition} \label{def:aug}
	\begin{itemize}
		\item  An $\mathbb{F}$-valued {\bf augmentation} of $(\alg, \partial)$
is a $\coeff$-algebra homomorphism $\epsilon: \mathcal{A} \rightarrow \mathbb{F}$ satisfying $\epsilon(1) =1$, $\epsilon \circ \partial = 0$, and $\epsilon(x) =0$ whenever $|x| \neq 0$.  Equivalently, $\epsilon:(\mathcal{A}, \partial) \rightarrow (\mathbb{F},0)$ is a unital homomorphism of dg-algebras over $\coeff$ where $\mathbb{F}$ is concentrated in grading $0$.
		\item The set of all $\mathbb{F}$-valued augmentations of $(\mathcal{A}, \partial)$ is denoted
		\[
		V(\mathcal{A}, \mathbb{F}) = \{ \epsilon:(\mathcal{A}, \partial) \rightarrow (\mathbb{F},0) \}
		\]
		and is referred to as the ($\mathbb{F}$-valued) {\bf augmentation variety} of $(\mathcal{A}, \partial)$. 
	\end{itemize}
\end{definition}

\begin{remark} \label{rem:Faugs} When $(\alg, \partial)$ has free generating set $\mathcal{B}= \{x_1, \ldots, x_n\}$ and coefficient ring $R= \Z[t_1^{\pm1}, \ldots, t_\ell^{\pm 1}]$ or $\mathbb{F}[t_1^{\pm1}, \ldots, t_\ell^{\pm 1}]$, an $\mathbb{F}$-valued augmentation is  uniquely determined by a choice of values $\epsilon(t_j) \in \mathbb{F}^*$ and $\epsilon(x_i) \in \mathbb{F}$ satisfying the Laurent polynomial equations $\epsilon(x_i)=0$ for all $i$ with $|x_i| \neq 0$ and $\epsilon(\partial x_i)=0$ for all $i$ with $|x_i|=1$.  
In this way, the augmentation variety is identified with an affine algebraic set in $(\mathbb{F}^*)^\ell \times \mathbb{F}^n$.
In this article, we will not need to use any structure on $V(\alg, \partial)$ beyond that of a set. 
\end{remark}

\begin{remark} \label{rem:mgraded}
	More generally, given a non-negative integer $m\geq 0$, one can consider {\bf $m$-graded augmentations} by relaxing the grading condition to only require that $\epsilon$ preserves grading mod $m$, i.e., that $\epsilon(x) \neq 0$ implies that $x \in \oplus_{k \equiv 0 \mbox{ mod $m$}} \mathcal{A}_k$.
		Definition \ref{def:aug} then corresponds to {\it $0$-graded augmentations} (where $m=0$), also called {\it graded augmentations} or {\it $\Z$-graded augmentations} in the literature.  In this article, we restrict attention to the $m=0$ case so that, except where otherwise stated, all augmentations are assumed to be $0$-graded.  Legendrian isotopy invariant augmentation numbers can also be defined in the $m$-graded case; see \cite{NgSab, HR, NRSS}.
\end{remark}

\begin{definition}[\cite{NgSab}]  \label{def:DGalgAugNum} Let $q \in \Z_{>0}$ a prime power and $\mathbb{F}_q$ denote the finite field of order $q$.  Let $((\mathcal{A},\partial), \mathcal{B})$ be a based dg-algebra with coefficient ring $R$ defined over $\coeff = \Z$ or $\coeff = \mathbb{F}_q$.
	\begin{itemize}
		\item The {\bf shifted Euler characteristic} of $((\mathcal{A},\partial), \mathcal{B})$ is defined by 
		\[
		\chi^*(\alg) = \sum_{k \geq 0} (-1)^k b_k + \sum_{k <0} (-1)^{k+1} b_k
		\] 
		where $b_k$ denotes the number of elements in $\mathcal{B}$ of degree $k$.
		\item  
The $\mathbb{F}_q$-valued {\bf augmentation number} of the dg-algebra  $(\mathcal{A},\partial)$ is 
		\[
		\aug( \alg, q) = q^{-\chi^*(\alg)/2}\cdot |V(\alg, \mathbb{F}_q)|.
		\]
\end{itemize}
\end{definition}
\begin{remark} 
We caution that the augmentation number of a Legendrian surface $\Lambda$, appearing above in (\ref{eq:1dimskeinrel}), differs from the augmentation number of its LCH dg-algebra, $\alg(\Lambda)$, by an additional factor, cf. Definition \ref{def:augLambda} below.  This also differs from the actual `number of augmentations' $|V(\alg, \mathbb{F}_q)|.$  
\end{remark}

\begin{proposition}[\cite{NgSab}]  \label{prop:stableinv}
	If two based dg-algebras $\alg_1$ and $\alg_2$ are stable tame  isomorphic or stable semi-tame isomorphic, then $\aug( \alg_1, q) = \aug(\alg_2,q)$.
\end{proposition}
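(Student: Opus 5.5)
It suffices to treat the individual moves, because stable tame (resp.\ semi-tame) isomorphism is generated by stabilizations, elementary automorphisms, coefficient automorphisms and relabelings of generators. Invariance under a relabeling induced by a bijection of generating sets is immediate: the map $\epsilon \mapsto \epsilon\circ\Phi^{-1}$ is a bijection of augmentation varieties, and $\chi^*$ depends only on the multiset of degrees.

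For an elementary or coefficient automorphism $\Phi$ of $\alg$, I would regard $\Phi$ as a dg-isomorphism from $(\alg,\partial)$ to $(\alg,\Phi\partial\Phi^{-1})$. Precomposition $\epsilon\mapsto\epsilon\circ\Phi$ is then a bijection between the $\mathbb{F}_q$-valued augmentation sets of the two dg-algebras. The inverse is composition with $\Phi^{-1}$, which is again a $\coeff$-algebra map. Grading is preserved, so $\epsilon\circ\Phi$ still vanishes off degree $0$. The generating set is unchanged, so $\chi^*$ is unchanged. A coefficient automorphism is only $\coeff$-linear, but that is exactly why Definition \ref{def:aug} requires augmentations to be only $\coeff$-algebra maps. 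Composing these steps, any (semi-)tame isomorphism preserves $|V|$ and $\chi^*$.

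The remaining case is stabilization, which adds pairs $y,z$ with $|y|=|z|+1$ and $\partial y=z$; by induction it is enough to add one pair. An augmentation of $S\alg$ restricts to one of $\alg$, since $\partial' x=\partial x$ for $x\in\mathcal{B}$. Conversely, an extension is determined by the values $\epsilon(y)$ and $\epsilon(z)$ subject to two conditions: $\epsilon(\partial y)=\epsilon(z)=0$, and $\epsilon(\partial z)=\epsilon(\partial^2 y)=0$, which holds automatically. There are no other conditions, so $\epsilon(z)=0$ is forced, while $\epsilon(y)$ is arbitrary in $\mathbb{F}_q$ when $|y|=0$ and must be $0$ otherwise.

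Three degree cases then follow. If $|y|=0$, $|z|=-1$, then $|V|$ multiplies by $q$, and $\chi^*$ changes by $(-1)^0+(-1)^{-1+1}=2$; the factor $q^{-\chi^*/2}$ changes by $q^{-1}$, which cancels. If $|y|=k\ge1$, then $|z|=k-1\ge0$ and the contributions $(-1)^k+(-1)^{k-1}=0$ cancel. If $|y|=k\le -1$, then $|z|=k-1<0$ and $(-1)^{k+1}+(-1)^{k}=0$. In the last two cases $|V|$ is also unchanged, so the augmentation number is invariant.

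The main point needing care, and the reason for the shifted parity in $\chi^*$, is the boundary case $|y|=0$. There the shift in the sign convention at negative degrees is exactly what makes $\chi^*$ jump by $2$ and compensate the factor $q$. Everything else is formal.
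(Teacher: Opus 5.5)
Your proof is correct and follows the same route as the paper: (semi-)tame isomorphisms preserve both $\chi^*$ and $|V(\alg,\mathbb{F}_q)|$, the latter because augmentations are only required to be $\coeff$-algebra maps. A stabilization then satisfies $\chi^*(S\alg)=\chi^*(\alg)+2Y_0$ and $|V(S\alg,\mathbb{F}_q)|=q^{Y_0}|V(\alg,\mathbb{F}_q)|$, where $Y_0$ is the number of degree-$0$ generators $y_j$. Your step-by-step transport of the differential through the individual elementary and coefficient automorphisms is harmless but unnecessary, since the composite is already a dg-isomorphism over $\coeff$, which is all the paper uses for counting augmentations.
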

\begin{proof}  Using that the coefficient ring $R$ is assumed to lie in grading $0$, we see that semi-tamely isomorphic based $R$-algebras have the same shifted Euler characteristic. As the count of augmentations of $(\mathcal{A}, \partial)$ to $\mathbb{F}_q$ depends only on the isomorphism type of $(\mathcal{A}, \partial)$ over $\coeff$, it follows that semi-tamely isomorphic dg-algebras have equal augmentation numbers.  To complete the proof, it suffices to observe that $\aug(S\alg, q) = \aug( \alg, q)$ when $S\alg$ is a stabilization of $\alg$.  To see this, note that with the generating set of $S\alg$ notated as in Definition \ref{def:DGAs}, we have
	\[
	\chi^*(S\alg) = \chi^*(\alg) + 2 Y_0 \quad \mbox{and} \quad |V(S\alg, \mathbb{F}_q)| = q^{Y_0} \cdot |V(\alg, \mathbb{F}_q)|
	\]
	where $Y_0$ is the number of $y_i$ generators having degree $0$.
\end{proof}

\begin{remark}  In the case that $R$ is defined over $K = \Z$, $\aug( \alg, q)$ is defined for all values of $q$ and we can speak collectively of the {\it augmentation numbers}, and view them together as a function $q \mapsto \aug( \alg, q)$ defined on the set of prime powers.
\end{remark}

We record also a property relating augmentations with the specializations from Construction \ref{cons:spec1} and \ref{cons:spec2}.
\begin{proposition} \label{prop:specbiject}
Let $((\mathcal{A}, \partial), \mathcal{B})$ be a based DGA with coefficient ring $\Z[t_1, \ldots, t_\ell]$, and let $\mathbb{F}$ be a field.  
\begin{itemize}
\item Let $b \in \mathcal{B}$ with $\partial b =0$ and $\overline{b} \in \mathbb{F}$ with the restriction that $\overline{b}=0$ if $|b| \neq 0$.  Then, we have a bijection
\[
V(\mathcal{A}|_{b= \overline{b}}, \mathbb{F})  \leftrightarrow \left\{ \epsilon \in V(\mathcal{A}, \mathbb{F}) \,\mid\, \epsilon(b) = \overline{b} \right\}.
\]
\item For any $1\leq i \leq \ell$ and any $\overline{\lambda} \in \mathbb{F}^*$, we have a bijection,
\[
V(\mathcal{A}|_{t_i= \overline{\lambda}}, \mathbb{F})  \leftrightarrow \left\{ \epsilon \in V(\mathcal{A}, \mathbb{F}) \,\mid\, \epsilon(t_i) = \overline{\lambda} \right\}.
\]
\end{itemize}
\end{proposition}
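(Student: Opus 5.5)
The plan is to pass through the alternate description of each specialization as a quotient dg-algebra and then use the universal property of quotients; Remark \ref{rem:Faugs} identifies augmentations with tuples of values. One preliminary point: the statement writes the coefficient ring as $\Z[t_1,\ldots,t_\ell]$, but the intended ring is the Laurent ring of Section \ref{sec:special}. Since augmentations send each $t_j$ to $\mathbb{F}^*$, this makes no difference.

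\emph{First bullet.} Write $\pi:\mathcal{A}\to \mathcal{A}\otimes \mathbb{F} \to (\mathcal{A}\otimes\mathbb{F})/\mathcal{I}$ with $\mathcal{I}$ the two-sided ideal generated by $b-\overline{b}$. As noted after Construction \ref{cons:spec1}, $\mathcal{I}$ is graded, since $\overline{b}=0$ whenever $|b|\neq 0$. It is also differential, since $\partial(b-\overline{b})=0$ and $\partial$ is a derivation. The quotient is identified with $\mathcal{A}|_{b=\overline{b}}$: the map $x\mapsto x$ on $\mathcal{B}'$ and $b\mapsto \overline{b}$ is an isomorphism of algebras, and it intertwines the differentials by the defining formula $\partial'x=(\partial x)|_{b=\overline b}$.

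Given $\epsilon'\in V(\mathcal{A}|_{b=\overline b},\mathbb{F})$, define $\epsilon=\epsilon'\circ\pi$. This is a unital $\Z$-algebra map, and $\epsilon\circ\partial=\epsilon'\circ\partial'\circ\pi=0$. It vanishes on elements of nonzero degree, and $\epsilon(b)=\overline{b}$.

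Conversely, let $\epsilon\in V(\mathcal{A},\mathbb{F})$ with $\epsilon(b)=\overline b$. It extends $\mathbb{F}$-linearly to $\mathcal{A}\otimes\mathbb{F}$ (here $\mathbb{F}$ acts through the target, and $\Z\to\mathbb{F}$ is the canonical map). This extension kills $b-\overline b$, so it kills $\mathcal{I}$ and descends to the quotient, and it still satisfies the augmentation conditions. The two constructions are inverse to each other because $\pi$ is surjective and the extension is unique.

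\emph{Second bullet.} The argument is identical, using the ideal generated by $t_i-\overline\lambda$ in $\mathcal{A}\otimes\mathbb{F}$. This ideal is automatically graded and differential, because $t_i$ is a central scalar in degree $0$ and $\partial$ is $R$-linear. The quotient is $\mathcal{A}|_{t_i=\overline\lambda}$ over $\mathbb{F}[t_j^{\pm1}]_{j\neq i}$.

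Alternatively, both bullets follow directly from Remark \ref{rem:Faugs}. An augmentation is a tuple $(\epsilon(t_j),\epsilon(x_i))$ satisfying $\epsilon(\partial x_i)=0$. Fixing $\epsilon(b)=\overline b$, respectively $\epsilon(t_i)=\overline\lambda$, turns these equations into exactly the defining equations for the specialization. The only equation dropped is $\epsilon(\partial b)=0$, which holds trivially since $\partial b=0$.

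The only point needing care is checking that the ideal is differential and graded, so that the quotient is the stated specialization; this is exactly where the hypotheses $\partial b=0$ and $\overline b=0$ when $|b|\ne0$ are used.
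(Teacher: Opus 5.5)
Your proposal is correct. Your closing ``alternatively'' paragraph is exactly the paper's proof, which is the one-line remark that the bijection is clear from Remark \ref{rem:Faugs}. Augmentations of a free DGA are tuples of values solving the equations $\epsilon(\partial x_i)=0$. Fixing $\epsilon(b)=\overline b$ or $\epsilon(t_i)=\overline\lambda$ turns these into the defining equations of the specialization, and the only dropped equation, $\epsilon(\partial b)=0$, is vacuous.

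Your main argument takes a different route. It treats the specialization as the quotient of $\mathcal{A}\otimes\mathbb{F}$ by a graded differential ideal and uses the universal property of that quotient. This is coordinate-free and makes explicit where the hypotheses $\partial b=0$ and ($\overline b=0$ when $|b|\neq 0$) are used. It costs a bit more bookkeeping than the paper's one-line observation.

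There is one small slip. The composite $\pi:\mathcal{A}\to(\mathcal{A}\otimes\mathbb{F})/\mathcal{I}$ is not surjective when $\mathbb{F}$ is not a prime field, e.g.\ $\mathbb{F}_4$; only the map $\mathcal{A}\otimes\mathbb{F}\to(\mathcal{A}\otimes\mathbb{F})/\mathcal{I}$ is surjective. Your inverse check still goes through, because the image of $\pi$ spans the quotient over $\mathbb{F}$. By Definition \ref{def:aug}, augmentations of the specialization, whose coefficient ring is defined over $\mathbb{F}$, are required to be $\mathbb{F}$-linear. So they are determined by their composite with $\pi$. That $\mathbb{F}$-linearity is also what keeps the bijection from failing for non-prime $\mathbb{F}$.
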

\begin{proof}  This is clear from the viewpoint of Remark \ref{rem:Faugs}.
\end{proof}

\section{The LCH dg-algebra for surfaces with collections of curves}  \label{sec:LCHdgalgebra}

The Legendrian isotopy invariant augmentation numbers, $\aug_{\Lambda}(q)$, considered in this paper may be defined in terms of the standard version of the LCH dg-algebra with coefficients in $\Z[H_1(\Lambda)]$ \cite{EES05a,EES05c,EES07} as was done in the introduction via (\ref{eq:1dimskeinrel}).  In Section \ref{sec:classicLCH} we briefly review this version of the LCH dg-algebra after collecting some basics about $\Z$-graded Legendrian surfaces in Section \ref{sec:Zgraded}.  Following \cite{EENS}, the $H_1(\Lambda)$-coefficients appearing in the differential of generators may be calculated by recording intersections of boundaries of rigid holomorphic disks with a collection of curves, $\Gamma= \{\Gamma_i\}$, forming a basis for $H_1(\Lambda)$.  For computing intersection numbers it is slightly more direct to work instead with co-oriented curves (that we interpret as cocycles in Section \ref{sec:multiDGA}) forming a basis for $H^1(\Lambda)$, and we take this approach in Section \ref{sec:multiDGA}. 
For our later derivation of the skein relations for $\aug_{\Lambda}(q)$, it is convenient to also consider a variant of the LCH dg-algebra, $\mathcal{A}(\Lambda, \Gamma)$ which we call the {\it multi-curve DGA}, that allows for a larger collection of curves $\Gamma$ that spans $H^1(\Lambda)$ but is not necessarily linearly independent.  This is analogous to the multi-base pointed DGA from the case of $1$-dimensional Legendrians from \cite{NgR}, and also fits into the more general scheme of LCH dg-algebras associated to Legendrians equipped with ``homotopy structures'' used in \cite{RuSu5}. 
In Section \ref{sec:defauginvariance}, we discuss this variant of the LCH dg-algebra and use it to provide a more flexible definition for $\aug_{\Lambda}(q)$ generalizing (\ref{eq:1dimskeinrel}).

\subsection{$\Z$-graded Legendrian surfaces} \label{sec:Zgraded} We will work with closed Legendrian surfaces in $1$-jet spaces, $J^1S$, where $S$ is an oriented surface (possibly open).  We use local coordinates of the form $(x_1,x_2,y_1,y_2,z)$ on $J^1S = T^*S \times \R$ where $(x_1,x_2)$ is a local coordinate on $S$, $(y_1,y_2)$ is the resulting coordinate on the co-tangent fibers, and $z$ is the $\R$ coordinate.  The contact structure on $J^1S$ is $dz-\sum_{i=1}^2y_i\,dx_i$ and the Reeb vector field is $\partial_z$.  The case of $S=\R^2$ is standard contact $\R^5$.  

We will often depict (subsets of) a Legendrian surface $\Lambda \subset J^1S$ via its {\bf front projection}, $\pi_{xz}(\Lambda) \subset S \times \R$, or by depicting the image of front singularities in the {\bf base projection} $\pi_x(\Lambda) \subset S$.  We assume familiarity with the generic singularities of the front projections of surfaces, cf. \cite[Section 2.2]{RuSu1}. 
In particular, a generic Legendrian surface $\Lambda \subset J^1S$ has the form $\Lambda = \sqcup_{i= 0}^2 \Lambda_i$ where the front projection $\pi_{xz}|_{\Lambda}$ is an immersion along $\Lambda_0$, has cusp edge singularities along $\Lambda_1$, and has swallowtail singularities at the points of $\Lambda_2$.  Each $\Lambda_i$ is a submanifold of codimension $i$ in $\Lambda$.  Swallowtail points are either {\bf upward} or {\bf downward} depending on whether the sheet that connects the two cusp edges appears above or below the crossing arc near the swallowtail points.  See Figure \ref{fig:UpwardSwallow}.

\begin{figure}
	
\labellist
\small
\pinlabel $x_1$ [t] at -2 -2
\pinlabel $x_2$ [l]  at 72 22
\pinlabel $z$ [b] at 24 70
\endlabellist
\centerline{\includegraphics[scale=.5]{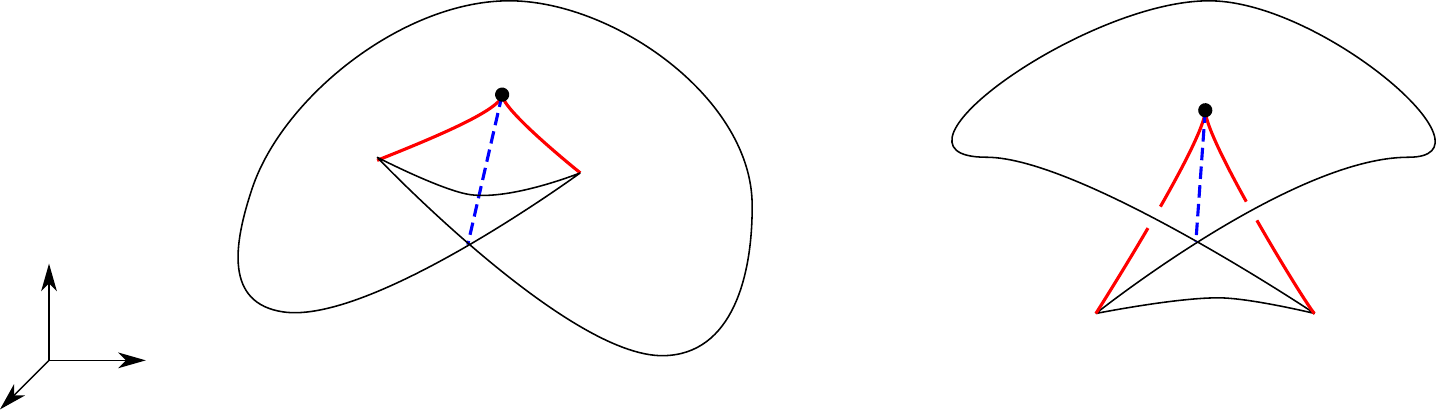} }

\caption{Neighborhoods of an upward (left) and downward (right) swallowtail point pictured in the front projection.  Cusp edges are depicted in solid red, and crossing arcs in dotted blue.
}
\label{fig:UpwardSwallow}
\end{figure}

Recall that a {\bf $\Z$-valued Maslov potential} on $\Lambda$ is a locally constant function $\Lambda_0 \rightarrow \Z$ whose value increases by one when passing a cusp edge from the lower sheet to the upper sheet (in terms of the $z$-coordinate).  Such a $\Z$-valued Maslov potential exists if and only if $\Lambda$ has vanishing Maslov class, or equivalently the minimal Maslov number of $\Lambda$ is $0$; see, eg. \cite[Section 2.1.2]{RuSu2}.  A {\bf $\Z$-graded Legendrian surfaces} $(\Lambda, \mu)$ in $J^1S$ is an ordered pair consisting of a Legendrian surface $\Lambda  \subset J^1S$ equipped with a choice of $\Z$-valued Maslov potential, $\mu$.

\medskip

\noindent{\bf Standing assumption:}  Except where specified, {\it all Legendrian surfaces in this article are assumed to be $\Z$-graded.}  Frequently, the Maslov potential will be suppressed from notation.

\medskip

\begin{remark}
	\begin{enumerate}
		\item For considering $\Z$-graded augmentation numbers (terminology as in Remark \ref{rem:mgraded}), no generality is lost by restricting attention to $\Z$-graded Legendrians.  Indeed, 
		$\Z$-graded augmentations of the LCH dg-algebra can exist only if $\Lambda$ has vanishing Maslov class (since otherwise $\mathcal{A}(\Lambda)$ has invertible elements in non-zero graded degree that have the form $e^A$ with $A \in H_1(\Lambda)$),  cf. \cite[Section 2.2]{EES05a}. 
		\item  Moreover, with the base surface $S$ assumed oriented, all $\Z$-graded Legendrian surfaces in $J^1S$ are orientable as the mod $2$ reduction of the Maslov potential specifies an orientation.  See \cite[Remark 2.9]{PanRu}.
		\end{enumerate}
\end{remark}

\subsection{The classical LCH dg-algebra}  \label{sec:classicLCH}
We assume some familiarity with the LCH dg-algebra of a ($\Z$-graded) Legendrian surface $\Lambda \subset J^1S$ using coefficients in the group ring $\Z[H_1(\Lambda)]$ as constructed in \cite{EES05a,EES05c,EES07}, and we will denote it as $(\mathcal{A}(\Lambda; \Z[H_1(\Lambda)]),\partial)$ or simply $\mathcal{A}(\Lambda; \Z[H_1(\Lambda)])$.  Given a homology class $A \in H_1(\Lambda)$ we write $e^A$ for the corresponding element in $\Z[H_1(\Lambda)]$ so that $\Z[H_1(\Lambda)]$ is the set of finite $\Z$-linear combinations of the form $\sum_{i} c_i e^{A_i}$.  The algebra $\mathcal{A}(\Lambda; \Z[H_1(\Lambda)])$ is the based dg-algebra over $R = \Z[H_1(\Lambda)]$ 
with generators $a_1, \ldots, a_n$ corresponding to Reeb chords of $\Lambda$.
 The Maslov potential $\mu$ is used to specify a grading of generators, $|a_i| \in \Z$ (as in \cite[Section 2.1.2]{RuSu2}) which produces the $\Z$-grading on $\mathcal{A}(\Lambda; \Z[H_1(\Lambda)])$ with the group ring $\Z[H_1(\Lambda)] \cong \Z[H_1(\Lambda)]\cdot 1 \subset \alg(\Lambda)$ sitting in graded degree $0$ (due to the assumption of vanishing Maslov class).
The differential of a Reeb chord $a$ is computed by summing over all rigid moduli spaces of boundary punctured disks, $D^2_* = D^2 \setminus\{ q_0, \ldots, q_r\}$,  mapped holomorphically to the symplectization, $u: (D^2_*, \partial D^2_*) \rightarrow (\R \times J^1M, \R\times \Lambda))$, having a single positive puncture at (in the asymptotic sense) $a$ and an arbitrary number of negative punctures.  We have
\begin{equation} \label{eq:LCHoriginal}
\partial a = \sum_{u} \iota(u) e^{A(u)} w(u)
\end{equation}
where $w(u) = b_1\cdots b_r$ is the product of Reeb chords appearing at the negative punctures of $u$ in counter-clockwise order around the boundary of the domain of $u$ starting from the positive puncture; $A(u) \in H_1(\Lambda)$ is the homology class represented by the sum of the  boundary segments of $u$ (projected to $\Lambda$ and compactified) and appropriately chosen capping paths (or basepoint paths in the disconnected case, see \cite[Section 2.1.3]{RuSu2});  and $\iota(u) \in \{\pm1\}$ is a sign arising from a coherent orientation of moduli spaces of disks that depends on a choice of spin structure on $\Lambda$.

\begin{remark}
\begin{enumerate}
\item When $\Lambda$ is connected $\mathcal{A}(\Lambda; \Z[H_1(\Lambda)])$ (and so also its augmentation numbers) is independent of the choice of Maslov potential.  In the disconnected case, the grading on $\mathcal{A}(\Lambda; \Z[H_1(\Lambda)])$ may depend on the choice of Maslov potential.
\item LCH dg-algebras, $\alg_1$ and $\alg_2$, arising from two different choices of spin structure on $\Lambda$ are isomorphic {\it as dg-algebras over $\Z$} via an isomorphism that is the identity on Reeb chords and has the form $e^A \mapsto \pm e^A$ on homology classes; see \cite[Section 4.4]{EES05c}.  In particular, the augmentation numbers are independent of the choice of spin structure.  For this reason, for much of the discussion we will suppress the dependence of $\mathcal{A}(\Lambda; \Z[H_1(\Lambda)])$ on a choice of spin structure.
\end{enumerate}
\end{remark}

\subsection{The multi-curve LCH dg-algebra}  \label{sec:multiDGA}

Recall (following Poincar\'e) that when $S$ is a closed surface equipped with a smooth triangulation (or polygonal decomposition), $\mathcal{E}$, the cohomology of $S$ may be computed using a chain complex $(C_{co}^\cdot(\mathcal{E}), d)$ such that $C_{co}^k$ is the set of {\it co-oriented} $(2-k)$-chains made up of $(2-k)$-cells of $\mathcal{E}$; the differential $d$ assigns to a co-oriented $(2-k)$-cell, $e$, its $2-(k+1)$ dimensional boundary chain with boundary cells co-oriented using an outward pointing tangent vector to $e$ followed by the co-orientation of $e$.  [Indeed, there is a canonical isomorphism $C_{co}^{k}(\mathcal{E}) = \mathit{Hom}_{\Z}(C_k(\mathcal{E}^*), \Z)$ obtained by intersecting co-oriented $(2-k)$-chains in $\mathcal{E}$ with oriented $k$-chains in the dual polygonal decomposition (aka the barycentric star decomposition) $\mathcal{E}^*$ and comparing the co-orientation and orientation to obtain a sign $\pm1$ at each intersection point.  Under this isomorphism, the standard cohomology differential on $C^k(\mathcal{E}^*) = \mathit{Hom}(C_k(\mathcal{E}^*), \Z)$ matches (up to a sign depending on $k$) the differential $d$ on $C_{co}^{k}(\mathcal{E})$.]

\begin{definition} With the previous discussion in mind, we adopt the terminology {\bf geometric cocycle} of simply {\bf cocycle} to refer to a union or $\Z$-linear combination of co-oriented closed curves.
\end{definition}

Let $\Lambda \subset J^1S$ be a spin $\Z$-graded Legendrian surface, and let $\Gamma = \{\Gamma_i\} = \{ \Gamma_1, \ldots, \Gamma_\ell\}$ be a collection (technically an {\it ordered set}) of geometric cocycles on $\Lambda$.
As above, each $\Gamma_i$ represents a cohomology class $[\Gamma_i] \in H^1(\Lambda)$.  

\begin{definition}  \label{def:multicurveDGA} We associate an LCH dg-algebra to $(\Lambda, \Gamma)$ that we denote by $\mathcal{A}(\Lambda, \Gamma)$ and call the {\bf multi-curve LCH dg-algebra} as follows.  The coefficient ring is $R= \Z[t_1^{\pm1}, \ldots, t_\ell^{\pm1}]$ (where $\ell$ is the number of $\Gamma_i$ cocycles) and the algebra $\mathcal{A}(\Lambda, \Gamma)$ is the based dg-algebra over $R$ generated by the  Reeb chords $a_1, \ldots, a_n$ of $\Lambda$.  The grading of the $a_i$ is the same as in the ordinary LCH dg-algebra.  The differential is defined by summing over the same rigid holomorphic disks as in the ordinary LCH DGA but with the terms modified to
\[
\partial a = \sum_{u} \iota(u)\, t_1^{\mu_1} \cdots t_\ell^{\mu_l} w(u)
\]
where $\iota(u)$ and $w(u)$ are as in (\ref{eq:LCHoriginal}) and $\mu_{i}$ is the intersection number of the union of (oriented) boundary arcs\footnote{These intersection numbers involve {\it only} the boundary arcs of $u$.  In contrast with the formation of $A(u)$ in (\ref{eq:LCHoriginal}), no capping/basepoint paths are involved.} of $u$ with (co-oriented) $\Gamma_i$. 
\end{definition}

See Figure \ref{fig:DiskEx} for a schematic of a rigid disk contributing to the differential in $\mathcal{A}(\Lambda, \Gamma)$.

\begin{figure}
	
\labellist
\small
\pinlabel $a$ [b] at 80 134
\pinlabel $\Gamma_2$ [br]  at 40 90
\pinlabel $\Gamma_1$ [r] at 4 56
\pinlabel $b_1$ [t] at 20 -2
\pinlabel $\Gamma_2$ [t] at 52 22
\pinlabel $b_2$ [t] at 82 -2
\pinlabel $b_3$ [t] at 140 -2
\pinlabel $\Gamma_3$ [l] at 154 68

\endlabellist
\centerline{\includegraphics[scale=.8]{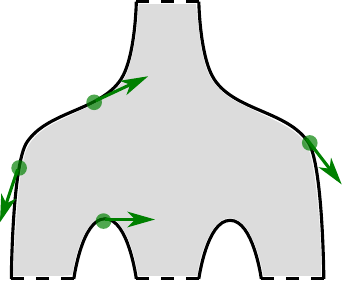} }

\caption{A rigid disk with a positive puncture at $a$ contributing a term $\partial a = \pm t_2^{-1}t_1b_1t_2b_2b_3t_3^{-1}+\cdots= \pm t_1t_3^{-1}b_1b_2b_3+\cdots$.  Intersections with the $\Gamma_i$ curves are indicated by green dots with arrows specifying the co-orientation. 
}
\label{fig:DiskEx}
\end{figure}

\begin{proposition} \label{prop:multiDGA} The $\alg(\Lambda, \Gamma)$ is a dg-algebra, and the stable tame isomorphism type (as a based dg-algebra over $R$) of $\alg(\Lambda, \Gamma)$ is invariant under Legendrian isotopy of $(\Lambda, \Gamma)$.  Moreover, up to tame isomorphism $\alg(\Lambda, \Gamma)$ only depends on the cohomology classes $([\Gamma_1], \ldots, [\Gamma_\ell]) \in (H^1(\Lambda))^\ell$.
\end{proposition}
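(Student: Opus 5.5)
We will derive all three assertions from the corresponding facts for the classical LCH dg-algebra $\mathcal{A}(\Lambda;\Z[H_1(\Lambda)])$ \cite{EES05a,EES05c,EES07}, by comparing the multi-curve weights with the homology-class weights. Fix capping (basepoint) paths $\gamma_a$ for each Reeb chord $a$, as in (\ref{eq:LCHoriginal}). For a rigid disk $u$ with positive puncture $a$ and negative punctures $b_1,\dots,b_r$, the boundary arcs of $u$ joined with the capping paths form a closed cycle representing $A(u)$. Hence for each cocycle $\Gamma_i$ we have
\[
\mu_i(u) = \langle [\Gamma_i], A(u)\rangle + c_i(a) - \textstyle\sum_j c_i(b_j),
\]
where $c_i(x)$ is the signed intersection number of $\Gamma_i$ with the capping path of $x$. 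We assume the capping paths are chosen transverse to the $\Gamma_i$.

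\emph{$\partial^2=0$.} Consider the ring homomorphism $\phi:\Z[H_1(\Lambda)]\to R$, $e^A\mapsto \prod_i t_i^{\langle[\Gamma_i],A\rangle}$. Extend it to an algebra map $\Phi$ on generators by $\Phi(a)= \big(\prod_i t_i^{-c_i(a)}\big) a$. Because the coefficients are central, the identity above gives $\partial_{\Gamma}\Phi(a)=\Phi(\partial a)$ on the nose. Here $\partial_\Gamma$ is the multi-curve differential, and we use the same coherent orientations $\iota(u)$.

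The map $\Phi$ is a coefficient extension $\mathcal{A}(\Lambda;\Z[H_1])\otimes_\phi R\to \mathcal{A}(\Lambda,\Gamma)$ composed with a diagonal rescaling of the generators by units. The rescaling is a composition of elementary automorphisms. Therefore $\mathcal{A}(\Lambda,\Gamma)\cong \mathcal{A}(\Lambda;\Z[H_1(\Lambda)])\otimes_\phi R$ by a tame isomorphism. In particular $\partial_\Gamma^2=0$ follows from the classical $\partial^2=0$, and the Leibniz rule and degree $-1$ are inherited.

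\emph{Dependence only on cohomology classes.} The formula exhibits the isomorphism type through $\phi$, which depends only on $([\Gamma_i])$. To make this a tame isomorphism directly, suppose $\Gamma_i'$ is homologous to $\Gamma_i$. Then $\Gamma_i'-\Gamma_i=d\beta$ is the co-oriented boundary of a $2$-chain $\beta$. Transporting intersections shows that
\[
\mu_i'(u)-\mu_i(u) = n_\beta(a^+)-\sum_j n_\beta(b_j^\pm),
\]
where $n_\beta$ counts the multiplicity of $\beta$ at the chord endpoints. This counting must be done carefully at the upper and lower endpoints, which give the two boundary arcs meeting each puncture. Rescaling each chord $a$ by $t_i^{\pm(n_\beta(a^+)-n_\beta(a^-))}$ then gives the tame isomorphism. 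This also removes the dependence on the choice of capping paths.

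\emph{Invariance under Legendrian isotopy of $(\Lambda,\Gamma)$.} An isotopy of $(\Lambda,\Gamma)$ carries the cocycles along, so $\phi$ is transported compatibly with the induced map $H_1(\Lambda)\to H_1(\Lambda')$. The classical stable tame isomorphism \cite{EES05c,EES07} is $\Z[H_1]$-linear with this identification. It consists of elementary automorphisms and stabilizations arising from bifurcations of handle-slide disks and birth/death of chords. Tensoring with $R$ via $\phi$ preserves elementary automorphisms, stabilizations and the invertibility of the leading coefficient $\alpha$. Composing with the tame isomorphisms from the first step on both ends yields a stable tame isomorphism over $R$.

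\emph{Main obstacle.} The main difficulty is checking the first step precisely: the signs and co-orientation conventions relating intersections of boundary arcs with $\Gamma_i$, counted in counter-clockwise order, to the pairing $\langle[\Gamma_i],A(u)\rangle$. Also, for disconnected $\Lambda$ the basepoint paths cross between components. Only relative intersection data enters, however, so the correction terms still factor as $c_i(a)-\sum_j c_i(b_j)$. I would verify this first on disks with a single negative puncture.
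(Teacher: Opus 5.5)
Your proposal is correct and follows the paper's route. The paper also identifies $\alg(\Lambda,\Gamma)$, up to tame isomorphism, with the base change of $\alg(\Lambda;\Z[H_1(\Lambda)])$ along $\phi(A)=\prod_i t_i^{[\Gamma_i](A)}$, citing \cite{RuSu5} for this identification, and reads off the remaining claims from it; your explicit capping-path rescaling $a\mapsto \prod_i t_i^{-c_i(a)}a$ supplies the detail the paper outsources. Your separate $2$-chain argument is redundant, and as displayed its formula for $\mu_i'(u)-\mu_i(u)$ is garbled: it should involve both endpoints of every chord, although the rescaling exponent $n_\beta(a^+)-n_\beta(a^-)$ you then use is the right one.
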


\begin{proof}
Up to a tame isomorphism, $\alg(\Lambda, \Gamma)$, is obtained from the ordinary LCH dg-algebra $\alg(\Lambda)$ via changing coefficients from $\Z[H_1(\Lambda)]$ to $\Z[t_1^{\pm1}, \ldots, t_\ell^{\pm 1}]$ via the ring homomorphism induced by the group homomorphism
\[
\phi: H_1(\Lambda) \rightarrow (\Z[t_1^{\pm1}, \ldots, t_\ell^{\pm1}])^\times, \quad A \mapsto t_1^{\langle \Gamma_i, A\rangle}\cdots t_\ell^{\langle \Gamma_\ell , A \rangle}
\]
where $\langle \Gamma_i, A \rangle$ is the intersection pairing.  [This is a special case\footnote{In the terminology of \cite{RuSu5}, decorating the co-oriented curve $\Gamma_i$ with $t_i$ makes $\Gamma$ into a $G$-valued homotopy structure where $G$ is the free abelian multiplicative group generated by $t_1, \ldots, t_\ell$.  The associated homomorphism $\Theta_\Gamma: \pi_1(\Lambda, x_0) \rightarrow G$ factors through $H_1(\Lambda)$ as the homomorphism $\phi$.} of a result from \cite{RuSu5}.]
Moreover, the intersection pairing is simply the pairing of cohomology and homology, $\langle \Gamma_i , A \rangle = [\Gamma_i](A)$, so that $\phi$ only depends on the cohomology classes of the $\Gamma_i$. 
\end{proof}

\begin{remark} \label{rem:Gammabasis}
\begin{enumerate} 
\item In the special case where $[\Gamma_1], \ldots, [\Gamma_{2g}]$ is a basis for $H^1(\Lambda)$ the group ring map induced by $\phi$ is an isomorphism ($\Lambda$ is assumed to be a closed surface), so that with the coefficient rings identified in this manner we have $\mathcal{A}(\Lambda; \Z[H_1(\Lambda)]) \cong \alg(\Lambda, \Gamma)$.  This method of computing the LCH dg-algebra using intersections with a basis of curves appears in \cite{EENS}.  
\item  The case where $\Gamma =\{\Gamma_i\}$ spans $H^1(\Lambda)$ but is not linearly independent is analogous to the multi-basepoint DGA used for $1$-dimensional Legendrian knots from \cite{NgR}.
\end{enumerate}
\end{remark}

\subsection{Definition and invariance of augmentation numbers of Legendrian surfaces}  \label{sec:defauginvariance}

Next we provide a more flexible definition of augmentation numbers for Legendrian surfaces that generalizes the equation (\ref{eq:1dimskeinrel}) used in the introduction.

\begin{definition}  \label{def:augLambda} For any prime power, $q$, to   define the {\bf augmentation number of $\Lambda$}, denoted $\aug_\Lambda(q)$, we choose $\Gamma = \{\Gamma_i\}$ so that $[\Gamma_1], \ldots, [\Gamma_\ell]$ spans $H^1(\Lambda)$ and set 
\[
\aug_\Lambda(q)= (q-1)^{\dim H_0(\Lambda)-\ell} \aug(\alg(\Lambda, \Gamma), q)
\]
where $\aug(\alg(\Lambda, \Gamma), q)$ is the augmentation number of the dg-algebra $\alg(\Lambda, \Gamma)$ as defined in Definition \ref{def:DGalgAugNum} above.
\end{definition}

\begin{proposition} \label{prop:augInvariance} The augmentation numbers $\mathit{Aug}_\Lambda(q)$  only depend on the Legendrian isotopy type of $(\Lambda, \mu)$ and in the connected case are independent of the Maslov potential $\mu$.
\end{proposition}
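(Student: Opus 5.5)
The proof has three parts. First, $\aug_\Lambda(q)$ does not depend on the auxiliary collection $\Gamma$ used in Definition \ref{def:augLambda}. Second, it is invariant under Legendrian isotopy. Third, in the connected case it does not depend on the Maslov potential.

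For independence of $\Gamma$, I would fix a basis $\Gamma^0=\{\Gamma^0_1,\ldots,\Gamma^0_{b}\}$ of $H^1(\Lambda)$, where $b=\rk H^1(\Lambda)$, and compare an arbitrary spanning collection $\Gamma=\{\Gamma_1,\ldots,\Gamma_\ell\}$ to it. Write $[\Gamma_i]=\sum_j n_{ij}[\Gamma^0_j]$. By Proposition \ref{prop:multiDGA}, $\alg(\Lambda,\Gamma)$ is tame isomorphic to the change of coefficients of $\alg(\Lambda,\Gamma^0)$ along the ring map $\psi:\Z[s_1^{\pm1},\ldots,s_b^{\pm1}]\to\Z[t_1^{\pm1},\ldots,t_\ell^{\pm1}]$, $s_j\mapsto\prod_i t_i^{n_{ij}}$. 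Here I am using that the intersection numbers of disk boundaries with $\Gamma_i$ are the corresponding linear combinations; more precisely, both DGAs are base changes of $\alg(\Lambda;\Z[H_1(\Lambda)])$ along $\phi_\Gamma$ and $\phi_{\Gamma^0}$, and $\phi_\Gamma=\psi\circ\phi_{\Gamma^0}$.

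Both DGAs have the same generators, so $\chi^*$ agrees. For the augmentation counts, Remark \ref{rem:Faugs} gives
\[
|V(\alg(\Lambda,\Gamma),\F_q)|=\sum_{\tau\in(\F_q^*)^\ell} N(\psi^*\tau),
\]
where $N(\sigma)$ denotes the number of augmentations of $\alg(\Lambda,\Gamma^0)$ with $\epsilon(s_j)=\sigma_j$. The map $\psi^*:(\F_q^*)^\ell\to(\F_q^*)^b$ is the homomorphism with matrix $(n_{ij})^T$. Since the $[\Gamma_i]$ span the free abelian group $H^1(\Lambda)$, the matrix $(n_{ij})$ has a left inverse over $\Z$, and hence $\psi^*$ is surjective. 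Each fiber therefore has size $(q-1)^{\ell-b}$, which gives
\[
|V(\alg(\Lambda,\Gamma))|=(q-1)^{\ell-b}\,|V(\alg(\Lambda,\Gamma^0))|.
\]
This factor is exactly cancelled by the normalization $(q-1)^{\dim H_0-\ell}$ in Definition \ref{def:augLambda}, so $\aug_\Lambda(q)$ is independent of $\Gamma$. I expect this step to be the main point needing care: one must check that surjectivity holds over $\Z$ and not only rationally, which follows from the freeness of $H^1$ of a closed orientable surface.

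For Legendrian isotopy invariance, let $\Lambda_t$ be an isotopy, and transport a basis $\Gamma$ of $H^1(\Lambda_0)$ along it to a collection $\Gamma'$ on $\Lambda_1$. By Proposition \ref{prop:multiDGA}, $\alg(\Lambda_0,\Gamma)$ and $\alg(\Lambda_1,\Gamma')$ are stable tame isomorphic over the same coefficient ring. Proposition \ref{prop:stableinv} then shows their augmentation numbers agree. The normalizing factor is also unchanged, because $\dim H_0$ and $\ell$ are preserved under the isotopy.

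Finally, suppose $\Lambda$ is connected. Any two Maslov potentials differ by a global constant, so the degree of every Reeb chord, which is a difference of potentials plus an index term, is unchanged. Hence $\alg(\Lambda,\Gamma)$ is literally the same graded DGA for both potentials, and $\aug_\Lambda(q)$ does not depend on $\mu$.
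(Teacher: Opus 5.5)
Your proof is correct. The isotopy-invariance part is essentially the paper's argument: transport the cocycles, then apply Proposition~\ref{prop:multiDGA} and Proposition~\ref{prop:stableinv}. For the Maslov-potential statement, the paper's proof leaves it to the remark in Section~\ref{sec:classicLCH} that $\alg(\Lambda;\Z[H_1(\Lambda)])$ does not depend on $\mu$ when $\Lambda$ is connected. Your observation that two potentials on a connected $\Lambda$ differ by a global constant, which cancels in every degree, is the reason behind that remark.

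The genuine difference is in showing independence of $\Gamma$.
\begin{itemize}
\item The paper never fixes a basis. It proves a one-step claim: adjoining a cocycle with $[\Gamma_{\ell+1}]=\sum_i b_i[\Gamma_i]$ leaves the normalized number unchanged. It shows this with an explicit isomorphism of DGAs over $\Z$, namely $\Z[t_{\ell+1}^{\pm1}]\otimes_\Z\alg(\Lambda,\Gamma)\to\alg(\Lambda,\Gamma\cup\{\Gamma'_{\ell+1}\})$ with $t_i\mapsto t_it_{\ell+1}^{b_i}$. Augmentations of the target are then augmentations of $\alg(\Lambda,\Gamma)$ together with a free choice of $\epsilon(t_{\ell+1})\in\F_q^*$. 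Two spanning sets are compared by passing through their union.
\item You instead compare an arbitrary spanning set directly to a fixed basis in one step. You realize $\alg(\Lambda,\Gamma)$ as a base change along $\psi$ and count fibers of the character map $\psi^*:(\F_q^*)^\ell\to(\F_q^*)^b$. You correctly get its surjectivity from a left inverse of $(n_{ij})$ over $\Z$.
\end{itemize}

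The paper's route gives a structural isomorphism before specializing to $\F_q$, and needs nothing beyond an invertible change of Laurent variables. Yours shows transparently that the normalization $(q-1)^{-\ell}$ divides out exactly the kernel of a torus homomorphism. It also pinpoints where spanning over $\Z$, rather than over $\mathbb{Q}$, is essential. For example, if a basis cocycle $\Gamma^0_1$ is replaced by $2\Gamma^0_1$, then $\psi^*$ is not surjective for odd $q$ and the count can change.
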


\begin{proof}
With $\Lambda$ fixed, for verifying that $\mathit{Aug}_\Lambda(q)$ is independent of the choice of spanning set $\Gamma,$ we temporarily use the notation $\aug_{(\Lambda, \Gamma)}(q) = (q-1)^{\dim H_0(\Lambda)-|\Gamma|} \aug(\alg(\Lambda, \Gamma), q)$.  To see the independence of $\Gamma$, it suffices to establish the following.

\medskip

\noindent {\bf Claim:}  If $[\Gamma_{\ell+1}] \in \mbox{Span}_{\Z}\{[\Gamma_1], \ldots, [\Gamma_\ell]\}$, then
\[
\aug_{(\Lambda, \Gamma)}(q)= \aug_{(\Lambda, \Gamma \cup \{\Gamma_{\ell+1}\})}(q).
\]

\medskip

\noindent Indeed, using the claim we see that if $\Gamma$ and $\Gamma'$ are two spanning sets for $H^1(\Lambda)$, then
\[
\aug_{(\Lambda, \Gamma)}(q) = \aug_{(\Lambda, \Gamma \cup \Gamma')}(q) = \aug_{(\Lambda, \Gamma')}(q).
\]

To verify the claim, supposing $[\Gamma_{\ell+1}] = \sum_{i=1}^\ell b_i [\Gamma_i]$ we can replace $\Gamma_{\ell+1}$ with $\Gamma'_{\ell+1} = \sum_{i=1}^\ell b_i \Gamma_i$ without changing the augmentation numbers (by Proposition \ref{prop:multiDGA}).  Once this is done we have an isomorphism of dg-algebras over $\Z$ given by 
\[\begin{array}{ccc}
\Z[t^{\pm1}_{\ell +1}] \otimes_\Z \alg(\Lambda, \Gamma) & \rightarrow & \alg(\Lambda, \Gamma \cup \{ \Gamma'_{\ell +1}\}) \\
t_{\ell+1} \otimes 1 & \mapsto & t_{\ell+1} \\
 1 \otimes t_i & \mapsto & t_it_{\ell+1}^{b_i} \\
1 \otimes a_j & \mapsto & a_j
\end{array}
\]
for all Reeb chords $a_j$.  As an $\F$-valued augmentation of $\Z[t^{\pm1}_{\ell +1}] \otimes_\Z \alg(\Lambda, \Gamma)$ is nothing more than an augmentation $\epsilon$ for $\alg(\Lambda, \Gamma)$ together with an arbitrary choice of value for $t_{\ell +1}$ in $\mathbb{F}^*$, we see from Definition \ref{def:augLambda} that
\begin{align*}
\aug_{(\Lambda, \Gamma \cup \{\Gamma_{\ell+1}\})}(q) &= (q-1)^{\dim H_0(\Lambda)-(\ell+1)} \aug(\alg(\Lambda, \Gamma \cup \{\Gamma_{\ell+1}\}), q) \\
 &= (q-1)^{\dim H_0(\Lambda)-\ell-1} q^{-\chi^*(\alg(\Lambda))/2} |V(\alg(\Lambda, \Gamma \cup \{\Gamma_{\ell+1}\}), \mathbb{F}_q)|  \\
 &= (q-1)^{\dim H_0(\Lambda)-\ell-1} q^{-\chi^*(\alg(\Lambda))/2} |\mathbb{F}_q^*\times V(\alg(\Lambda, \Gamma), \mathbb{F}_q)|  \\
&= (q-1)^{\dim H_0(\Lambda)-\ell} q^{-\chi^*(\alg(\Lambda))/2} |V(\alg(\Lambda, \Gamma), \mathbb{F}_q)|  \\
& = \aug_{(\Lambda, \Gamma)}(q). 
\end{align*}

Finally, with the independence of $\Gamma$ established, the Legendrian isotopy invariance of $\aug_{(\Lambda, \Gamma)}(q)$ follows from Propositions \ref{prop:multiDGA} and \ref{prop:stableinv}.
\end{proof}

\begin{remark}
The augmentation numbers defined in Definition \ref{def:augLambda} coincide with those defined in equation (\ref{eq:1dimskeinrel}) of the introduction.  To see this, take $\Gamma$ to be a basis for $H^1(\Lambda)$ and apply the isomorphism from Remark \ref{rem:Gammabasis} (1) to get
\begin{align*}
(q-1)^{\dim H_0(\Lambda)-|\Gamma|} \aug(\alg(\Lambda, \Gamma), q) & = (q-1)^{\dim H_0(\Lambda)-\dim H_1(\Lambda)} \aug(\mathcal{A}(\Lambda; \Z[H_1(\Lambda)]), q) \\
& =(q-1)^{\dim H_0(\Lambda)-\dim H_1(\Lambda)} q^{-\chi^*(\alg(\Lambda))/2} |V(\Lambda, \mathbb{F}_q)|.
\end{align*}
\end{remark}

\section{Cellular computation of the LCH dg-algebras}  \label{sec:CWDGA}

For Legendrian surfaces in $1$-jet spaces, the works \cite{RuSu1, RuSu2, RuSu25, RuSu5} provide a cellular computation of LCH dg-algebras that we will make use of in proving the skein relations from Theorem \ref{thm:main}.  We provide here a brief overview of the version of the cellular DGA from \cite{RuSu5} that allows for computation of the multi-curve LCH dg-algebra $\alg(\Lambda, \Gamma)$ from Definition \ref{def:multicurveDGA}.  We postpone some details related to swallowtail points until Section \ref{sec:Cone} where they become necessary for proofs.

\subsection{Cellular computation of the multi-curve DGA}  \label{sec:CWDGA1}

Let $\Lambda \subset J^1S$ be a (closed) Legendrian surface equipped with a collection of geometric cocycles $\Gamma = \{ \Gamma_1,\ldots, \Gamma_\ell\}$.  The front projection of $\Lambda$ is assumed to have only generic singularities; these include crossing arcs (singularity type $A_1^2$), cusp edges (type $A_2$) of codimension $1$ in $\pi_{xz}(\Lambda)$, as well as triple points (type $A_1^3$), cusp-sheet intersections (type $A_1A_2$), and swallowtail points (type $A_3$) appearing at isolated points of codimension $2$ in $\pi_{xz}(\Lambda)$.  See, e.g., \cite{RuSu1}.

The cellular DGA relies on two additional choices:  
\begin{enumerate}
	\item  A {\bf compatible polygonal decomposition}, $\mathcal{E}$, of the base projection $\pi_x(\Lambda) \subset S$.
\end{enumerate}
Here, 
$\mathcal{E}$ must be a CW-decomposition of $\pi_{x}(\Lambda)$ that contains in its $1$-skeleton the base projection of all front singularities of $\Lambda$; this implies that the  projections of all codimension $2$ singularities are contained in the $0$-skeleton.  In addition, $\mathcal{E}$ includes choices of orientations for $1$- and $2$-cells; choices of initial and terminal vertices for each $2$-cell; and at each swallowtail point $w$ a choice of labeling by $S$ and $T$ of the two $2$-cell corners that border the crossing arc at $w$.  The cells of $\mathcal{E}$ are required to be transverse to the cocycles from $\Gamma$; see also \cite{RuSu1, RuSu5}.

\begin{enumerate}
	\item[(2)]  A {\bf combinatorial spin structure}, $\xi$, for $\Lambda$.
\end{enumerate}
By definition, a combinatorial spin structure
is a $1$-chain in $\Lambda$ with coefficients in $\Z/2$ (i.e., a union of unoriented paths and loops) whose boundary points are in bijection with the set of swallowtail points of $\Lambda$.  For technical reasons, rather than having the boundary of $\xi$ coincide precisely with the swallowtail points of $\Lambda$, it is required that 
\[
\partial \xi = \sum_{w} a_w
\] 
where the sum is over all swallowtail
points $w$ of $\Lambda$, and the terms $a_w$ are defined as follows:  At a swallowtail point $w$, let $e_S$ be the  $2$-cell of $\mathcal{E}$ that contains the $S$ labelled corner at $w$.  In the case that $w$ is an upward (resp. downward) swallowtail point, we lift $e_S$ to the lower (resp. upper) of the two crossing sheets of $\Lambda$ that meets $w$ and take $a_w$ to be the barycenter of this lifted $2$-cell.  See also \cite{RuSu5}.

\begin{definition}  \label{def:Sheets}
A {\bf sheet of $\Lambda$ above a cell} $e^d_\alpha \subset S$ is a connected component of $\Lambda \cap \pi_x^{-1}(e^d_\alpha)$ not contained in the set of cusp singularities of $\pi_{xz}|_{\Lambda}$.  Note that a swallowtail point {\it is} considered to be a valid sheet above a $0$-cell.
\end{definition}

\begin{remark} \label{rem:sheetnum}
\begin{enumerate}
\item[(i)] The base projection $\pi_{x}: \Lambda \rightarrow S$ maps each sheet above $e^d_\alpha$ homeomorphically to $e^d_\alpha$.
\item[(ii)] Crucially, sheets are subsets of $\Lambda$ and {\it not}  subsets of the front projection, $\pi_{xz}(\Lambda)$.  E.g., when a $1$-cell or $0$-cell $e^d_\alpha$ lies in the projection of a crossing arc of the front projection, there are two distinct sheets above $e^d_\alpha$ that both map to the same subset of $\pi_{xz}(\Lambda)$.
\item[(iii)]  We typically enumerate the sheets above a given $e^d_\alpha$ as $S^\alpha_1, \ldots, S^\alpha_n$ with weakly descending $z$-coordinate, i.e. $z(S^\alpha_1) \geq z(S^\alpha_2) \geq \cdots \geq z(S^\alpha_n)$, where $z(S^\alpha_m)$ is the $z$-coordinate along $S^\alpha_m$ viewed as a function $e^d_\alpha \rightarrow \R$.  However, when two or more sheets of $\Lambda$ cross in the front projection above $e^d_\alpha$ this convention does not uniquely specify the numbering, and a choice is required for ordering crossing sheets.  A convenient way to specify this is by choosing a neighboring $2$-cell $e'$ that contains $e^d_\alpha$ in its closure and ordering the crossing sheets in the order that their continuous extensions appear above $e'$.  (As all singularities of the front projection project into the $1$-skeleton, above any (open) $2$-cell sheets are always totally ordered by $z$-coordinates.)  In figures, we indicate the choices of $e'$ with small arrows pointing from a crossing cell into the interior of the chosen $e'$. 
\end{enumerate}
\end{remark}

Given such a choice of compatible polygonal decomposition $\mathcal{E}$ and combinatorial spin structure $\xi$ we define the {\bf cellular DGA} $(\algCW(\Lambda, \Gamma, \mathcal{E}, \xi), \partial)$, often denoted  simply as $\algCW(\Lambda, \Gamma)$, as a based DGA in the following manner.  The coefficient ring is $R = \Z[t_1^{\pm1}, \ldots, t_\ell^{\pm 1}]$.  The generating set consists of one generator, $x^\alpha_{i,j}$, for each triple $(e^d_\alpha, S^\alpha_i, S^\alpha_j)$ consisting of a cell, $e^d_\alpha$, of $\mathcal{E}$ together with two sheets of $\Lambda$ above $e^d_\alpha$ satisfying $z({S_{i}^\alpha})> z(S_j^\alpha)$, i.e. at all points $x \in e^d_\alpha$ the $z$-coordinate of $S_i^\alpha$ at $x$ is strictly above the $z$-coordinate of $S_j^\alpha$.  
A $\Z$-grading (with $R$ placed in degree $0$) arises from the $\Z$-valued Maslov potential, $\mu$, on $\Lambda$ according to the formula
\begin{equation} \label{eq:CWgrading}
|x_{i,j}^\alpha| = \mu(S_i^\alpha)- \mu(S_j^\alpha) + \dim(e^d_\alpha) -1.
\end{equation}
We indicate the dimension of the underlying cell $e^d_\alpha$ in our notation for generators by writing $a^\alpha_{i,j}$, $b^\alpha_{i,j}$, or $c^\alpha_{i,j}$ rather than $x^\alpha_{i,j}$ depending whether $\dim(e^d_\alpha) = 0, 1$, or $2$ respectively.  

\begin{remark} 
	\begin{enumerate}
		\item 
	Due to the numbering of sheets $S_1^\alpha, \ldots, S_{n_\alpha}^\alpha$ above each $e^d_\alpha$ with (weakly) {\it decreasing} $z$-coordinate, i.e., $z(S_i^\alpha) \geq z(S_j^\alpha)$ whenever $i<j$, generators $x^\alpha_{i,j}$ always have $i<j$.  

	\item When sheets $S_k$ and $S_l$ cross above $e^d_\alpha$, there is no generator of the form $x^\alpha_{k,l}$.
	
	\item When $S_i^\alpha=w$ is a swallowtail point $w$ above a $0$-cell, the term $\mu(S_i^\alpha)$ from (\ref{eq:CWgrading}) should be taken to be the value of $\mu$ on the crossing sheets near $w$.
\end{enumerate}
\end{remark}

The differential $\partial: \algCW(\Lambda, \Gamma) \rightarrow \algCW(\Lambda, \Gamma)$ is characterized by the following  matrix formulas where in all cases we apply $\partial$ to matrices in an {\it entry-by-entry} manner.  For simplicity, we restrict our discussion here to the case of cells that {\it do not contain swallowtail points} in their closure.  The adjustments for swallowtail points are provided in Section \ref{sec:Cone} below where they are needed; see also \cite{RuSu5}.

\medskip

\noindent{\it $0$-cells:}  Using the ordering of sheets above a $0$-cell $e^0_\alpha$ as  $S_1^\alpha,\ldots, S_n^\alpha$ with $n=n_\alpha$, place the generators into an $n \times n$ strictly upper triangular matrix $A$ whose $i,j$-entry is $a^\alpha_{i,j}$ (resp. $0$) whenever $a^\alpha_{i,j}$ exists (resp. does not exist).  In addition, a diagonal matrix is formed with entries $\pm 1$ reflecting the parity of the Maslov potential $\mu$ as
\begin{equation} \label{eq:Jmatrix}
J_\alpha = \mbox{diag}\left((-1)^{\mu(S_1^\alpha)}, \ldots, (-1)^{\mu(S_n^\alpha)}\right)
\end{equation}
Then,
\begin{equation} \label{eq:dA}
\partial A = J_\alpha \cdot A^2.
\end{equation}

\medskip

\noindent{\it $1$-cells:}  Let $e^1_\alpha$ be a $1$-cell of $\mathcal{E}$ and notate the boundary cells of $e^1_\alpha$ as $e^0_\pm$ so that $\partial e^1_\alpha = e^0_+-e^0_-$.  Place the generators $b_{i,j}^\alpha$ associated to $e^1_\alpha$ into an upper triangular matrix $B= (b_{i,j}^\alpha)$, and form a diagonal matrix $J_\alpha$ as in (\ref{eq:Jmatrix}).  Form a diagonal matrix $\Delta = \mbox{diag}(\delta_1, \ldots, \delta_n)$ using $\Gamma$ and the combinatorial spin stucture $\xi$ by setting
\[
\delta_i  = (-1)^{\langle \xi, S_i^\alpha \rangle} \prod_{m =1}^\ell t_m^{\langle \Gamma_m, S_i^\alpha \rangle}
\]
where $\langle \xi, S_i^\alpha\rangle$ is a mod $2$ intersection number and $\langle \Gamma_m, S_i^\alpha \rangle$ is a $\Z$-valued intersection number arising from the co-orientation of $\Gamma_m$  and the orientation of $S_i^\alpha$ as a lift of $e^1_\alpha$.  
In addition, form a $n_\alpha \times n_\alpha$ {\bf boundary matrix} $A_+$ by (i) placing each generator $a^{+}_{p,q}$ associated to $e^0_+$ into the $i,j$-position specified by $S^+_{p} \subset \overline{S_{i}}$ and $S^+_q \subset \overline{S_j}$ and by (ii) placing $1$ into each $k,l$-position for which sheets $S^\alpha_k$ and $S^\alpha_l$ with $z(S^\alpha_k)>z(S^\alpha_l)$ above $e^1_\alpha$ meet at a cusp point above $e^0_+$.  A second boundary matrix $A_-$ associated to $e^0_-$ is formed similarly, and the differentials $\partial b^\alpha_{i,j}$ are specified by
\begin{equation} \label{eq:dB}
\partial B = J_\alpha \cdot\left[A_+(\Delta+B)-(\Delta+B)A_-\right].
\end{equation}

\begin{figure}
	
\labellist
\small
\pinlabel $A=(a^\alpha_{i,j})$ [b] at 0 100
\pinlabel $B$ [br]  at 120 68
\pinlabel $A_+$ [r] at 158 110
\pinlabel $A_-$ [r] at 92 18
\pinlabel $v_0$ [r] at 256 86
\pinlabel $v_1$ [l] at 410 78
\pinlabel $C$ [b] at 326 70
\pinlabel $B_1$ [r] at 260 52
\pinlabel $B_2$ [tr] at 300 8
\pinlabel $B_3$ [t] at 356 8
\pinlabel $B_4$ [l] at 400 52
\pinlabel $B_5$ [b] at 296 114
\pinlabel $B_6$ [l] at 380 102

\endlabellist
\centerline{\includegraphics[scale=.8]{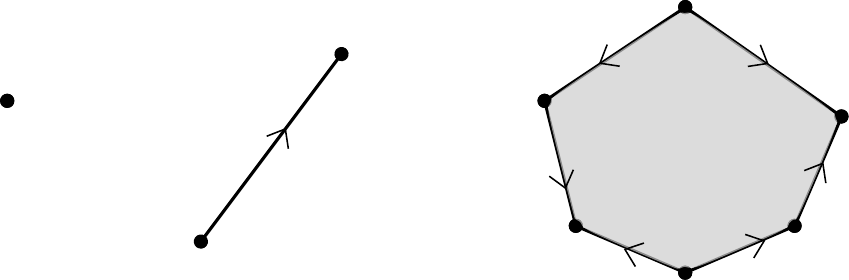} }

\caption{The matrices of Reeb chords associated to cells and their boundaries that appear in (\ref{eq:dA}), (\ref{eq:dB}), and (\ref{eq:dC}).  For a $2$-cell as pictured, (\ref{eq:dC}) becomes $\partial C = J_{\alpha}\cdot[A_{v_1}C+CA_{v_0} + (\Delta_4+B_4)(\Delta_3+B_3)(\Delta_2+B_2)^{-1}(\Delta_1+B_1) - (\Delta_6+B_6)(\Delta_5+B_5)^{-1}]$. 
}
\label{fig:Diff}
\end{figure}

\medskip

\noindent{\it $2$-cells:}  The differential for generators $c_{i,j}^\alpha$ associated to a $2$-cell, $e^2_\alpha$, are characterized by the formula
\begin{equation} \label{eq:dC}
\partial C = J_\alpha\cdot\left[ A_{v_1} C + CA_{v_0} + (\Delta_j+B_j)^{\eta_j} \cdots(\Delta_1 +B_1)^{\eta_1} - (\Delta_m +B_m)^{\eta_m} \cdots (\Delta_{j+1} + B_{j+1})^{\eta_{j+1}}\right].
\end{equation}
where the notations are as follows:  

\begin{itemize}
\item The $C$ and $J_\alpha$ are  defined as usual.

\item The $A_{v_0}$ and $A_{v_1}$ are boundary matrices formed from the generators associated to the initial and terminal vertices $v_0$ and $v_1$ of $e^2_\alpha$ by the same procedure that was used to form $A_\pm$ from (\ref{eq:dB}) but with $e^2_\alpha$ playing the role of $e^1_\alpha$.

\item Label the $1$-cells that appear around the boundary of $e^2_\alpha$ (as parametrized by a characteristic map $D^2 \rightarrow \overline{e^2_\alpha}$) in order along the counter-clockwise path from $v_0$ to $v_1$ as $e_1,\ldots, e_j$ and those along the clockwise path as $e_{j+1},\ldots, e_m$. For $1 \leq i \leq m$, the matrix $B_i$ is formed by placing 
the $b^i_{pq}$ generator in position $k,l$ when $S^i_p \subset \overline{S^\alpha_{k}}$ and $S^i_q \subset \overline{S^\alpha_{l}}$.  Note that in contrast to the procedure used when forming the $A_{v_1}$ and $A_{v_0}$  boundary matrices we do {\it not} include additional entries of $1$ when pairs of sheets above $e^2_\alpha$ meet at cusp edges above $e_i$.

\item The $\Delta_{i}$ is formed as in the matrix $\Delta$ above using a small shift of $e_i$ into the interior of $e^\alpha_2$ 
and then taking its lift to each of the sheets $S^\alpha_1, \ldots, S^\alpha_n$ before intersecting with $\xi$ and $\Gamma$ to form the diagonal entries.  
\item The exponent $\eta_i$ is $+1$ or $-1$ depending on whether the orientation of $e_i$ points from $v_0$ to $v_1$ or from $v_1$ to $v_0$.
\end{itemize}

See Figure \ref{fig:Diff}.

\begin{remark}\label{rem:diffs}
\begin{enumerate}
\item 	The case where $v_0=v_1$ is allowed and convenient in several of our later calculations.  In this case, a choice is made for a path around $\partial \overline{e^2_\alpha}$ from $v_0$ to $v_1$ and the other path is considered to be constant (so that one of the two products involving $B_i$'s in (\ref{eq:dC}) is just the identity matrix $I$).
\item It is crucial to distinguish between the matrix $A$ used in (\ref{eq:dA}) to define $\partial a_{i,j}^\alpha$ for generators associated to $0$-cell $e^0_\alpha$ and the boundary matrices $A_\pm, A_{v_0}$ and $A_{v_1}$ that arise when $e^0_\alpha$ appears in the boundary of a neighboring $1$- or $2$-cell, $e'$.  The boundary matrices may differ from the original matrix $A$ by (i) interchanging rows and columns to reflect the difference between the choice of numbering of sheets above $A$ and the numbering of their continuous extensions above $e'$, and by (ii) the presence of additional rows and columns in the boundary matrices that correspond to sheets above the $e'$ that limit to cusp points above $e^0_\alpha$.  Consequently, the boundary matrices can be formed from $A$ by first conjugating $QA Q^{-1}$ where $Q$ is an appropriate permutation matrix, and then taking a direct sum with some number of $2\times 2$ matrices of the form $N = \left[\begin{array}{cc} 0  & 1 \\ 0 & 0 \end{array}\right]$ where the positioning of the summands is determined by the location of the cusp sheets above $e'$. 

A parallel remark applies to the matrices $B$ and $\Delta$ from (\ref{eq:dB}) and the matrices $B_i$ and $\Delta_i$ from (\ref{eq:dC}), though with different $2\times 2$-blocks  playing the role of $N$. 
\end{enumerate}
\end{remark}

The following theorem is from\footnote{The referenced article \cite{RuSu5} works with the fully non-commutative version of the DGA where the $R$ coefficients do not commute with Reeb chords.} \cite{RuSu5}; see also  \cite{RuSu1,RuSu2, RuSu25} for the case of $\Z/2$-coefficients. For the given collection of geometric co-cycles $\Gamma=\{\Gamma_1, \ldots, \Gamma_\ell\}$ on $\Lambda$, we let $-\Gamma = \{-\Gamma_1, \ldots, -\Gamma_\ell\}$ denote the collection resulting from reversing the co-orientation of each of the $\Gamma_i$.  

\begin{theorem} \label{thm:CWmulti}  For any $\mathcal{E}$ and $\xi$, there exists a choice of spin structure on $\Lambda$ such that there is a stable tame isomorphism over $R= \Z[t_1^{\pm1}, \ldots, t_\ell^{\pm1}]$
	\[
	\algCW(\Lambda, \Gamma, \mathcal{E}, \xi) \cong \alg(\Lambda, -\Gamma)
	\]
	where $\alg(\Lambda, -\Gamma)$ denotes the multi-curve LCH DGA.
\end{theorem}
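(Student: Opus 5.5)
The plan follows the strategy of \cite{RuSu1, RuSu2, RuSu25}, which establishes the $\Z/2$ case, and upgrades it to multi-curve $\Z[t_i^{\pm1}]$ coefficients with signs.

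\textbf{Step 1: a Legendrian isotopy adapted to $\mathcal{E}$.} Replace $\Lambda$ by a Legendrian isotopic $\widetilde{\Lambda}$ whose Reeb chords correspond bijectively to the generators $x^\alpha_{i,j}$ of $\algCW$. The isotopy is built cell by cell:
\begin{itemize}
\item above each cell $e^d_\alpha$, perturb the front so that every local difference function $z(S^\alpha_i)-z(S^\alpha_j)$ acquires a single critical point of index $d$ at a chosen point of $e^d_\alpha$, with positive critical values;
\item off these points, the difference functions increase toward the interior of higher-dimensional cells.
\end{itemize}
By (\ref{eq:CWgrading}), the resulting chords have the gradings of the $x^\alpha_{i,j}$. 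The cocycles $\Gamma$ are carried along the isotopy, and by Proposition \ref{prop:multiDGA} we may push them into a position transverse to the cells, as in the definition of $\mathcal{E}$. Because $\alg(\Lambda,-\Gamma)$ is an isotopy invariant up to stable tame isomorphism, it suffices to compute $\alg(\widetilde{\Lambda},-\Gamma)$ and show it is tamely isomorphic to $\algCW$.

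\textbf{Step 2: the disks of $\widetilde{\Lambda}$ via gradient flow trees.} Compute the rigid holomorphic disks of $\widetilde{\Lambda}$ using Ekholm's gradient flow trees in a small-scaling limit, as in \cite{RuSu1, RuSu2, RuSu25}. The action filtration shows that the differential of a chord over a $d$-cell involves only chords over that cell and its boundary cells.
\begin{itemize}
\item Over $0$-cells, the flow trees give $\partial A = J A^2$; the matrix $J_\alpha$ records the orientation signs.
\item Over $1$-cells, flow lines running from the $1$-cell's critical point to its endpoints produce the terms $A_\pm(\Delta+B)$ and $(\Delta+B)A_\pm$.
\item Over $2$-cells, the matrix products of $(\Delta_i+B_i)^{\pm1}$ arise only after a further sequence of tame changes of generators, since the raw tree count yields partial flows around $\partial e^2_\alpha$. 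This is the content of \cite{RuSu1}, and we reuse it verbatim modulo coefficients.
\end{itemize}

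\textbf{Step 3: the $t$-coefficients.} Each boundary arc of a flow tree, projected to $\Lambda$, crosses $\Gamma_m$ exactly where the corresponding sheet segment does. In the cellular formulas, the only places where sheets of $\widetilde{\Lambda}$ over an open cell are traversed across a $\Gamma$-intersection are encoded in the diagonal matrices $\Delta$, through the factors $t_m^{\langle \Gamma_m, S_i\rangle}$. The orientation convention (disk boundary orientation versus the lift orientation of $e^1_\alpha$) produces the reversal $\Gamma \mapsto -\Gamma$. Inverses of $(\Delta_i+B_i)$ appear exactly when the path runs against the orientation of $e_i$.

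\textbf{Step 4: signs (the main obstacle).} We expect this step to be the hardest: matching the coherent orientations $\iota(u)$ with the signs $J_\alpha$, $(-1)^{\langle \xi, S\rangle}$, and the swallowtail contributions. The plan is as follows.
\begin{enumerate}
\item Use the flow-tree orientation scheme of Karlsson, or the EES combinatorial sign rules, for a spin structure on $\widetilde{\Lambda}$. Choose the spin structure so that it is trivialized over each sheet away from $\xi$, with $\xi$ recording where the trivialization twists.
\item Check the local sign models: the $0$-cell trees, the $1$-cell flow lines, and the partial flow lines feeding into $2$-cells.
\item Verify $\partial^2=0$ sign-consistently. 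Any remaining discrepancy is a sign change $x \mapsto \pm x$ on generators, which is an elementary automorphism, or a change of the identification $e^A \mapsto \pm e^A$, which is absorbed into the choice of spin structure allowed in the statement.
\end{enumerate}
At swallowtails, the requirement $\partial\xi=\sum_w a_w$ is exactly what makes the local sign model consistent. I would first verify this in the single-swallowtail model computation, deferred to the setup of Section \ref{sec:Cone}.
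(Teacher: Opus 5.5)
Your outline follows the route the paper indicates for this theorem, whose proof is deferred to \cite{RuSu5}: isotope $\Lambda$ into a form adapted to $\mathcal{E}$, compute the rigid disks by gradient flow trees as in \cite{RuSu1, RuSu2, RuSu25}, and add the $t_m$ bookkeeping. Your Steps 1--3 correspond to what the paper calls the relatively straightforward part. The gap is Step 4, which is exactly the ``more delicate'' extension from $\mathbb{F}_2$ to $\Z$, and which you do not carry out. In particular, the shortcut in item (3) is invalid. Two differentials that agree mod $2$ and both satisfy $\partial^2=0$ need not differ by $x\mapsto \pm x$ and $e^A\mapsto \pm e^A$, and need not even give isomorphic DGAs. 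For example, take $|x|=1$, $|y|=|z|=0$, $\partial y=\partial z=0$. The differentials $\partial x=yz+zy$ and $\partial x=yz-zy$ agree mod $2$ and both square to zero. A sign change $x\mapsto\epsilon_x x$, $y\mapsto \epsilon_y y$, $z\mapsto\epsilon_z z$ relating them would need $\epsilon_x=\epsilon_y\epsilon_z=-\epsilon_x$, and the two DGAs have $5$ versus $9$ augmentations to $\mathbb{F}_3$. So the signs must actually be computed. You need the orientation signs of every rigid flow tree in each local model (e.g.\ via Karlsson's scheme) for an explicit spin structure, and you must check that they reproduce $J_\alpha$, the factors $(-1)^{\langle \xi, S_i\rangle}$ in $\Delta$, and the $S$, $T$ matrices at swallowtails.

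Two further points fall into the same gap.
\begin{itemize}
\item $\xi$ is not a cycle, since $\partial\xi=\sum_w a_w$, so ``a spin structure trivialized away from $\xi$ and twisting along $\xi$'' is not yet a spin structure. You have to show that the local sign contribution of flow trees near each swallowtail exactly compensates the boundary of $\xi$, so that the data assemble into an honest spin structure on $\Lambda$. This is where the ``there exists a choice of spin structure'' in the statement is earned.
\item The changes of generators that turn the raw $2$-cell tree count into the products $(\Delta_i+B_i)^{\eta_i}$, with the $S^{\pm1},T^{\pm1}$ insertions, were built over $\Z/2$. A chain isomorphism mod $2$ does not automatically lift to one over $\Z[t_1^{\pm1},\ldots,t_\ell^{\pm1}]$. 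So ``reuse verbatim modulo coefficients'' has to be replaced by explicit signed lifts, checked against the signed tree counts.
\end{itemize}
A minor point: the locality claim in Step 2 does not follow from the action filtration alone, which bounds action but not position. It comes from the confinement of flow trees for your chosen difference functions.
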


In particular, in view of Proposition \ref{prop:stableinv} and Proposition \ref{prop:augInvariance} the augmentation numbers from $\aug_{\Lambda}(q)$ can be computed using $\algCW(\Lambda, \Gamma, \mathcal{E}, \xi)$ in place of $\alg(\Lambda, \Gamma)$ where $\Gamma$ is chosen so that $[\Gamma_1], \ldots, [\Gamma_\ell]$ spans $H^1(\Lambda)$.  Note that the $[-\Gamma_i]$ will then span $H^1(\Lambda)$ as well.

\begin{remark}
	The need for using $-\Gamma$ rather than $\Gamma$ on the right side of the isomorphism from Theorem \ref{thm:CWmulti} can be avoided by modifying the definition of the cellular DGA to replace all occurrences of the matrices $\Delta$ and $\Delta_i$ in  (\ref{eq:dB}) and (\ref{eq:dC}) with $\Delta^{-1}$ and $\Delta^{-1}_i$, respectively.  This is the approach taken in \cite{RuSu5}.  In the present article, using $\Delta$ and $\Delta_i$ rather than their inverses is convenient for simplifying considerations in later calculations.
\end{remark}

\subsubsection{Triangular property of the cellular DGA}

When working with the cellular DGA, we will often apply the Propostion \ref{prop:cancel} which requires a suitable ordering of generators to make $\mathcal{A}^{CW}(\Lambda)$ triangular (as in Definition \ref{def:DGAs}).  We provide here a construction of many such orderings.

\begin{construction} \label{const:order}  For each $d = 0,1,2$, choose a total ordering, $\stackrel{d}{<}$ of the $d$-dimensional cells of $\mathcal{E}$.  Define a partial ordering $\prec$ of the generators of $\mathcal{A}^{CW}(\Lambda, \mathcal{E})$ by declaring that for generators $x^\alpha_{i,j}$ and $x^\beta_{i',j'}$ associated to cells $e^d_\alpha$ and $e^{d'}_\beta$ we have
\[
 x^\alpha_{i,j} \prec x^\beta_{i',j'}
\]
if and only if one of the following conditions holds
\begin{itemize}
\item[(i)]  $d < d'$, or
\item[(ii)] $d=d'$ and $e^d_\alpha \stackrel{d}{<} e^{d}_\beta$, or
\item[(iii)] $e^d_\alpha = e^{d'}_\beta$ and $|i-j| < |i'-j'|$.
\end{itemize}
Finally, extend $\prec$ to a total ordering.
\end{construction}

\begin{proposition}  \label{prop:triangleCW}
The cellular DGA is a triangular DGA  with respect to any ordering of the generating set arising from Construction \ref{const:order}.
\end{proposition}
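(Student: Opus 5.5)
We must check that for each generator $x$, the differential $\partial x$ is a polynomial in generators strictly smaller than $x$ in the ordering of Construction \ref{const:order}. Since that ordering extends $\prec$, it suffices to show that every generator $y$ appearing in $\partial x$ satisfies $y \prec x$. We go through the three differential formulas (\ref{eq:dA}), (\ref{eq:dB}), (\ref{eq:dC}) cell by cell, and we would also handle the swallowtail modifications deferred to Section \ref{sec:Cone}.

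\emph{$0$-cells.} By (\ref{eq:dA}), the entry $(\partial A)_{i,j}$ is $\pm\sum_{i<k<j} a^\alpha_{i,k}a^\alpha_{k,j}$, using that $A$ is strictly upper triangular. Each factor lives on the same cell and has $|i-k|,|k-j|<|i-j|$. Condition (iii) therefore gives the claim.

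\emph{$1$-cells.} Expanding (\ref{eq:dB}) gives
\[
\partial B = J_\alpha\left[A_+\Delta - \Delta A_- + A_+B - BA_-\right].
\]
The terms $A_+\Delta$ and $\Delta A_-$ involve only $0$-cell generators, which are smaller by (i). In $(A_+B)_{i,j}=\sum_k (A_+)_{i,k}b_{k,j}$, we need $(A_+)_{i,k}\neq 0$, which forces $k>i$. This holds because $A_+$ is strictly upper triangular: it is obtained from an upper triangular $A$ by a permutation consistent with the sheet orderings near $e^0_+$, together with nilpotent $N$-blocks, as in Remark \ref{rem:diffs}. Hence $|k-j|<|i-j|$. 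The term $BA_-$ is handled symmetrically, and the $0$-cell factors are smaller by (i).

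The one point to check is that the boundary matrices really are strictly upper triangular. Sheets of $e^1_\alpha$ that are strictly ordered extend to weakly ordered sheets over the boundary vertex, and crossing sheets carry no generator. The $1$ entries sit in the positions $(k,l)$ with $z(S_k)>z(S_l)$, so $k<l$.

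\emph{$2$-cells.} In (\ref{eq:dC}), the terms $A_{v_1}C$ and $CA_{v_0}$ are handled exactly as above, again using strict upper triangularity of the boundary matrices. The products of the factors $(\Delta_i+B_i)^{\pm1}$ involve only $1$-cell generators, because $(\Delta_i+B_i)^{-1}$ is a polynomial in the entries of $B_i$ with coefficients in $R$: $\Delta_i$ is diagonal and invertible and $B_i$ is strictly upper triangular, so the Neumann series terminates. These generators are smaller by (i), and since a product of upper triangular matrices is upper triangular, no $C$ entries appear.

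The main obstacle is the swallowtail case, where the differentials acquire extra terms. For those, we would verify in the same way that every modified entry involves only lower-dimensional generators, or same-cell generators with smaller $|i-j|$. This again amounts to upper triangularity of the modified boundary matrices.
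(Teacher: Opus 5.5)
Your proposal is correct and follows the same route as the paper's proof. The generators in $\partial x^\alpha_{i,j}$ either belong to lower-dimensional cells, or they belong to the same cell with strictly smaller $|i-j|$, because $A$, $B$, $C$ are strictly upper triangular and always appear multiplied by another strictly upper triangular (boundary) matrix.

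You supply more detail than the paper's one-line argument, for instance the terminating inverse of $\Delta_i+B_i$ and the strict upper triangularity of the boundary matrices. The swallowtail check you defer is routine: $A_T$ is a unipotent upper triangular conjugate of $\widehat{A}$ and vanishes in the entry at the two indices swapped to form $A_S$, while $S^{\pm1}$ and $T^{\pm1}$ contain only $0$-cell generators.
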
 

\begin{proof}
From (\ref{eq:dA}), (\ref{eq:dB}), (\ref{eq:dC}), we see that the differential of a generator $x^\alpha_{i,j}$ associated to a cell $e^d_\alpha$ involves only generators that either (i) are associated to other cells of lesser dimension, or (ii) are associated to $e^d_\alpha$ but have strictly smaller $|i-j|$.  [The reason for (ii) is that the matrices $A$, $B$, and $C$ from (\ref{eq:dA}), (\ref{eq:dB}), (\ref{eq:dC}) are strictly upper triangular and are always multiplied by at least one other strictly upper triangular matrix when they appear on the right hand sides of these equations.]
\end{proof}

\begin{remark} \label{rem:reorder}
\begin{enumerate}
\item The requirement (ii) is not necessary to prove triangularity but can be  useful when applying Proposition \ref{prop:cancel}.
\item When a generator $x$ satisfies $\partial x =0$, modifying the ordering of generators to make $x$ the smallest element maintains the  triangularity property.
\end{enumerate} 
\end{remark}

\section{Proof of the index $0/2$ skein relation}  \label{sec:5}

In this section we establish the proof of (SR1), aka the ``index 0/2 skein relation'' in reference to the Morse index of the Reeb chords (viewed as critical points of local difference functions) that appear on the LHS of the equation.  In outline, after equipping each of the four Legendrians with a collection of geometric cocycles (Section \ref{sec:cocycles}), a combinatorial spin structure, and a compatible polygonal decomposition, we compute the cellular DGAs in Section \ref{sec:5-2}.  In Section \ref{sec:5-3}, we establish stable tame isomorphisms between various specializations of these DGAs (Proposition \ref{prop:specialiso}), and use them to complete the proof of (SR1).

\medskip

Let $\Lambda_1, \Lambda_2, \Lambda_3, \Lambda_4 \subset J^1S$ denote four Legendrian surfaces corresponding to the four terms of a skein relation (SR1) from Theorem \ref{thm:main} numbered in the order they appear in the relation.  The front projections of the $\Lambda_a$, $1 \leq a \leq 4$ are assumed to agree outside of the pictured $3$-ball $B \subset S\times \R$.  As well, the $\Z$-valued Maslov potentials of $\Lambda_1$ and $\Lambda_2$ are assumed to agree outside of $B$ and to take the values $r$ and $s$ in $\pi_{xz}^{-1}(B)$ as pictured in (SR1). We will only consider $\Lambda_3$ and $\Lambda_4$ when $r=s+1$, and in this case $\Lambda_3$ and $\Lambda_4$ can (and are assumed to) be uniquely equipped with Maslov potentials agreeing with those of $\Lambda_1$ and $\Lambda_2$ outside of $\pi_{xz}^{-1}(B)$.

\subsection{Cocycles}  \label{sec:cocycles}

For proving the relation (SR1) we will specify for each $\Lambda_a$, $1\leq a \leq 4$, a collection of geometric cocycles $\Gamma_{a}$ spanning $H^1(\Lambda_a)$ and work with the multi-curve cellular DGA from Theorem \ref{thm:CWmulti}.  For the computation of $H^1(\Lambda_3)$ and $H^1(\Lambda_4)$ and at other points later in the section, we will need to distinguish between the following two cases.

\medskip

\noindent {\bf Connected Case.}  The two disks  of $\Lambda_1$ pictured in (SR1) belong to the {\it same} connected component of $\Lambda_1$.

\medskip

\noindent {\bf Disconnected Case.}  The two disks of $\Lambda_1$ pictured in (SR1) belong to {\it different} connected components of $\Lambda_1$.

\medskip

We have
\begin{align*} & \rk(H^1(\Lambda_1)) = \rk(H^1(\Lambda_2)), \\
& \rk(H^1(\Lambda_3)) = \rk(H^1(\Lambda_4)) = \left\{\begin{array}{cl} \rk(H^1(\Lambda_1)) +2, & \mbox{Connected Case} \\ \rk(H^1(\Lambda_1)), & \mbox{Disconnected Case.}
\end{array}\right.
\end{align*}

The $\Gamma_a$ are as follows:  Start with a common choice $\Gamma$ of co-oriented multi-curves within $\Lambda_1 \setminus \pi_{xz}^{-1}(B) = \Lambda_2 \setminus \pi_{xz}^{-1}(B)$ that
\begin{itemize}
	\item[(i)] span $H^1(\Lambda_1)$ and $H^1(\Lambda_2)$, and
	\item[(ii)] have their base projections disjoint from $\pi_x(B)$.
\end{itemize}
[This is possible since $\Lambda_1 \cap \pi^{-1}_{x}(\pi_x(B))$ is a union of disks.] Let $\mu_a$ be an additional cocycle on each $\Lambda_a$ running parallel to the upper boundary circle of $\Lambda_a \cap \pi_{xz}^{-1}(B)$ that is visible in (SR1), but shifted slightly into $\Lambda_a \cap \pi_{xz}^{-1}(B)$ with co-orientation pointing into $\Lambda_a \cap \pi_{xz}^{-1}(B)$.  We then put
\[
\Gamma_1 = \Gamma \cup \{\mu_1\} \quad \mbox{and} \quad \Gamma_2 = \Gamma \cup \{\mu_2\};
\] 
see Figures \ref{fig:CharMap1} and \ref{fig:CharMap2}.

In the Connected Case, we choose for both $\Lambda_3$ and $\Lambda_4$ additional cocycles $\lambda_3$ and $\lambda_4$ that intersect $\Lambda_a \cap \pi^{-1}_{xz}(B)$ as pictured
in Figures \ref{fig:CharMap2} and \ref{fig:CharMap1} respectively and agree with one another outside of $\pi^{-1}_{xz}(B)$.  We then have spanning sets for $H^1(\Lambda_3)$ and $H^1(\Lambda_4)$, 
\[
\Gamma_3 = \left\{\begin{array}{cl} \Gamma\cup\{\mu_3, \lambda_3\}, & \mbox{Connected Case} \\ \Gamma\cup\{\mu_3\}, & \mbox{Disconnected Case}\end{array}\right. \quad \mbox{and} \quad \Gamma_4 = \left\{\begin{array}{cl} \Gamma\cup\{\mu_4, \lambda_4\}, & \mbox{Connected Case} \\ \Gamma\cup\{\mu_4\}, & \mbox{Disconnected Case.} \end{array}\right.
\]

\begin{remark}  The co-orientations of $\lambda_3$ and $\lambda_4$ agree at their upper endpoints in Figures \ref{fig:CharMap1} and \ref{fig:CharMap2}.  To see this, one should note that a continuous normal vector field to $\lambda_3$ within $\Lambda_3$ will appear to reverse in the front projection of $\Lambda_3$ when it passes through the cone point (whose pre-image in $\Lambda_3$ is an $S^1$).  See also the appearance of $\lambda_3$ when the cone point of $\Lambda_3$ is perturbed in Section \ref{sec:Cone} below.

\end{remark}

\subsection{Cellular DGAs of the four Legendrians}  \label{sec:5-2}
Next, we fix compatible polygonal decompositions and combinatorial spin structures for each of the $\Lambda_a$, $1\leq a \leq 4$.  We then form cellular DGAs for the $\Lambda_a$, and in some cases pass to stable tame isomorphic quotients in order to allow for an easier comparison of augmentation numbers.  The resulting DGAs will be denoted as simply $\alg(\Lambda_1), \alg(\Lambda_2), \alg(\Lambda_3),$ and $\alg(\Lambda_4)$.

\subsubsection{Combinatorial spin structures} First, fix such a structure $\xi_1$ for $\Lambda_1$ that has its base projection disjoint from $\pi_{x}(B)$.  We then can put $\xi_a = \xi_1$  for $a=1, 2, 4$ to obtain combinatorial spin structures for $\Lambda_1, \Lambda_2,$ and $\Lambda_4$.  The $\Lambda_3$ contains a cone point in its front projection which is a non-generic singularity that needs to be resolved before the cellular DGA can be applied.  This is done in the Section \ref{sec:Cone} where a combinatorial spin structure $\xi_3$ is introduced that agrees with $\xi_1$ outside of $\Lambda_3 \cap \pi_{xz}^{-1}(B)$.

\medskip

\subsubsection{The DGA $\alg(\Lambda_1)$}  
 We begin with a compatible polygonal decomposition $\mathcal{E}_1$ for $\Lambda_1$ that has the form pictured in Figure \ref{fig:CharMap1} within $\pi_x(B)$, and we consider the cellular DGA $\alg^{CW}(\Lambda_1, \Gamma_1, \mathcal{E}_1, \xi_1)$.  Here and henceforth, we will often simply label cells by their corresponding matrices of generators.  Note that the base projection of the visible circle of crossings of $\pi_{xz}(\Lambda_1)$ is the union of the cells labeled with $A_1$ and $B_2$;  the arrows pointing from the $A_1$ and $B_2$ cells into $C_1$ indicate that we number the $A_1$ and $B_2$ sheets in the order they appear above $C_1$, cf. Remark \ref{rem:sheetnum} (iii).  Recall that this numbering of sheets above each cell specifies the subscripts in the notations for the generators;  we will take the superscript of generators to match the subscript of the matrix they belong to.  E.g., the generators associated to cells labeled $A_0$ and $C_2$ will be respectively labeled as $a^0_{i,j}$ and $c^2_{i,j}$.

\begin{figure}
	
\labellist
\small
\pinlabel $\mu_1$ [b] at 46 132
\pinlabel $\Lambda_1:$ [r]  at -2 74
\endlabellist
\centerline{\includegraphics[scale=.8]{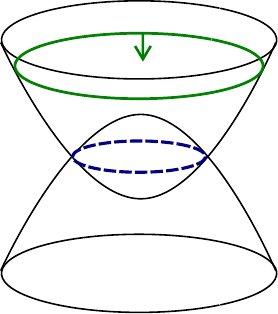} \quad \quad \quad \quad \quad
	\labellist
	\small
	\pinlabel $\mu_4$ [b] at 46 132
	\pinlabel $\Lambda_4:$ [r] at -2 74
	\pinlabel $\lambda_4$ [l] at 84 46
	\endlabellist
	\raisebox{0cm}{\includegraphics[scale=.8]{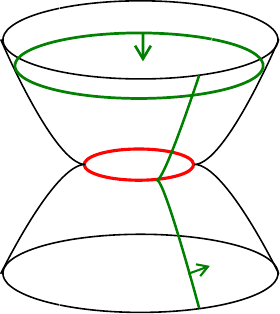}}
} 
	
	\quad
	
	\quad
	
\labellist
\small
\pinlabel $B_0$ [l] at 310 152
\pinlabel $C_1$  at 152 84
\pinlabel $A_0$ [r] at -2 150
\pinlabel $A_1$ [tr] at 118 144
\pinlabel $B_1$ [b] at 72 156
\pinlabel $C_2$ at 152 152
\pinlabel $B_2$ [l] at 194 158
\pinlabel $\mu_a$ [t] at 152 262
\endlabellist
\centerline{\includegraphics[scale=.4]{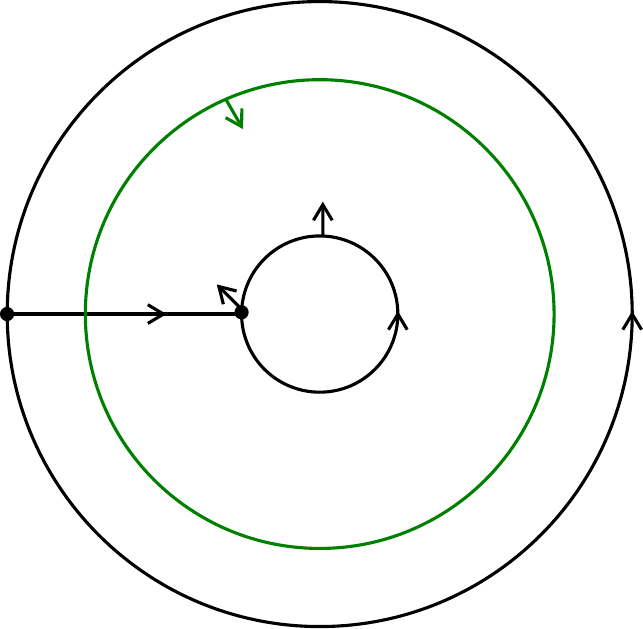} \quad \quad \quad \quad \quad
\labellist
\small
\pinlabel $B_0$ [t] at 110 -2
\pinlabel $B_1$ [l] at 236 120
\pinlabel $B_1$ [r] at -2 120
\pinlabel $B_2$ [b] at 110 234
\pinlabel $C_1$ at 118 118
\pinlabel $A_0=v_0$ [r] at -2 2
\pinlabel $v_1=A_0$ [l] at 234 2
\endlabellist
\raisebox{.5cm}{\includegraphics[scale=.4]{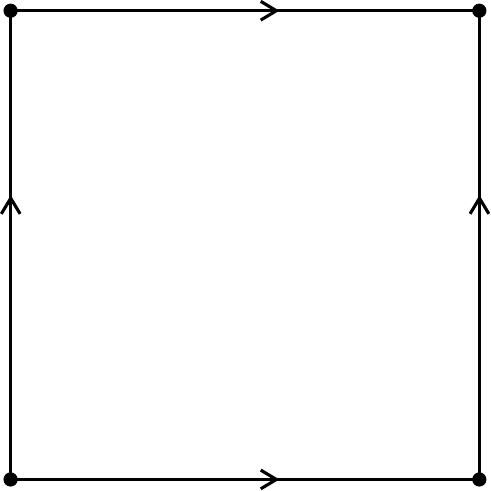}}
} 

\caption{(Top)  Front projections for $\Lambda_1$ and $\Lambda_4$ with the co-oriented curves $\mu_1$ from $\Gamma_1$ and  $\mu_4$ and $\lambda_4$ from $\Gamma_4$ pictured in green.  (Bottom left)  The decomposition $\mathcal{E}_a$ for $a=1$ or $4$.  
(Bottom right) The domain of the characteristic map for $C_1$ with initial and terminal vertices, $v_0$ and $v_1$, labeled.
}
\label{fig:CharMap1}
\end{figure}

Let $n$ denote the common number of sheets of $\Lambda_1$ that appear above any of the cells pictured in Figure \ref{fig:CharMap1}; let $k$ and $k+1$ be the indices of the two sheets above $A_1$ and $B_2$ that cross in $\pi_{xz}(\Lambda_1)$.  Each of the matrices $A_0, A_1, B_0, B_1, B_2, C_1, C_2$ is strictly upper-triangular, and, with two exceptions,  a unique generator of $\alg^{CW}(\Lambda_1, \Gamma_1, \mathcal{E}_1, \xi_1)$ appears in each $i,j$-entry with $1\leq i< j \leq n$, e.g., the $i,j$ entry of $A_0$ is $a^0_{i,j}$ if $i<j$ and $0$ if $i\geq j$. The two exceptions are that the $k,k+1$-entries of $A_1$ and $B_2$ are $0$.  Indeed, no generator of the form $a^1_{k,k+1}$  exists since the sheets $S^{A_1}_{k}$ and $S^{A_1}_{k+1}$ share the same $z$-coordinate above $A_1$ due to the crossing arc in $\pi_{xz}(\Lambda_1)$.  For the same reason, $b^2_{k,k+1}$ does not exist.

Next, we pass from $\alg^{CW}(\Lambda_1, \Gamma_1, \mathcal{E}_1, \xi_1)$ to a stable tame isomorphic quotient that we will denote as simply $\mathcal{A}(\Lambda_1)$.

\begin{proposition} \label{prop:ALambda1}  The cellular DGA $\alg^{CW}(\Lambda_1, \Gamma_1, \mathcal{E}_1, \xi_1)$ has a stable tame isomorphic quotient, $\mathcal{A}(\Lambda_1)$, satisfying:
\begin{itemize}
\item All $B_2$ generators are removed from the generating set.
\item All $C_2$ generators except for $c := c_{k,k+1}^2$ are removed from the generating set, and $\partial c=0$.
\item There is no generator of the form $a^1_{k,k+1}$.
\item In the quotient the differentials satisfy
\begin{align}
J_{A_1} \cdot \partial A_1 &= A_1^2 \label{eq:JA1Lambda1}  \\
J_{B_1} \cdot \partial B_1 &= A_1(\Theta^{\mu_1}_k +B_1)- (\Theta^{\mu_1}_k +B_1)A_0 \label{eq:JB1Lambda1} \\
J_{C_1} \cdot \partial C_1 & = A_0C_1+C_1A_0 + (I+B_0) \label{eq:JC1Lambda1}  \\ & \quad \quad -(\Theta^{\mu_1}_k+B_1)^{-1}\left(I+ A_1(cE_{k+1,k}) + (cE_{k+1,k})A_1\right)(\Theta^{\mu_1}_k+B_1). \notag  
\end{align}
where $\Theta^{\mu_1}_k$ is the diagonal matrix with $k$-th diagonal entry  $\mu_1$ and other diagonal entries $1$. 
\item All other generators remain and their differentials are the same as in $\alg^{CW}(\Lambda_1, \Gamma_1, \mathcal{E}_1, \xi_1)$.
\end{itemize}
\end{proposition}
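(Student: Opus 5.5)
The plan is to apply Proposition \ref{prop:cancel} to cancel the $B_2$ generators against the $C_2$ generators other than $c=c^2_{k,k+1}$. The first step is to write out the cellular differentials (\ref{eq:dB}) and (\ref{eq:dC}) for the cells $B_2$ and $C_2$ of Figure \ref{fig:CharMap1}.

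\textbf{The cell $C_2$.} It is the disk bounded by the crossing circle $A_1\cup B_2$. I would take $v_0=v_1=A_1$, using Remark \ref{rem:diffs} (1), so that one boundary path is constant and the other consists of $B_2$ alone. Since $\mu_a$ and $\xi_1$ avoid this region, all the $\Delta$'s there are the identity. Formula (\ref{eq:dC}) then reads
\[
J_{C_2}\,\partial C_2 = A_1C_2 + C_2A_1 + I - (I+B_2)^{\pm 1}.
\]
Hence $\partial c^2_{i,j} = \pm b^2_{i,j} + (\text{products involving } A_1,C_2,B_2)$. More precisely, the quadratic terms in $C_2$ involve only entries $c^2_{i',j'}$ with $|i'-j'|<|i-j|$, and the higher-order $B_2$ terms come from $(I+B_2)^{-1}$ or similar expressions.

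\textbf{Cancellation.} I would order the generators using Construction \ref{const:order}, and for each $(i,j)\neq(k,k+1)$ pair $y=c^2_{i,j}$ with $z=b^2_{i,j}$. Ordering the pairs by $|i-j|$, the remaining terms of $\partial c^2_{i,j}$ then lie either in the ideal generated by earlier $y$'s and their differentials, or in the subalgebra generated by generators smaller than $b^2_{i,j}$. Note that the $B_2$ entries appearing in products have strictly smaller $|i-j|$, and the $A_1$ entries live on $0$-cells. Proposition \ref{prop:cancel} then yields a stable tame isomorphic quotient.

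In this quotient every $c^2_{i,j}$ with $(i,j)\neq(k,k+1)$ is set to $0$, and $B_2$ becomes expressible through the relation $\partial C_2\equiv 0$. Setting the cancelled $C_2$ entries to zero leaves
\[
(I+B_2)^{\pm1} = I + A_1(cE_{k+1,k})+(cE_{k+1,k})A_1,
\]
up to the sign and index conventions for how $c$ sits in the matrix. So $B_2$ is replaced by an explicit expression in $A_1$ and $c$. Here one must be careful: the entry $c$ lies in position $(k,k+1)$ relative to the sheet order above $C_2$, but in the boundary matrix for $C_1$ the crossing swaps sheets $k$ and $k+1$. This is what produces $E_{k+1,k}$ after the permutation conjugation described in Remark \ref{rem:diffs} (2).

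\textbf{The remaining differentials.} The $(k,k+1)$ entry of $\partial C_2$ has no $b^2_{k,k+1}$ term, since that generator does not exist, and every product term contains a cancelled generator or vanishes by strict triangularity across the crossing. Hence $\partial c=0$. The differentials (\ref{eq:JA1Lambda1}) and (\ref{eq:JB1Lambda1}) come directly from (\ref{eq:dA}) and (\ref{eq:dB}), with $\Delta=\Theta^{\mu_1}_k$ recording the crossing of $B_1$ with $\mu_1$ on the $k$-th sheet. The formula (\ref{eq:JC1Lambda1}) comes from (\ref{eq:dC}) for $C_1$, using the characteristic map with $v_0=v_1=A_0$ and boundary word $B_0$ versus $B_1^{-1}B_2B_1$, after substituting the expression for $B_2$.

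\textbf{Main obstacle.} I expect the hardest part to be the bookkeeping: the permutation conjugation at the crossing, the signs from $J$, and the orientation exponents $\eta_i$. These must all be tracked carefully to get exactly the stated form of (\ref{eq:JC1Lambda1}). A secondary check is confirming the hypothesis on $v$ in Proposition \ref{prop:cancel}, namely the triangular ordering.
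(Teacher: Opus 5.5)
Your outline matches the paper's: cancel the $C_2$ generators other than $c$ against the $B_2$ generators via Proposition \ref{prop:cancel}, read off $B_2$ from $\partial C_2\doteq 0$, and substitute into the $C_1$ formula. The paper builds the sheet swap directly into the $C_2$ formula as $J_{C_2}\partial C_2=-QB_2Q+(QA_1Q)C_2+C_2(QA_1Q)$ with $Q=Q_{(k\,k+1)}$, which is equivalent to your deferred conjugation. However, your check of the hypotheses of Proposition \ref{prop:cancel} is false as stated.

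\begin{itemize}
\item For $i<k$, the differential of $c^2_{i,k+1}$ contains the term $\pm a^1_{i,k+1}c$. For $j>k+1$, the differential of $c^2_{k,j}$ contains $\pm c\,a^1_{k,j}$. These are exactly the terms that later give $B_2\doteq A_1(cE_{k+1,k})+(cE_{k+1,k})A_1$.
\item Since $c=c^2_{k,k+1}$ is not one of the cancelled $y$'s, these terms do not lie in $\mathcal{I}(y_1,\ldots,y_{j-1},\partial y_1,\ldots,\partial y_{j-1})$.
\item Since $c$ is a $2$-cell generator, it is larger than every $b^2_{p,q}$ in any ordering from Construction \ref{const:order}. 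So these terms do not lie in $R\langle x<z_j\rangle$ either.
\end{itemize}
So the trouble is with the $w$-term, not the $v$-term you flagged.

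The missing step, which the paper supplies, is to prove $\partial c=0$ in $\algCW(\Lambda_1,\Gamma_1,\mathcal{E}_1,\xi_1)$ itself, before any cancellation. In the $(k,k+1)$ entry there is no $B_2$ term, since $b^2_{k,k+1}$ does not exist. Every product term vanishes identically, because $QA_1Q$, $QB_2Q$ and $C_2$ are strictly upper triangular and no index lies strictly between $k$ and $k+1$. Your justification, that the terms ``contain a cancelled generator,'' only yields $\partial c\doteq 0$ in the quotient, which comes too late.

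Once $\partial c=0$ is known, Remark \ref{rem:reorder}(2) lets you move $c$ to the bottom of the ordering without losing triangularity. The $c$-terms then become legitimate $w$-terms, and Proposition \ref{prop:cancel} applies to the pairs with $(i,j)\neq(k,k+1)$, ordered by non-decreasing $|i-j|$. With this fix, $C_2\doteq cE_{k,k+1}$ and $\partial C_2\doteq 0$ in the quotient. The rest of your argument then goes through as in the paper, including the $C_1$ computation with $\Delta_1=\Theta^{\mu_1}_k$ and $\Delta_0=\Delta_2=I$.
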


\begin{proof}

We will apply Proposition \ref{prop:cancel} to 
$\alg^{CW}(\Lambda_1, \Gamma_1, \mathcal{E}_1, \xi_1)$ with the goal of eliminating all of the $B_2$ and $C_2$ generators except for $c^2_{k,k+1}$.  To this end, with $Q=Q_{(k\,k+1)}$ denoting the permutation matrix of the transposition $(k\,k+1)$ and making use of the identity $(J_{C_2})^2=I$, the equation (\ref{eq:dC}) leads to
\begin{align*}
J_{C_2} \cdot \partial C_2 = (QA_1Q)C_2 + C_2 (Q A_1Q) + I -(I + QB_2Q).
\end{align*}
The matrices $A_1$ and $B_2$ appear conjugated by $Q$ since the $S_{k}$ and $S_{k+1}$ sheets above $A_1$ and $B_2$ belong to the closures of the sheets  $S^{C_2}_{k+1}$ and $S^{C_2}_k$ above $C_2$ respectively, cf. Remark \ref{rem:diffs} (2).
Now, as $a^1_{k,k+1}$ and $b^2_{k,k+1}$ do not exist, the conjugated matrices $QA_1Q$ and $QB_2Q$ are themselves upper triangular with $k,k+1$-entry of $0$.  Thus, writing\footnote{To help the reader follow the arguments, we place a pair of boxes around generators that we will intend to cancel using Proposition \ref{prop:cancel}.}
\begin{equation} \label{eq:100}
J_{C_2} \cdot \partial \framebox{$C_2$} = -Q\framebox{$B_2$}Q + (QA_1Q)C_2 + C_2 (Q A_1Q)
\end{equation}
shows that $\partial c^2_{k,k+1}=0$.   Following Proposition \ref{prop:triangleCW} and  Remark \ref{rem:reorder}, there exists an ordering of the generators of $\alg^{CW}(\Lambda_1, \Gamma_1, \mathcal{E}_1, \xi_1)$ for which $\partial$ is triangular such that $ c^2_{k,k+1}$ is smallest and is followed immediately by the $A_1$ generators.  
Moreover, for each $i<j$ with $(i,j) \neq (k,k+1)$ we can write
\[
\iota \cdot \partial \framebox{$c^2_{i,j}$} = -\framebox{$b^2_{\sigma(i),\sigma(j)}$} + v+ w
\]
where $\iota \in \{\pm1\}$;  $w$ consists of those terms from $(QA_1Q)C_2 + C_2 (Q A_1Q)$ that involve the generator $c^2_{k,k+1}$; and $v$ consists of all the remaining terms.  Note that due to the strict upper triangularity of $QA_1Q$ and $C_2$, the $(i,j)$-entry of the product $(QA_1Q)C_2$ (resp. $C_2 (Q A_1Q)$) involves only entries from $C_2$ of the from $(i',j)$ with $i<i'<j$ (resp. of the form $(i,j')$ with $i<j'<j$).  Therefore, we are able to apply Proposition \ref{prop:cancel} by taking the $y_1,\ldots, y_r$ and $z_1, \ldots, z_r$ appearing in the statement of Proposition \ref{prop:cancel} to be the $\{c_{i,j} \,|\, i<j, (i,j) \neq (k,k+1)\}$ and $\{b_{\sigma(i),\sigma(j)}\,|\, i<j, (i,j) \neq (k,k+1)\}$ ordered with $|i-j|$ non-decreasing.
With the ordering of generators of $\alg^{CW}(\Lambda_1, \Gamma_1, \mathcal{E}_1, \xi_1)$ described above $v$ and $w$ indeed satisfy the requirements of the proposition.  We take the resulting quotient dg-algebra of $\alg^{CW}(\Lambda_1, \Gamma_1, \mathcal{E}_1, \xi_1)$ to be $\alg(\Lambda_1)$.  

The formulas for the differentials (\ref{eq:JA1Lambda1}) and (\ref{eq:JB1Lambda1}) 
are satisfied already in $\alg^{CW}(\Lambda_1, \Gamma_1, \mathcal{E}_1, \xi_1)$; see (\ref{eq:dA}) and (\ref{eq:dB}) above.  
For verifying (\ref{eq:JC1Lambda1}), 
we note that $\partial C_2 \doteq 0$ and $C_2 \doteq cE_{k,k+1}$
where $\doteq$ is used to indicate equality of classes in the quotient algebra, $\alg(\Lambda_1)$.  Thus,  using that $Q^2=I$, (\ref{eq:100}) shows that
\begin{align*}
B_2 & \doteq A_1 QC_2Q + QC_2QA_1 \\
 & \doteq A_1(cE_{k+1,k}) + (c E_{k+1,k})A_1 \\
 & = \sum_{i<k} a^1_{i,k+1}cE_{i,k} + \sum_{k+1<j} c a^1_{k,j} E_{k+1,j},
\end{align*}
and (\ref{eq:JC1Lambda1}) follows from 
substituting this into the identity
\[
J_{C_1} \cdot \partial C_1 = A_0C_1+C_1A_0 + (I+B_0) -(\Theta^{\mu_1}_k+B_1)^{-1}\left(I+B_2\right)(\Theta^{\mu_1}_k+B_1). 
\]
that holds in $\alg^{CW}(\Lambda_1, \Gamma_1, \mathcal{E}_1, \xi_1)$ from (\ref{eq:dC}), minding that $\Delta_1 = \Theta^{\mu_1}_k$ and  $\Delta_0 =\Delta_2 = 1$ by our construction of $\Gamma_1$.
\end{proof}

\begin{remark}  Throughout this section as well as in Section \ref{sec:Cone}, we continue using the box notation to indicate pairs of generators to be canceled, e.g., writing $\partial \framebox{$y$} = \framebox{$z$} + v +w$ indicates that an application of Proposition \ref{prop:cancel} will be used to remove $y$ and $z$ from the generating set.  In addition, we maintain the notation $\doteq$ for equality of classes in a quotient DGA.
\end{remark}

\begin{remark}
Morally, 
the quotienting procedure used to pass from $\alg^{CW}(\Lambda_1, \Gamma_1, \mathcal{E}_1, \xi_1)$ to $\mathcal{A}(\Lambda_1)$ is analogous to modifying the CW decomposition by the simple homotopy equivalence collapsing $C_2 \cup B_2$ to a point.  However, due to the presence of the crossing arc, in the quotient DGA some remnant of $C_2$ remains via the relation $B_2 \doteq A_1(cE_{k+1,k}) + (cE_{k+1,k})A_1$.
\end{remark}

\subsubsection{The DGA $\alg(\Lambda_2)$}  A compatible polygonal decomposition, $\mathcal{E}_2$, for $\Lambda_2$ is introduced in Figure \ref{fig:CharMap2}.  We set $\alg(\Lambda_2) = \alg^{CW}(\Lambda_1, \Gamma_1, \mathcal{E}_1, \xi_1)$ and record some of its properties.

\begin{figure}
		\labellist
	\small
	\pinlabel $\mu_2$ [b] at 46 132
	\pinlabel $\Lambda_2:$ [r]  at -2 74
	\endlabellist
	\centerline{\includegraphics[scale=.8]{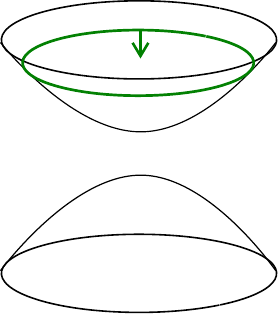} \quad \quad \quad \quad \quad
		\labellist
		\small
		\pinlabel $\mu_3$ [b] at 46 132
		\pinlabel $\Lambda_3:$ [r] at -2 74
		\pinlabel $\lambda_3$ [l] at 52 22
		\endlabellist
		\raisebox{0cm}{\includegraphics[scale=.8]{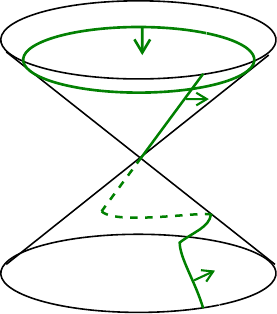}}
	} 
	
	\quad
	
	\quad
	\labellist
	\small
	\pinlabel $B_0$ [l] at 310 152
	\pinlabel $C_1$  at 152 84
	\pinlabel $A_0$ [r] at -2 150
	\pinlabel $A_1$ [t] at 152 144
	\pinlabel $B_1$ [b] at 84 156
	\pinlabel $\mu_a$ [t] at 152 262
	\endlabellist
	\centerline{\includegraphics[scale=.4]{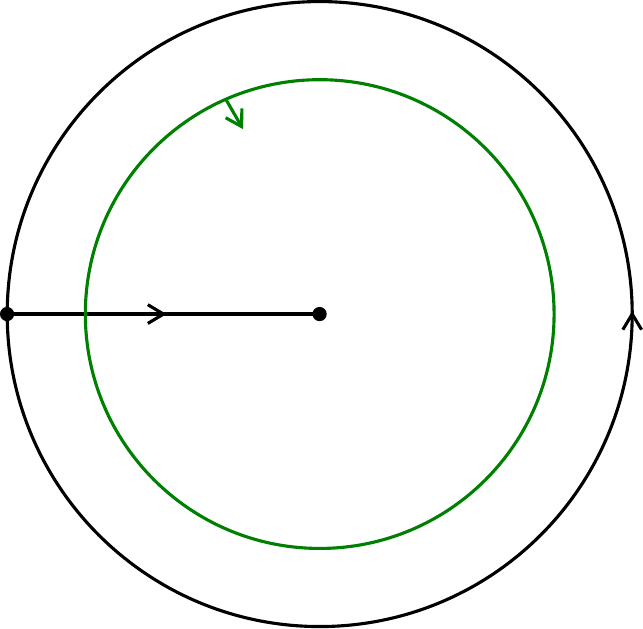} \quad \quad \quad \quad \quad
		\labellist
		\small
		\pinlabel $B_0$ [t] at 110 -2
		\pinlabel $B_1$ [l] at 180 120
		\pinlabel $B_1$ [r] at 50 120
		\pinlabel $C_1$ at 118 80
		\pinlabel $A_0=v_0$ [r] at -2 2
		\pinlabel $v_1=A_0$ [l] at 234 2
		\endlabellist
		\raisebox{.5cm}{\includegraphics[scale=.4]{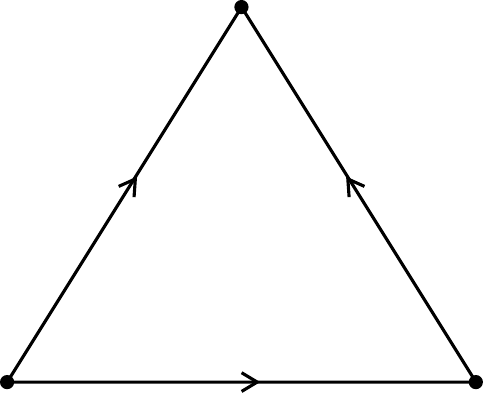}}
	} 
	
	\caption{(Top)  Front projections for $\Lambda_2$ and $\Lambda_3$ with the co-oriented curves $\mu_2$ from $\Gamma_2$, and $\mu_3$ and $\lambda_3$ from $\Gamma_3$ pictured in green.  Note that the co-orientation of $\lambda_3$, while continuous in $\Lambda_3$ itself, appears to reverse in the front projection when passing through the cone point singularity.  In particular, near the boundary the coorientation of $\lambda_3$ is consistent with that of the curve $\lambda_4$ from Figure \ref{fig:CharMap1}.  (Bottom left)  The decomposition $\mathcal{E}_a$ for $a=2$ or $3$.  
(Bottom right) The domain of the characteristic map for $C_1$ with initial and terminal vertices, $v_0$ and $v_1$, labeled.
	}
	\label{fig:CharMap2}
\end{figure}

\begin{proposition}  \label{prop:ALambda2}  The generators and differentials of $\alg(\Lambda_2)$ coincide with the those of $\alg(\Lambda_1)$ with the following exceptions:
	\begin{itemize}
		\item The generator $a:= a^1_{k,k+1}$ exists in $\alg(\Lambda_2)$.
		\item The generator $c = c^2_{k,k+1}$ does not exist in $\alg(\Lambda_2)$.
	\item The differentials satisfy
\begin{align}
	J_{A_1} \cdot \partial A_1 &= A_1^2 \label{eq:2JA1Lambda1}  \\
	J_{B_1} \cdot \partial B_1 &= A_1(\Theta^{\mu_2}_k +B_1)- (\Theta^{\mu_2}_k +B_1)A_0 \label{eq:2JB1Lambda1} \\
	J_{C_1} \cdot \partial C_1 & = A_0C_1+C_1A_0 + (I+B_0) - I  \label{eq:2JC1Lambda1}
\end{align}
where $\Theta^{\mu_2}_k$ is the diagonal matrix with $k$-th diagonal entry  $\mu_2$ and other diagonal entries $1$.
\end{itemize}
In particular, differentials for the generators associated to all other cells ($A_0$, $B_0$, and all cells outside of the pictured disk) coincide with differentials from $\mathcal{A}(\Lambda_1)$.   
\end{proposition}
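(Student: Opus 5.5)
This is a direct computation from the cellular differential formulas (\ref{eq:dA}), (\ref{eq:dB}), (\ref{eq:dC}), applied to the decomposition $\mathcal{E}_2$ of Figure \ref{fig:CharMap2}. Here $\alg(\Lambda_2)$ is understood as $\algCW(\Lambda_2, \Gamma_2, \mathcal{E}_2, \xi_2)$ itself, with no quotient taken. I would first compare the cell structures. Outside $\pi_x(B)$, and on the cells $A_0$ and $B_0$, the front projections of $\Lambda_1$ and $\Lambda_2$ agree. So do the cocycles ($\Gamma$ avoids $\pi_x(B)$) and the spin structures. Hence the generators and their differentials there coincide with those of $\algCW(\Lambda_1,\ldots)$. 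The quotient in Proposition \ref{prop:ALambda1} did not alter differentials of these generators, since $B_2, C_2$ never appear in their differentials; those cells lie inside the disk and not in the boundary of any outside cell. Inside the disk, $\mathcal{E}_2$ has only the cells $A_0, A_1, B_0, B_1, C_1$, with no $B_2$ or $C_2$. This accounts for the absence of $c$. Because $\pi_{xz}(\Lambda_2)$ has no crossing arc above $A_1$ (the two sheets are separated there by a short Reeb chord, an index $0$ or $2$ critical point of the difference function), all $a^1_{i,j}$ with $i<j$ exist, including $a=a^1_{k,k+1}$.

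Next I would verify the three displayed formulas using $J^2=I$. For $A_1$, (\ref{eq:dA}) gives $\partial A_1 = J_{A_1}A_1^2$ directly. For $B_1$, I apply (\ref{eq:dB}) with $A_+=A_1$ and $A_-=A_0$, choosing orientation as in $\mathcal{E}_1$. No sheet reordering is needed because sheets above $A_1$ are now totally ordered, consistent with those above $B_1$. The one thing to check is $\Delta$. The cocycle $\mu_2$ runs parallel to the upper boundary circle and is crossed once by the lift of $B_1$ to sheet $k$, as in $\Lambda_1$; $\xi_2$ is disjoint. This gives $\Delta=\Theta^{\mu_2}_k$, where $\mu_2$ also denotes the corresponding variable $t$.

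For $C_1$, I would read off the boundary path from the characteristic map in Figure \ref{fig:CharMap2}, with $v_0=v_1=A_0$. The counterclockwise side is $B_0$, giving the factor $(I+B_0)$; $\Delta_0=I$ since no cocycle or $\xi$ meets it. The other side traverses $B_1$ out to $A_1$ and back along $B_1$ in reverse. In principle this gives a term $(\Theta+B_1)^{-1}(\ldots)(\Theta+B_1)$. The main subtlety is to check, following Remark \ref{rem:diffs}(1), that in $\mathcal{E}_2$ the $C_1$ boundary does not actually pass through $A_1$. Rather, $B_1$ is a tail ending in the interior, and the two traversals of $B_1$ cancel. Equivalently, the clockwise path is constant and contributes $I$.

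This cancellation is the step I expect to be the main obstacle. It requires carefully matching orientations and the $\Delta_i$ on both passes: the shifted copies of $B_1$ on either side lie on the same side of $\mu_2$, so both give $\Theta^{\mu_2}_k$ and they cancel. It also requires confirming that $A_{v_0}=A_{v_1}=A_0$ with no extra cusp entries, which yields $A_0C_1+C_1A_0$. Together these give (\ref{eq:2JC1Lambda1}).
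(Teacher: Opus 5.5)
Your proposal is correct and is the direct cellular computation the paper intends. The paper states Proposition~\ref{prop:ALambda2} without a written proof, reading it off from (\ref{eq:dA})--(\ref{eq:dC}) for $\algCW(\Lambda_2,\Gamma_2,\mathcal{E}_2,\xi_2)$ with the decomposition of Figure~\ref{fig:CharMap2}; the text's ``$\alg(\Lambda_2)=\alg^{CW}(\Lambda_1,\ldots)$'' is a typo, and you read it correctly.

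One small correction: the boundary of $C_1$ does pass through $A_1$, which is the apex of the triangular characteristic domain, and $v_0\neq v_1$ as polygon vertices, so Remark~\ref{rem:diffs}(1) is not the relevant mechanism. The $-I$ term instead comes from the telescoping product $(\Theta^{\mu_2}_k+B_1)^{-1}(\Theta^{\mu_2}_k+B_1)=I$ along the two traversals of $B_1$, which is the argument you actually give.
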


\subsubsection{The DGA $\alg(\Lambda_3)$}
Due to the presence of the non-generic cone point singularity, we  cannot immediately employ the version of the cellular DGA described above in Section \ref{sec:CWDGA} to $\Lambda_3$.  
An extension of the cellular DGA allowing for cone point front singularities is established Section \ref{sec:Cone}.  Using this extension with the cellular decomposition $\mathcal{E}_3$ as in Figure \ref{fig:CharMap2} produces a DGA $\alg(\Lambda_3)$ as follows. 

\begin{proposition}  \label{prop:ALambda3}  The generators and differentials of $\alg(\Lambda_3)$ have the following properties:
	\begin{itemize}
		\item The generators of $\alg(\Lambda_3)$ coincide with those of $\alg(\Lambda_1)$ except that $c = c^2_{k,k+1}$ does not exist in $\alg(\Lambda_3)$.
		\item In particular, the generator $a= a^1_{k,k+1}$ that exists in $\alg(\Lambda_2)$ does not exist in $\alg(\Lambda_3)$.
	\item Write
	\[
	\lambda = \left\{\begin{array}{cl} \lambda_3, & \mbox{Connected Case,} \\ 1, & \mbox{Disconnected Case,} \end{array} \right.
	\]
	\[
	A_1^\lambda = A_1 + (\lambda^{-1}-1)E_{k,k+1},
	\]
	and
	\[
	U_\lambda = 
\Theta^\lambda_{k,k+1}+ \sum_{i <k} a^1_{i,k+1} \lambda E_{i,k} + \sum_{k+1 <j} \lambda a^1_{k,j} E_{k+1,j} 
	\]
	where $\Theta^\lambda_{k,k+1}$ is diagonal with $\lambda$ appearing in positions $k$ and $k+1$ and $1$ in all other diagonal entries.
	The differentials satisfy
	\begin{align}
		J_{A_1} \cdot \partial A_1 &= (A^\lambda_1)^2 \label{eq:3JA1Lambda1}  \\
		J_{B_1} \cdot \partial B_1 &= A_1^\lambda(\Theta^{\mu_3}_k +B_1)- (\Theta^{\mu_3}_k +B_1)A_0 \label{eq:3JB1Lambda1} \\
		J_{C_1} \cdot \partial C_1 & = A_0C_1+C_1A_0 + (\Theta^\lambda_{k,k+1}+B_0)- (\Theta^{\mu_3}_k+B_1)^{-1}U_\lambda(\Theta^{\mu_3}_k+B_1)   \label{eq:3JC1Lambda1}
	\end{align}
	where $\Theta^{\mu_3}_k$ is the diagonal matrix with $k$-th diagonal entry  $\mu_3$ and other diagonal entries $1$.
	\item Differentials for the generators associated to all other cells ($A_0$, $B_0$, and all cells outside of the pictured disk) coincide with differentials from $\mathcal{A}(\Lambda_1)$ after replacing all occurrences of $\lambda_3$ with $1$.  
\end{itemize} 
\end{proposition}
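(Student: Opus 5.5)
The plan is to build the cellular DGA for a generic perturbation of the cone point of $\Lambda_3$, and then reduce it by cancellation until only the cells pictured for $\mathcal{E}_3$ in Figure \ref{fig:CharMap2} remain. First I resolve the cone point. I perturb the front of $\Lambda_3$ inside $\pi_{xz}^{-1}(B)$ by a small Legendrian isotopy so that the cone becomes a generic front: two crossing sheets meeting along a small closed curve of crossings, with a pair of swallowtail points and cusp edges, all above a small disk $D$ around the center of $A_1$. I then refine $\mathcal{E}_3$ inside $D$ into a compatible polygonal decomposition containing these singularities. The combinatorial spin structure $\xi_3$ is a short arc joining the two points $a_w$ attached to the swallowtails; it lies inside $D$, so it agrees with $\xi_1$ outside $\pi_{xz}^{-1}(B)$. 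The cocycle $\lambda_3$ crosses the perturbed region as in Figure \ref{fig:CharMap2}, and I choose it so that it meets the lifts of exactly one $1$-cell of the refinement. This is where the factors $\lambda$ and $\lambda^{-1}$ enter the $\Delta$ matrices.

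Next I write out the differentials (\ref{eq:dA}), (\ref{eq:dB}), (\ref{eq:dC}), using the swallowtail-modified formulas, for all cells inside $D$. Then I apply Proposition \ref{prop:cancel} repeatedly, with an ordering from Construction \ref{const:order} and Remark \ref{rem:reorder}. Each $2$-cell of the refinement lying over $D$ is cancelled against a $1$-cell on its boundary, and each interior $1$-cell against a $0$-cell, in the same way as the $C_2$ and $B_2$ cancellation in Proposition \ref{prop:ALambda1}. After these cancellations the surviving generators sit over a single $0$-cell $A_1$. Since the two sheets that meet at the cone point have equal $z$-value at the center, no generator $a^1_{k,k+1}$ survives. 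I will also check that no leftover $c$-type generator survives, which matches the first two bullets of the statement.

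Third, I identify the resulting differentials. The surviving generators obey relations in which the $k,k+1$ slot of $A_1$ is replaced by an invertible coefficient, namely $\lambda^{-1}-1$ after normalization. Substituting these relations into $\partial A_1$, into $\partial B_1$ (through the boundary matrix $A_+$), and into $\partial C_1$ gives the matrix $A_1^\lambda$, the conjugating matrix $U_\lambda$, and the term $\Theta^\lambda_{k,k+1}+B_0$. The diagonal entries $\lambda$ at positions $k$ and $k+1$ come from the lifts of boundary edges meeting $\lambda_3$; signs are absorbed using $J^2=I$ and the choice of $\xi_3$. In the Disconnected Case $\lambda_3$ is absent, so setting $\lambda=1$ gives the stated formulas. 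Away from $D$ nothing changes, except that $\lambda_3$ (which exits $B$) contributes to $\Delta$ matrices of outside cells. Setting $\lambda_3=1$ there recovers the $\Lambda_1$ differentials, and this proves the last bullet.

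The hardest step is the middle one: choosing the refinement of the perturbed cone and running the cancellations so that exactly $A_1^\lambda$ and $U_\lambda$ come out, including the correct placement of $\lambda$ and $\lambda^{-1}$ and the signs from the swallowtail corrections. I would first do the whole computation over $\mathbb{F}_2$, ignoring signs, to confirm the shape of $U_\lambda$, and only then track signs using the spin-structure conventions.
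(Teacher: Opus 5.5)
Your overall route (resolve the cone point generically, write the cellular DGA with the swallowtail corrections, cancel via Proposition \ref{prop:cancel}) is the paper's, through Theorem \ref{thm:ConeDGA}, and your treatment of the last bullet is fine. However, the local model you plan to compute with is wrong, and the step that actually produces $A_1^\lambda$ and $U_\lambda$ is missing.

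A small Legendrian perturbation of a cone point is not two sheets crossing along a closed curve with a pair of swallowtails. As recalled in Section \ref{sec:desingcone} (after \cite{Rizell,EENS}), it has four swallowtails, two upward and two downward. Their cusp edges bound a square $X$ whose diagonals are the two crossing arcs: sheets $k,k+1$ cross between the downward swallowtails, and sheets $k+2,k+3$ between the upward ones. There are $n+2$ sheets over the interior of $X$. Two swallowtails cannot occur: for the generating family $\langle x,e^{i\theta}\rangle+\epsilon g(\theta)$ the swallowtails sit at the zeros of $g'''+g'$, which is orthogonal to $1,\cos\theta,\sin\theta$ and so has at least four zeros. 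Consequently $\xi_3$ needs two arcs. Also, $\lambda_3$ has to run through $X$ from $S_{k+3}$ to $S_k$ (Figure \ref{fig:ConeSpin}), so it contributes $\lambda^{\pm1}$ factors to the $\Delta$-matrices of several edges, not one.

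With the correct model, your cancellation scheme does not close up. You propose cancelling each $2$-cell against a boundary $1$-cell and each interior $1$-cell against a $0$-cell, as for $C_2,B_2$ in Proposition \ref{prop:ALambda1}. But the two extra sheets over $X$ and the missing generators along the diagonals make the counts mismatch cell by cell, and the leftover generators carry all the content. In the labels of Figure \ref{fig:ConeLabels1}, the paper's cancellations are:
\begin{itemize}
\item $c^4_{k,k+1}$ against $b^1_{k,k+1}$;
\item $c^3_{k+2,k+3}$ against the degree-$0$ generator $a^0_{k,k+1}$ at the upward swallowtail that replaces the cone point. Here $\pm\partial c^3_{k+2,k+3}\doteq 1+a^0_{k,k+1}\lambda-\lambda$, which forces $a^0_{k,k+1}\doteq 1-\lambda^{-1}$;
\item the cycle $c^2_{k,k+1}$ survives until the last step, where it is cancelled against $c^1_{k+1,k+2}$, after one checks that it is the $(k,k+2)$ entry of the correction term $G$;
\item $c^1_{i,k+2}$ and $c^1_{k+1,j}$ against $c^1_{i,k+1}$ and $c^1_{k+2,j}$, inside the same $2$-cell.
\end{itemize}

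The constant $1-\lambda^{-1}$ is where the modified $(k,k+1)$ entry of $A_1^\lambda$ comes from. It is not invertible, contrary to your claim: it vanishes at $\lambda=1$, e.g.\ in the Disconnected Case. Likewise, $a^1_{k,k+1}$ is absent because a genuine generator is cancelled and replaced by this constant. It is not because the sheets have equal $z$-value at a center point; after resolving there is no such point.

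$U_\lambda$ comes from showing that, after these cancellations, the boundary product $(\Delta_7+B_7)(\Delta_6+B_6)(\Delta_5+B_5)(\Delta_0+B_0)$ around $\partial X$ collapses to $U$. The off-diagonal entries $a^0_{i,k+1}\lambda$ and $\lambda a^0_{k,j}$ enter through the swallowtail $S$-matrices and the resulting relation for $B_0$. The paper also places $X$ in a chosen corner of $C_1$, so that this product appears as a new edge inserted between the two occurrences of $B_1$ in $\partial C_1$. Centering a disk at the cone point instead would cut $B_1$ and $C_1$, and this bookkeeping would have to be redone.

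Your third paragraph asserts these outcomes without any of this mechanism, so as it stands the proposal does not prove the statement.
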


The proof of Proposition \ref{prop:ALambda3} appears in Section \ref{sec:Cone}.

\subsubsection{The DGA $\alg(\Lambda_4)$}  
Finally, we turn to $\Lambda_4$ making use of the polygonal decomposition from Figure \ref{fig:CharMap1}.

\begin{proposition} \label{prop:ALambda4}  The cellular DGA $\alg^{CW}(\Lambda_4, \Gamma_4, \mathcal{E}_4, \xi_4)$ has a stable tame isomorphic quotient, $\mathcal{A}(\Lambda_4)$, with the following properties:
	\begin{itemize}
\item 	The generators of $\alg(\Lambda_4)$ coincide with the those of $\alg(\Lambda_1)$ except that  
	\begin{itemize}
		\item[(i)] the generator $c = c^2_{k,k+1}$ does not exist in $\alg(\Lambda_4)$
		and
		\item[(ii)] there are no generators of the form $a^1_{i,j}$ with $\{i,j\} \cap \{k,k+1\} \neq \emptyset$. 
	\end{itemize}
	\item In particular,
	 the generator $a:= a^1_{k,k+1}$ that exists in $\alg(\Lambda_2)$ does not exist in $\alg(\Lambda_4)$.
	\item 	Let $\widehat{A}_1$ denote the $n\times n$ matrix with $i,j$ entry of $a^1_{i,j}$ when $a^1_{i,j}$ exists as a generator of $\alg(\Lambda_4)$, with $k,k+1$ entry of $1$, and having all other entries $0$.  Write
	\[
	\lambda = \left\{\begin{array}{cl} \lambda_4, & \mbox{Connected Case,} \\ 1, & \mbox{Disconnected Case.} \end{array} \right.
	\]
	The differentials in $\mathcal{A}(\Lambda_4)$
	satisfy
		\begin{align}
			J_{A_1} \cdot \partial\widehat{A}_1 &= \widehat{A}_1^2 \label{eq:4JA1Lambda1}  \\
			J_{B_1} \cdot \partial B_1 &= \widehat{A}_1(\Theta^{\mu_4}_k +B_1)- (\Theta^{\mu_4}_k +B_1)A_0 \label{eq:4JB1Lambda1} \\
			J_{C_1} \cdot \partial C_1 & = A_0C_1+C_1A_0 + (\Theta^{\lambda}_{k,k+1}+B_0)-(\Theta^{\mu_4}_k+B_1)^{-1}\Theta^{\lambda}_{k,k+1}(\Theta^{\mu_4}_k+B_1).  \label{eq:4JC1Lambda1}
		\end{align}
		where $\Theta^{\mu_4}_k$ is the diagonal matrix with $k$-th diagonal entry  $\mu_1$ and other diagonal entries $1$, and $\Theta^\lambda_{k,k+1}$ is diagonal with $\lambda$ appearing in positions $k$ and $k+1$ and $1$ in all other diagonal entries.
\item Differentials for the generators associated to all other cells ($A_0$, $B_0$, and all cells outside of the pictured disk) coincide 
\begin{itemize}
	\item[(i)] with differentials from $\mathcal{A}(\Lambda_3)$ after replacing $\lambda_4$ with $\lambda_3$, and
	\item[(ii)] with  differentials from $\mathcal{A}(\Lambda_2)$ after replacing $\lambda_4$ with $1$.
\end{itemize}
\end{itemize}
\end{proposition}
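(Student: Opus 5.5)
We will prove Proposition \ref{prop:ALambda4} by writing down $\alg^{CW}(\Lambda_4, \Gamma_4, \mathcal{E}_4, \xi_4)$ from (\ref{eq:dA}), (\ref{eq:dB}), (\ref{eq:dC}) and then cancelling pairs of generators with Proposition \ref{prop:cancel}, in the same way as in Proposition \ref{prop:ALambda1}.

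\textbf{Setup.} In $\pi_x(B)$ the front of $\Lambda_4$ has a circle of cusp edges (the Lagrangian surgery neck) lying over the cells $A_1 \cup B_2$. Above $A_1$, $B_2$, $C_2$ the sheets $k,k+1$ meet at this cusp.

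\begin{itemize}
\item Over $A_1$ and $B_2$ only the $n-2$ sheets other than $k,k+1$ survive. This is why there are no generators $a^1_{i,j}$ with $\{i,j\}\cap\{k,k+1\}\neq\emptyset$.
\item Over the inner disk $C_2$ the sheets $k,k+1$ are absent.
\item The boundary matrices of $A_1$ seen from $B_1$ and $C_1$ are obtained as in Remark \ref{rem:diffs}(2): a direct sum of $A_1$ with a block $N$ in positions $k,k+1$. This gives exactly $\widehat{A}_1$, with its entry $1$ at $(k,k+1)$.
\end{itemize}

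So (\ref{eq:4JB1Lambda1}) comes straight from (\ref{eq:dB}), with $\Delta = \Theta^{\mu_4}_k$ from the intersection of $\mu_4$ with the $k$-th sheet over $B_1$.

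For (\ref{eq:4JA1Lambda1}), (\ref{eq:dA}) gives $J\partial A_1 = A_1^2$ on the surviving $(n-2)$-sheet block. The bordered matrix $\widehat{A}_1$ satisfies $\widehat{A}_1^2 = A_1^2 \oplus N^2 = A_1^2 \oplus 0$, and its $(k,k+1)$ entry is the constant $1$, whose differential is $0$. Hence $J\partial\widehat{A}_1 = \widehat{A}_1^2$ holds entrywise.

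\textbf{Cancellation.} Next we remove the $C_2$ and $B_2$ generators. The characteristic map of $C_2$ has $v_0 = v_1 = A_1$ and boundary $B_2$, so (\ref{eq:dC}) gives
\[
J\partial C_2 = A_1C_2 + C_2A_1 + I - (I + B_2) = -B_2 + A_1C_2 + C_2A_1.
\]
This is written in $(n-2)$-sheet indexing, so no uncancelled $c$ remains, unlike for $\Lambda_1$. Using the ordering of Construction \ref{const:order} with $|i-j|$ non-decreasing, Proposition \ref{prop:cancel} cancels every $c^2_{i,j}$ against $b^2_{i,j}$. In the quotient $C_2 \doteq 0$, hence $B_2 \doteq 0$.

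\textbf{Formula for $C_1$ and the $\lambda$ factors.} We substitute $B_2 \doteq 0$ into (\ref{eq:dC}) for $C_1$. The boundary matrix of $B_2$ seen from $C_1$ is $0$ plus identity entries in the cusp block. The $\Delta$ factors along $B_0$ and $B_2$ record intersections with $\lambda_4$. The curve $\lambda_4$ crosses both the $k$-th and the $(k+1)$-st sheet over $C_1$ (the two sheets joined through the neck), which produces $\Theta^\lambda_{k,k+1}$. Placing the crossings appropriately, $\Theta^\lambda_{k,k+1}$ appears next to $B_0$ and as the conjugated middle term, giving (\ref{eq:4JC1Lambda1}). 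In the Disconnected Case no $\lambda_4$ is chosen, so $\lambda = 1$.

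The claims about all other cells hold because $\Gamma$ and $\xi$ are unchanged outside $\pi_x(B)$. The only difference is whether $\lambda$ is recorded, which gives the substitution statements $\lambda_4 \mapsto \lambda_3$ and $\lambda_4 \mapsto 1$.

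\textbf{Main obstacle.} The delicate step is the bookkeeping of where $\lambda_4$ meets the shifted boundary lifts, together with the signs $J$ and $\xi$. This is what must produce exactly $\Theta^\lambda_{k,k+1}$ on both sides in (\ref{eq:4JC1Lambda1}), consistently with the conventions used for $\Lambda_3$. I would check it first, by carefully drawing the shifted lifts of $B_0$ and $B_2$ on sheets $k$ and $k+1$ in Figure \ref{fig:CharMap1}.
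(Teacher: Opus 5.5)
Your proposal is correct and follows the paper's route. You cancel every $c^2_{i,j}$ against $b^2_{i,j}$ via Proposition \ref{prop:cancel} using $J\cdot\partial C_2 = \pm B_2 + A_1C_2 + C_2A_1$ (with $v$ coming from $A_1C_2+C_2A_1$ and $w=0$), so that $C_2 \doteq B_2 \doteq 0$. You then read off the remaining differentials with the $(n-2)$-sheet $A_1$ generators reindexed into $\widehat{A}_1$, and obtain the claim about the other cells from the fact that outside $\pi_x(B)$ the setups differ only by the presence of $\lambda$.

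Your sign in front of $B_2$ is $-$ where the paper has $+$; this depends only on which boundary path of $C_2$ is used and does not matter. Your extra checks fill in details the paper leaves to inspection of Figure \ref{fig:CharMap1}: that $\widehat{A}_1^2$ reduces to $A_1^2$, and the $\Delta$-bookkeeping that puts $\lambda_4$ on both sheets $k,k+1$ along $B_0$ and $B_2$.
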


\begin{proof}
The differential of $C_2$ in $\alg^{CW}(\Lambda_4, \Gamma_4, \mathcal{E}_4, \xi_4)$ is computed from (\ref{eq:dC}) as
\[
J \cdot \partial \framebox{$C_2$} = A_1C_2 + C_2A_1 + \framebox{$B_2$}
\]
where the matrices $A_1,B_2,C_2$ appearing in the formula all have size $(n-2) \times (n-2)$ and have distinct generators appearing in every $i,j$ entry with $i<j$.  We can then apply Proposition \ref{prop:cancel} where the $y_1, \ldots, y_r$ (resp. $z_1, \ldots, z_r$) are the $c_{i,j}^2$ (resp. the $b_{i,j}^2$) ordered with $|i-j|$ non-decreasing; the $v$ term in Proposition \ref{prop:cancel} arises from $A_1C_2+C_2A_1$; and $w=0$.  The resulting stable tame isomorphic quotient has all $C_2$ and $B_2$ generators removed and $C_2 \doteq B_2 \doteq 0$.  It matches the description of $\mathcal{A}(\Lambda_4)$ from the statement of Proposition 5.7 with the caveat that the generators $a^2_{i,j}$ from $\alg^{CW}(\Lambda_4, \Gamma_4, \mathcal{E}_4, \xi_4)$ are renamed as $a^2_{\tau(i), \tau(j)}$ in $\mathcal{A}(\Lambda_4)$ where $\tau(l) = \left\{\begin{array}{cr} l, &  l < k, \\ l+2, & k \leq l. \end{array}\right.$  [This is due to the inconsistency of the numbering of sheets of $\Lambda_4$ above $C_1$ and $A_2$ (with the latter matching the enumeration above $C_2$) due to the presence of the cusp edge.]

The final statement relating differentials of other generators with those of $\alg(\Lambda_2)$ and $\alg(\Lambda_3)$ follows since outside of the pictured disk $\pi_x(B)$ the only difference between the setups $(\Lambda_a, \mathcal{E}_a, \Gamma_a, \xi_a)$, $1 \leq a \leq 4$ is that $\lambda_3$ and $\lambda_4$ exist (in the Connected Case) in $\Gamma_3$ and $\Gamma_4$ respectively (and coincide outside of $B$), but do not exist in $\Gamma_1$ or $\Gamma_2$.
\end{proof}

\subsection{Relating the four DGAs}  \label{sec:5-3}

The skein relation (SR1) is established via a combination of stable tame isomorphisms between various specializations stated in the following Proposition \ref{prop:specialiso}.  We prepare for the statement with some preliminaries.  Recall the Constructions \ref{cons:spec1} and \ref{cons:spec2}, and let $\mathbb{F}$ be a field.

\begin{itemize}
\item In a mild extension of notation, for $a=3$ or $4$ we set
\[
\alg(\Lambda_a)|_{\lambda =1} = \left\{\begin{array}{cc} \alg(\Lambda_a)|_{\lambda_a =1}, & \mbox{Connected Case}, \\
\alg(\Lambda_a), & \mbox{Disconnected Case}; \end{array}\right.
\]
note that this is consistent with the use of $\lambda$ in Propositions \ref{prop:ALambda3} and \ref{prop:ALambda4}.  (Recall that the geometric cocycles $\lambda_3$ and $\lambda_4$ are only defined in the Connected Case; see Section \ref{sec:cocycles}.)

\item By identifying $\mu= \mu_1=\mu_2=\mu_3=\mu_4$ we can view the DGAs $\alg(\Lambda_1)$, $\alg(\Lambda_2)$ as well as any specializations of the form $\alg(\Lambda_3)|_{\lambda =1}$, $\alg(\Lambda_4)|_{\lambda =1}$, $\alg(\Lambda_3)|_{\lambda_3 = \overline{\lambda}}$, or $\alg(\Lambda_4)|_{\lambda_4 = \overline{\lambda}}$ with $\overline{\lambda} \in \mathbb{F}^*$ as being defined over the same coefficient ring, $R = \mathbb{F}[t_1^{\pm1}, \ldots, t_\ell^{\pm1}, \mu^{\pm1}]$.

\item Note that the generators $c \in \alg(\Lambda_1)$ and $a \in \alg(\Lambda_2)$ identified in Propositions \ref{prop:ALambda1} and \ref{prop:ALambda2} have their degrees determined from the values of the Maslov potential indicated in (SR1) as $r$ and $s$ by
\begin{equation} \label{eq:acMaslov}
|c| = s-r+1, \quad |a|= r-s-1.
\end{equation}
In particular, $|a|=|c|=0$ holds if and only if $r=s+1$ so that the RHS of (SR1) can be non-zero.  Recall that we only consider the $\Lambda_3$ and $\Lambda_4$ in this situation (as otherwise, $\Lambda_3$ and $\Lambda_4$ do not inherit a well defined $\Z$-valued Maslov potential from $\Lambda_1$ and $\Lambda_2$.)

\end{itemize}

\begin{proposition} \label{prop:specialiso}   Let $\mathbb{F}$ be a field.  
We have the following semi-tame isomorphisms, $\cong$, and stable semi-tame isomorphisms, $\stackrel{s.t.}{\cong}$, of based DGAs over $R=\mathbb{F}[t_1^{\pm1}, \ldots, t_\ell^{\pm1}, \mu^{\pm1}]$.
\begin{enumerate}
\item  We have
\begin{equation} \label{eq:58i}
\alg(\Lambda_1)|_{c=0} \cong \alg(\Lambda_2)|_{a=0}.
\end{equation}
\item If $|a|=|c| =0$, then
\begin{equation}  \label{eq:58ii}
\forall \, \overline{c} \in \mathbb{F}^*, \quad \alg(\Lambda_1)|_{c=\overline{c}} \cong \alg(\Lambda_3)|_{\lambda=1}
\end{equation}
and
\begin{equation}  \label{eq:58iii}
\forall \, \overline{a} \in \mathbb{F}^*, \quad \alg(\Lambda_2)|_{a=\overline{a}} \stackrel{s.t.}{\cong} \alg(\Lambda_4)|_{\lambda=1}.
\end{equation}
Moreover, in the Connected Case
\begin{equation}  \label{eq:58iv}
\forall \, \overline{\lambda} \in \mathbb{F}^*\setminus\{1\}, \quad \alg(\Lambda_3)|_{\lambda_3=\overline{\lambda}} \stackrel{s.t.}{\cong} \alg(\Lambda_4)|_{\lambda_4=\overline{\lambda}}.
\end{equation}
\end{enumerate}

\end{proposition}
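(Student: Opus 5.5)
All four identifications can be read off from the explicit differentials in Propositions \ref{prop:ALambda1}--\ref{prop:ALambda4}. Outside the pictured disk the generators and differentials of the four DGAs coincide once $\lambda$ is specialized appropriately. It therefore suffices to match the local data: the matrices $A_1, B_1, C_1$ together with $c$, $a$, or the missing $a^1_{i,j}$. The matching will use explicit algebra automorphisms that fix $A_0$ and $B_0$, followed where needed by cancellations via Proposition \ref{prop:cancel}. Since $\mu=\mu_1=\mu_2=\mu_3=\mu_4$ is identified and the coefficient ring is common, the maps will be tame apart from possible rescalings of $t$-variables; this is why only semi-tame isomorphisms are claimed.

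For (\ref{eq:58i}), setting $c=0$ in (\ref{eq:JC1Lambda1}) gives
\[
J_{C_1}\partial C_1 = A_0C_1+C_1A_0+(I+B_0)-I ,
\]
which agrees with (\ref{eq:2JC1Lambda1}). Setting $a=0$ in $\alg(\Lambda_2)$ returns $A_1$ to a matrix with vanishing $(k,k+1)$-entry, so the formulas (\ref{eq:2JA1Lambda1}) and (\ref{eq:2JB1Lambda1}) agree with those of $\alg(\Lambda_1)$. The identity on generators is then an isomorphism.

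For (\ref{eq:58ii}), in the case $\lambda=1$ we have $A_1^\lambda=A_1$ and $\Theta^\lambda_{k,k+1}=I$. Moreover,
\[
U_1 = I+A_1(E_{k+1,k})+(E_{k+1,k})A_1 .
\]
On the other side, $\alg(\Lambda_1)|_{c=\overline c}$ has the term $I+\overline c\,(A_1E_{k+1,k}+E_{k+1,k}A_1)$. I will rescale by conjugating with a diagonal matrix $D$, which scales row/column $k$ or $k+1$ by $\overline c$. Concretely, I will replace $a^1_{i,k+1}\mapsto \overline c^{-1}a^1_{i,k+1}$ and make compensating rescalings of the $B_1$, $C_1$, $A_0$ entries in the affected rows and columns, so that $\overline c$ disappears. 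I expect this to be consistent because all the formulas are conjugation-equivariant under $D$, provided $D$ also commutes with $\Theta^{\mu}_k$ and $J$. I will choose $D$ acting on a single index, and check that $D(I+B_0)D^{-1}$ can be absorbed into $B_0$ entries. Should this clash with the fixed $A_0$ and $B_0$ cells shared with the exterior, I will instead absorb $\overline c$ into the coefficient variable $\mu$ via a coefficient automorphism $\mu\mapsto \overline c^{\pm1}\mu$; this is exactly where semi-tameness enters.

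For (\ref{eq:58iii}), the differential in $\alg(\Lambda_2)|_{a=\overline a}$ contains $J\partial a^1_{i,k+1}$ with a term $\pm\overline a\, a^1_{i,k}$, and similarly $J\partial a^1_{k,j}$ contains $\pm\overline a\, a^1_{k+1,j}$. After rescaling so that $\overline a=1$, which makes $A_1$ equal to $\widehat A_1$ plus the extra rows and columns, I will apply Proposition \ref{prop:cancel} to cancel the pairs $(a^1_{i,k+1},a^1_{i,k})$ for $i<k$ and $(a^1_{k,j},a^1_{k+1,j})$ for $j>k+1$, ordered by $|i-j|$. This removes exactly the generators listed as absent in Proposition \ref{prop:ALambda4}. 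I must then verify that, in the quotient, the relations express the remaining entries so that $A_1\doteq\widehat A_1$ up to conjugation by an elementary matrix $P$ built from $E_{k+1,k}$-type terms. Conjugating $B_1$ and $C_1$ by $P$ via an elementary change of variables should then turn (\ref{eq:2JB1Lambda1}) and (\ref{eq:2JC1Lambda1}) into (\ref{eq:4JB1Lambda1}) and (\ref{eq:4JC1Lambda1}) with $\lambda=1$.

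For (\ref{eq:58iv}), with $\overline\lambda\ne1$ the $(k,k+1)$-entry of $A_1^\lambda$ is the unit $\overline\lambda^{-1}-1$. This plays the role of $\overline a$, and the same cancellation as in (\ref{eq:58iii}) reduces $\alg(\Lambda_3)$ to the form of $\alg(\Lambda_4)$. The term $U_\lambda$ should become $\Theta^\lambda_{k,k+1}$ after the conjugation.

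I expect the main obstacle to be bookkeeping in (\ref{eq:58iii}) and (\ref{eq:58iv}). The difficulty is constructing the conjugating matrix $P$ so that the $C_1$ formula, including $U_\lambda$ and the $\Theta^\lambda_{k,k+1}$ entries, transforms exactly, with signs from $J$ and from Proposition \ref{prop:cancel}. I would first work out the $n=2$ case by hand and then generalize.
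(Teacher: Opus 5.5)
Your overall route matches the paper's. Part (i) is a direct comparison of the formulas. Part (ii) is an explicit rescaling isomorphism using a coefficient automorphism of $\mu$. Parts (iii) and (iv) cancel the pairs $(a^1_{i,k+1},a^1_{i,k})$ and $(a^1_{k,j},a^1_{k+1,j})$ via Proposition~\ref{prop:cancel} against the unit $(k,k+1)$-entry, and then rescale.

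\textbf{The gap: the rescaling map in (ii).} This map is the entire content of (ii) and is reused in (iii) and (iv), and your first choice for it fails. Conjugating all of $A_0,B_0,A_1,B_1,C_1$ by a diagonal $D$ would work if only the disk mattered. But $A_0$ and $B_0$ are boundary data for the cells outside the disk, whose generators and differentials must stay unchanged.
\begin{itemize}
\item With $A_0$ fixed, the relation $J_{B_1}\partial B_1=A_1(\Theta^{\mu}_k+B_1)-(\Theta^{\mu}_k+B_1)A_0$ forces $\Phi(A_1)=DA_1D^{-1}$ together with the \emph{left multiplication} $\Phi(\Theta^{\mu}_k+B_1)=D(\Theta^{\mu}_k+B_1)$, not a conjugation.
\item For $D\Theta^{\mu}_k$ to again have the form $\Theta^{\mu'}_k$, the scalar must sit at index $k$, the sheet on which $\mu$ crosses $B_1$. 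It cannot sit at $k+1$, which is what your scaling $a^1_{i,k+1}\mapsto\overline{c}^{-1}a^1_{i,k+1}$ amounts to.
\end{itemize}
So your ``fallback'' is not optional. The map $\alg(\Lambda_3)|_{\lambda=1}\to\alg(\Lambda_1)|_{c=\overline{c}}$ is $\mu\mapsto\overline{c}\mu$, $A_1\mapsto DA_1D^{-1}$, $\Theta^{\mu}_k+B_1\mapsto D(\Theta^{\mu}_k+B_1)$, with $D=\mathrm{diag}(1,\dots,\overline{c},\dots,1)$ ($\overline{c}$ in position $k$), and the identity on $A_0$, $B_0$, $C_1$ and all exterior generators. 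The $C_1$ check then works because the conjugated term becomes $(\Theta^{\mu}_k+B_1)^{-1}D^{-1}\Phi(U_1)D(\Theta^{\mu}_k+B_1)$, and $D^{-1}\Phi(U_1)D=I+\overline{c}(A_1E_{k+1,k}+E_{k+1,k}A_1)$:
\begin{itemize}
\item the $E_{i,k}$ terms pick up $\overline{c}$ from $D^{-1}E_{i,k}D=\overline{c}E_{i,k}$;
\item the $E_{k+1,j}$ terms pick up $\overline{c}$ from $\Phi(a^1_{k,j})=\overline{c}\,a^1_{k,j}$;
\item $a^1_{i,k+1}$ is unchanged.
\end{itemize}
The semi-tame ingredient is this rescaling of $\mu$, not of the $t_i$.

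\textbf{Parts (iii) and (iv) are simpler than you expect.} No elementary conjugation $P$ is needed, and the $C_1$ formulas need no transformation. All the cancellations have $w=0$, so every cancelled generator is $\doteq 0$ in the quotient. Write $\widetilde{A}_1$ for $A_1$ with rows and columns $k,k+1$ set to $0$. Then $A_1\doteq\widetilde{A}_1+\overline{a}E_{k,k+1}=D\widehat{A}_1D^{-1}$, with $\overline{a}$ replaced by $\overline{\lambda}^{-1}-1$ in (iv). Also $U_\lambda\doteq\Theta^\lambda_{k,k+1}$, because its off-diagonal entries are exactly the cancelled $a^1_{i,k+1}$ and $a^1_{k,j}$. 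The $C_1$ differentials then already agree: in (iii) both reduce to $A_0C_1+C_1A_0+B_0$, and in (iv) one uses that $D$ commutes with $\Theta^\lambda_{k,k+1}$. The same index-$k$ map finishes the argument:
\begin{itemize}
\item $\Theta^{\mu}_k+B_1\mapsto D(\Theta^{\mu}_k+B_1)$;
\item $\mu\mapsto\overline{a}\mu$, respectively $\mu\mapsto(\overline{\lambda}^{-1}-1)\mu$;
\item the identity on the surviving $A_1$ generators.
\end{itemize}
This matches the $B_1$ relations. Your step ``rescale so that $\overline{a}=1$'' has to be implemented by exactly this map.
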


\begin{proof}  Throughout the proof we will use notations $\partial_1, \partial_2, \partial_3$, and $\partial_4$ for the differentials on the DGAs arising from $\alg(\Lambda_1), \alg(\Lambda_2), \alg(\Lambda_3)$, and $\alg(\Lambda_4)$ respectively.
	
	\medskip
	
\noindent 	{\bf  Proof of (\ref{eq:58i}):}  Once these specializations are made, the generating sets of the two dg-algebras (as computed in Propositions \ref{prop:ALambda1} and \ref{prop:ALambda2}) coincide, and the formulas (\ref{eq:JA1Lambda1})-(\ref{eq:JC1Lambda1}) and (\ref{eq:2JA1Lambda1})-(\ref{eq:2JC1Lambda1}) that characterize the differentials of generators agree. E.g., upon specializing $c =0$, the product in the second line of (\ref{eq:JC1Lambda1}) becomes simply $I$.

\medskip

\noindent {\bf  Proof of (\ref{eq:58ii}):} We will  establish an isomorphism of dg-algebras $\Phi:\alg(\Lambda_3)|_{\lambda=1} \rightarrow \alg(\Lambda_1)|_{c = \overline{c}}$ over $\mathbb{F}$ that when restricted to the coefficient rings is the isomorphism 
\begin{equation} \label{eq:phioverlinec}
\phi_{\overline{c}}: \mathbb{F}[t_1^{\pm1}, \ldots, t_\ell^{\pm1}, \mu_3^{\pm1}] \stackrel{\cong}{\rightarrow} \mathbb{F}[t_1^{\pm1}, \ldots, t_\ell^{\pm1}, \mu_1^{\pm1}]
\end{equation}
 that preserves $\F$ and satisfies $\phi_{\overline{c}}(t_i) = t_i$ and $\phi_{\overline{c}}(\mu_3) = \overline{c} \mu_1$.  Set $D= \mbox{diag}(1, \ldots, \overline{c}, 1, \ldots, 1)$ to be the diagonal matrix with $\overline{c}$ in the $k$-th position and $1$'s at all other diagonal entries.  There is then a unique $\F$-algebra homomorphism $\Phi$ that (i) restricts to $\phi_{\overline{c}}$ on $R$, (ii) satisfies the matrix equations (as usual maps are applied to matrices entry-by-entry) 
\begin{equation} \label{eq:PhiA1527}
\Phi(A_1) = D A_1 D^{-1}, \quad \Phi(\Theta_k^{\mu_3}+ B_1) = D(\Theta_k^{\mu_1} + B_1),
\end{equation}
and acts as the identity on all generators not belonging to $A_1$ or $B_1$.  [In defining $\Phi$, it is important to note that $D\Theta_k^{\mu_3} = \phi_{\overline{c}}(\Theta_k^{\mu_3})$.]  It then remains to verify that $\Phi \circ \partial_3 = \partial_1 \circ \Phi$ holds on all generators of $\alg(\Lambda_3)|_{\lambda=1}$ (and hence on all of $\alg(\Lambda_3)|_{\lambda=1}$).  As the generators from $A_1$ or $B_1$ and elements of the coefficient ring involving  $\mu_3$ or $\mu_1$ only appear in the differentials of $A_1, B_1$, and $C_1$, the identity is clear when applied to any generator not belonging to $A_1,B_1,$ or $C_1$.  We are left only to check that $\Phi \circ \partial_3 = \partial_1 \circ \Phi$ when applied to entries of $A_1, B_1,$ and $C_1$.  This is done via routine, but somewhat involved, matrix calculations.  Frequently, we will use that when $M \in \mathit{Mat}_n(\F)$ is a matrix of constants and $X \in \mathit{Mat}_n(\alg(\Lambda_3)|_{\lambda=1})$ the algebra homomorphism property of $\Phi$ and the Leibniz rule for $\partial_a$, $a=1,3$, allow us to make computations such as
\[
\Phi(MX) = M \Phi(X), \quad \Phi(XM) = \Phi(X) M,  \quad \partial_a(MX) = M \partial_a(X), \quad \partial_a(XM) = \partial_a(X)M, \quad \mbox{etc.}
\]

Now, noting that when $\lambda=1$ the matrix $A_1^\lambda$ from (\ref{eq:3JA1Lambda1}) becomes simply $A_1$, we compute
\begin{align*}
J_{A_1} \cdot (\Phi \circ \partial_3)(A_1) & = \Phi( J_{A_1} \cdot \partial_3(A_1) ) = \Phi( (A_1)^2) = (D A_1D^{-1})^2 = D (A_1)^2D^{-1} \\
 & = D(J_{A_1} \cdot \partial_1(A_1))D^{-1} = J_{A_1} \cdot \partial_1(D A_1 D^{-1}) = J_{A_1} \cdot (\partial_1 \circ \Phi)(A_1)
\end{align*}
where at the 6th equality we used that $D$ and $J_{A_1}$ commute since they are both diagonal.  As $J_{A_1}$ is invertible, this shows $(\Phi \circ \partial_3)(A_1) = (\partial_1 \circ \Phi)(A_1)$.

Next, compute from (\ref{eq:PhiA1527}) that
\begin{align*}
J_{B_1} \cdot(\Phi \circ \partial_3)(\Theta_k^{\mu_3} + B_1) & = \Phi\left[ A_1(\Theta_k^{\mu_3} + B_1) - (\Theta_k^{\mu_3} + B_1) A_0 \right] \\
&= (DA_1D^{-1})\big( D(\Theta_k^{\mu_1} + B_1)\big) - \big( D(\Theta_k^{\mu_1} + B_1)\big) A_0 \\
 & = D\left[ A_1 (\Theta_k^{\mu_1} + B_1)- (\Theta_k^{\mu_1} + B_1)A_0 \right] \\
& = D\left[J_{B_1} \cdot \partial_1(\Theta_k^{\mu_1} + B_1) \right] = J_{B_1} \cdot \partial_1( D (\Theta_k^{\mu_1} + B_1)) \\ & = J_{B_1} \cdot (\partial_1 \circ \Phi)(\Theta_k^{\mu_3} + B_1).
\end{align*}
This verifies $\Phi\circ \partial_3 = \partial_1 \circ \Phi$ on all generators from $B_1$.

Turning towards $C_1$, we first observe that in the specialization  $\alg(\Lambda_3)|_{\lambda=1}$ the matrices $U_\lambda$ and $\Theta^\lambda_{k,k+1}$ from (\ref{eq:3JC1Lambda1}) becomes
\[
U_\lambda= I + \sum_{i<k} a^1_{i,k+1}E_{i,k}+ \sum_{k+1<j} a_{k,j}^1 E_{k+1,j} \quad \quad \mbox{and} \quad \quad \Theta^\lambda_{k,k+1}=I.
\]
We can then compute
\begin{align*}
J_{C_1} \cdot (\Phi \circ\partial_3)(C_1) & = \Phi\bigg[A_0C_1+C_1A_0 +(I+B_0) \\ & \quad -(\Theta_k^{\mu_3} + B_1)^{-1}\left[I + \sum_{i<k} a^1_{i,k+1}E_{i,k}+ \sum_{k+1<j} a_{k,j}^1 E_{k+1,j} \right](\Theta_k^{\mu_3} + B_1)\bigg] \\
& = A_0C_1+C_1A_0 +(I+B_0) \\ & \quad -(\Theta_k^{\mu_1} + B_1)^{-1}D^{-1}\left[I + \sum_{i<k} a^1_{i,k+1}E_{i,k}+ \sum_{k+1<j} \overline{c} a_{k,j}^1 E_{k+1,j} \right]D(\Theta_k^{\mu_1} + B_1) \\
& = A_0C_1+C_1A_0 +(I+B_0) \\ & \quad -(\Theta_k^{\mu_1} + B_1)^{-1}\left[I + \sum_{i<k} a^1_{i,k+1} \overline{c} E_{i,k}+ \sum_{k+1<j} \overline{c} a_{k,j}^1 E_{k+1,j} \right](\Theta_k^{\mu_1} + B_1) \\
& = J_{C_1} \cdot \partial_1C_1 = J_{C_1} \cdot (\partial_1 \circ \Phi)(C_1).
\end{align*}
At the 2nd equality we used that $\Phi(a_{k,j}^1) = \overline{c} a^1_{k,j}$ for $k+1<j$ and  $\Phi(a^1_{i,k+1})= a^1_{i,j}$ for $i<k$; at the 3rd equality we used that for $i<k$, $D^{-1}a^1_{i,k+1} E_{i,k}D = a^1_{i,k+1}\overline{c}E_{i,k}$, and for $k+1<j$, $D^{-1}a^1_{k,j}  E_{k+1,j}D = a^1_{k,j}E_{k+1,j}$.

\medskip

\noindent {\bf  Proof of (\ref{eq:58iii}):}

\medskip

We begin by applying Proposition \ref{prop:cancel} to remove all generators of the form $a^1_{i,k}, a^1_{i, k+1}, a^1_{k,j}$ and $a^1_{k+1,j}$ with $i<k$ and $k+1 <j$ from $\alg(\Lambda_2)|_{a=\overline{a}}$; notice that as $a^1_{k,k+1} = a$ is also removed from the generating set of $\alg(\Lambda_2)$ due to the specialization, the remaining generators are in bijection with those of $\alg(\Lambda_4)|_{\lambda=1}$.  This application of Proposition \ref{prop:cancel} is done in two parts: First, take the $y_1, \ldots, y_r$ and $z_1, \ldots, z_r$ to respectively be the $a^1_{i,k+1}$ and $a^1_{i,k}$ ordered with $i$ decreasing from $k-1$ to $1$.  Here, equation (\ref{eq:2JA1Lambda1}) provides for $i<k$
\[
\partial_2 \framebox{$a^1_{i,k+1}$} = \pm\left( \framebox{$a^1_{i,k}$}\, \overline{a} + \sum_{i<l<k} a^1_{i,l}a^1_{l,k+1} \right)
\]
which indeed demonstrates the required form from equation (\ref{eq:yzvw}) where $\alpha = \pm \overline{a} \in \mathbb{F}^*$ and $w=0$.  Second, take the $y$ and $z$ generators to be $a^1_{k,j}$ and $a^1_{k+1,j}$ in the order of increasing $j$; we have
\[
\partial_2 \framebox{$a^1_{k,j}$} = \pm\left( \overline{a}\,\framebox{$a^1_{k+1,j}$} + \sum_{k+1<l<j} a^1_{k,l}a^1_{l,j} \right)
\]
again verifying (\ref{eq:yzvw}).  Denote by $\mathcal{B}(\Lambda_2, \overline{a})$ the resulting stable tame isomorphic quotient of $\alg(\Lambda_2)|_{a = \overline{a}}$.  Notice that in this quotient (since in the applications of Proposition \ref{prop:cancel} the $w$ term was always $0$), the cancelled generators all become $0$.  Moreover, the matrix $A_1$ from $\alg(\Lambda_2)$ becomes
\begin{equation}  \label{eq:A1doteq}
A_1 \doteq \widetilde{A}_1 + \overline{a}E_{k,k+1}
\end{equation}
where $\widetilde{A}_1$ denotes $A_1$ with all entries in rows and columns $k$ and $k+1$ replaced with $0$.  
With this notation the matrix $\widehat{A}_1$ from Proposition \ref{prop:ALambda4} is $\widehat{A}_1= \widetilde{A}_1+ E_{k,k+1}$.  In particular, the differential on $\mathcal{B}(\Lambda_2, \overline{a})$ has 
\begin{equation}  \label{eq:1102}
J_{A_1} \cdot \partial_2 \widetilde{A}_1 = J_{A_1} \cdot \partial_2 (\widetilde{A}_1 + \overline{a}E_{k,k+1}) = J_{A_1} \cdot \partial_2 (A_1) = A_1^2 = (\widetilde{A}_1 + \overline{a}E_{k,k+1})^2 = (\widetilde{A}_1)^2.
\end{equation}

A dg-algebra isomorphism,  $\Phi: \mathcal{B}(\Lambda_2, \overline{a}) \rightarrow \alg(\Lambda_4)|_{\lambda=1}$ now arises from requiring $\Phi|_{R} = \phi_{\overline{a}}$ (defined as in Proof of (5.15)); 
\[
\Phi(\Theta_k^{\mu_2}+B_1) = D(\Theta_k^{\mu_4}+B_1)
\]
where $D$ is now a diagonal matrix with $\overline{a}$ in the $k$-th position and all other diagonal entries $1$; and $\Phi$ acts as the identity on all generators not belonging to $B_1$.  To verify that $\Phi$ is a chain map, note that using (\ref{eq:1102}) and (\ref{eq:4JA1Lambda1}) we have 
\[
J_{A_1} \cdot (\Phi \circ \partial_2)(\widetilde{A}_1) = \Phi[(\widetilde{A}_1)^2] = (\widetilde{A}_1)^2 = (\widehat{A}_1)^2 = J_{A_1} \cdot \partial_4 \widehat{A}_1 = J_{A_1} \cdot \partial_4 \widetilde{A}_1 = J_{A_1} \cdot (\partial_4 \circ \Phi)( \widetilde{A}_1).
\]
In addition, observing that
\[
\widetilde{A}_1D = \widetilde{A}_1 = D \widetilde{A}_1 \quad \mbox{and} \quad \overline{a}E_{k,k+1} D = \overline{a}E_{k,k+1} = D E_{k,k+1}
\]
and using equations (\ref{eq:2JB1Lambda1}), (\ref{eq:4JB1Lambda1}), and (\ref{eq:A1doteq}), we can compute
\begin{align*}
J_{B_1} \cdot (\Phi \circ \partial_2)(\Theta_k^{\mu_2}+ B_1) & = \Phi\left[ (\widetilde{A}_1 + \overline{a}E_{k,k+1})(\Theta_k^{\mu_2}+B_1) - (\Theta_k^{\mu_2}+B_1)A_0 \right] \\
& = (\widetilde{A}_1 + \overline{a}E_{k,k+1}) D (\Theta_k^{\mu_4}+B_1) - D(\Theta_k^{\mu_4}+B_1)A_0 \\
& = D\left[(\widetilde{A}_1 + E_{k,k+1})(\Theta_k^{\mu_4}+B_1) - (\Theta_k^{\mu_4}+B_1)A_0 \right] \\
& = D\left[ J_{B_1} \cdot \partial_4(\Theta_k^{\mu_4}+B_1) \right]  \\ & = J_{B_1} \cdot \left[ \partial_4( D(\Theta_k^{\mu_4}+B_1))\right]  = J_{B_1} \cdot (\partial_4 \circ \Phi)(\Theta_k^{\mu_2}+B_1).
\end{align*}
For the rest of the generators (including entries of $C_1$), the differentials agree in $\mathcal{B}(\Lambda_2, \overline{a})$ and $\alg(\Lambda_4)|_{\lambda=1}$ and do not involve $B_1$, $\mu_2$ or $\mu_4$.  [In (\ref{eq:4JC1Lambda1}), use that $\Theta^\lambda_{k,k+1}=I$ due to the specialization of $\lambda=1$.]

\medskip

\noindent {\bf  Proof of (\ref{eq:58iv}):}  Since the $k,k+1$ entry of the matrix $A^\lambda_1$ from Proposition \ref{prop:ALambda3} becomes the non-zero element $\overline{\lambda}^{-1} -1 \in \mathbb{F}^*$, we can begin by cancelling all generators of the form $a_{i,k}, a_{i,k+1}, a_{k,j}, a_{k+1,j}$ precisely as was done in Proof of (\ref{eq:58iii}) above.  Each of these generators becomes equal to $0$ in the resulting quotient dg-algebra that we denote as $\mathcal{B}(\Lambda_3, \overline{\lambda})$.  The proof is then completed via an isomorphism $\Phi: \mathcal{B}(\Lambda_3, \overline{\lambda}) \rightarrow \alg(\Lambda_4)|_{\lambda = \overline{\lambda}}$ defined to have $\Phi|_{R} = \phi_{\overline{\lambda}^{-1}-1}$; to have 
\[
\Phi(\Theta_{k}^{\mu_3}+B_1) = D(\Theta_{k}^{\mu_4}+B_1)
\]
where $D$ is now taken to be diagonal with $\overline{\lambda}^{-1}-1$ in the $k$-th position and $1$'s elsewhere;  and to act as the identity on all other generators.  That $\Phi \circ \partial_3 = \partial_4 \circ \Phi$ is verified on generators from $A_1$ and $B_1$ in the same manner as in the Proof of (\ref{eq:58iii}).  For consideration of $C_1$, note that in $\mathcal{B}(\Lambda_3, \overline{\lambda})$  the matrices $U_\lambda$ and $\Theta_{k,k+1}^\lambda$ from Proposition \ref{prop:ALambda3} becomes equal.  Then, we see that
\begin{align*}
J_{C_1} \cdot (\Phi \circ \partial_3)(C_1)  & = \Phi\left[ A_0 C_1 + C_1A_0 +(\Theta_{k,k+1}^\lambda+B_0) -(\Theta_k^{\mu_3}+B_1)^{-1}\Theta^\lambda_{k,k+1} (\Theta_k^{\mu_3}+B_1) \right] \\
 & = A_0 C_1 + C_1A_0 +(\Theta_{k,k+1}^\lambda+B_0) - (\Theta_k^{\mu_4}+B_1)^{-1}D^{-1}\Theta^\lambda_{k,k+1} D(\Theta_k^{\mu_4}+B_1) \\
 & = A_0 C_1 + C_1A_0 +(\Theta_{k,k+1}^\lambda+B_0) - (\Theta_k^{\mu_4}+B_1)^{-1}\Theta^\lambda_{k,k+1}(\Theta_k^{\mu_4}+B_1) \\
 & = J_{C_1} \cdot \partial_4(C_1) = J_{C_1} \cdot (\partial_4 \circ \Phi)(C_1).
 \end{align*}

\end{proof}

We are now prepared to prove the index $0$/$2$ skein relation (SR1).

\begin{proof}[Proof of (SR1)]  {\bf Case 1.} {\it $|a|$ and $|c|$ are non-zero.}  Then, any augmentation of $\alg(\Lambda_1)$ (resp. $\alg(\Lambda_2)$) must map $c \mapsto 0$ (resp. $a\mapsto 0$), so that via Proposition \ref{prop:specbiject} we have bijections  
\[
V(\alg(\Lambda_1), \mathbb{F}_q) \cong V(\alg(\Lambda_1)|_{c=0}, \mathbb{F}_q) \quad \mbox{and} \quad V(\alg(\Lambda_2), \mathbb{F}_q) \cong V(\alg(\Lambda_2)|_{a=0}, \mathbb{F}_q).
\]
As the shifted Euler characteristics (from Definition \ref{def:DGalgAugNum}) are related by
\[
\chi^*(\alg(\Lambda_1)|_{c=0}) = \chi^*(\alg(\Lambda_1)) - \eta(|c|)  \quad \mbox{and} \quad \chi^*(\alg(\Lambda_2)|_{a=0}) = \chi^*(\alg(\Lambda_2)) - \eta(|a|)
\]
where $\eta$ is the shifted parity function from (\ref{eq:parity}), the augmentation numbers of the {\it dg-algebras} (as in Definition \ref{def:DGalgAugNum}) satisfy
\[
q^{\eta(|c|)/2}\aug(\alg(\Lambda_1), q) = \aug(\alg(\Lambda_1)|_{c=0}, q) \quad \mbox{and} \quad q^{\eta(|a|)/2}\aug(\alg(\Lambda_2), q) = \aug(\alg(\Lambda_2)|_{a=0}, q).
\]
Now, since $H_0(\Lambda_1) \cong H_0(\Lambda_2)$ and $|\Gamma_1| = |\Gamma_2|$, working from Definition \ref{def:augLambda} and using (\ref{eq:acMaslov}) we verify
\begin{align*}
& \quad \quad q^{\eta(s-r+1)/2}\aug_{\Lambda_1}(q) - q^{\eta(r-s-1)/2}\aug_{\Lambda_2}(q) \\ & = q^{\eta(|c|)/2} (q-1)^{\dim H_0(\Lambda_1)- |\Gamma_1|}\aug(\alg(\Lambda_1),q)  - q^{\eta(|a|)/2} (q-1)^{\dim H_0(\Lambda_2)- |\Gamma_2|}\aug(\alg(\Lambda_2),q) \\
& = (q-1)^{\dim H_0(\Lambda_1)- |\Gamma_1|}\big(\aug(\alg(\Lambda_1)|_{c=0},q) - \aug(\alg(\Lambda_2)|_{a=0},q) \big) = 0
\end{align*}
where the last equality used (\ref{eq:58i}).  Proposition \ref{prop:stableinv} and Theorem \ref{thm:CWmulti}  have been applied as well to allow the use of the dg-algebra $\alg(\Lambda_1)$ in computing $\aug_{\Lambda_1}(q)$, and similarly for $\Lambda_2$.

\medskip

\noindent {\bf Case 2.} {\it $|a|=|c|=0$ in Connected Case.}  We need to establish the identity
\[
q^{1/2} \aug_{\Lambda_1}(q) - q^{1/2} \aug_{\Lambda_2}(q) = (q-1)^2 \big[ \aug_{\Lambda_3}(q) - \aug_{\Lambda_4}(q) \big].
\]
Begin by using that $H_0(\Lambda_1) \cong H_0(\Lambda_2)$ and $|\Gamma_1|= |\Gamma_2|$ to compute from Definitions \ref{def:augLambda} and \ref{def:DGalgAugNum} that
\begin{align}
\notag & q^{1/2} \aug_{\Lambda_1}(q) - q^{1/2} \aug_{\Lambda_2}(q)  \\
\notag = \quad & q^{1/2}(q-1)^{\dim H_0(\Lambda_1)-|\Gamma_1|}\big[ \aug(\alg(\Lambda_1)) - \aug(\alg(\Lambda_2)) \big] \\
= \quad & (q-1)^{\dim H_0(\Lambda_1)-|\Gamma_1|}\left[ q^{-[\chi^*(\alg(\Lambda_1)) -1]/2} | V(\alg(\Lambda_1), \mathbb{F}_q)| -q^{-[\chi^*(\alg(\Lambda_2)) -1]/2} | V(\alg(\Lambda_2), \mathbb{F}_q)|. \right. \label{eq:SR1proof1}
\end{align}
[We omit the dependence on $q$ from the notation for augmentation numbers of dg-algebras from Definition \ref{def:DGalgAugNum}.]
Next, 
Proposition \ref{prop:specbiject} provides a bijection
\[
V(\alg(\Lambda_1), \mathbb{F}_q)  \stackrel{\cong}{\longleftrightarrow} \bigsqcup_{ \overline{c} \in \mathbb{F}_q} V(\alg(\Lambda_1)|_{c=\overline{c}}, \mathbb{F}_q) 
\]
It is also the case that $\chi^*(\alg(\Lambda_1)|_{c= \overline{c}}) = \chi^*(\alg(\Lambda_1))-1$.  Using this, and a similar observations for $\Lambda_2$, we can continue our computation with (\ref{eq:SR1proof1}) becoming
\begin{align}
& (q-1)^{\dim H_0(\Lambda_1)-|\Gamma_1|}\left[ \sum_{\overline{c} \in \mathbb{F}_q} \aug(\alg(\Lambda_1)|_{c=\overline{c}}) - \sum_{\overline{a} \in \mathbb{F}_q} \aug(\alg(\Lambda_2)|_{a=\overline{a}}) \right]. 
\end{align}
Now, using equations (\ref{eq:58i}) through (\ref{eq:58iii}) from Proposition \ref{prop:specialiso} with Proposition \ref{prop:stableinv} the $\overline{c}=0$ and $\overline{a}=0$ terms cancel, and the $q-1$ terms with $\overline{c} \neq 0$ (resp. with $\overline{a} \neq 0$) can be replaced with $\aug(\alg(\Lambda_3)|_{\lambda =1})$ (resp. with $\aug(\alg(\Lambda_4)|_{\lambda =1})$) to get
\begin{align}
 & q^{1/2} \aug_{\Lambda_1}(q) - q^{1/2} \aug_{\Lambda_2}(q) \notag \\ \label{eq:sr1proof35} =  \quad & (q-1)^{\dim H_0(\Lambda_1)-|\Gamma_1|} \bigg[ (q-1) \aug(\alg(\Lambda_3)|_{\lambda =1}) - (q-1) \aug(\alg(\Lambda_4)|_{\lambda =1}) \bigg] \\
 \label{eq:sr1proof3} = \quad & (q-1)^{\dim H_0(\Lambda_1)-|\Gamma_1|+1} \bigg[ \aug(\alg(\Lambda_3)) - \aug(\alg(\Lambda_4))  \bigg]  \\
 \label{eq:sr1proof4} = \quad & (q-1)^2(q-1)^{\dim H_0(\Lambda_3)-|\Gamma_3|}  \bigg[ \aug(\alg(\Lambda_3)) - \aug(\alg(\Lambda_4))  \bigg]  \\ 
  \label{eq:sr1proof5} = \quad & (q-1)^2\big[ \aug_{\Lambda_3}(q) - \aug_{\Lambda_4}(q) \big]
\end{align}
as required.  At the equality (\ref{eq:sr1proof3}) we used (\ref{eq:58iv}) and Proposition \ref{prop:specbiject} to see that 
\begin{align*}
\aug(\alg(\Lambda_3)|_{\lambda =1}) - \aug(\alg(\Lambda_4)|_{\lambda =1}) & =  \sum_{\overline{\lambda} \in \mathbb{F}^*_q} \aug(\alg(\Lambda_3)|_{\lambda =\overline{\lambda}}) - \sum_{\overline{\lambda} \in \mathbb{F}^*_q}\aug(\alg(\Lambda_4)|_{\lambda =\overline{\lambda}})  \\
 & =  \aug(\alg(\Lambda_3)) - \aug(\alg(\Lambda_4)).
\end{align*}
  At (\ref{eq:sr1proof4}) and (\ref{eq:sr1proof5}) we used that, due to the Connected Case, $\dim H_0(\Lambda_1) = \dim H_0(\Lambda_3) = \dim H_0(\Lambda_4)$ and $|\Gamma_1 | = |\Gamma_3| -1= |\Gamma_4|-1$.
  
  \medskip
  
  \noindent {\bf Case 3.} {\it $|a|=|c|=0$ in Disconnected Case.}  The computation is the same through (\ref{eq:sr1proof35}).  From there we use that (in the Disconnected Case) $\alg(\Lambda_3)|_{\lambda =1} = \alg(\Lambda_3)$, $\alg(\Lambda_4)|_{\lambda =1} = \alg(\Lambda_4)$,  $\dim H_0(\Lambda_1) = \dim H_0(\Lambda_3) +1 = \dim H_0(\Lambda_4) +1$, and $|\Gamma_1| = |\Gamma_3|=|\Gamma_4|$ to justify the same equalities (\ref{eq:sr1proof3})-(\ref{eq:sr1proof5}) that complete the argument.
  
\end{proof}

\section{The index $1$ skein relation via Reidemeister moves}  \label{sec:SR2}
The next order of business is to deduce the index $1$ skein relation, (SR2),  from the index $0/2$ skein relation, (SR1).  (Again the ``index'' referred to is the Morse index of the Reeb chords as critical points of local difference functions.)  This is based on the following:

\begin{proposition}  \label{prop:index1main}
	Let $\Lambda_1, \Lambda_2, \Lambda_3, \Lambda_4$ be Legendrian surfaces related as in the four terms of (SR2) numbered from left to right.  Then, there exists Legendrian isotopic surfaces
	\[
	\Lambda_1 \cong \Lambda_1', \quad \Lambda_2 \cong \Lambda_2', \quad \Lambda_3 \cong \Lambda_3', \quad \Lambda_4 \cong \Lambda_4'
	\]
	such that $\Lambda_1', \Lambda_2', \Lambda_3', \Lambda_4'$ are related as in the four terms of (SR1).  Moreover, the values of the Maslov potential\footnote{Consistent with the $\delta_{r,s}$ term on the RHS of (SR2), we only consider $\Lambda_3$, $\Lambda_4$, $\Lambda'_3$ and $\Lambda'_4$ in the case that $r=s$.}  $r$ and $s$ for the $\Lambda_i$ that appear in (SR2) are related to the Maslov potential values $r'$ and $s'$ for the $\Lambda_i'$ appearing in (SR1) by $r'=s+1$ and $s' = r$.
\end{proposition}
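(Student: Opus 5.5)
The plan is to build one local Legendrian isotopy (a sequence of surface Reidemeister moves from \cite{Arnold, Goryunov}) that applies to all four terms of (SR2) at once. Afterwards we check, term by term, that the results have exactly the four fronts of (SR1). Since all four Legendrians of (SR2) agree outside a disk, we will use the same ambient model for each. The modification will be supported in a slightly larger $3$-ball $B' \supset B$ and fixed outside it, so the outputs $\Lambda_a'$ again agree outside a ball as (SR1) requires.

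The key idea is read off from the Maslov shift $r'=s+1$, $s'=r$. We fold the lower sheet, with potential $s$, upward through the sheet with potential $r$. Concretely:

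\begin{enumerate}
\item In a small disk near the index $1$ Reeb chord, create a pair of cusp edges (a ``lips'' or fold birth) on the $s$-sheet. This produces a new sheet with potential $s+1$ that sits just below the $s$-sheet.
\item Push this $s+1$ sheet up through the $r$-sheet using moves of type $A_1A_2$ (cusp edge passing through a sheet) and triple point moves.
\item Arrange that near the center the relevant pair of sheets is the $(s+1)$-sheet above the $r$-sheet. In the two LHS terms of (SR2), the local difference function then becomes a function whose only critical point is a maximum or a minimum.
\end{enumerate}

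In other words, the saddle-type contractible chord is traded for an index $0/2$ chord between the sheets labelled $r'=s+1$ (upper) and $s'=r$ (lower). The leftover fold and crossing structure must then be simplified by further moves (swallowtail births or deaths and cusp-sheet moves) until the front outside the central ball matches the common ``outer'' part of the (SR1) pictures. The two LHS terms of (SR2) differ only by which sheet is on top at the center. Following them through the same moves should give the two LHS fronts of (SR1): the two resolutions of the tangential double point (\ref{eq:doublept}). As a check, the factors $q^{\eta(r-s)/2}$ and $q^{\eta(s-r)/2}$ match $q^{\eta(s'-r'+1)/2}$ and $q^{\eta(r'-s'-1)/2}$ under this substitution.

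For the RHS terms, where $r=s$ so that $r'=s'+1$, we run the same isotopy on the two fronts with $D_4^-$ singularities. In these fronts the crossing locus near the center is a pair of trivalent vertices joined by an edge, oriented in one of two ways. The moves from \cite{Arnold, Goryunov} that we expect to need are the ones relating $D_4^-$ points to configurations of cusps and crossings. In particular we will use the move passing a $D_4^-$ point through a cusp edge, and the move that cancels two $D_4^-$ points against a creation of cusp structure. With these, one orientation should become the cone-point front of (SR1), after the cone is perturbed as in Section~\ref{sec:Cone}. The other should become the front with a cusp circle (the fourth term of (SR1)). Both are the two Lagrangian surgeries, consistent with \cite[Theorem 4.21]{CZ} and \cite[Theorem 6.3]{CMP19}.

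The main obstacle is this last part. We must exhibit explicit move sequences that convert the $D_4^-$ fronts into the (SR1) surgery fronts while matching the outer boundary data to what the LHS isotopy produced. We must also track which of the two $D_4^-$ configurations goes to the cone point and which to the cusp circle, since this determines the sign on the RHS. I would first draw the base projections of crossing and cusp loci in each step (as in \cite{CZ}), working with the pair of trivalent vertices. I would then verify at the end that the Maslov potentials on the resulting sheets are $s'+1=r+1=s+1$ and $s'=r$, which forces the identification with (SR1).
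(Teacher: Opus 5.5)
Your Maslov bookkeeping is sound. The top sheet of the (SR1) picture has to come from folding the $s$-sheet, which is why $r'=s+1$ and $s'=r$. Your exponent check $\eta(s'-r'+1)=\eta(r-s)$, $\eta(r'-s'-1)=\eta(s-r)$ is also correct.

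The gap is that the proposition has no content beyond the existence of the four isotopies, and the proposal never produces one. Steps 1--3 and the later ``simplification'' of the leftover fold and crossing structure are stated as expectations, with no sequence of moves. The Maslov and exponent checks only show consistency with the (SR2) you want to derive; they cannot ``force'' an isotopy. Step 1 is also internally inconsistent. A sheet meeting the $s$-sheet along a cusp edge from below has potential $s-1$, not $s+1$. The $(s+1)$-sheet created by a swallowtail-pair birth lies above the $s$-sheet, and this matters for step 2.

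More seriously, the organizing premise --- one isotopy applied to all four terms at once --- fails as stated. The four fronts differ exactly inside the ball where the moves must act. The right-hand fronts have no Reeb chord to fold next to. Pushing a fold toward the center meets two horizontal crossing arcs, two vertical ones, or one of two $D_4^-$ configurations, depending on the term. This is why the paper proves four separate lemmas with different move sequences:
\begin{itemize}
\item Lemma \ref{lem:L1iso}: $A_2A_1$-max (twice), $A_3$-max I, $A_3A_1$ (twice), $A_3$-max II (twice).
\item Lemma \ref{lem:L2iso}: $A_1^3$-max and $A_3$-max II.
\item Lemma \ref{lem:lambda4}: $A_3$-max I and the $D_4^-$ move.
\item Lemma \ref{lem:L3iso}: start from $\Lambda_3'$ and resolve the cone point into four swallowtails (Section \ref{sec:desingcone}). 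Then apply two $A_4$ moves, several $A_3A_1$, $A_3$-max I and $A_2A_1$-max moves, an $A_1^4$ move, two $A_1^3$-max moves and an $A_2^2$ move, and finish with the isotopy of Lemma \ref{lem:lambda4}.
\end{itemize}

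The right-hand side is exactly the part you leave open. The moves you propose for it (sliding a $D_4^-$ point through a cusp edge, cancelling two $D_4^-$ points) are not among the available moves. A $D_4^-$ point only occurs at the singular instant of the $D_4^-$ move, and you give no argument that these moves are realized by Legendrian isotopy. You also leave undecided which $D_4^-$ configuration goes to the cone point. The paper's isotopies settle this (third term to $\Lambda_3'$, fourth to $\Lambda_4'$), and it fixes the sign of the right-hand side.

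A genuinely single-isotopy argument would have to work with the singular Legendrian and its double point. You would isotope the index-$1$ double point to an index-$0/2$ tangential one and transport the two push-offs and two surgeries. That would also require showing these four resolutions are well defined up to Legendrian isotopy, independently of the local front presentation. Nothing in the proposal supplies this.
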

The deduction of (SR2) from (SR1) then follows from Proposition \ref{prop:index1main} via
\begin{align*}
q^{\eta(r-s)/2} \aug_{\Lambda_1} - q^{\eta(s-r)/2} \aug_{\Lambda_2} & = q^{\eta(s'-r'+1)/2} \aug_{\Lambda'_1} - q^{\eta(r'-s'-1)/2} \aug_{\Lambda'_2} \\
 & = \delta_{r',s'+1}(q-1)^2 \left[ \aug_{\Lambda'_3} - \aug_{\Lambda_4'} \right] \\
 & = \delta_{s,r}(q-1)^2 \left[ \aug_{\Lambda_3} - \aug_{\Lambda_4} \right].
\end{align*} 
The remainder of the section establishes Proposition \ref{prop:index1main} in the following manner.  The surfaces $\Lambda_i$ and $\Lambda_i'$, $1 \leq i \leq 4$ are presented in Section \ref{sec:4surfaces} from which it is clear  that the $\Lambda_i'$ are related as in (SR1) with the claimed Maslov potential values.  After reviewing some elementary isotopies of Legendrian surfaces arising from local moves of Arnold \cite{Arnold1, Arnold} and Goryunov-Alsaeed \cite{Goryunov} in Section \ref{sec:AGmoves} and desingularization of the cone point in Section \ref{sec:desingcone}, we finish the proof of Proposition \ref{prop:index1main} in Section \ref{sec:ProofProp} by obtaining the required isotopies in Lemmas \ref{lem:L1iso}, \ref{lem:L2iso}, \ref{lem:lambda4}, and \ref{lem:L3iso}.

\subsection{The four surfaces $\Lambda_i'$}  \label{sec:4surfaces}
Recall from the statement of (SR2) from Theorem \ref{thm:main} that, within the ball in the front projection $B \subset S\times \R$ where they differ, the Legendrians $\Lambda_1, \Lambda_2, \Lambda_3, \Lambda_4$ have two sheets with crossing arcs that project to $\pi_x(B) \subset S$ as follows:
\[
\begin{array}{cccc} 
\quad \includegraphics[scale=.5]{images/SR2b} \quad &  \quad \includegraphics[scale=.5]{images/SR1b} \quad & \quad \includegraphics[scale=.5]{images/SR4b} \quad & \quad \includegraphics[scale=.5]{images/SR3b} \quad \\
\Lambda_1 & \Lambda_2 & \Lambda_3 & \Lambda_4
\end{array}
\]
At the points where tri-valent vertices appear in the crossing arcs of $\Lambda_3$ and $\Lambda_4$ the front projection has a $D_4^-$ singularity.  
These singularities are non-generic for Legendrian surfaces.  However, they occur generically in $1$-parameter families of Legendrian surfaces at the singular time of the $D_4^-$ Move as pictured in Section \ref{sec:AGmoves} below; see \cite{TZ, CZ} for more discussion.  

We present each of the $\Lambda_i'$ by picturing on the left the image of its crossings (in blue) and cusps (in red) in the base projection,   $\pi_x(B)$, and on the right the movie of slices of the front projection above horizontal lines in $\pi_x(B)$ with increasing $x_2$-coordinate.

\medskip

\centerline{
\begin{tabular}{|c|c|}
\hline
 & \\
$\Lambda_1'$   &  $\Lambda_2'$ \\
 & \\
\hline
 & \\
\quad \includegraphics[scale=.45]{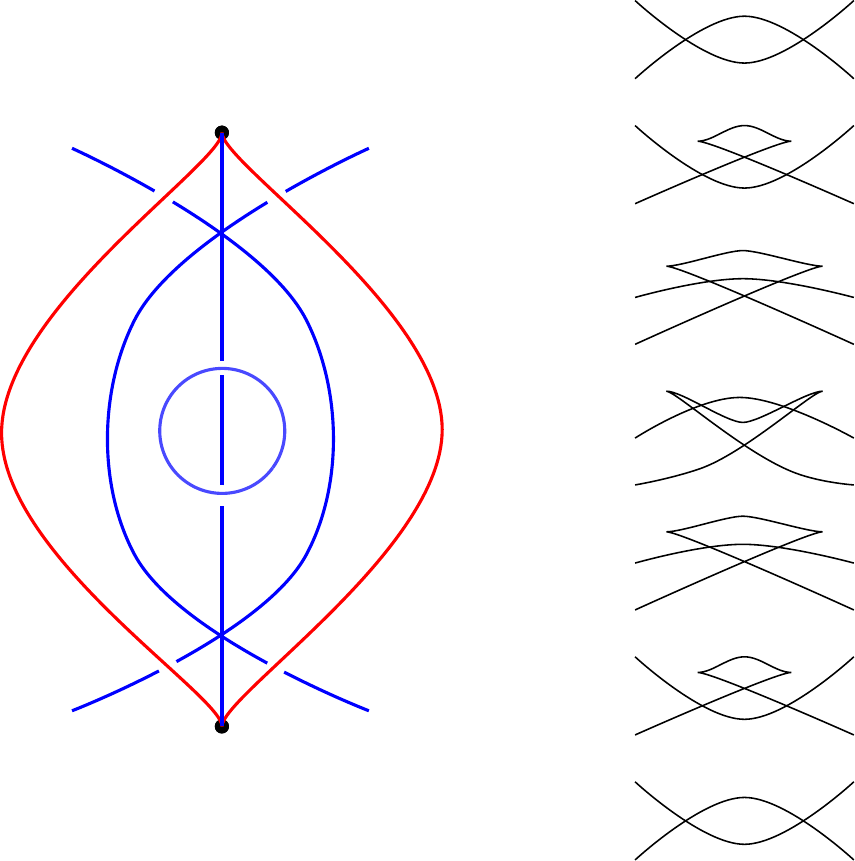}  \quad \quad \quad  &   \quad \quad  \raisebox{1cm}{\includegraphics[scale=.45]{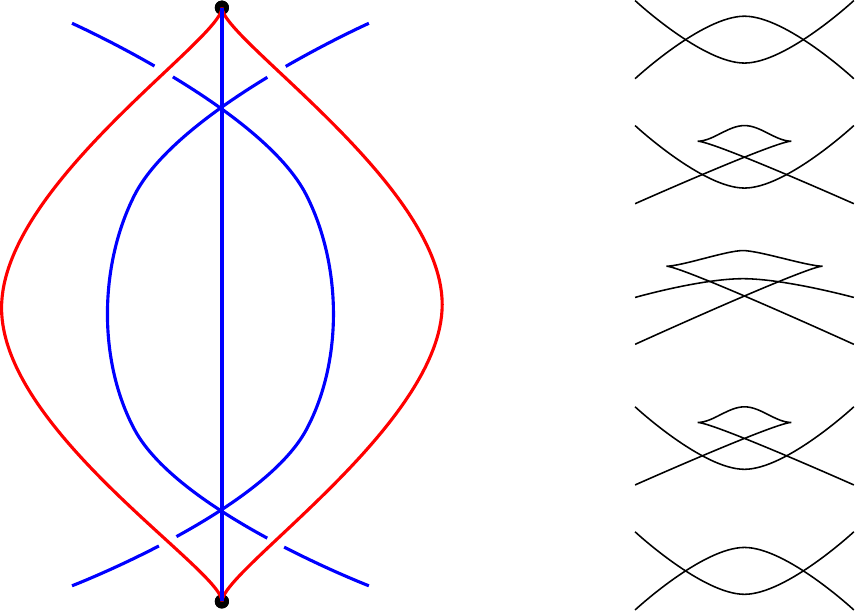}} \quad \\
\hline
\end{tabular}}

\centerline{
\begin{tabular}{|c|c|}
\hline
 & \\
$\Lambda_3'$   &  $\Lambda_4'$ \\
 (a cone point connects the top two sheets) & \\
\hline
 & \\
\quad \labellist
		\small
		\pinlabel $\begin{array}{c}\mbox{Cone}\\\mbox{point}\end{array}$ [b] at -2 188
		\endlabellist
		\raisebox{1cm}{\includegraphics[scale=.45]{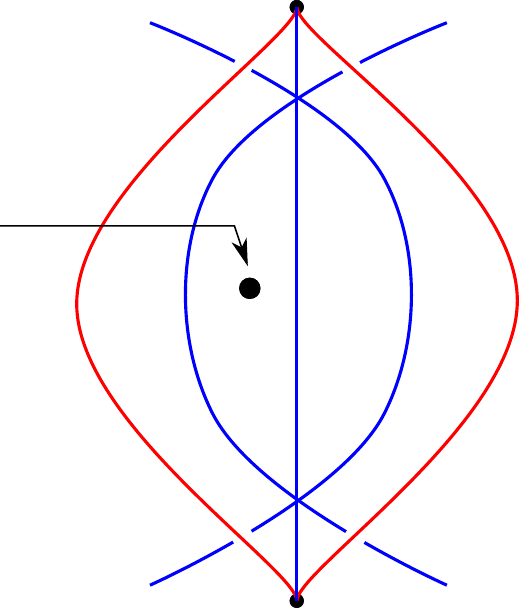}} \quad \quad 
		\labellist
		\small
				\pinlabel $\begin{array}{c}\mbox{Cone}\\\mbox{point}\end{array}$ [b] at 164 256
				\endlabellist
								\raisebox{0cm}{\includegraphics[scale=.45]{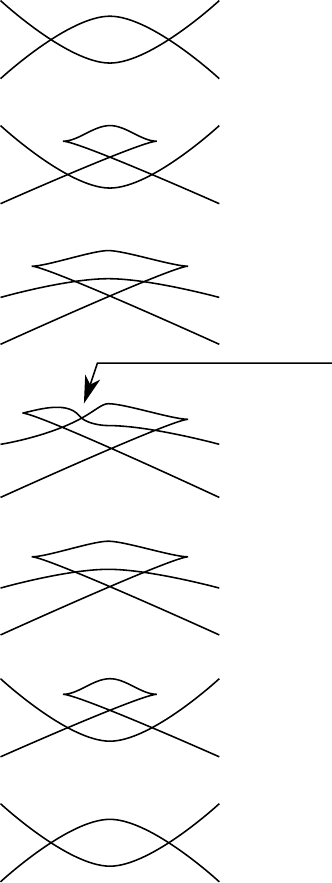}}
		  \quad \quad \quad  &    \quad \quad \raisebox{0cm}{\includegraphics[scale=.45]{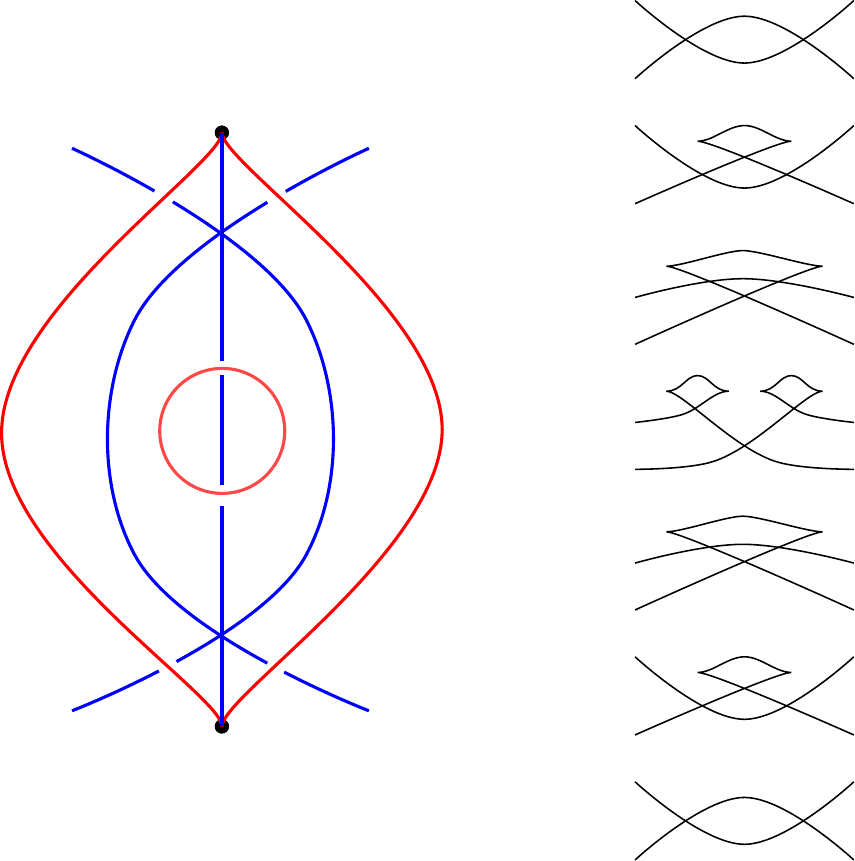}} \quad \\
\hline
\end{tabular}}

\subsection{Some Legendrian surface moves}  \label{sec:AGmoves}

We collect here a series of local modifications to the front projection of a Legendrian surface that can be be achieved via Legendrian isotopy.  We denote these local moves  as $A_1^4$, $A^2_2$, $A_3$-max I, $A_3$-max II, $A_2A_1$-max, $A_3A_1$, $A_1^3$-max, $A_4$, and $D_4^-$, based on Arnold's notation for (multi-germ) singularities.  The notation is explained below in Remark \ref{rem:notate}.  All the moves involve a modification within a 3-ball $B \subset S\times \R$ in the front projection.  In most cases, we have indicated projections of the singular set (crossings in blue and cusps in red) with select front slices pictured in black.  For some of the more complicated fronts, we have included the full movie of $1$-dimensional front slices.

\medskip

\begin{tabular}{|l|c|}

\hline

 & \\
\noindent{\bf $A_2^2$ Move:}   \quad \quad & \raisebox{-2cm}{\includegraphics[scale=.6]{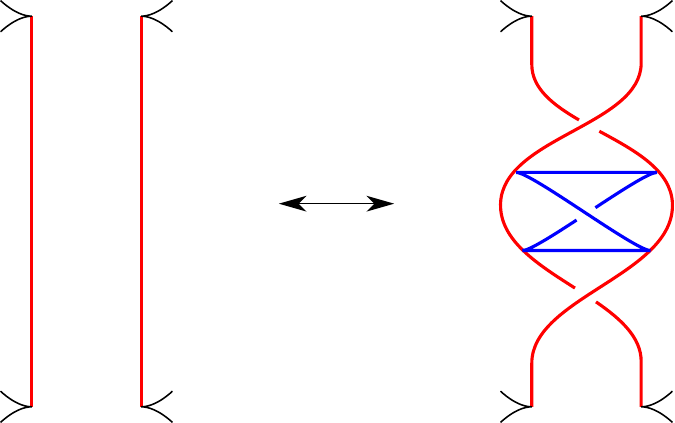}} \\ &

\\
\hline
& \\
\noindent{\bf $A_1^4$ Move:}   \quad \quad & \raisebox{-2cm}{\includegraphics[scale=.6]{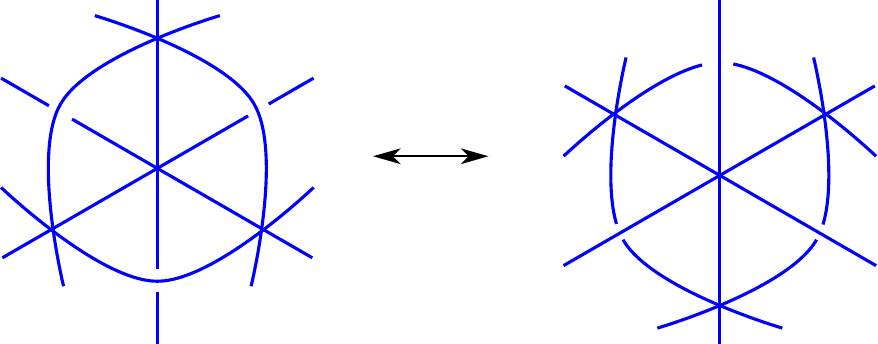}} \\ &
\\
\hline
\end{tabular}

\medskip

\noindent   
We provide a $3$-dimensional description of the $A_1^4$ move.  The front projection on the left side can be taken to be the union of $4$ planes, $P_i$, $1\leq i \leq 4$: Considering a $3$-space $\R^3$ with coordinates $(a_1, a_2,a_3)$, take $P_1, P_2, P_3$ to be the three coordinate planes, and take $P_4$ to be the plane passing through $(1,0,0)$, $(0,1,0)$, and $(0,0,1)$.  The oriented line $\ell: a_1=a_2=a_3$ corresponds to the $z$-direction in $S \times \R$.  The Legendrian isotopy consists of parallel translating $P_4$ downward along $\ell$ until it moves through the origin.   

\medskip

\begin{tabular}{|l|c|}

\hline
& \\
\noindent{\bf $A_3$-max I Move:}   \quad \quad & \raisebox{-2cm}{\includegraphics[scale=.6]{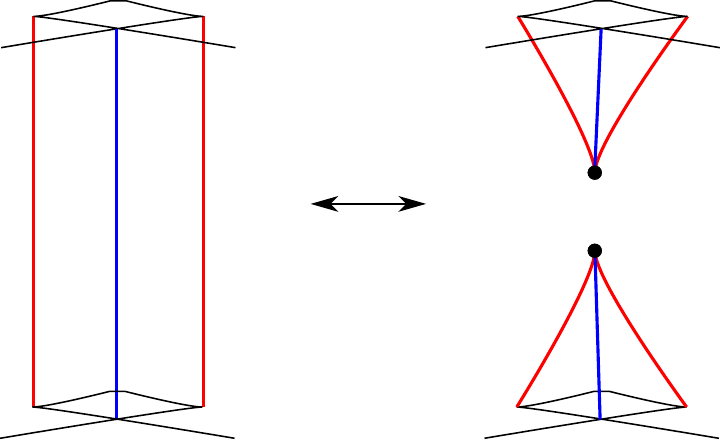}} \\ &
\\
\hline
& \\
\noindent{\bf $A_3$-max II Move:}   \quad \quad & \raisebox{-2cm}{\includegraphics[scale=.6]{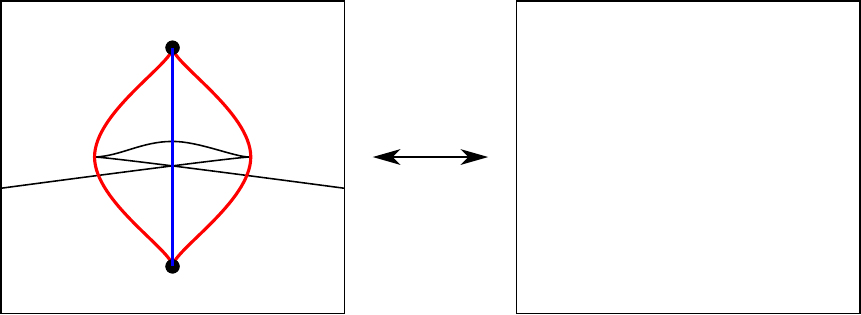}} \\ &
\\
\hline
& \\
\noindent{\bf $A_2A_1$-max Move:}   \quad \quad & \raisebox{-2cm}{\includegraphics[scale=.6]{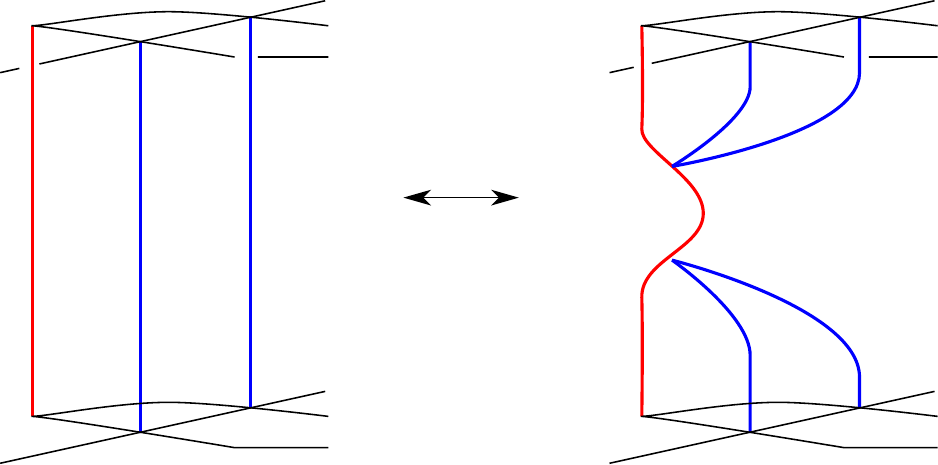}} \\ &
\\
\hline
 & \\
\noindent{\bf $A_3A_1$ Move:}   \quad \quad & \raisebox{-2cm}{\includegraphics[scale=.6]{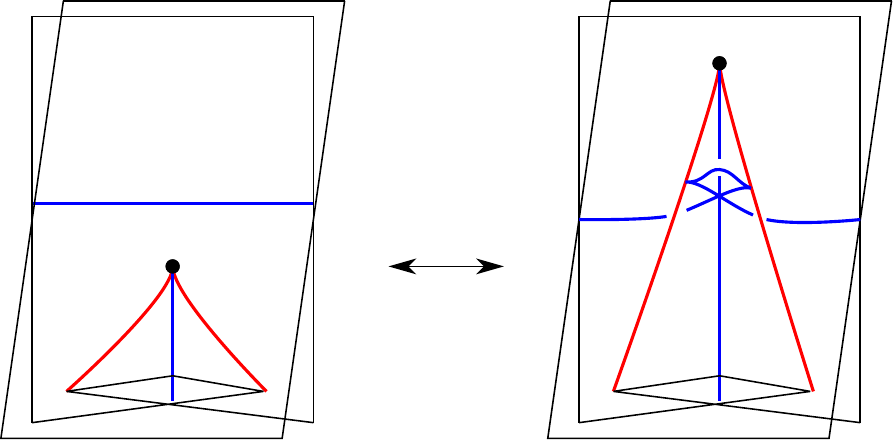}} \\ &
\\
\hline

\end{tabular}

\begin{tabular}{|l|c|}

\hline

 & \\
\noindent{\bf $A_1^3$-max Move:}   \quad \quad & \raisebox{-2cm}{\includegraphics[scale=.6]{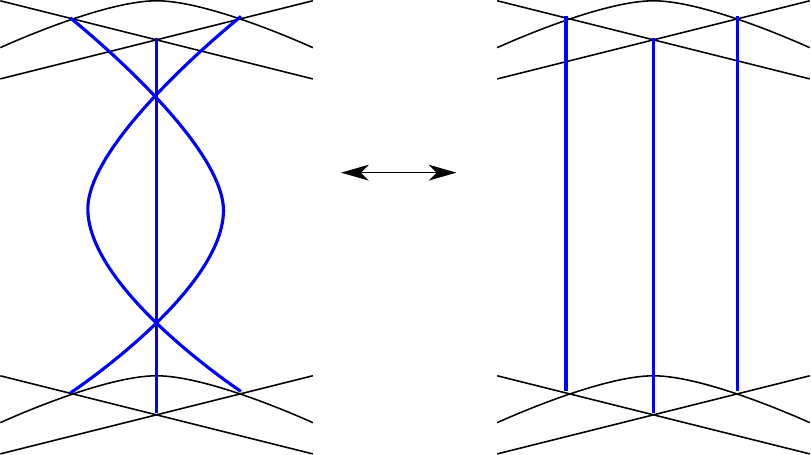}} \\ & 
\\
\hline

& \\
\noindent{\bf $A_4$ Move:}   \quad \quad & \raisebox{-2cm}{\includegraphics[scale=.6]{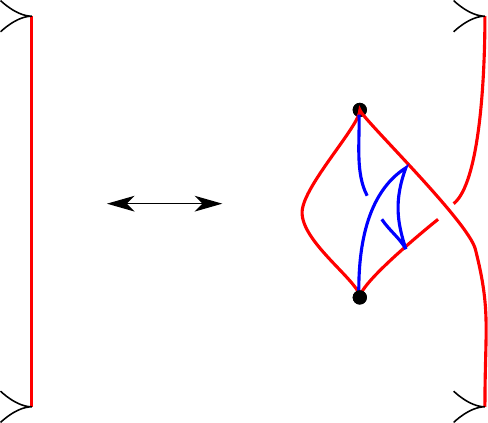}}  \\ &
\\
\hline
\end{tabular}

\medskip

\noindent For the right side of the $A_4$ Move, the movie of $1$-dimensional front slices above horizontal lines in the base projection with increasing $x_2$-coordinate appears as
\[
\includegraphics[scale=.6]{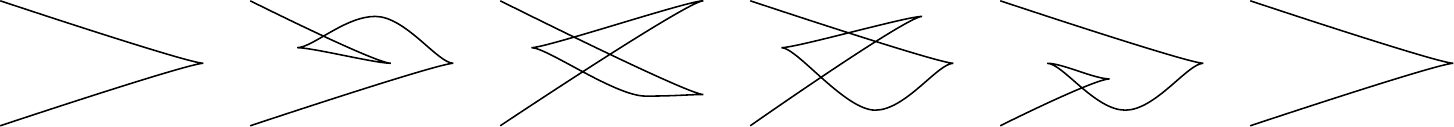}
\]

\medskip

\begin{tabular}{|l|c|}
	
	\hline
	
	& \\
	\noindent{\bf $D_4^-$ Move:}   \quad \quad & \raisebox{-2cm}{\includegraphics[scale=.6]{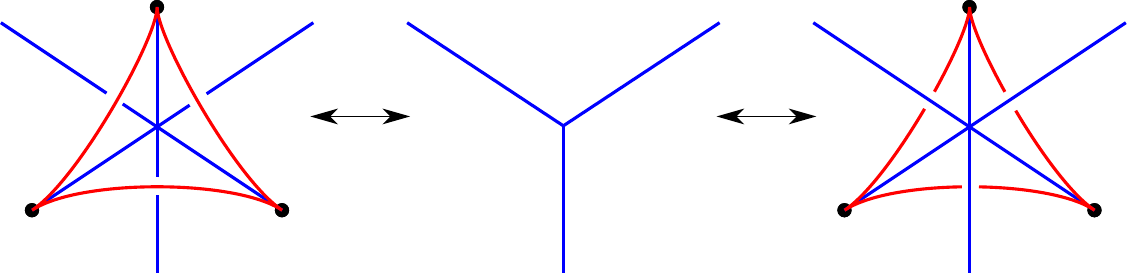}}
	\\ &
	\\
	\hline
\end{tabular}

\medskip

\noindent The left and right sides of the $D_4^-$ Move are related by reflection in the $z$-direction.  The movie of $1$-dimensional front slices for the left side of the move is
\[
\includegraphics[scale=.6]{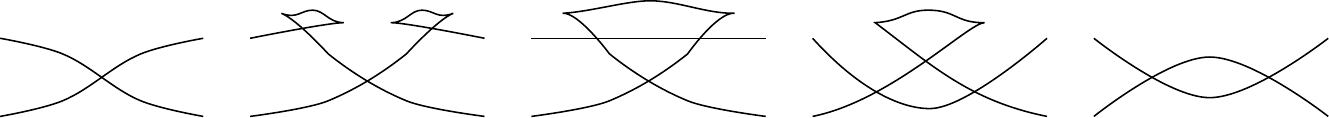}
\]

\begin{remark}[About the notations]  \label{rem:notate}
The notations $A_1$,  $A_2$, and $A_3$ refer points of $\Lambda =\sqcup_{i=0}^2\Lambda_i \subset J^1S$  (the notation is revived from Section \ref{sec:Zgraded}) belonging respectively to $\Lambda_0$,  $\Lambda_1$, and  $\Lambda_2$.  I.e., $A_1$ points are non-singular in the front projection,  $A_2$ points belong to cusp edges, and $A_3$ points are swallowtails.  Note that the subscript $i$ of $A_i$ (resp. $\Lambda_i$) coincides with codimension of the singularity set in the ambient front space, $S \times \R$, (resp. in $\Lambda$).  Products of $A_i$'s denote points in the front projection $\pi_{xz}(\Lambda)$ that are simultaneously the image of multiple such points.  The $A_4$ and $D_4^-$ singularities are two of $3$ generic codimension 3 (in $\Lambda$) singularities of $3$-dimensional front projections.  (The third,   $D_4^+$, is not needed for our proof of Proposition \ref{prop:index1main}.)  

Recall that from a Legendrian isotopy $\{\Lambda_t \}_{t \in [0,1]}$ in $J^1S$, one can construct a $3$-dimensional Legendrian $\Lambda_{\mathit{big}} \subset J^1(S \times [0,1])$ (called the {\it big front} in \cite[Ch. 3]{Arnold})  that is characterized by the property that the slice of the front projection of $\Lambda_{\mathit{big}}$ at $S \times\{t\}$ is the front projection of $\Lambda_t$ for each $t \in [0,1]$.  The notation for the local moves reflects a (multi)-singularity that appears in the front projection of  $\Lambda_{\mathit{big}}$ and corresponds to a unique time $t_0 \in (0,1)$ in the isotopy when the front projection of $\Lambda_{t_0}$ has a non-generic front singularity.  For instance, in the $A^2_2$ move, at the singular time $t_0$, there is a point in the front projection where two cusp edges intersect.  
The moves that include ``max'' in their titles correspond to critical points of the $t$-coordinate function restricted to particular singularity sets.   
\end{remark}

\begin{remark}  The classification of stable germs of Legendrian front singularities in dimensions up to $3$ goes back to Arnold in  \cite{Arnold1}.  See also the expositions in \cite{Arnold} or \cite{AGV}.  Generic 3-dimensional multi-germ singularities and bifurcations of $2$-dimensional Legendrian fronts in $1$-parameter families (i.e. Reidemeister moves for Legendrian surfaces) are exhaustively enumerated in the more recent work of Goryunov and Alsaeed \cite{Goryunov}.  We caution the reader that the collection of moves presented above is tailored towards our needs in proving Proposition \ref{prop:index1main} and is {\it not complete}, cf. \cite{Goryunov}. 
\end{remark}

\begin{proposition}
	The local modification to a front projection achieved by any of the above moves can be realized by a Legendrian isotopy supported in the pre-image in $J^1S$ of the pictured $3$-ball in the front projection. 
\end{proposition}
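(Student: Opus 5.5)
The plan is to realize each move as an explicit one-parameter family of fronts, and then apply the standard fact that a generic path of fronts lifts to a Legendrian isotopy. Write $\Lambda_{t}$, $t \in [0,1]$, for the lifts. Away from its singularities, a front determines its Legendrian lift uniquely by $y_i = \partial z/\partial x_i$. So a smooth family of fronts $F_t \subset S \times \R$ with only Legendrian-type singularities and constant outside $B$ lifts to a smooth family of Legendrian embeddings, provided two conditions hold. First, each $F_t$ is the front of an \emph{embedded} Legendrian, which means no two points of the lift share $(x,y,z)$. Second, the singular times are the stable bifurcations listed by Arnold and Goryunov--Alsaeed. The isotopy is then automatically supported in $\pi_{xz}^{-1}(B)$, since the fronts agree outside $B$.

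Concretely, I will treat each move in one of two ways.

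\emph{Normal forms.} For the single-germ moves ($A_3$-max I and II, $A_4$, $D_4^-$) and the moves built from them ($A_2A_1$-max, $A_3A_1$), I will write the big front $\Lambda_{\mathit{big}} \subset J^1(S \times [0,1])$ as a generating family or normal form. For example:
\begin{itemize}
\item $A_4$: the big front is generated by $z = \xi^5 + x_1\xi^3 + x_2\xi^2 + t\xi$ (critical values in $\xi$).
\item $D_4^-$: the big front is generated by $z = \xi_1^3 - 3\xi_1\xi_2^2 + t(\xi_1^2+\xi_2^2) + x_1\xi_1 + x_2\xi_2$.
\end{itemize}
Since $\Lambda_{\mathit{big}}$ is a smooth Legendrian $3$-fold, its slices $\Lambda_t$ are smooth Legendrian surfaces for every $t$, including the singular time. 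The slices at $t = \pm\delta$ reproduce the pictured fronts.

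\emph{Explicit graphs.} For the multi-sheet moves ($A_1^4$, $A_1^3$-max, $A_2^2$), I will present the sheets as explicit graphs or cusp normal forms that translate in $z$ as $t$ varies. The $A_1^4$ description with the four planes $P_i$ is already of this form.

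The step I expect to be the main obstacle is embeddedness at the singular moment. When sheets pass through one another, as in $A_1^4$, $A_1^3$-max, and $A_2^2$, I must check that coincident front points never have the same slope. The way to handle this is to observe that in each picture, distinct sheets meeting at a common front point have distinct tangent planes. For instance, in $A_1^4$ the plane $P_4$ is transverse to each coordinate plane, and the $P_i$ are pairwise transverse. Hence their $y$-coordinates differ and the lifts remain disjoint. For the cusp-edge collisions, I will check that the two cusp edges meet transversally in the base, so their Legendrian lifts have different $y$ values.

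In the $D_4^-$ case the singular-time Legendrian $\Lambda_{t_0}$ is the smooth slice of $\Lambda_{\mathit{big}}$ by construction. Cutting off the families near $\partial B$ by a bump function in $t$ completes the argument. This last step uses that both ends of each move agree near $\partial B$, which holds because the singular set in each move lies in the interior of the ball.
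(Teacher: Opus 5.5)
Your overall strategy matches the paper's: generating families for the single-germ moves (the paper defers to the families in Goryunov--Alsaeed, Section 3.2), and pairwise transversality of tangent planes at the singular time for the multi-germ moves. The gap is in the principle you use for the single-germ moves, and it sits exactly at the point the paper's proof is about.

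A smooth Legendrian $\Lambda_{\mathit{big}}\subset J^1(S\times[0,1])$ need not have smooth embedded slices. You need two separate conditions:
\begin{enumerate}
\item[(a)] $t|_{\Lambda_{\mathit{big}}}$ has no critical points, so that the slices are smooth and form an isotopy;
\item[(b)] each slice stays embedded after forgetting the coordinate $s=\partial_tF$ dual to $t$, i.e.\ no two points of $\Lambda_t$ share $(x,y,z)$. This is the paper's remark that ``even at the singular time, the surface is embedded''.
\end{enumerate}

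Your $A_4$ family violates (a). Parametrize $\Lambda_{\mathit{big}}$ by $(\xi,x_1,x_2)$. Then $t=-(5\xi^4+3x_1\xi^2+2x_2\xi)$, which is critical along the whole line $\{\xi=0=x_2\}$. The slice $\Lambda_0$ is the union of the zero section $\{\xi=0\}$ and the sheet $\{5\xi^3+3x_1\xi+2x_2=0\}$, and these meet along a curve. Locally near that curve, the slices at $t=\pm\delta$ are the two different smoothings of this intersection. So the family is not a Legendrian isotopy.

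Taking $t$ to be the $\xi^3$-coefficient instead, $F=\xi^5+t\xi^3+x_1\xi^2+x_2\xi$, repairs it:
\begin{itemize}
\item $\Lambda_{\mathit{big}}$ is now parametrized by $(\xi,t,x_1)$, so (a) holds.
\item $y_2=\partial_{x_2}F=\xi$, which gives (b).
\item $\partial_\xi^3F=60\xi^2+6t$, so there are two swallowtails for $t<0$ and none for $t>0$.
\end{itemize}
Your $D_4^-$ family does satisfy (a) and (b), since $\Lambda_{\mathit{big}}$ is parametrized by $(\xi_1,\xi_2,t)$ and $y=\xi$. But this has to be checked; it is not automatic. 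The same check is needed for the $A_3$-max moves.

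Two smaller points.
\begin{itemize}
\item $A_2A_1$-max and $A_3A_1$ are multi-germs, and a normal form for the $A_2$ or $A_3$ germ alone does not settle them. What must be verified is that the extra sheet's tangent plane differs from the limiting tangent plane at the cusp or swallowtail point. That is how the paper treats them, alongside $A_1^4$, $A_2^2$ and $A_1^3$-max.
\item Localizing to $\pi_{xz}^{-1}(B)$ should be done by cutting off the contact Hamiltonian generating the isotopy in the $J^1S$ directions, not by a bump function in $t$. The end slices of a normal-form family do not literally agree near $\partial B$; for $D_4^-$ they are $z$-reflections of one another. They agree there only up to an isotopy through generic fronts.
\end{itemize}
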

\begin{proof}  It should be noted that even at the singular time, the $2$-dimensional Legendrian surface is embedded.  For the $A_3$-max I and II, $A_4$, and $D_4^-$-moves, this may be checked using the generating families that produce the isotopies found in Section 3.2 of \cite{Goryunov}.  For the remaining moves, one should note that at the singular time, $t_0$, the tangent planes to distinct points of $\Lambda_{t_0}$ that intersect at the multi-singularity in the front projection are all pair-wise transverse.  (The $A_2$ and $A_3$ points have well-defined tangent planes determined (as a limit) by adjacent $A_1$ points.)  For instance with the $A^2_2$ move, at the singular time $t_0$
when the two cusp edges intersect each cusp edge is transverse to the tangent plane determined (as a limit) by the sheets adjacent to the other cusp edge.  In particular, due to this transversality the pre-image of the $A_2^2$ singular point consists of two distinct points in $\Lambda_{t_0} \subset J^1S$.  See also \cite[Section 4]{CZ} for further visualizations and discussion of the Legendrian isotopies corresponding to many of the above moves.
\end{proof}

\subsection{Desingularization of a cone point}  \label{sec:desingcone} The cone point singularity that appears in the 3rd term of (SR1) and also in $\Lambda_3'$ above is another non-generic singularity of Legendrian surfaces.  
The pre-image of a cone point $c$ in the front projection $\pi_{xz}(\Lambda)$ of a Legendrian $\Lambda \subset J^1S$ is an  embedded $S^1$ in $\Lambda$.  As detailed in \cite[Section 3]{Rizell} and \cite[Section 3]{EENS}, 
a small Legendrian isotopy perturbs a cone point
into a configuration of four swallowtail points, two downward and two upward, connected by crossing arcs and cusp edges as pictured via the base projection 
\[
\includegraphics[scale=.6]{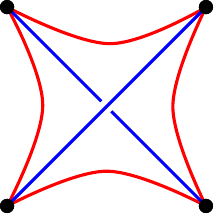}
\]
with the following movie of $1$-dimensional front slices 
\[
\includegraphics[scale=.5]{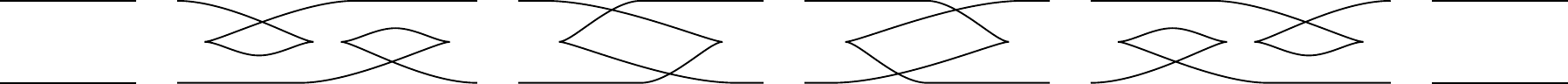}
\]
corresponding to horizontal lines sweeping upward through the pictured base projection.

\subsection{Proof of Proposition \ref{prop:index1main}} \label{sec:ProofProp}

The following Lemmas \ref{lem:L1iso}-\ref{lem:L3iso} establish the Legendrian isotopies required for Proposition \ref{prop:index1main}.

\begin{lemma} \label{lem:L1iso} The surfaces $\Lambda_1$ and $\Lambda_1'$ are Legendrian isotopic.
\end{lemma}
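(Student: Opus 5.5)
We will build the isotopy as an explicit finite sequence of the local moves from Section~\ref{sec:AGmoves}, recorded as a movie of base projections (crossing arcs in blue, cusp edges in red) inside $\pi_x(B)$. Nothing is changed outside $B$. The input is that $\Lambda_1$ has, inside $\pi_x(B)$, two sheets whose crossing locus is a smooth arc (the first picture of (SR2)). Their local difference function $f_u-f_l$ has an index~$1$ critical point at the center, so the short Reeb chord in $\Lambda_1$ is of saddle type. We want to convert this saddle-type chord into an index~$0$ or index~$2$ one sitting between two disks, as in the first term of (SR1). We expect the new sheet that becomes the upper sheet to carry potential $r'=s+1$ and the lower one $s'=r$.

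The plan is the following.

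\begin{enumerate}
\item Create a pair of cusp edges on the sheet with potential $s$, at a point away from the crossing arc. We will use an $A_2A_1$-max type move together with an $A_2^2$ move (run in reverse) to introduce a small "flying saucer" fold. This produces a new sheet with potential $s+1$ lying just above the original one, bounded by two cusp edges.

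\item Push this fold across the crossing arc. Passing the fold's cusp edges through the other sheet uses $A_2A_1$-type moves. Once the two swallowtails have been created (by the $A_3$-max moves), they pass the crossing arc via an $A_3A_1$ move.

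\item Arrange that the new $(s+1)$-sheet and the original $r$-sheet form, over a disk, two nested sheets with no crossing. After this, the only short chord between them should be an index~$0$/$2$ critical point of the difference function.

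\item Remove the now superfluous crossings that wind around, using $A_1^3$-max and $A_1^4$ moves, until the front restricted to $B$ agrees with the picture of $\Lambda_1'$ displayed in Section~\ref{sec:4surfaces}.
\end{enumerate}

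At each stage we will check the Maslov potentials. Going up through a cusp adds one, which is what gives $r'=s+1$ and $s'=r$.

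To keep this manageable, we will present the isotopy as a chain of intermediate base-projection diagrams $\Lambda_1=F_0\to F_1\to\cdots\to F_m=\Lambda_1'$. Each arrow is labelled by one move, and each move is realized by a Legendrian isotopy supported in a small ball by the Proposition stated just before Section~\ref{sec:desingcone}. For the more intricate steps we will draw the movie of $1$-dimensional front slices in increasing $x_2$. From these slices one can see directly that the neighboring fronts agree, and that each singular moment matches the standard model of the cited move.

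The main obstacle will be step~2. Moving the cusp/swallowtail configuration past the crossing arc must be done so that the resulting combinatorics (which sheet lies over which, and the arrangement of the crossing loci) exactly reproduces the front of $\Lambda_1'$, rather than a mirror image or a configuration that differs by a further Reeb-chord index. Our first attempt will be to perform the fold creation on the side where the saddle's descending direction is oriented, so that the difference function's index increases from $1$ to $2$. If this produces the wrong sign or orientation, we will redo the fold on the opposite sheet, using the $z$-reflected versions of the moves.
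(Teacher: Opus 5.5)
There is a genuine gap. What you have written is a strategy for finding an isotopy rather than an isotopy, and its one concrete step cannot be carried out.

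Inside $\pi_{xz}^{-1}(B)$, the front of $\Lambda_1$ consists of just two sheets meeting along \emph{two} crossing arcs, with no cusp edges. (The crossing locus is the zero set of the saddle-shaped difference function $f_r-f_s$, so it is not a single arc.) An $A_2A_1$-max move pushes an existing cusp edge through a sheet. An $A_2^2$ move passes two existing cusp edges through each other. Both sides of either move already contain cusp edges, so neither can create the first ones in your step 1.

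Nor can any Legendrian isotopy supported in $B$ produce a swallowtail-free fold of the $s$-sheet bounded by cusp edges. In a generic front, cusp edges end only at swallowtails, so such cusp edges would form a closed curve bounding a disk $D$. Over $D$, $\pi_x|_{\Lambda}$ would be an unbranched $3$-sheeted cover. The two sheets meeting along the curve would then close up, together with the fold circle, into a separate sphere component: an actual flying saucer, which changes the topology. Here cusp edges can only be born together with a pair of swallowtail points, i.e.\ via an $A_3$-max move. Such a birth also brings in a new crossing arc, which your later steps would have to account for. Your step 2 even invokes swallowtails created by $A_3$-max moves, which is inconsistent with the swallowtail-free fold of step 1.

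Beyond that, the whole content of the lemma is an explicit sequence of moves, together with a check that it ends exactly at the front of $\Lambda_1'$ with the correct sheet ordering and Maslov potentials. Your proposal leaves this undetermined: you do not fix the sequence, and you anticipate possibly landing on a mirror image and redoing the fold on the other sheet. The paper's proof is precisely such a sequence, recorded by intermediate base projections. It consists of two $A_2A_1$-max moves, one $A_3$-max I move, two $A_3A_1$ moves and two $A_3$-max II moves. No $A_2^2$, $A_1^3$-max or $A_1^4$ moves are used.

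Your heuristic that the chord's index ``increases from $1$ to $2$'' also conflates two different pairs of sheets. The index-$2$ chord of $\Lambda_1'$ lies between the new $(s+1)$-sheet and the $r$-sheet, not between the original pair.

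To repair the argument, you would need to begin with swallowtail-pair births on the $s$-sheet. An upward one does produce a roof sheet of potential $s+1$. You would then have to exhibit and verify each subsequent move picture by picture.
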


\begin{proof}  Applying successively the $A_2A_1$-max move (twice), the $A_3$-max I move, the $A_3A_1$ move (twice), and the $A_3$-max II move (twice) produces the required Legendrian isotopy:
	\[\raisebox{-2cm}{\includegraphics[scale=.4]{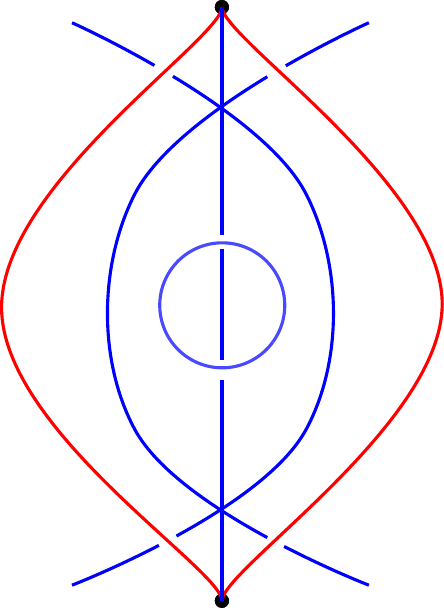}}\quad  \cong \quad \raisebox{-2cm}{\includegraphics[scale=.4]{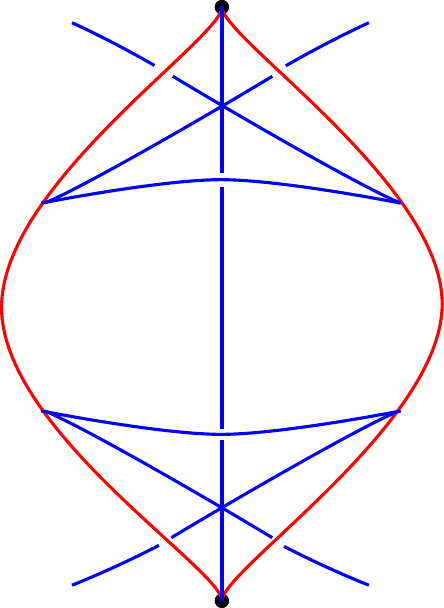}} \quad \cong \quad \raisebox{-2cm}{\includegraphics[scale=.4]{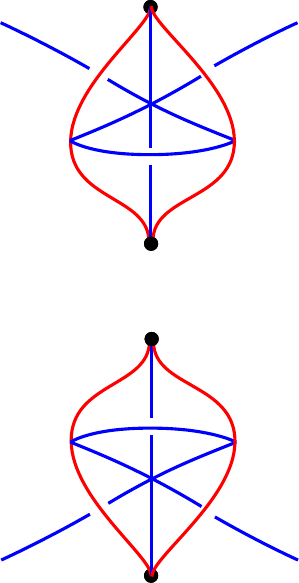}} \quad \cong \quad  \raisebox{-2cm}{\includegraphics[scale=.4]{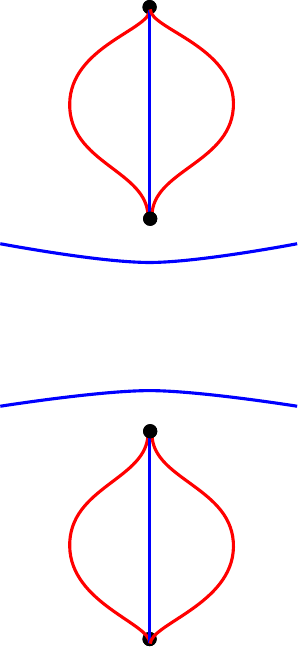}}  \quad \cong \quad \raisebox{-1cm}{\includegraphics[scale=.6]{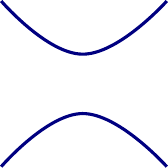}}
	\]	
\end{proof}

\begin{lemma} \label{lem:L2iso} The surfaces $\Lambda_2$ and $\Lambda_2'$ are Legendrian isotopic.
\end{lemma}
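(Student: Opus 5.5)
The plan is to mirror the proof of Lemma \ref{lem:L1iso}: exhibit an explicit sequence of the Legendrian surface moves from Section \ref{sec:AGmoves}, each realized by a Legendrian isotopy supported in a small ball, that carries the front of $\Lambda_2$ to that of $\Lambda_2'$.

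\textbf{Reduction to Lemma \ref{lem:L1iso}.} I would not build this sequence from scratch. Instead I would use the symmetry relating the two LHS terms of (SR2). The base projection of $\Lambda_2$ is that of $\Lambda_1$ with the roles of the two crossing sheets exchanged at the center: in $\Lambda_1$ the sheet with potential $r$ lies above, in $\Lambda_2$ the sheet with potential $s$ lies above. Equivalently, the local difference function has an index $1$ critical point with the opposite sign.

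Similarly, $\Lambda_2'$ is related to $\Lambda_1'$ by swapping which of the two index $0/2$ Reeb chord configurations appears. So the first step is to identify a symmetry of the local model that interchanges the pairs. Natural candidates are a reflection of the base disk $\pi_x(B)$ composed with the contactomorphism $(x,y,z)\mapsto(x,-y,-z)$, which reflects fronts in the $z$-direction. A check is needed that this symmetry carries $(\Lambda_1,\Lambda_1')$ to $(\Lambda_2,\Lambda_2')$ up to planar isotopy of the base pictures.

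\textbf{Transporting the moves.} If such a symmetry works, the moves of Lemma \ref{lem:L1iso} transport to their mirror images. The $z$-reflection swaps upward and downward swallowtails and turns ``max'' moves into ``min'' versions. These remain Legendrian isotopies, since the symmetry is a contactomorphism, possibly after composing with a base reflection to restore orientation.

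\textbf{Fallback and main obstacle.} The main obstacle is that the $z$-reflection need not fix the outside of $B$, and it reverses Maslov potentials. I would therefore apply it only as a guide to the local moves, not as a global map. Concretely, I would write the explicit chain for $\Lambda_2$ directly:
\begin{enumerate}
\item Use the $A_2A_1$-max move twice to create cusp edges around the index $1$ saddle of the crossing locus.
\item Use an $A_3$-max move to produce a swallowtail pair.
\item Use $A_3A_1$ moves to slide the swallowtails past the crossing arcs.
\item Close up with $A_3$-max moves, arriving at the front of $\Lambda_2'$.
\end{enumerate}
At each stage I would verify the movie of front slices against the pictures of $\Lambda_2'$. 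The delicate point is which moves apply when the upper sheet is the other one: the $A_3$-max I versus II variants may be interchanged, or an $A_1^3$-max or $A_2^2$ move may be needed. Finally I would track the Maslov potentials through the moves to confirm $r'=s+1$ and $s'=r$, or rather its mirrored form, as required by Proposition \ref{prop:index1main}.
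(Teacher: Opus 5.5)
\textbf{There is a genuine gap.} The symmetry reduction cannot work, and the fallback is not yet an argument.

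\textbf{The symmetry fails.} Your candidate $\sigma$ is a base diffeomorphism composed with $(x,y,z)\mapsto(x,-y,-z)$. It does carry the index $1$ model of $\Lambda_1$ to that of $\Lambda_2$. It cannot, however, carry $\Lambda_1'$ to $\Lambda_2'$.
\begin{itemize}
\item If a Reeb chord runs between sheets $z=f_u$ (upper) and $z=f_l$ (lower), then after the $z$-flip the upper sheet is $z=-f_l$ and the lower is $z=-f_u$. So the local difference function $f_u-f_l$ is unchanged.
\item Base diffeomorphisms also preserve the Morse index, and $\sigma$ maps crossing locus to crossing locus.
\item Since $\Lambda_1'$ and $\Lambda_2'$ are the first two terms of (SR1), they agree outside a small ball. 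Inside it, $\Lambda_1'$ has a circle of crossings around an index $2$ chord, while $\Lambda_2'$ has no crossings and an index $0$ chord.
\end{itemize}
Hence $\sigma(\Lambda_1')$ is again of first-term type; its crossing locus even has one more closed component than that of $\Lambda_2'$. So the check you propose fails. Conjugating the isotopy of Lemma \ref{lem:L1iso} by $\sigma$ only gives $\Lambda_2\cong\sigma(\Lambda_1')$. That is not the second-term partner of $\Lambda_1'$ which Proposition \ref{prop:index1main} requires. Bridging $\sigma(\Lambda_1')\cong\Lambda_2'$ is as hard as the lemma itself.

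\textbf{The fallback is not carried out.} Your explicit chain is copied from the proof of Lemma \ref{lem:L1iso}, whose moves are tailored to produce a crossing circle around an index $2$ chord. None of your steps is checked against the front of $\Lambda_2'$. Also, the $A_2A_1$-max move does not create cusp edges; it changes how an existing cusp edge meets another sheet. Since the entire content of this lemma is an explicit isotopy, the proposal does not prove it.

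\textbf{What the paper does.} The paper's isotopy is different and shorter, taken from \cite[Section 4.5, Figure 46]{CZ}. It uses an $A_1^3$-max move (involving triple points of the front) together with an $A_3$-max II move, and then matches the resulting front with $\Lambda_2'$. You mention the $A_1^3$-max move only as a contingency; it is one of the two moves the actual isotopy is built from. To repair the proposal, drop the symmetry and write down such a sequence explicitly, verifying the front slices at each stage.
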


\begin{proof}  The proof is from \cite[Section 4.5, Figure 46 5-8]{CZ}.
	The isotopy is constructed making use of the $A_1^3$-max move and the $A_3$-max II move:
	\[\raisebox{-2cm}{\includegraphics[scale=.4]{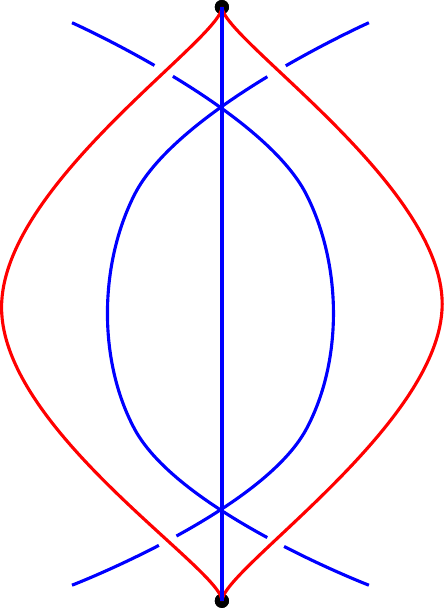}}\quad  \cong \quad \raisebox{-2cm}{\includegraphics[scale=.4]{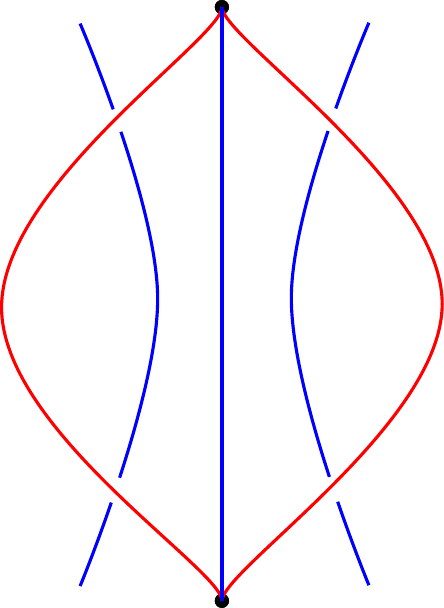}} \quad \cong \quad \raisebox{-1.5cm}{\includegraphics[scale=.4]{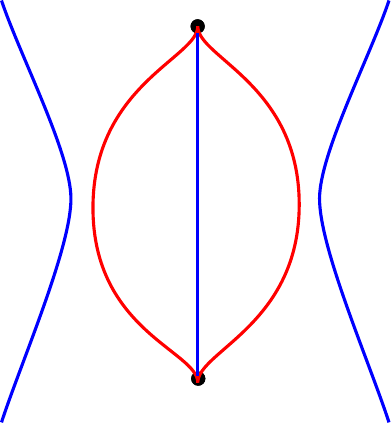}} \quad \cong \quad  \raisebox{-1cm}{\includegraphics[scale=.6]{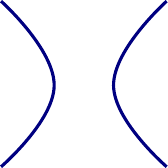}}
\]	
\end{proof}

\begin{lemma} \label{lem:lambda4} The surfaces $\Lambda_4$ and $\Lambda_4'$ are Legendrian isotopic.
\end{lemma}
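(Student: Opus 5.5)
Following the pattern of Lemmas \ref{lem:L1iso} and \ref{lem:L2iso}, we will exhibit an explicit sequence of the local moves from Section \ref{sec:AGmoves} carrying the front of $\Lambda_4$ to that of $\Lambda_4'$. Recall that $\Lambda_4$ is the two-sheeted surface whose crossing locus in $\pi_x(B)$ is the ``H''-shaped graph with two trivalent $D_4^-$ vertices (the other of the two Lagrangian surgeries of the index $1$ chord). $\Lambda_4'$ is the (SR1)-type front, the second Lagrangian surgery: two sheets joined by a tube bounded by a circular cusp edge, with the surrounding crossings arranged as in the picture of $\Lambda_4'$ in Section \ref{sec:4surfaces}. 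The guiding idea is that the two $D_4^-$ points must be traded for cusp/swallowtail structure. The natural tool for this is the $D_4^-$ Move, read in the direction that replaces a $D_4^-$-type configuration by three crossing arcs together with a small cusped region. After that, the extra swallowtails and crossings will be removed with the moves used earlier.

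\textbf{Step 1.} We first perturb $\Lambda_4$ so that its fronts are generic near each trivalent vertex. Since the $D_4^-$ points of $\Lambda_4$ are non-generic, the first step is to apply (at each of the two vertices) the $D_4^-$ Move. This replaces each vertex by a generic front: a small triangle of crossing arcs, or its $z$-reflection, containing a cusped sheet with three swallowtails. We choose the side of the move so that the new cusp edges open toward the edge joining the two vertices. We expect the two resulting cusp regions to sit on the same pair of sheets, separated only by the middle crossing arc of the ``H''.

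\textbf{Step 2.} Next we merge the two local cusp regions across that middle crossing arc. We would try the $A_2A_1$-max move (a cusp edge passing through a crossing arc) followed by the $A_3A_1$ move, pushing swallowtails through the crossing. Then an $A_2^2$ move joins the two cusp edges into a single closed cusp curve. Pairs of swallowtails of opposite type would then be cancelled by the $A_3$-max I/II moves, possibly using the $A_4$ move to rearrange a swallowtail pair beforehand. The aim is to arrive at a closed cusp circle surrounded by crossing arcs in the configuration of $\Lambda_4'$. Any leftover triple-crossing configurations would be cleaned up with the $A_1^3$-max and $A_1^4$ moves. This mirrors the sequence used in \cite[Section 4]{CZ}.

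\textbf{Step 3.} Finally we check that the end result agrees with $\Lambda_4'$ outside a neighborhood of $\partial \pi_x(B)$. We also verify the Maslov potentials, where the cusp circle forces $r'=s+1$ and $s'=r$ with $r=s$. The main obstacle is Step 2, and specifically bookkeeping: we must track which sheet is upper at each stage so that the swallowtails created in Step 1 have the correct upward/downward types to cancel in pairs. If they come out of the wrong type, the fix is to apply the $D_4^-$ Move at one vertex from the reflected side instead. We would determine the right choice by drawing the movie of front slices at each stage, as was done for Lemma \ref{lem:L1iso}.
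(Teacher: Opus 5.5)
Your proposal has a genuine gap: it never produces the isotopy. The content of this lemma is precisely an explicit chain of fronts, each connected to the next by one of the listed moves. Your Steps 1--3 are a search strategy rather than a proof (``we would try'', ``possibly using the $A_4$ move'', ``we expect the two resulting cusp regions to sit on the same pair of sheets'', ``if they come out of the wrong type, the fix is\ldots''). The decisive part is deferred to movies you have not drawn: which moves, in what order, where, and with which upward/downward swallowtail types. Nothing in the proposal checks that the front obtained after resolving the two $D_4^-$ points can actually be brought to the front of $\Lambda_4'$, so the lemma is not established.

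The one concrete mechanism you name for the key topological change in Step 2 is also invalid. The $A_2^2$ move cannot join two cusp edges into a single closed cusp curve. At its singular time two cusp edges simply pass through each other in the front, and the singular point has two distinct preimages in $\Lambda_{t_0}$, so the cusp locus of $\Lambda$ is unchanged. Reconnecting cusp curves, or creating and cancelling swallowtail pairs, must come from moves of $A_3$-max type or from the $D_4^-$ move itself.

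For comparison, the paper's proof, taken from \cite[Section 4.5, Figure 46]{CZ}, is a two-step picture isotopy that uses only the $A_3$-max I move and the $D_4^-$ move. You were right that the $D_4^-$ move is what trades the trivalent vertices for cusp/swallowtail structure. What is missing is the explicit short sequence, using the $A_3$-max I move, that turns the result into the cusp-circle front of $\Lambda_4'$. An open-ended cleanup with $A_2A_1$-max, $A_3A_1$, $A_4$, $A_1^3$-max and $A_1^4$ moves whose outcome is never verified does not supply it.
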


\begin{proof}   The proof is from \cite[Section 4.5, Figure 46 1-3]{CZ}.  Using the $A_3$-max I move and the $D_4^-$-move we have
	\[\raisebox{-2cm}{\includegraphics[scale=.4]{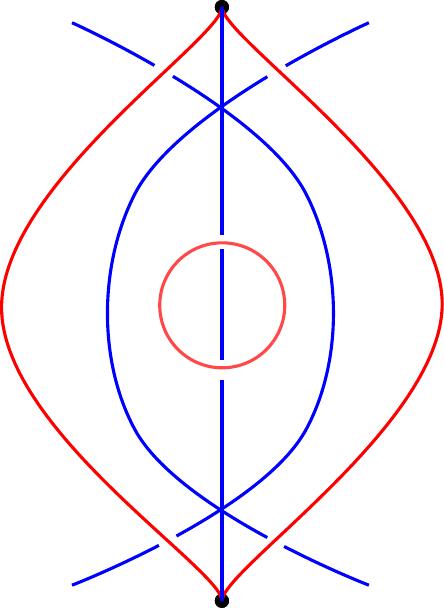}}\quad  \cong \quad \raisebox{-2cm}{\includegraphics[scale=.4]{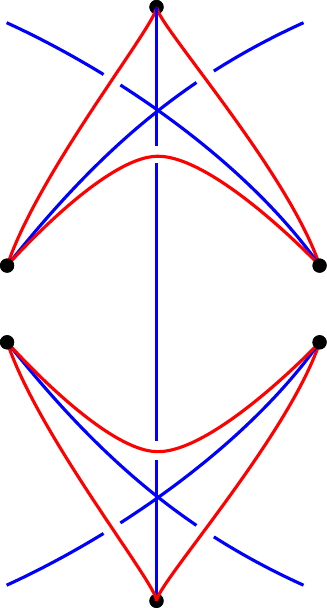}} \quad \cong \quad \raisebox{-1cm}{\includegraphics[scale=.6]{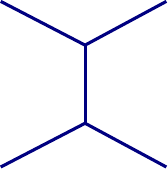}}.
	\]	
\end{proof}

\begin{lemma} \label{lem:L3iso} The surfaces $\Lambda_3$ and $\Lambda_3'$ are Legendrian isotopic.
\end{lemma}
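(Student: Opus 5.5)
The plan is to reduce $\Lambda_3$, which has two $D_4^-$ trivalent vertices in its crossing locus, to $\Lambda_3'$, which has a cone point joining its top two sheets. The cone point is non-generic, so I would not isotope directly toward it. Instead I would use the desingularization of Section \ref{sec:desingcone} and compare against the generic perturbation of $\Lambda_3'$: the cone point replaced by four swallowtails (two upward, two downward) joined by crossing arcs and cusp edges. It then suffices to show that $\Lambda_3$ is Legendrian isotopic to this perturbed front, since the perturbation is itself a small Legendrian isotopy of $\Lambda_3'$.

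The first step uses the $D_4^-$ Move to remove both trivalent vertices of $\Lambda_3$. Passing through a $D_4^-$ singularity reflects the local front in the $z$-direction, exactly as in the proof of Lemma \ref{lem:lambda4}, where the $D_4^-$ Move together with an $A_3$-max I move converted the two vertices of $\Lambda_4$ into the generic front $\Lambda_4'$. After the analogous step, each vertex is replaced by a small generic configuration of crossing arcs, cusp edges and swallowtails. $\Lambda_3$ and $\Lambda_4$ differ only in how the two vertices are joined: the crossing locus is an ``H'' for one and an ``I'' for the other, i.e.\ the two Lagrangian surgeries. So I expect the resulting front to resemble that of $\Lambda_4'$ but with the swallowtail pairs connected across the other channel, producing a closed loop of singular curves in place of the cusp arcs of $\Lambda_4'$.

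Next I would simplify this configuration toward the four-swallowtail picture. This uses the $A_3$-max I and II moves to create or cancel swallowtail pairs along cusp edges, the $A_2A_1$-max and $A_3A_1$ moves to slide cusp edges and swallowtails past the crossing arc of the remaining sheets, the $A_1^3$-max move to remove superfluous triple-point configurations, and the $A_2^2$ move where two cusp edges must pass through each other. The target is the movie of front slices in Section \ref{sec:desingcone}. I would verify the match slice by slice against the movie for $\Lambda_3'$ in Section \ref{sec:4surfaces}, also checking that the upward/downward types of the four swallowtails agree. Finally I would check that the Maslov potential relation $r'=s+1$, $s'=r$ is respected with $r=s$, which follows since the top two sheets meet at the cone point.

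The main obstacle is the middle step: finding an explicit sequence of moves from the post-$D_4^-$ front to the desingularized cone. The cone configuration has a rotational (circle) structure that the vertex picture lacks. If direct simplification stalls, I would first try an $A_4$ move, which trades swallowtail configurations, to convert a pair of adjacent swallowtails into the required alternating up/down pattern around the circle.
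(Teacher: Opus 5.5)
There is a genuine gap: the chain of moves, which is the whole content of this lemma, is missing. Your framing agrees with the paper. You compare against the four-swallowtail resolution of the cone point of $\Lambda_3'$, and you handle the trivalent vertices by the $D_4^-$ step of Lemma \ref{lem:lambda4}. The paper's final step is exactly your first step run backwards: an $A_2^2$-move followed by the isotopy of Lemma \ref{lem:lambda4}. The crossing locus of $\Lambda_3$ is that of $\Lambda_4$ rotated by $90^\circ$, so this step simply produces a rotated copy of $\Lambda_4'$, not a new configuration.

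The lemma is a pure existence statement, and its only content is an explicit chain of moves connecting that front to the resolved cone. Your proposal names candidate moves and a target picture, but never produces such a chain or checks a single intermediate front; you flag this yourself as the main obstacle. Listing which moves ``would be used'' does not show that any sequence of them works, so as written only the two easy endpoints are established.

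The paper's route also shows that your toolbox is incomplete and mis-prioritized. It first places the centre of the resolved cone's ``X'' directly above the straight crossing arc, giving the base projection a $180^\circ$ rotational symmetry. The sequence then runs as follows:
\begin{enumerate}
\item Two $A_4$-moves, applied immediately after resolving the cone point; these are the essential opening step, not a fallback.
\item $A_3A_1$-moves, $A_3$-max I moves, and two $A_2A_1$-max moves, all performed while preserving the symmetry.
\item A deliberate break of the symmetry by moving a triple point, which makes an $A_1^4$ (quadruple point) move available.
\item Two $A_1^3$-max moves, then an ambient isotopy of the front, then the $A_2^2$-move, arriving at the input of Lemma \ref{lem:lambda4}.
\end{enumerate}
Your list never mentions the $A_1^4$-move, which this sequence requires. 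To close the gap, you need to exhibit a concrete sequence of this kind and verify each stage, for instance by movies of front slices.
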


\begin{proof}  We begin by resolving the cone point and then applying two $A_4$-moves:
		\[\Lambda_3' \quad  \cong \quad \raisebox{-2cm}{\includegraphics[scale=.4]{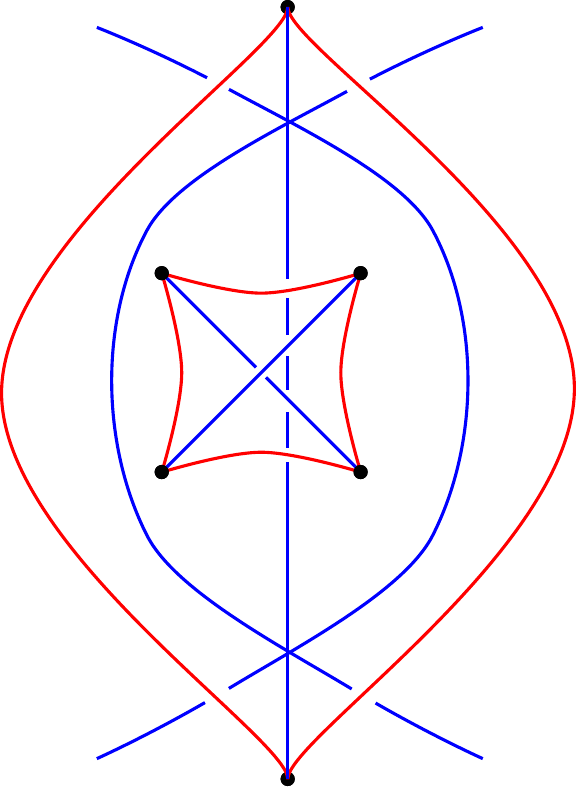}} \quad \cong \quad \raisebox{-2.5cm}{\includegraphics[scale=.4]{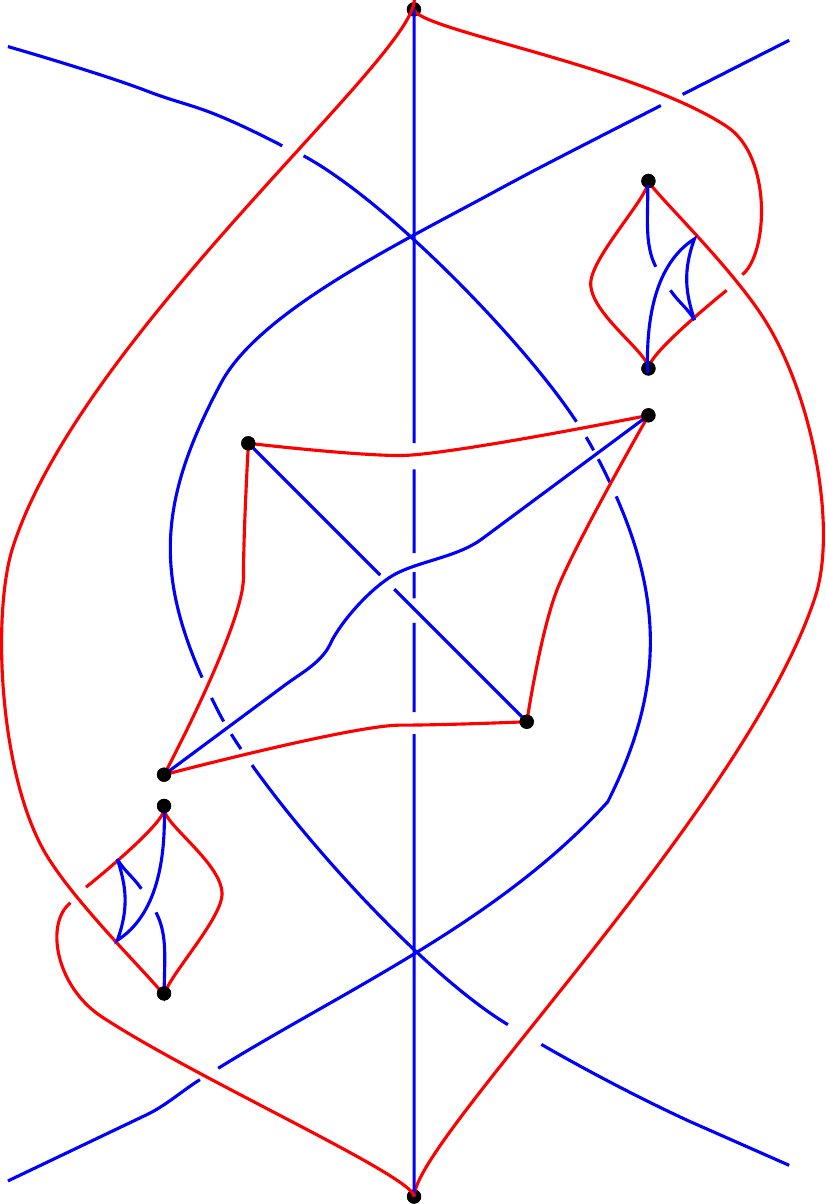}}
	\]	
	With the center of the ``X" from the resolved cone point positioned directly above the straight vertical crossing arc, this base projection exhibits a $180^\circ$ rotational symmetry. For the next few steps, this symmetry is maintained, and  we only present the upper half of the base projection.  
Note that the light green indications that appear in the illustrations are intended to help the reader visualize the change that the base projection undergoes during the isotopy, and are not themselves part of the base projection.
	
	We proceed by applying two $A_3A_1$-moves and the $A_3$-max I move:
	\[
	\raisebox{-2cm}{\includegraphics[scale=.4]{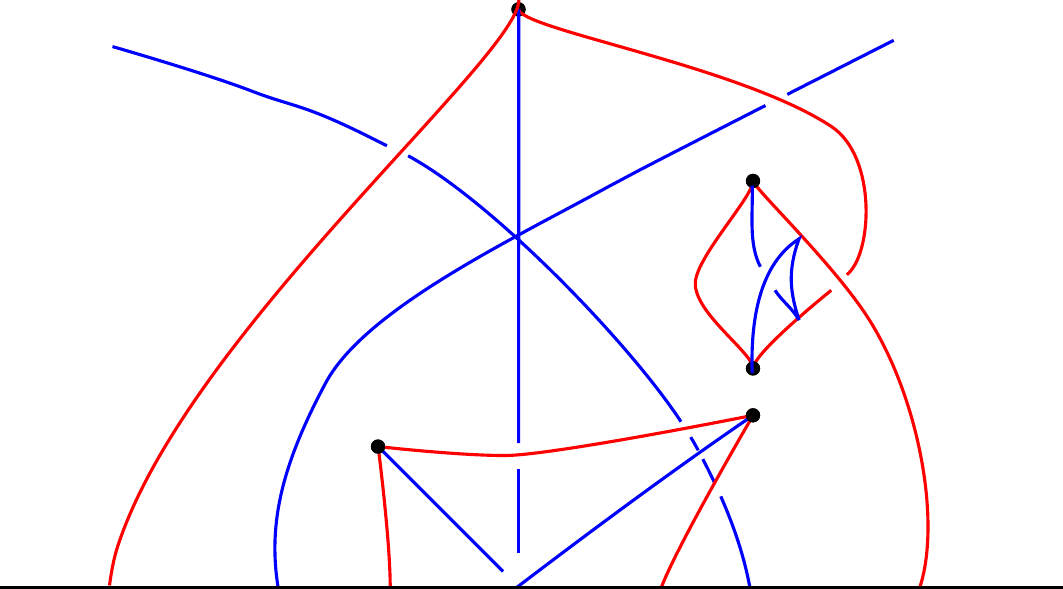}}\quad  \cong \quad \raisebox{-2cm}{\includegraphics[scale=.4]{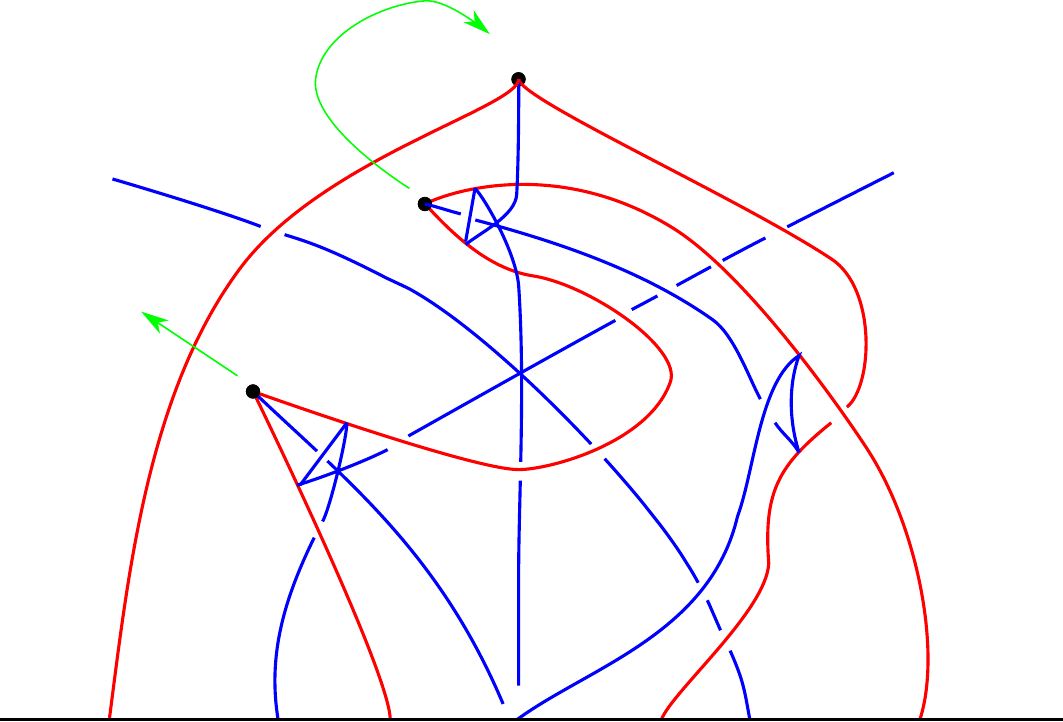}}  
	\]
	One more $A_3$-max I move produces:
	\[ \raisebox{-2cm}{\includegraphics[scale=.4]{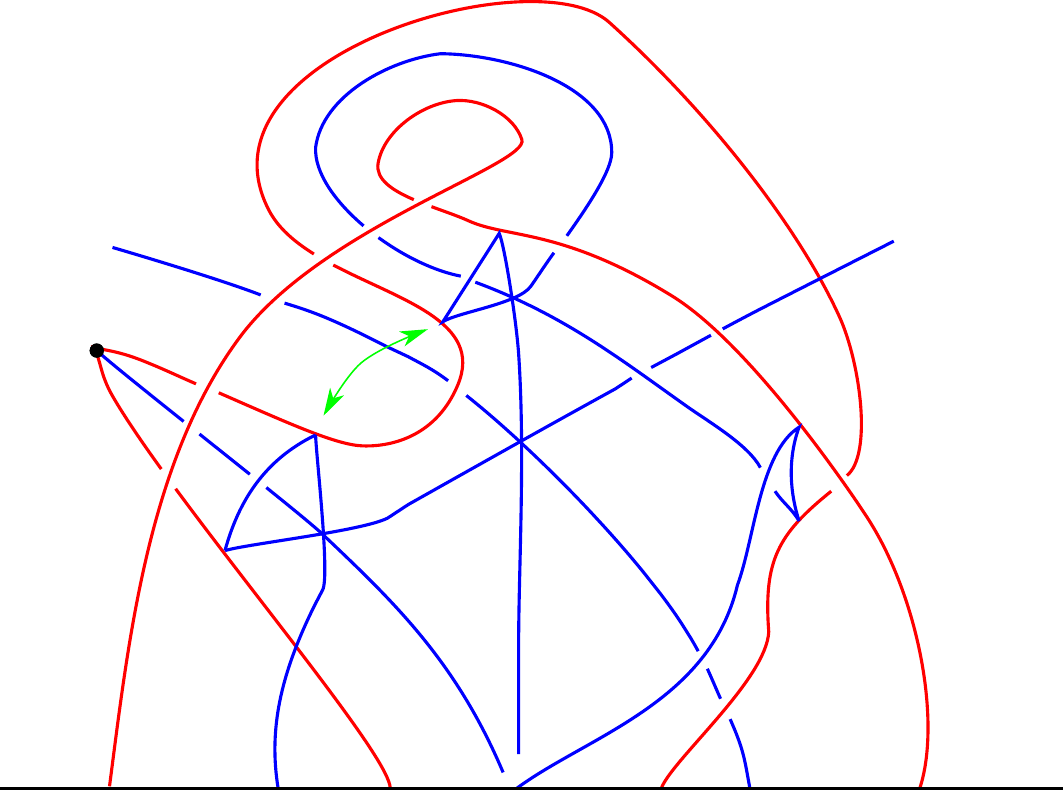}} 
	\]
		We next apply the $A_2A_1$-max move, return to drawing the entire base projection, and then apply the $A_2A_1$-max move again:
		\[
	\raisebox{-4cm}{\includegraphics[scale=.4]{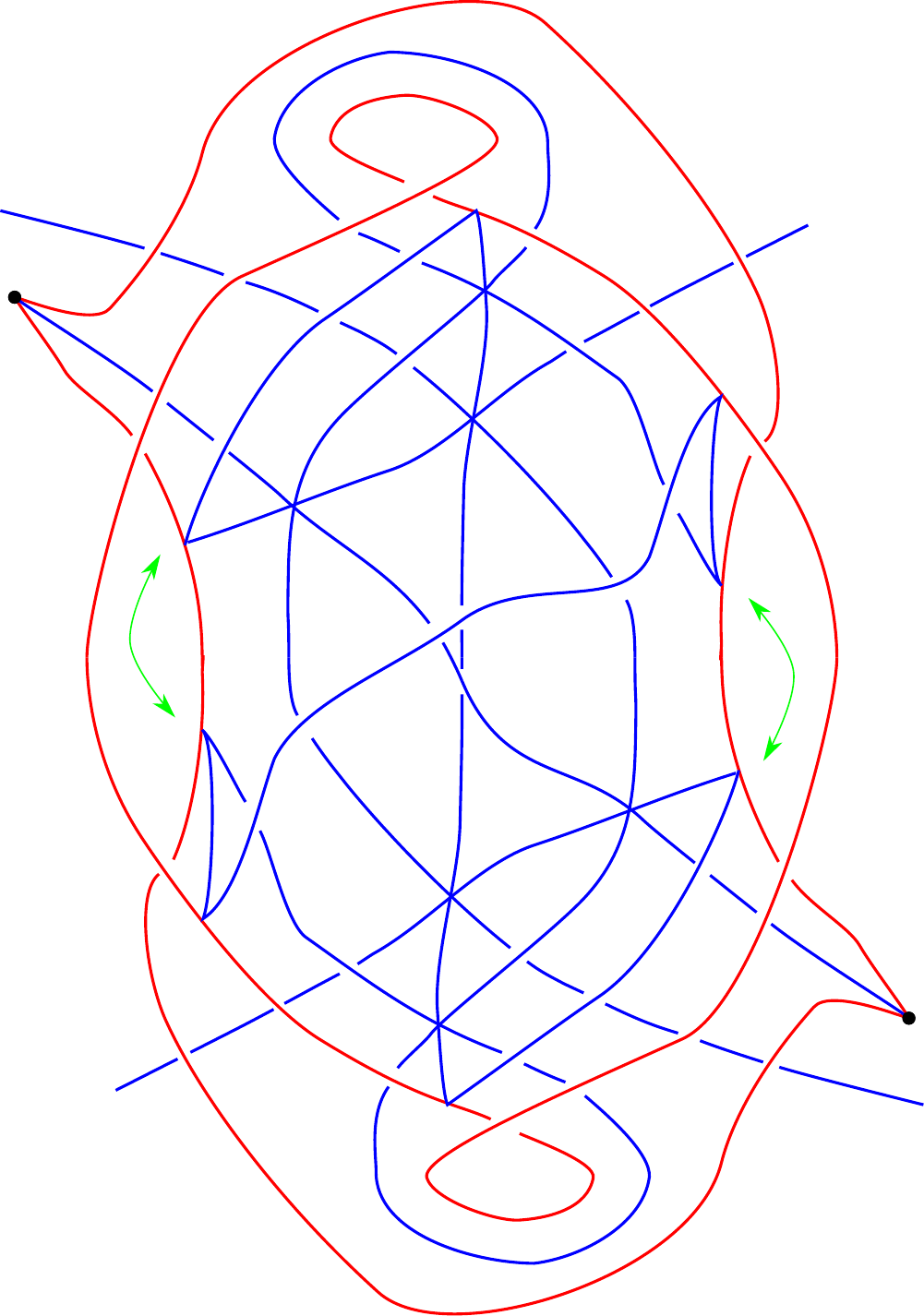}}\quad  \cong \quad \raisebox{-4cm}{\includegraphics[scale=.4]{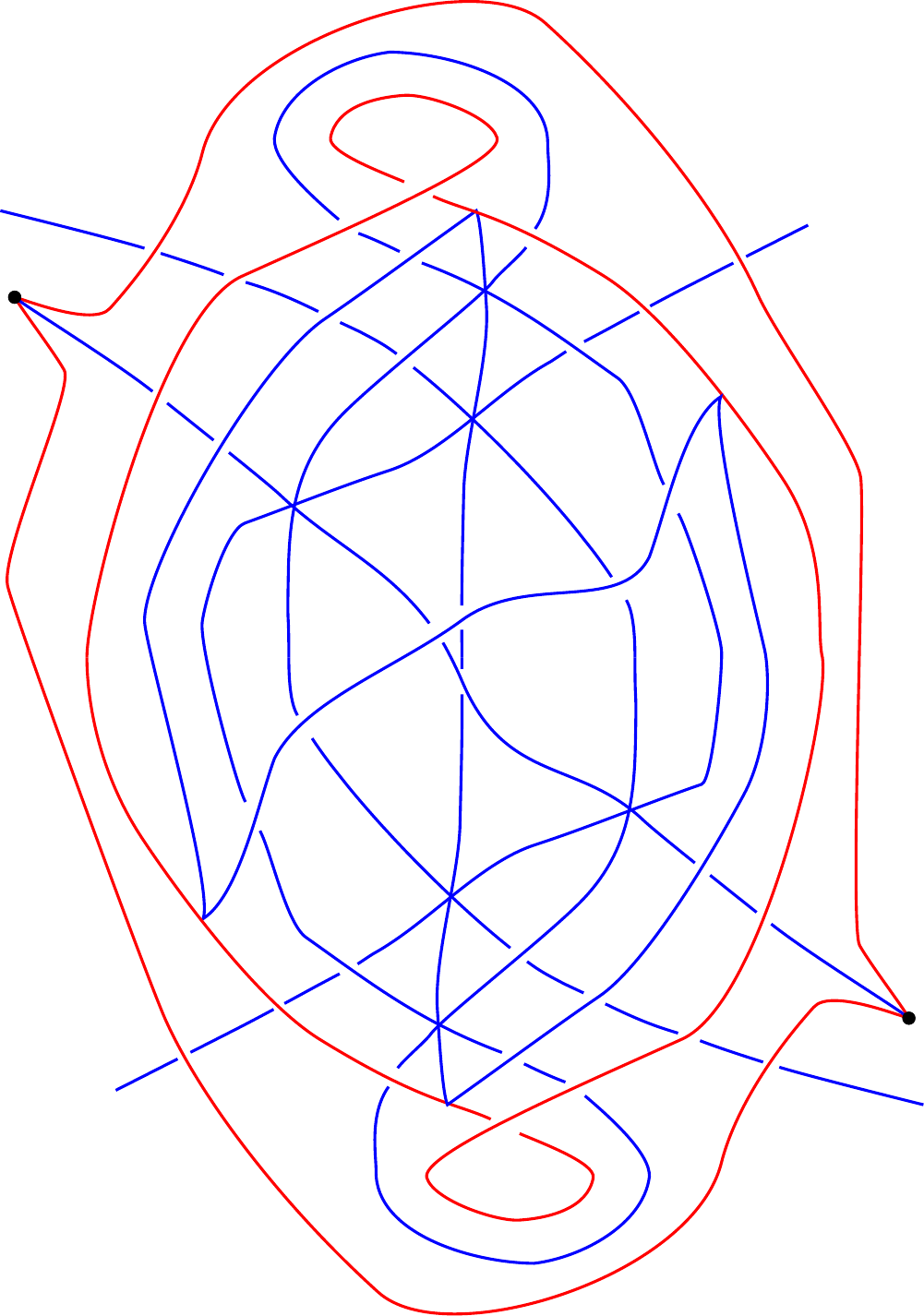}}  
	\]
	At this point in the isotopy we break the rotational symmetry by moving the circled triple point into the upper half of the projection.  
	\[
	\raisebox{-4cm}{\includegraphics[scale=.4]{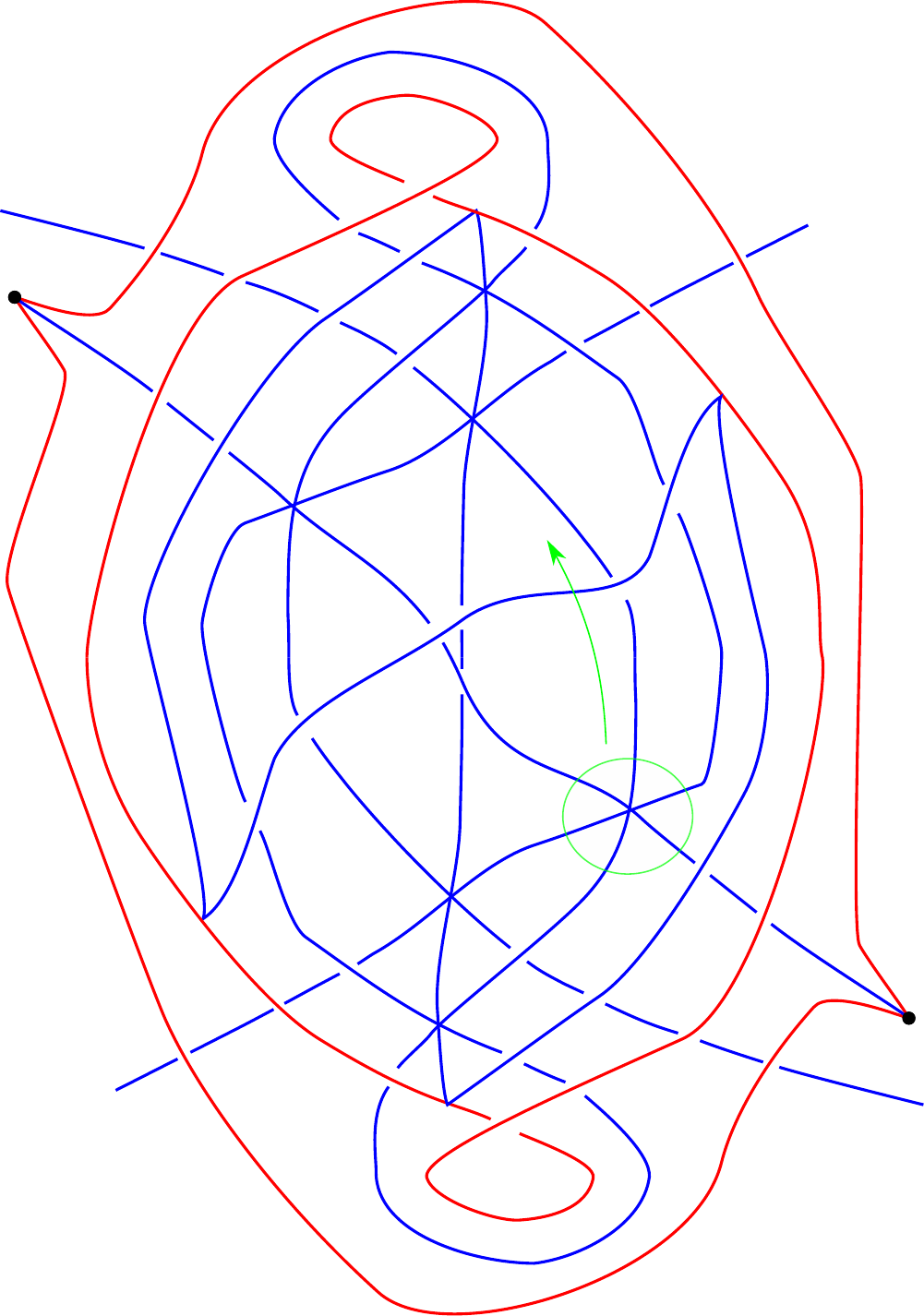}}\quad  \cong \quad \raisebox{-4cm}{\includegraphics[scale=.4]{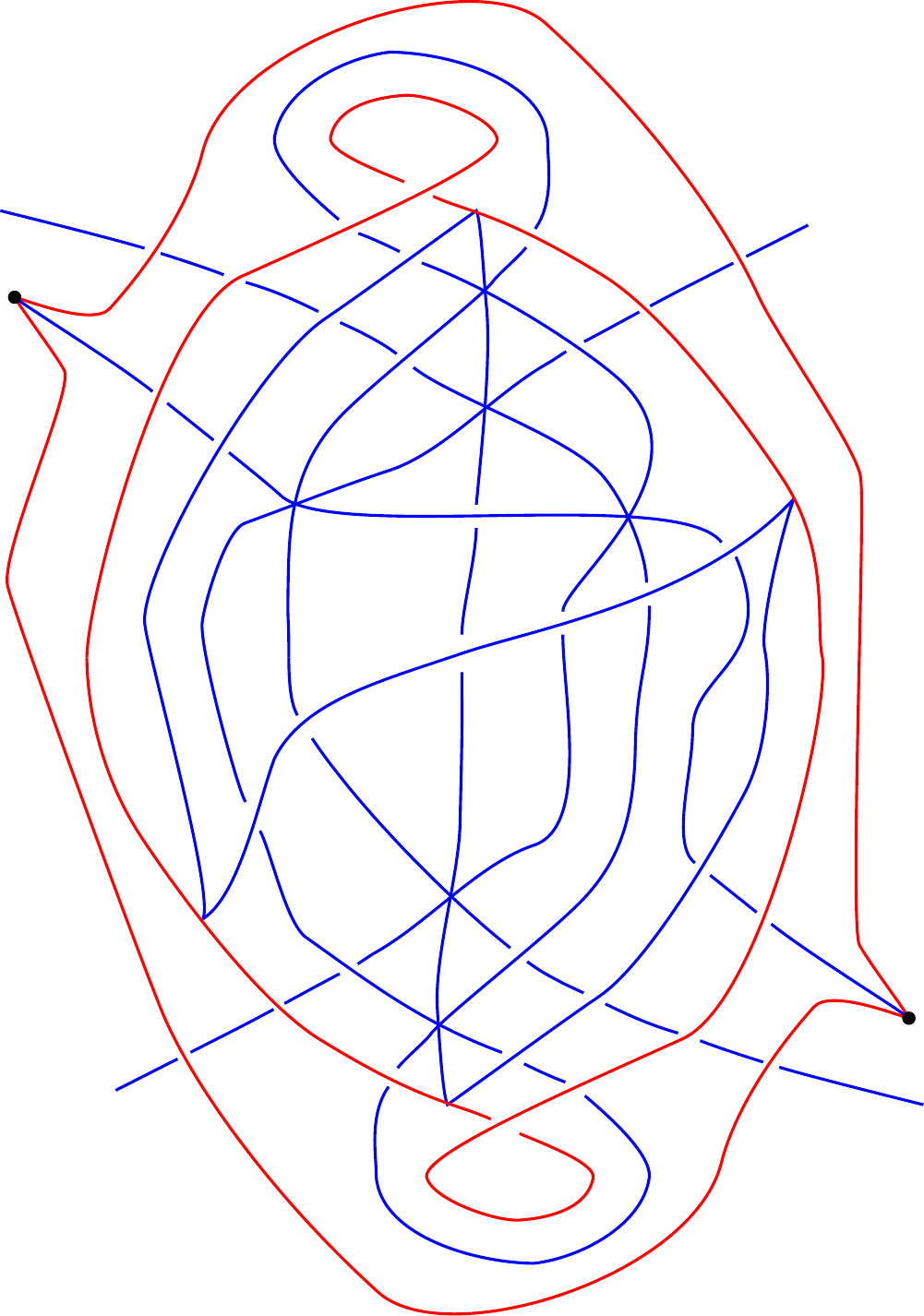}}  \quad 
	\]
	This allows us to then apply the $A_1^4$-move in the region outlined in green followed by a sequence involving two $A_1^3$-max moves.
	\[
	\raisebox{-4cm}{\includegraphics[scale=.4]{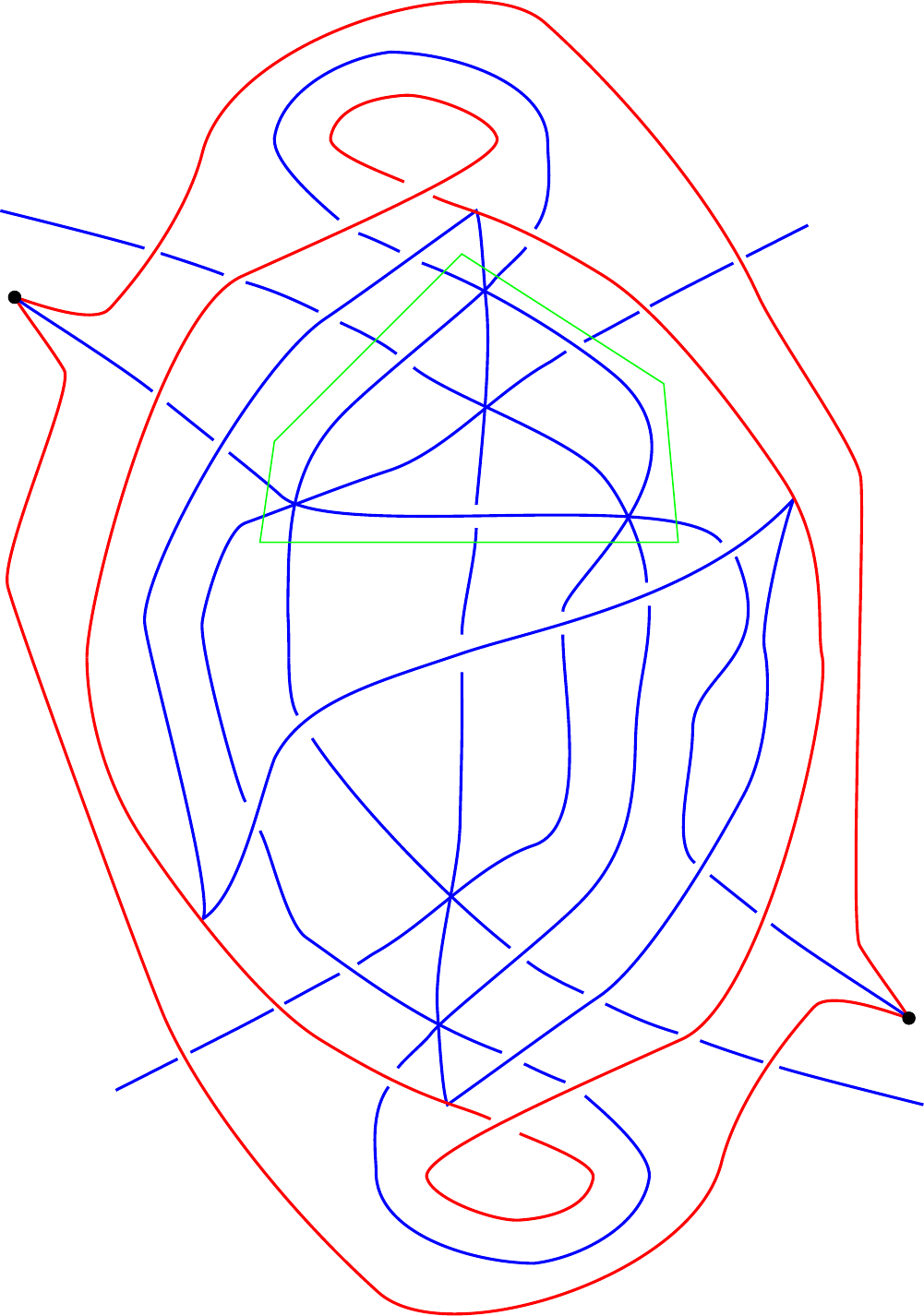}}  \quad  \cong \quad \raisebox{-4cm}{\includegraphics[scale=.4]{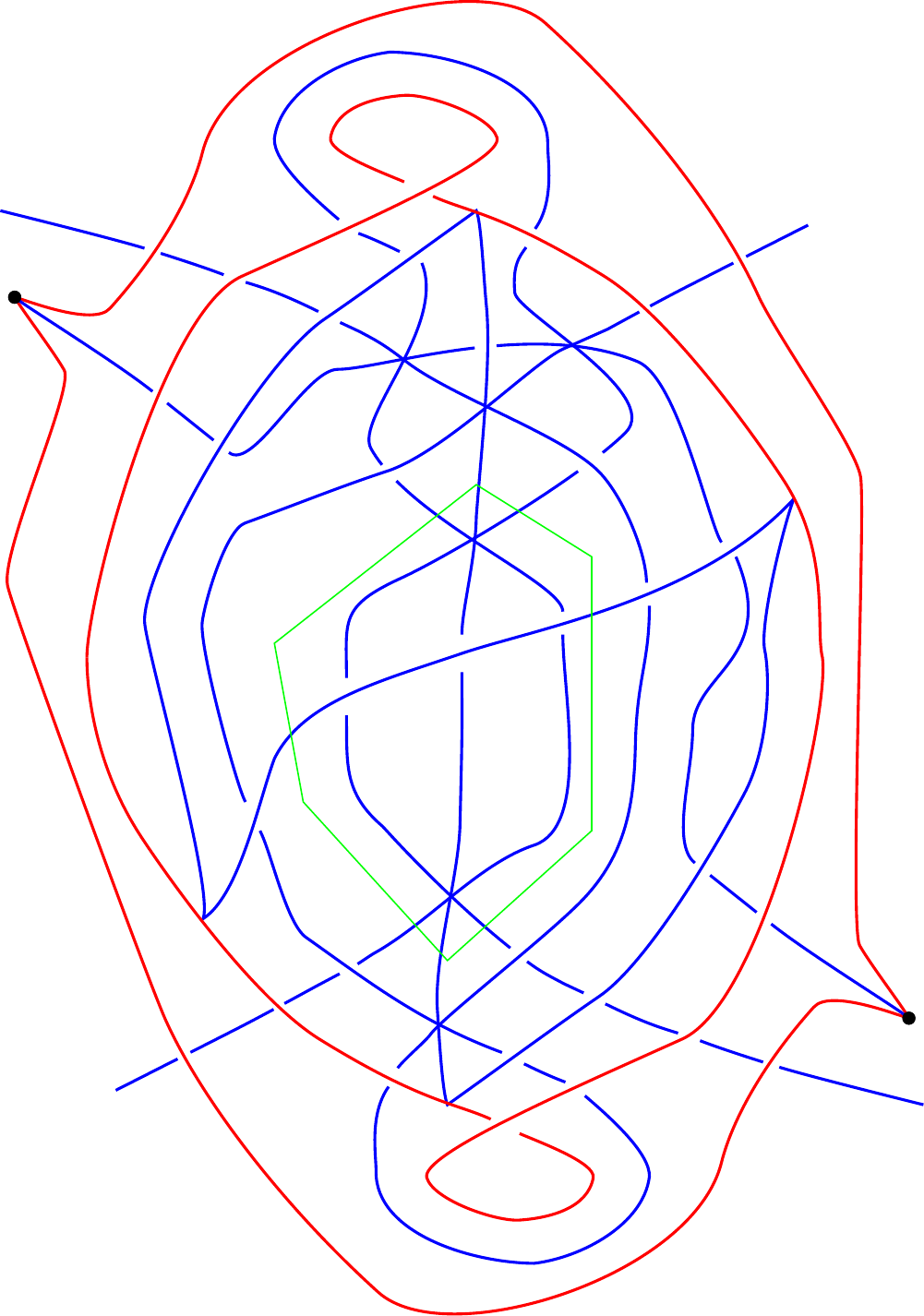}}  \quad \cong
	\]
	\[
	\raisebox{-4cm}{\includegraphics[scale=.4]{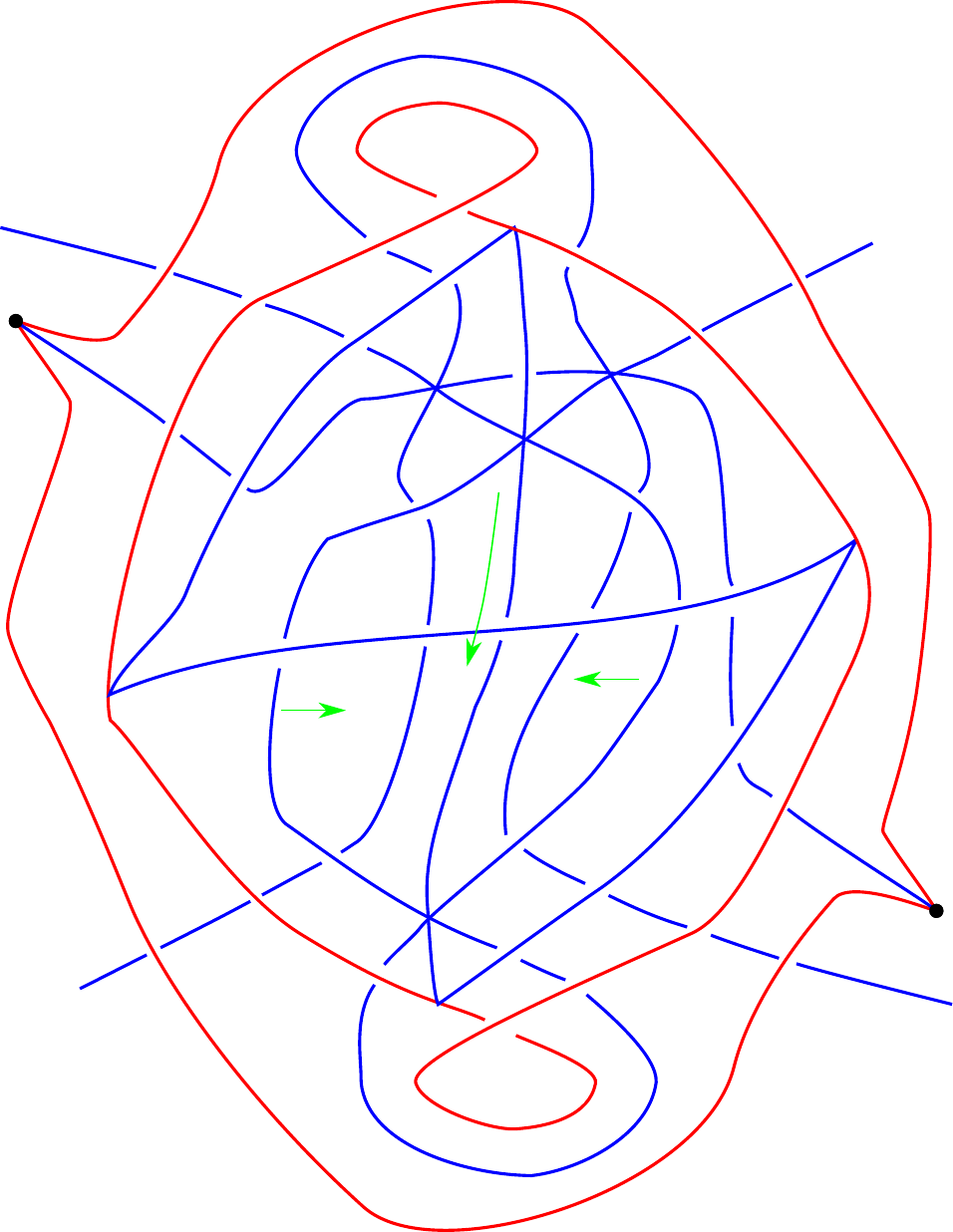}}  \quad  \cong \quad \raisebox{-4cm}{\includegraphics[scale=.4]{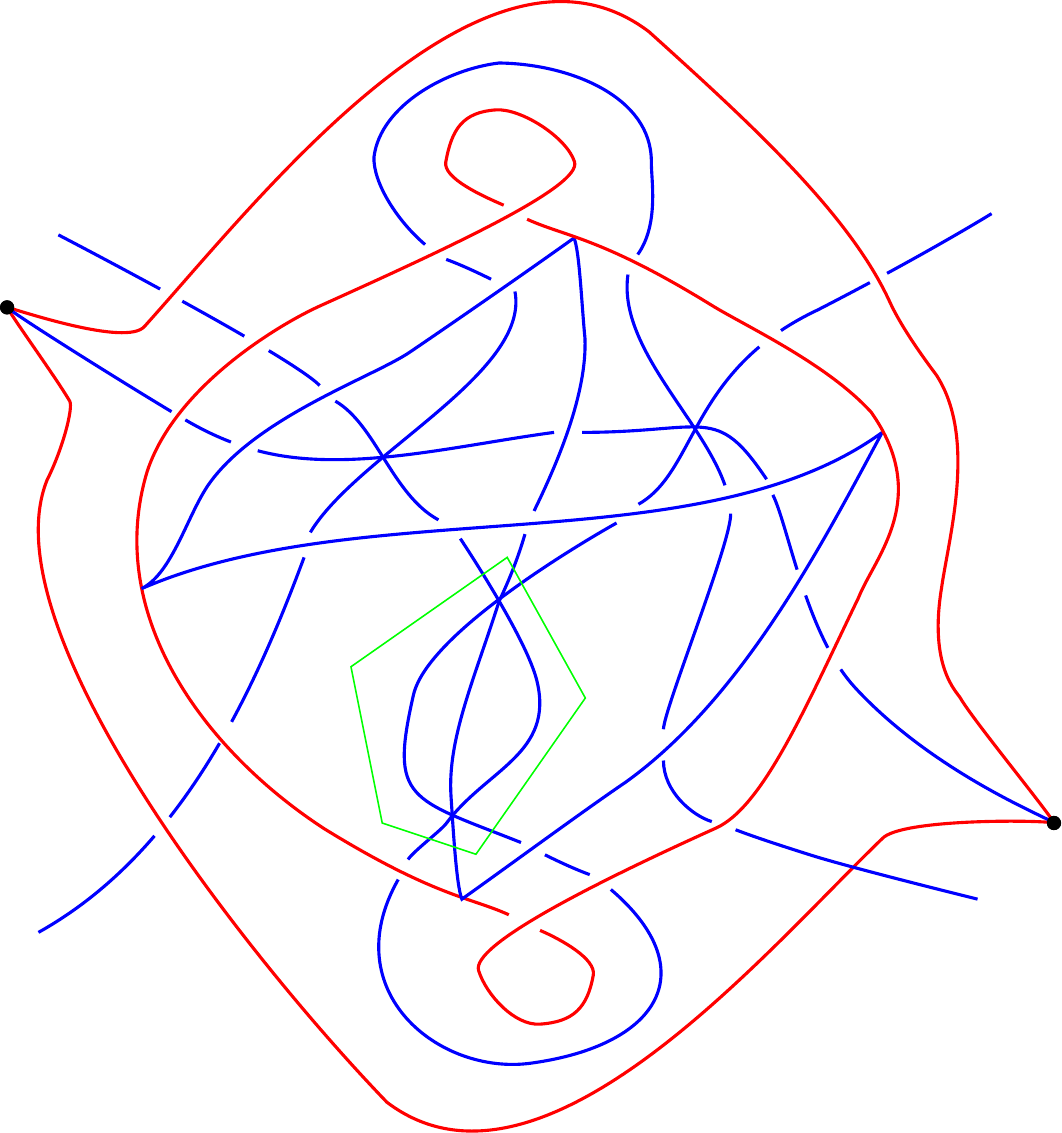}}  \quad \cong
	\]
	\[
	\raisebox{-4cm}{\includegraphics[scale=.4]{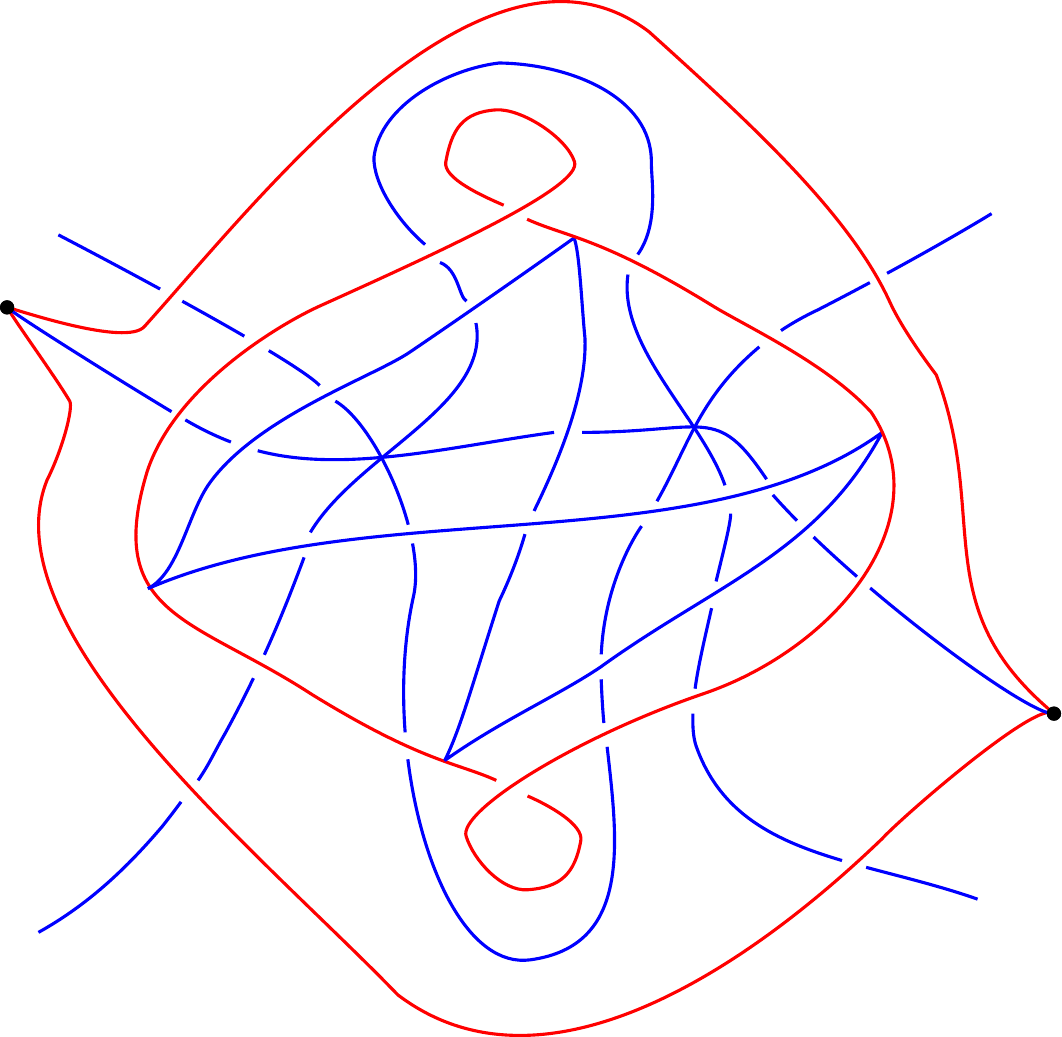}}  \quad  \cong \quad \raisebox{-4cm}{\includegraphics[scale=.4]{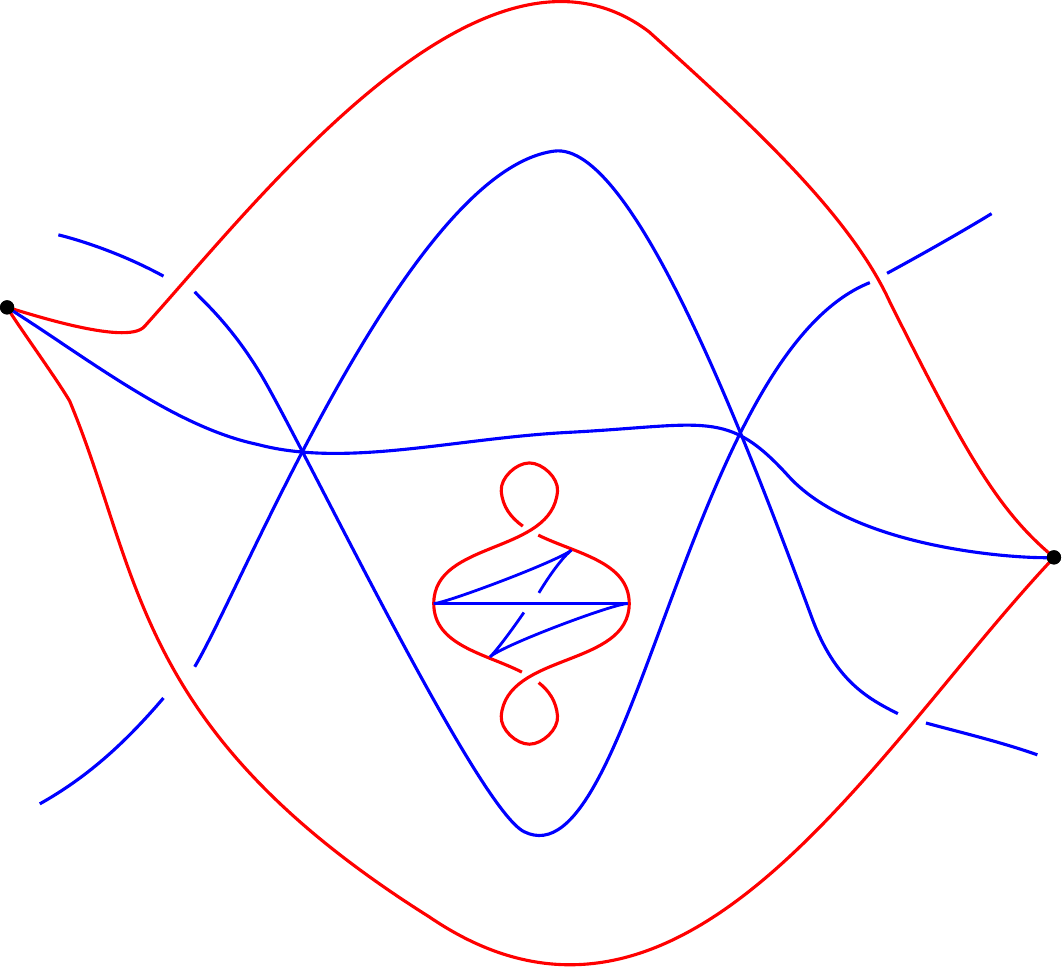}}  
	\]
	The last $\cong$ in the above sequence is via an ambient isotopy of the front projection in $S\times \R$.   Up to diffeomorphism, the singular set in the front projection is unchanged, but its projection  to the $x_1x_2$-plane is altered.  
	
	Finally, we apply the $A_2^2$-move and then the Legendrian isotopy from Lemma \ref{lem:lambda4} to arrive at $\Lambda_3$ via
	\[
	\raisebox{-3cm}{\includegraphics[scale=.4]{images/Isotopy14}}  \quad  \cong \quad \raisebox{-1.5cm}{\includegraphics[scale=.4]{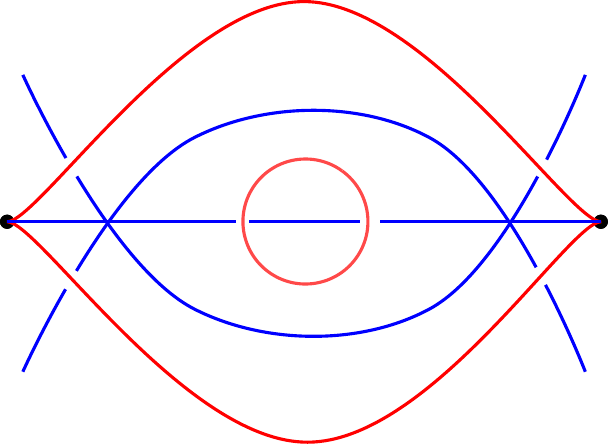}}  \quad \cong \quad \raisebox{-.5cm}{\includegraphics[scale=.4]{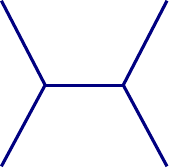}}
	\]
	
\end{proof}

\section{Proof of the remaining skein relations}  \label{sec:7}

With the skein relations (SR1) and (SR2) established in the previous two sections,
to complete the proof of Theorem \ref{thm:main} it  remains to establish (SR3) and (SR4) and the identities (\ref{eq:productrule}) and (\ref{eq:Uvalue}).

\subsection{The $0$-surgery skein relation (SR3)}  Let $\Lambda_1, \Lambda_2 \subset J^1S$ be Legendrians related as in the statement of the (SR3) skein relation from Theorem \ref{thm:main}.  In particular, the fronts of $\Lambda_1, \Lambda_2$ are identical outside of a $3$-ball $B \subset S\times \R$ where they appear as pictured in Figure \ref{fig:SR3cocycle}.  In the terminology of \cite{Rizell, BST}, $\Lambda_2$ is obtained from $\Lambda_1$ via ambient $0$-surgery.  Again, we will need to distinguish between two cases.

\medskip

\noindent {\bf Connected Case.}  The two disks  of $\Lambda_1$ pictured in (SR3) belong to the {\it same} connected component of $\Lambda_1$.

\medskip

\noindent {\bf Disconnected Case.}  The two disks of $\Lambda_1$ pictured in (SR3) belong to {\it different} connected components of $\Lambda_1$.

\medskip

\subsubsection{Combinatorial spin structures and geometric cocycles} 
We choose a combinatorial spin structure $\xi_1$ for $\Lambda_1$ whose base projection is disjoint from $\pi_x(B)$, and equip $\Lambda_2$ with the same structure $\xi_2 := \xi_1$.  Next, we assign collections of geometric cocycles, $\Gamma_1$ and $\Gamma_2$, to $\Lambda_1$ and $\Lambda_2$ in the following manner:  Choose a collection of geometric cocycles $\Gamma$ for $\Lambda_1$ that together span $H^1(\Lambda_1)$ and have their base projections disjoint from $\pi_x(B)$.  An additional cocycle $\mu$ is added to both $\Lambda_1$ and $\Lambda_2$ in the part of $B$ where $\Lambda_1$ and $\Lambda_2$ coincide; see Figure \ref{fig:SR3cocycle}.  We set $\Gamma_1 = \Gamma \cup \{\mu \}$.  In the connected case, we choose a further geometric cocycle $\lambda_2 \subset \Lambda_2$ that intersects $\mu$ exactly once so that $\Gamma_1 \cup \{\lambda\}$ spans $H^1(\Lambda_2)$, and then set
\[
\Gamma_2 = \left\{\begin{array}{cr}
	\Gamma_1 \cup \{ \lambda_2\},  &  \mbox{Connected Case}  \\
	\Gamma_1, &  \mbox{Disconnected Case}.
 	\end{array} \right.
\]

\begin{figure}
	
	\labellist
	\small
	\pinlabel $\Lambda_1$ [t] at 86 0
	\pinlabel $\mu$ [bl] at 166 88
		\endlabellist
	\centerline{\includegraphics[scale=.8]{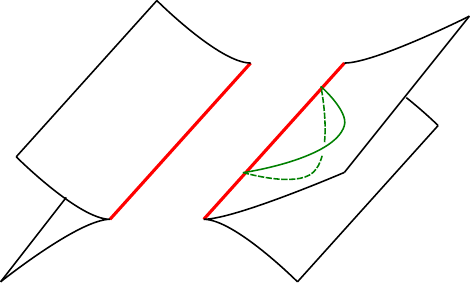} \quad
	\labellist
	\small
	\pinlabel $\Lambda_2$ [t]  at 86 0
	\pinlabel $\mu$ [bl] at 166 88
	\pinlabel $\lambda_2$ [br] at 44 102
	\endlabellist
		\includegraphics[scale=.8]{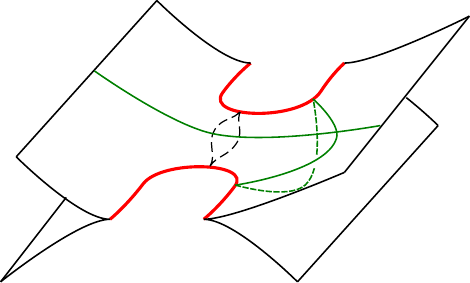}
	 }
	
	\caption{The geometric cocycles $\mu$ and (in the Connected Case) $\lambda_2$.   
	}
	\label{fig:SR3cocycle}
\end{figure}

\subsubsection{Cellular computations}
For comparing the augmentation numbers of $\Lambda_1$ and $\Lambda_2$, we make use of CW-decompositions $\mathcal{E}_1$ and $\mathcal{E}_2$ that agree outside of $\pi_x(B)$ and within $\pi_x(B)$ appear as in Figure \ref{fig:SR3base}.  We will pass from the cellular DGAs $\mathcal{A}^{CW}(\Lambda_1, \Gamma_1, \mathcal{E}_1, \xi_1)$ and $\mathcal{A}^{CW}(\Lambda_2, \Gamma_2, \mathcal{E}_2, \xi_2)$ to stable tame isomorphic quotients that we will denote simply as $\alg(\Lambda_1)$ and $\alg(\Lambda_2)$.

\medskip

\noindent {\it Constructing $\alg(\Lambda_1)$}:  The equation
\[
J_{B_0} \cdot \partial \framebox{$B_0$} = \framebox{$A_2$} - A_1 +(A_2B_0-B_0A_1)
\]
allows us to apply Proposition \ref{prop:cancel} to cancel all of the $B_0$ and $A_2$ generators and arrive at the relations
\[
B_0 \doteq 0, \quad \mbox{and} \quad  A_2 \doteq A_1.
\]
We define $\alg(\Lambda_1)$ to be this quotient DGA.

\medskip

\noindent {\it Constructing $\alg(\Lambda_2)$}:  We begin with some notations.  To treat the Disconnected and Connected cases together,  in the Disconnected Case we let $\lambda =1$, while in the Connected Case $\lambda$ denotes the element of the coefficient ring of $\mathcal{A}^{CW}(\Lambda_2, \Gamma_2, \mathcal{E}_2, \xi_2)$ corresponding to the geometric cocycle $\lambda_2$.  Let $n$ denote the number of sheets of $\Lambda_2$ above $B_1$, and let $k$ and $k+1$ be the numbering of the sheets that meet at the cusp edges.  Keeping in mind that cusp edges are not considered sheets, there are then $n-2$ sheets above $A_3$ and $A_4$.  We will use $B_1$ (resp. $A_3$ and $A_4$) for the corresponding $n\times n$ matrix (resp. $(n-2) \times (n-2)$ matrices) of generators of $\mathcal{A}^{CW}(\Lambda_2, \Gamma_2, \mathcal{E}_2, \xi_2)$.   We reprise  $\Theta^\lambda_k$ for the identity matrix with the $k$-th diagonal entry replaced with $\lambda$.  Using the recipe from (\ref{eq:dB}) we then have
\begin{equation} \label{eq:0-surgery-diff}
J_{B_1}\cdot \partial B_1 = \left( \widehat{A}_3\right)_{k,k+1} (\Theta^\lambda_k + B_1) - (\Theta^\lambda_k + B_1) \left( \widehat{A}_4\right)_{k,k+1}
\end{equation}
where $\left( \widehat{A}_3\right)_{k,k+1}$ denotes $A_3$ enlarged to an $n\times n$ matrix by adding two new columns and rows of $0$'s in positions $k$ and $k+1$ along with a $1$ in the $(k,k+1)$-entry.  The matrix $\left( \widehat{A}_4\right)_{k,k+1}$ is defined similarly.

We pass from $\mathcal{A}^{CW}(\Lambda_2, \Gamma_2, \mathcal{E}_2, \xi_2)$ to a stable tame isomorphic quotient $\mathcal{A}(\Lambda_2)$ via three rounds of cancellations.

\medskip

\noindent {\bf Step 1.}  For all $i<k$, cancel $b^1_{i,k+1}$ and $b^1_{i,k}$.  This is done by repeated applications of Proposition \ref{prop:cancel}, starting first with $i=k-1$ and then inductively decreasing $i$.  From (\ref{eq:0-surgery-diff}), we have
\[
\pm \partial \framebox{$b^1_{i,k+1}$} = -\framebox{$b^1_{i,k}$} + \sum_{i<j<k} a^3_{i,j} b^1_{j,k+1} \doteq  b^1_{i,k},
\]
so that in the quotient $b^1_{i,k+1} \doteq b^1_{i,k} \doteq 0$.

\medskip

\noindent {\bf Step 2.}  For all $k+1<j$, cancel $b^1_{k,j}$ and $b^1_{k+1,j}$.  Starting with $j=k+2$ and inductively increasing $j$, we can cancel
\[
\pm \partial \framebox{$b^1_{k,j}$} = \framebox{$b^1_{k+1,j}$} - \sum_{k+1<i<j} b^1_{k,i} a^4_{i-2,j-2} \doteq b^1_{k+1,j}.
\]
This results in $b^1_{k,j} \doteq b^1_{k+1,j} \doteq 0$.

\medskip

After Steps 1 and 2, all $b^1_{i,j}$ generators with $\{i,j\} \cap \{k,k+1\} \neq \emptyset$ have been cancelled, except for $b^1_{k,k+1}$.  We set
\begin{equation} \label{eq:b1lambda}
b:= b^1_{k,k+1}, \quad \mbox{and note} \quad  \pm \partial b = 1 - \lambda.
\end{equation}

\medskip

\noindent {\bf Step 3.} Cancel the remaining $b^1_{i,j}$, except for $b$, with the $a^3_{i,j}$ generators.  Note that the $b^1_{i,j}$ generators with $\{i,j\} \cap \{k,k+1\} = \emptyset$ are in bijection with the $a^3_{i,j}$ generators via $a^3_{i,j} \leftrightarrow b^1_{\tau(i), \tau(j)}$ where $\tau:\{1, \ldots, n-2\} \rightarrow \{1, \ldots, n\}$ has $\tau(i) = i $, if $i<k$, and $\tau(i) = i+2$, if $i \geq k$.  We then apply Proposition \ref{prop:cancel} (with the $A_3$ generators ordered to be larger than the $A_4$ generators and $b$ taken to be smallest in the ordering) to cancel the $(i,j)$-entries with $\{i,j\} \cap \{k,k+1\} =  \emptyset$ in
\[
J_{B_1}\cdot \partial \framebox{$B_1$} = \framebox{$\left( \widehat{A}_3\right)_{k,k+1}$} \Theta^\lambda_k + \left( \widehat{A}_3\right)_{k,k+1}B_1 - (\Theta^\lambda_k + B_1) \left( \widehat{A}_4\right)_{k,k+1}.
\]
In the quotient, we have $B_1 \doteq b E_{k,k+1}$, and (\ref{eq:0-surgery-diff}) becomes 
\[
(1-\lambda)E_{k,k+1} \doteq \left( \widehat{A}_3\right)_{k,k+1} (\Theta^\lambda_k + bE_{k,k+1}) - (\Theta^\lambda_k + bE_{k,k+1}) \left( \widehat{A}_4\right)_{k,k+1}.
\]
Ignoring the $k$ and $k+1$ rows and columns, this identity becomes simply
\[
0 \doteq A_3- A_4 \quad \Rightarrow \quad A_3 \doteq A_4.
\]

\begin{figure}
	
	\labellist
	\small
	\pinlabel $\Lambda_1$ [t] at 48 -2
	\pinlabel $B_0$ [b] at 48 40
	\pinlabel $A_1$ [r] at 30 38
	\pinlabel $A_2$ [l] at 66 38
	\endlabellist
	\centerline{\includegraphics[scale=1]{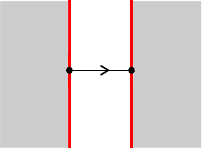} \quad
		\labellist
		\small
		\pinlabel $\Lambda_2$ [t]  at 48 -2
	\pinlabel $B_1$ [r] at 46 34
	\pinlabel $A_3$ [b] at 47 55
	\pinlabel $A_4$ [t] at 47 16
		\endlabellist
		\includegraphics[scale=1]{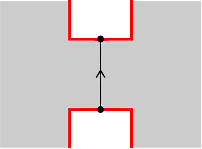}
	}
	
	\caption{Cellular decompositions of the base projections for $\Lambda_1$ and $\Lambda_2$.   
	}
	\label{fig:SR3base}
\end{figure}

\begin{proof}[Proof of (SR3)] Identifying the $A_i$ generators from $\mathcal{A}(\Lambda_1)$ and $\mathcal{A}(\Lambda_2)$ via
	\[
	A_1 \doteq A_2  \leftrightarrow A_3 \doteq A_4,
	\]
the only consequential difference between $\mathcal{A}(\Lambda_1)$ and $\mathcal{A}(\Lambda_2)$ is from the presence of $b$ and $\lambda$ (in the Connected Case) in $\alg(\Lambda_2)$.  Specializing $\lambda =1$ (if necessary) and $b=0$, we have an isomorphism
\[
\alg(\Lambda_1) \cong \alg(\Lambda_2)\big|_{\lambda =1, b=0}.
\]
Note that every $0$-graded augmentation of $\alg(\Lambda_2)$ must have $\epsilon(\lambda) =1$, because of (\ref{eq:b1lambda}), and $\epsilon(b) =0$ because $b$ has graded degree $|b| =1$.  Therefore, we also have bijections of augmentation varieties
\[
V(\alg(\Lambda_1);\mathbb{F}) \cong V\left(\alg(\Lambda_2)\big|_{\lambda =1, b=0};\mathbb{F}\right) \cong V(\alg(\Lambda_2);\mathbb{F}). 
\]
Moreover, since $|b|=1$, shifted Euler characteristics are related by
\[
\chi^*(\mathcal{A}(\Lambda_2)) = \chi^*(\mathcal{A}(\Lambda_1))-1.
\]

To verify the skein relation, we use Definitions \ref{def:augLambda} and \ref{def:DGalgAugNum} and Proposition \ref{prop:stableinv} together with the above equalities	as follows:  

\medskip

\noindent{ \bf Connected Case.}  Compute

\begin{align}
\aug_{\Lambda_2}(q) &= (q-1)^{\dim H_0(\Lambda_2) - |\Gamma_2|} \aug(\mathcal{A}(\Lambda_2,q))  \notag \\
 &= (q-1)^{\dim H_0(\Lambda_2) - |\Gamma_2|} q^{-\chi^*(\mathcal{A}(\Lambda_2))/2}  |V(\Lambda_2,\mathbb{F}_q)|  \notag \\
 & = (q-1)^{\dim H_0(\Lambda_1) - (|\Gamma_1|+1)} q^{-(\chi^*(\mathcal{A}(\Lambda_1))-1)/2}  |V(\Lambda_1,\mathbb{F}_q)|  \label{eq:line3ofproof} \\
 & = (q-1)^{-1}q^{1/2} (q-1)^{\dim H_0(\Lambda_1)- |\Gamma_1|} q^{-\chi^*(\mathcal{A}(\Lambda_1))/2} |V(\Lambda_1,\mathbb{F}_q)| \notag \\
 & = \frac{q^{1/2}}{q-1} \aug_{\Lambda_1}(q). \notag
\end{align}

\medskip

\noindent{ \bf Disconnected Case.}	The calculation is the same except that to obtain (\ref{eq:line3ofproof}) we use that $\dim H_0(\Lambda_2) = \dim H_0(\Lambda_1)-1$ and $|\Gamma_2| = |\Gamma_1|$.
	\end{proof}

\begin{remark}
	\begin{enumerate}
		\item In the proof, we made use of the grading $|b|=1$ in establishing that the ($0$-graded) augmentation varieties satisfy $V(\alg(\Lambda_1);\mathbb{F}) \cong V(\alg(\Lambda_2);\mathbb{F})$.  This argument goes through for $m$-graded augmentation varieties with $m=0$ or $m \geq 2$, but cannot be used when $m=1$.  At present, we are unsure whether the isomorphism is valid when $m=1$.
		  
		\item  An alternative approach to establishing (SR3) could be based on a suitable extension of Theorem 1.1 and Corollary 1.2 from \cite{Rizell} to allow for $\Z[H_1(\Lambda)]$-coefficients.
	\end{enumerate}
	\end{remark}

\subsection{The zig-zag skein relation (SR4)}  The relation is easily verified from the cellular DGA perspective as follows.  Consider a $1$-cell $B_0$ stretching across the region in between the two cusp edges as in Figure \ref{fig:ZigZagbase}.  Let $k,k+1,k+2$ be the numbering above $B_0$ of the three sheets whose closures contain cusp edge points.  Then, from $J_{B_0} \cdot \partial B_0 = \left(\widehat{A}_1\right)_{k,k+1}(I+B_0) - (I+B_0) \left(\widehat{A}_0\right)_{k+1,k+2}$ with matrix notations as in (\ref{eq:0-surgery-diff}),  we have that
\[
\pm b^0_{k,k+1} = 1 -0   \quad \mbox{and} \quad  \pm b^0_{k+1,k+2} = 0-1.
\]
 Either one of these shows that the DGA has no augmentations.  Thus, the augmentation numbers are $0$.

\begin{figure}
	
	\labellist
	\small
	\endlabellist
	\centerline{\includegraphics[scale=1]{images/Stabilized} 
			\quad
		\labellist
	\small
	\pinlabel $B_0$ [b] at 48 40
	\pinlabel $A_1$ [r] at 30 38
	\pinlabel $A_2$ [l] at 66 38
	\endlabellist
	\includegraphics[scale=1]{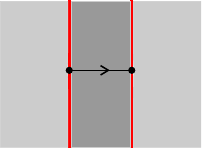} 
		}
	
	\caption{The front and base projections for the (SR4) skein relation.}  
	
	\label{fig:ZigZagbase}
\end{figure}

\subsection{Disjoint unions and the unknot}  To complete the proof of Theorem \ref{thm:main}, we establish the identities (\ref{eq:productrule}), concerning disjoint unions (displaceable in the base projection), and  (\ref{eq:Uvalue}), specifying the value on the unknot.  

\begin{proof}[Proof of (\ref{eq:productrule})]
When $\Lambda_1$ and $\Lambda_2$ have disjoint base projections, we have $\chi^*(\Lambda_1 \sqcup \Lambda_2) = \chi^*(\Lambda_1) + \chi^*(\Lambda_2)$.  Using the cellular DGA (or the original version of the LCH DGA) it is clear that $V(\Lambda_1 \sqcup \Lambda_2;\mathbb{F}) = V(\Lambda_1;\mathbb{F}) \times V(\Lambda_2;\mathbb{F})$.  [The reason is that $\mathcal{A}(\Lambda_1 \sqcup \Lambda_2) = \mathcal{A}(\Lambda_1) \otimes_\Z \mathcal{A}(\Lambda_2)$.  Alternatively, one can reason from the perspective of Remark \ref{rem:Faugs}.]
\end{proof}

\begin{proof}[Proof of (\ref{eq:Uvalue})]
	The DGA of the unknot has a single generator in degree $2$ with vanishing differential, and hence has a unique augmentation.  Using  Definitions \ref{def:augLambda} and \ref{def:DGalgAugNum} directly
	we find $\aug_U(q) = (q-1)q^{-1/2}$.	
\end{proof}

\section{First Examples}  \label{sec:firstex}

In this section, we illustrate the use of the skein relations from Theorem \ref{thm:main} to compute the augmentation numbers for an infinite class of examples introduced and studied by Dmitroglou-Rizell in \cite{Rizell1}.  For comparison, these augmentation numbers can also be computed via the direct study of the Chekanov-Eliashberg DGA and its augmentation variety in \cite[Section 4]{Rizell1}.
As done in the statement of Theorem \ref{thm:main}, when making computations with skein relations we may simply draw the front projection of $\Lambda$ in place of writing $\aug_\Lambda(q)$.

\medskip

\noindent {\bf 1. The standard torus.}  The front projection of a standard ``unknotted'' Legendrian torus, $T_{\mathit{std}}$, is obtained from spinning a standard $1$-dimensional Legendrian unknot $U$ around a vertical axis that is disjoint from $U$.  Using (SR3) and (\ref{eq:Uvalue}), we compute the augmentation numbers, $\aug_{T_\mathit{std}}(q)$, to be
\[
\fig{images/Y6} \,= \, (q^{1/2}-q^{-1/2})^{-1} \fig{images/Y3} = (q^{1/2}-q^{-1/2})^{-1}(q^{1/2}-q^{-1/2})=1.	
\]

\medskip 

\noindent {\bf 2. The cone point torus.}  Next, consider the Legendrian torus, $T_{\mathit{cone}}$, with front projection obtained from the standard $2$-dimensional Legendrian unknot by connecting the upper sheet with the lower sheet via a single cone point.  Alternatively, it is obtained from revolution of a symmetric $1$-dimensional front projection with a single crossing as 
\[
\includegraphics[scale=.8]{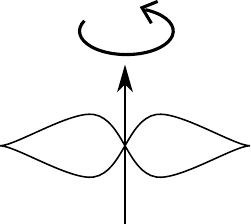}\quad \raisebox{1cm}{=} \quad \raisebox{.5cm}{\includegraphics[scale=.8]{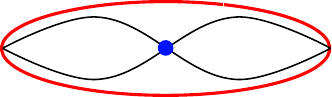}}
\]
where the blue point is a cone point.

\begin{remark} The Legendrian $T_\mathit{cone}$ is sometimes referred to, e.g., in \cite{CZ}, as the Legendrian Clifford torus, though the phrase ``Clifford torus'' is also used to refer to certain (related) non-exact embedded Lagrangian tori.  The DGA of $T_\mathit{cone}$ is first studied in \cite{Rizell1}.  See also \cite{RizellGolovko} for an interpretation of $T_{\mathit{cone}}$ in terms of a $3$-fold cover of a monotone Clifford torus in $\C P^2$ and other interesting results concerning $T_{\mathit{cone}}$.
\end{remark}

Now, the (SR1) relation gives
\begin{equation}  \label{eq:CliffordStep}
q^{1/2}\, \raisebox{-.5cm}{\includegraphics[scale=.5]{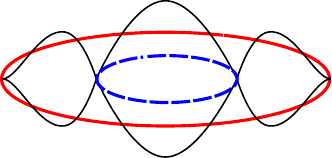}}- q^{1/2} \,\raisebox{-.7cm}{\includegraphics[scale=.5]{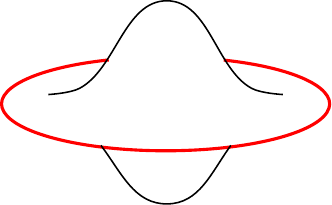}} = (q-1)^2 \left[ \raisebox{-.3cm}{\includegraphics[scale=.5]{images/Y7}}- \raisebox{-.3cm}{\includegraphics[scale=.5]{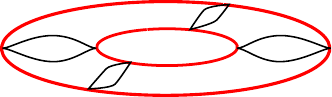}} \right]
\end{equation}
where the front projection in the first term is also a surface of revolution
\[
\includegraphics[scale=.8]{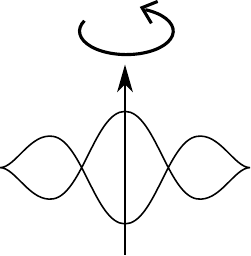}\quad \raisebox{1cm}{=} \quad \raisebox{.5cm}{\includegraphics[scale=.8]{images/Y5}}.
\]
The first term vanishes by (SR4), as the presence of a portion of a front projection of the form
\[
\raisebox{-.5cm}{\includegraphics[scale=.8]{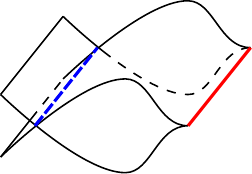}}
\]
implies the existence of a zig-zag after a Legendrian isotopy.  Indeed, a combination of the $A_3$-max II Move and then the $A_2A_1$-max Move result in the following changes to the movie of slices:
\[
\raisebox{-1.2cm}{\includegraphics[scale=.6]{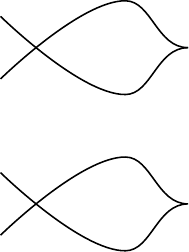}} \quad \quad \cong \quad \quad \raisebox{-1.8cm}{\includegraphics[scale=.6]{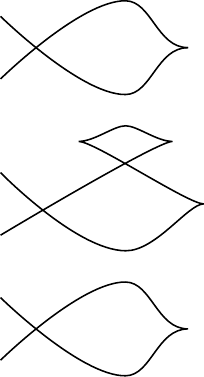}}  \quad \quad \cong \quad \quad
 \raisebox{-3.2cm}{\includegraphics[scale=.6]{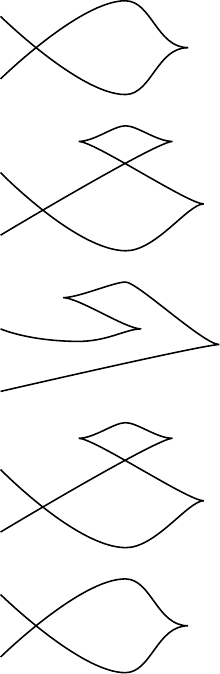}} 
\]

Using the evaluation of the unknot and $T_{\mathit{std}}$ from above, equation (\ref{eq:CliffordStep}) becomes
\[
q^{1/2} \cdot 0- q^{1/2}(q^{1/2}-q^{-1/2}) = (q-1)^2 \left[ \raisebox{-.5cm}{\includegraphics[scale=.8]{images/Y7}}- 1 \right]
\]
which leads to
\[
\raisebox{-.5cm}{\includegraphics[scale=.8]{images/Y7}}= \frac{q-2}{q-1}.
\]

\medskip

\noindent{\bf 3. Connected sums.}  In general, when $\Lambda_1, \Lambda_2 \subset J^1\R^2$ are connected Legendrian surfaces with disjoint base projections, we can form a connected sum of $\Lambda_1$ and $\Lambda_2$ by connecting a cusp edge of $\Lambda_1$ with a cusp edge of $\Lambda_2$ via an ambient $0$-surgery.  This procedure is introduced in \cite{EES05b}.  Moreover, Dimitroglou Rizell showed in \cite[Proposition 4.9 (1)]{Rizell} that, provided orientations are appropriately taken into account, connected sum is well defined on Legendrian isotopy classes.

\begin{proposition}  \label{prop:connectsum}
	For any connected sum $\Lambda_1 \# \Lambda_2$ with $\Lambda_1, \Lambda_2 \subset J^1\R^2$, we have
	\[
	\aug_{\Lambda_1 \# \Lambda_2}(q) = (q^{1/2}-q^{-1/2})^{-1} \aug_{\Lambda_1}(q) \cdot \aug_{\Lambda_2}(q).
	\]
\end{proposition}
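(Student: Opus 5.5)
The plan is to read the connected sum directly as an instance of (SR3), combined with the product rule (\ref{eq:productrule}). By definition, $\Lambda_1 \# \Lambda_2$ is obtained from the disjoint union $\Lambda_1 \sqcup \Lambda_2$ by an ambient Legendrian $0$-surgery joining a cusp edge of $\Lambda_1$ to a cusp edge of $\Lambda_2$. Locally, within a $3$-ball $B$ in the front projection, this surgery should replace two facing cusp edges by the tube appearing in the right-hand picture of (SR3). Once this identification is in place, (SR3) gives
\[
\aug_{\Lambda_1\#\Lambda_2}(q) = (q^{1/2}-q^{-1/2})^{-1}\aug_{\Lambda_1\sqcup\Lambda_2}(q).
\]

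The steps, in order, are as follows.

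First, I would arrange by a Legendrian isotopy that $\Lambda_1$ and $\Lambda_2$ have disjoint base projections. This is already part of the hypothesis. I would also arrange that the two chosen cusp edges face each other across a thin strip of $\R^2$, and that near these edges each front consists only of the two sheets meeting at the cusp, with no other sheets of $\Lambda_1$ or $\Lambda_2$ above the connecting region.

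Second, I would check that the connected sum construction of \cite{EES05b, Rizell} agrees, up to Legendrian isotopy supported near $B$, with the local modification $\Lambda_1 \leadsto \Lambda_2$ pictured in (SR3), where the role of that statement's $\Lambda_1$ is played by $\Lambda_1\sqcup\Lambda_2$. Because $\Lambda_1$ and $\Lambda_2$ are connected and distinct components, the two pictured disks lie on different components, so we are in the Disconnected Case of (SR3). The relation itself holds in both cases, so this case distinction does not affect the argument. Well-definedness of the connected sum on Legendrian isotopy classes (\cite[Proposition 4.9]{Rizell}) together with Proposition \ref{prop:augInvariance} means the particular choice of cusp edges and position of $B$ does not matter.

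Third, I would apply (\ref{eq:productrule}) to write $\aug_{\Lambda_1\sqcup\Lambda_2}(q)=\aug_{\Lambda_1}(q)\aug_{\Lambda_2}(q)$. This uses the disjoint base projections, which are preserved before the surgery is performed.

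The main obstacle is the second step: matching the connected sum picture with the (SR3) front. Concretely, after sliding the two cusp edges together one should see exactly the two half-cusps of $\Lambda_1$ in (SR3), with no other sheets inside $B$. For this I would take the facing portions of the cusp edges to be the outermost parts of each front in the chosen direction, for instance the edges of extreme $x_1$-coordinate. This ensures that nothing else lies above the short strip between them. Maslov potentials match automatically, since each component may be shifted independently, and augmentation numbers of connected surfaces are independent of the potential by Proposition \ref{prop:augInvariance}.
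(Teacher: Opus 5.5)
Your proposal is correct and is the same argument as the paper's: view $\Lambda_1 \# \Lambda_2$ as an ambient $0$-surgery on $\Lambda_1 \sqcup \Lambda_2$, apply (SR3), then apply the product rule (\ref{eq:productrule}). Your extra care about isolating the surgery region, matching Maslov potentials, and noting the Disconnected Case only makes explicit what the paper's one-line proof leaves implicit; the appeal to well-definedness of the connected sum is unnecessary, since (SR3) applies to each instance of the construction directly.
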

\begin{proof}
	This follows from $\aug_{\Lambda_1 \sqcup \Lambda_2}(q) = \aug_{\Lambda_1}(q) \cdot \aug_{\Lambda_2}(q)$ together with (SR3) of Theorem \ref{thm:main}.
\end{proof}

For $a, b \geq 0$, we can form a Legendrian surface of genus $g=a+b$, denoted $T^a_{\mathit{std}} \# T^b_{\mathit{cone}}$, as the connected sum of $a$ copies of $T_{\mathit{std}}$ and $b$ copies of $T^b_{\mathit{cone}}$; see Figure \ref{fig:CliffordConnectSum}.  Using Proposition \ref{prop:connectsum}, we have
\begin{equation} \label{eq:connectsumaug}
\aug_{T^a_{\mathit{std}} \# T^b_{\mathit{cone}}} = (q^{1/2}-q^{-1/2})^{1-a-b} \left(\frac{q-2}{q-1}\right)^{b}.
\end{equation}

\begin{figure}
	
	\centerline{	
		\labellist
		\Large
		\pinlabel $\cdots$ at 158 60 
		\pinlabel $\cdots$  at 338 60
		\pinlabel $a$ [t] at 135 -2
		\pinlabel $b$ [t]  at 368 -2
		\endlabellist
		\includegraphics[scale=.7]{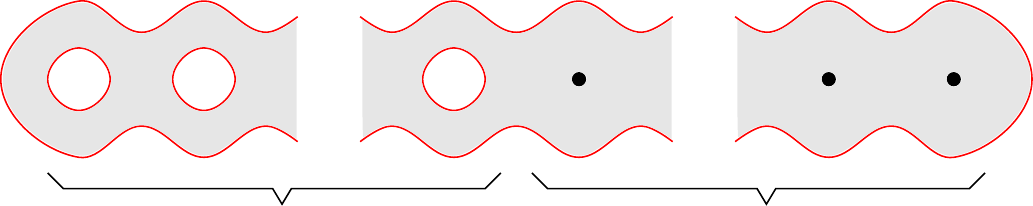}
	}
	
	\quad
	
	\caption{A connected sum of $a$ standard tori, $T_\mathit{std}$, and $b$ knotted tori pictured in the base projection.
	}
	\label{fig:CliffordConnectSum}
\end{figure}

\begin{remark}
	In general, Legendrian surfaces do not have well defined factorizations into connected sums of prime factors.  Indeed, the loose (in the terminology of Murphy \cite{Murphy}) Legendrian sphere $\Lambda_{\mathit{loose}} = \raisebox{-.5cm}{\includegraphics[scale=.5]{images/Y5}}$ has the property that for any arbitrary Legendrian sphere $\Lambda_0 \subset J^1\R^2$ we have $\Lambda_{\mathit{loose}} \# \Lambda_0= \Lambda_{\mathit{loose}}$.  In contrast, the calculation of the augmentation numbers from (\ref{eq:connectsumaug}) shows that $T^a_{\mathit{std}} \# T^b_{\mathit{cone}} \cong T^{a'}_{\mathit{std}} \# T^{b'}_{\mathit{cone}}$ implies that $(a,b) = (a',b')$.
\end{remark}

\begin{question} Are there classes of Legendrian surfaces that {\it do} admit well defined prime factorizations, e.g., all non-loose Legendrian surfaces or all Legendrian surfaces that admit augmentations?
\end{question}

\section{Legendrian $2$-weaves and face colorings via skein relations}  \label{sec:2weaves}

In this section, after a quick review of the elegant construction of Legendrian surfaces in $J^1S^2$ and $J^1\R^2$ (called {\it Legendrian $2$-weaves}) from trivalent graphs in $S^2$, we establish Theorem \ref{thm:2weaves}.  The method of proof is to view the left and right sides of (\ref{eq:2weaves}) as isotopy invariants of trivalent graphs in $S^2$, and show that they satisfy some common properties (the relations from Proposition \ref{prop:Gproperties} below).  The conclusion of the proof applies an inductive argument to show that these properties uniquely characterize the invariants.

\subsection{Legendrian $2$-weaves}
Let $G \subset S^2$ be a smoothly embedded trivalent graph whose vertices have neighborhoods diffeomorphic to the base projection of the $D_4^-$ singularity (the center term in the $D_4^-$ Move above). 
We allow for $G$ to be disconnected and to have components that are simple closed curves without any vertices.  We refer to the components of $S^2\setminus G$ as {\bf faces} of $G$, noting that because we allow $G$ disconnected they are not always disks.

Following Treumann and Zaslow \cite{TZ}, a Legendrian surface $\widetilde{\Lambda}_G \subset J^1S^2$ may be constructed as follows:
\begin{itemize}  
\item above each face of $G$ the front projection, $\pi_{xz}(\widetilde{\Lambda}_G)$, consists of two non-intersecting sheets;
\item a crossing arc of $\pi_{xz}(\widetilde{\Lambda}_G)$ sits above each edge of $G$;
\item above each vertex of $G$ a $D_4^-$-singularity appears in the front projection.
\end{itemize}
We equip $\widetilde{\Lambda}_G$ with the $\Z$-valued Maslov potential that is equal to $0$ everywhere.

In addition, we can take the Legendrian satellite \cite{NgT} of $\widetilde{\Lambda}_G$ with the standard $2$-dimensional Legendrian unknot $U \subset J^1\R^2$ to produce a Legendrian $\Lambda_G \subset J^1\R^2$.  A front projection of $\Lambda_G$ is described as follows: (i)  Take a $2$-copy of $U$, i.e., take a union $U \cup U'$ where $U'$ is a small shift of $U$ in the $z$-direction.  Both components are assigned the same Maslov potential that takes the values $0$ and $1$ on the lower and upper half of the saucer.  (ii) Delete from $\widetilde{\Lambda}_G$ a neighborhood of the north pole in $S^2$ (assumed to be disjoint from $G$) and place the remainder of the front projection of $\widetilde{\Lambda}_G$ as the lower two sheets of the front projection of $U \cup U'$.  See also \cite{TZ, CZ}. Using the terminology from \cite{CZ}, either of  $\widetilde{\Lambda}_G$ or $\Lambda_G$ may be referred to as the {\bf Legendrian $2$-weave} associated to $G$.  See Figure \ref{fig:2Weave}.

\begin{figure}

\centerline{	
\includegraphics[scale=.45]{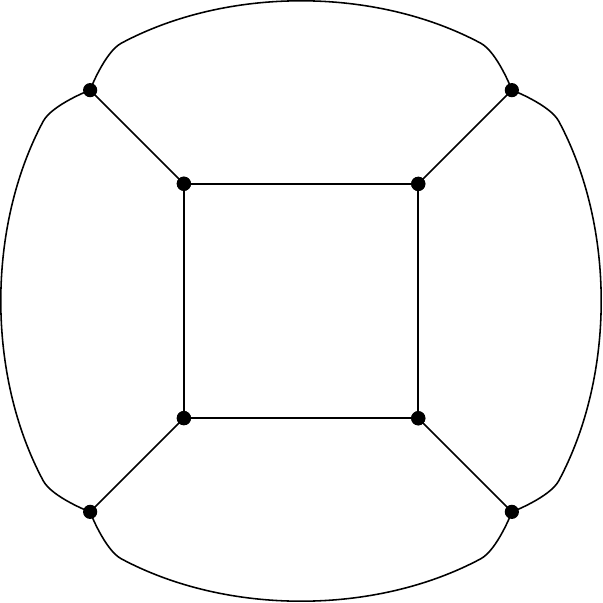}  \quad \quad \quad 
\includegraphics[scale=.45]{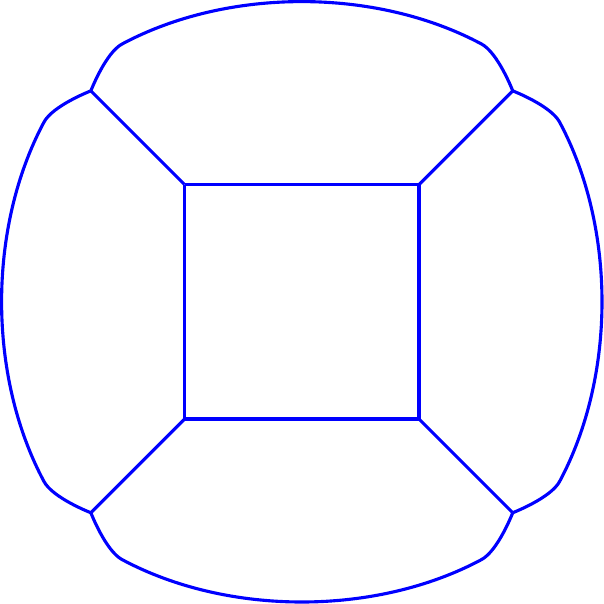}  \quad \quad \quad 
\labellist
\small
\pinlabel $0$ [t] at 32 46 
\pinlabel $0$ [t]  at 32 22
\pinlabel $1$ [b] at 32 92
\pinlabel $1$ [b]  at 32 70
\pinlabel $\Lambda_G$   at 90 20
\endlabellist
\raisebox{.5cm}{\includegraphics[scale=.7]{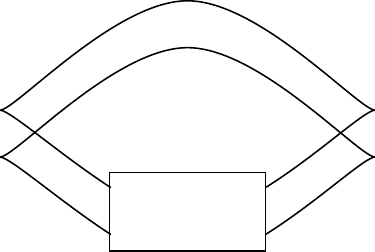}}
}

\caption{(left) A trivalent graph $G \subset S^2$. (middle) The base projection of the Legendrian $2$-weave $\widetilde{\Lambda}_G \subset J^1S^2$.  (right) Schematic of the front projection for the satellited $2$-weave $\Lambda_G \subset J^1\R^2$.
}
\label{fig:2Weave}
\end{figure}

\subsection{The face coloring polynomial}
For $G \subset S^2$ as above, a {\bf proper face coloring} of $G$ by a set $X$ is an assignment of an element of $X$ to each face of $G$ in such a way that along each edge of $G$ the two adjacent faces are assigned different values.  (We include simple closed curve components of $G$ as edges in this definition.)  
We define a {\bf face coloring polynomial}, $P(G^*,x)$, whose value at a positive integer $x$ is the number of proper face colorings of $G$ by elements of a set with $x$ elements.

\begin{remark}  The notation $P(G^*,x)$ reflects that we could have alternatively defined the face coloring polynomial to be the chromatic polynomial of the dual graph, $G^*$, to $G$:  vertices of $G^*$ are faces of $G$, and edges of $G^*$ connect faces that share common bordering edges.  It is well known that $P(G^*,x)$ is indeed a polynomial in $x$.
\end{remark}

\begin{proposition}  The face coloring polynomial, viewed as an assignment $G \mapsto P(G^*,x) \in \Z[x]$, depends only on the ambient isotopy type of $G$ within $S^2$ and satisfies the relations:
\begin{itemize}
	\item[(P1)]
	\[
	\raisebox{-1cm}{\includegraphics[scale=.5]{images/SR4b}} \quad -  
	\quad \raisebox{-1cm}{\includegraphics[scale=.5]{images/SR3b}}  \quad
	=  \quad
	\raisebox{-1cm}{\includegraphics[scale=.5]{images/SR2b}} \quad - 
	\quad  \raisebox{-1cm}{\includegraphics[scale=.5]{images/SR1b}} 
	\] 
	\item[(P2)]
	\[
	\raisebox{-1cm}{\includegraphics[scale=.5]{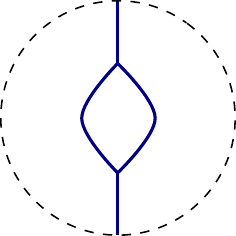}}  \quad
	=  \quad (x-2) \,\,
	\raisebox{-1cm}{\includegraphics[scale=.5]{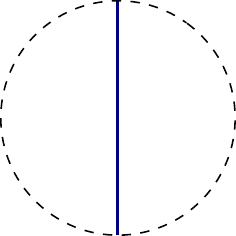}}
	\]
	\item[(P3)]
	\[
	\raisebox{-1cm}{\includegraphics[scale=.5]{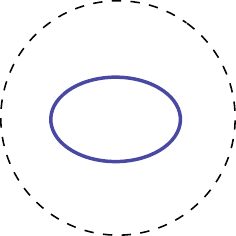}}  \quad
	=  \quad (x-1) \,\,
	\raisebox{-1cm}{\includegraphics[scale=.5]{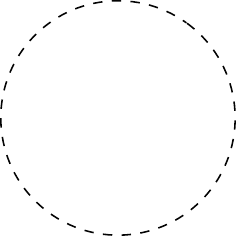}}
	\]
	
\end{itemize}	
\end{proposition}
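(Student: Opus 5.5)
The plan is to prove everything by direct counting of proper face colorings, using a local version of the count. Ambient isotopy invariance is immediate. An ambient isotopy of $S^2$ carrying $G$ to $G'$ gives a bijection between faces. This bijection preserves which pairs of faces share an edge, including simple closed curve components. Hence it induces a bijection between proper face colorings by any set $X$, so $P(G^*,x)$ agrees for $G$ and $G'$ at every positive integer $x$. Two polynomials that agree at all positive integers are equal.

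For the local relations, fix a disk $D \subset S^2$ in which the graphs differ and outside of which they coincide. Consider a coloring of the faces of $G \setminus D$, meaning an assignment of colors to the components of $S^2 \setminus (G \cup D)$. It should be proper along all edges outside $D$. Call it an \emph{exterior coloring}. Each graph $G_i$ in a relation determines a \emph{local weight}: the number of ways to extend a given exterior coloring to a proper face coloring of $G_i$. The only subtlety is that two components of $(S^2\setminus G)\setminus D$ may belong to the same face of $G_i$, either through $D$ or globally. I would handle this by noting that an extension exists only if the colors agree on components joined through $D$. The weight is then expressed by indicator conditions on the colors of the boundary regions. Summing over exterior colorings then reduces each relation to an identity among local weights, and this identity need only hold for every exterior coloring.

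For (P1), label the four regions meeting $\partial D$ cyclically $R_1, R_2, R_3, R_4$. Consecutive regions are separated by the four edge ends on $\partial D$, so in every exterior coloring consecutive colors already differ. For the two smoothings (two arcs, in the two ways of pairing the four edge ends), one of them joins $R_2$ and $R_4$ into a single face, so its weight is $[c(R_2)=c(R_4)]$. The other has weight $[c(R_1)=c(R_3)]$. For the two $H$-shaped pictures, no new faces are created. The extra middle edge requires $c(R_2)\neq c(R_4)$ in one case and $c(R_1)\neq c(R_3)$ in the other. Since $[u\neq v] = 1-[u=v]$, the difference of the two $H$-weights equals the difference of the two smoothing weights, with the assignment of pictures matching (P1) once I read off which $H$ separates which pair. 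This bookkeeping of which picture is which is the step I expect to need the most care, along with the case in which the boundary regions lie in a common global face. The indicator formulation handles the latter automatically, since it only constrains colors.

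For (P2), where I expect the left side to be an edge carrying a small bigon face bounded by two edges between two trivalent vertices, any coloring of the graph with the bigon erased extends by coloring the bigon with one of the $x-2$ colors different from the two faces on either side. Those two faces are distinct because they are separated by the edge. This gives the factor $(x-2)$. For (P3), a small vertex-free circle inside a face contributes a disk face whose color only has to differ from that of the surrounding face, so there are $x-1$ extensions. In both cases the count is a bijection between colorings of the larger graph and pairs consisting of a coloring of the smaller graph and a choice among the remaining colors, which yields the stated relations.
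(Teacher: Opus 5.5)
Your proposal is correct and is essentially the paper's argument. The paper proves (P1) by splitting each of the four counts according to whether the relevant opposite pair of boundary regions receives equal colors, then matching the pieces by color-preserving bijections; that is exactly your identity $[u\neq v]=1-[u=v]$ summed over exterior colorings, and your implicit pairing (the $H$ whose middle edge separates one opposite pair goes with the smoothing that merges the other pair) agrees with the paper's labeling. Your (P2) and (P3) are the same extension counts as in the paper.
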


Note that the bigon from (P2) as well as the disk (bounded by the closed curve) in (P3) are assumed to be innermost, i.e., no additional components of the graph $G$ appear in their interiors.

\begin{proof}  {\bf Proof of (P1):}  Let $G_1, G_2, G_3, G_4$ denote the four graphs appearing in the relation, and label their pictured regions as
\[
\labellist
\small
\pinlabel $A$  at 40 70
\pinlabel $B$  at -2 40
\pinlabel $C$  at 40 10
\pinlabel $D$  at 82 40
\endlabellist 
\raisebox{-1cm}{\includegraphics[scale=.4]{images/SR4bNoBorder}}  \quad \quad \quad 
\labellist
\small
\pinlabel $A$  at 40 82
\pinlabel $B$  at 10 40
\pinlabel $C$  at 40 -2
\pinlabel $D$  at 70 40
\endlabellist 
\raisebox{-1cm}{\includegraphics[scale=.4]{images/SR3bNoBorder}}  \quad \quad \quad 
\labellist
\small
\pinlabel $A$  at 40 82
\pinlabel $E$  at 40 40
\pinlabel $C$  at 40 -2
\endlabellist 
\raisebox{-1cm}{\includegraphics[scale=.4]{images/SR2bNoBorder}}  \quad \quad \quad 
\labellist
\small
\pinlabel $B$  at -2 40 
\pinlabel $F$  at 40 40
\pinlabel $D$  at 82 40 
\endlabellist 
\raisebox{-1cm}{\includegraphics[scale=.4]{images/SR1bNoBorder}}   
\]

\quad

\noindent It is possible and allowable that some of these labeled regions actually belong to the same face.
Write
\[
\begin{array}{lcl}
P(G_1^*,x) = P_1^{B \neq D} + P_1^{B=D}, & \quad & P(G_2^*,x) = P_2^{A \neq C} + P_2^{A=C}, \\
P(G_3^*,x) = P_3^{A \neq C} + P_3^{A =C}, & \quad & P(G_4^*,x) = P_4^{B \neq D} + P_4^{B=D}
\end{array}
\]
where $P_1^{B \neq D}$ (resp. $P_1^{B=D}$) counts only the face colorings of $G_1$ where the regions $B$ and $D$ have different (resp. the same) colors, and the other notations have similar meaning.  Bijections between colorings (defined to preserve the colors outside of the pictured disk) produce identities
\[
P_1^{B \neq D}= P_2^{A \neq C}, \quad P_3^{A \neq C} = P_1^{B=D}, \quad P_4^{B \neq D} = P_2^{A=C}, \quad P_3^{A=C} = P_4^{B=D},
\] 
which allow us to compute
\[
P(G_1^*,x) - P(G_2^*,x) = P_1^{B=D} - P_2^{A=C} = P_3^{A \neq C} - P_4^{B \neq D} = P(G_3^*,x)-P(G_4^*,x).
\]

\medskip

\noindent  {\bf Proof of (P2):}  Face colorings of the graph $G_1$ on the left are in bijection with ordered pairs $(F, C)$ consisting of a face coloring $F$ for the graph $G_2$ on the right together with a color $C$ different from the two colors of $F$ that appear in the pictured regions.  For any choice of $F$ there are $x-2$ choices for $C$, so $P(G_1^*,x) = (x-2) P(G_2^*,x)$.  

\medskip

\noindent  {\bf Proof of (P3):}  Face colorings of the graph on the left are in bijection with pairs $(F,C)$ consisting of a face coloring $F$ of the graph on the right and a color $C$ distinct from the color of the pictured region on the right.  For any $F$ there are $x-1$ choices for $C$.

\end{proof}

\subsection{Two graph invariants}
Given a trivalent graph $G \subset S^2$ and a prime power $q$, we set
\[
X_G(q) = \aug_{\Lambda_G}(q)
\]
and
\[
Y_G(q) = \frac{(q-1)^{6}}{q^{3}(q+1)} \left[ \frac{q^{1/2}}{(q-1)^2} \right]^{Z(G)} 
P(G^*, q+1)
\]
where $Z(G) = \chi(S^2 \setminus G)$ is the Euler characteristic of the complement of $G$. 
To simplify notation, we write $y(q) = \frac{(q-1)^{6}}{q^{3}(q+1)}$ so that 
\[
Y_G(q) = y(q) \left[ \frac{q^{1/2}}{(q-1)^2} \right]^{Z(G)} P(G^*, q+1).
\]

\begin{proposition} \label{prop:Gproperties}
Both of the assignments $G \mapsto X_G(q)$ and $G \mapsto Y_G(q)$  depend only on the ambient isotopy type of $G$ within $S^2$ and satisfy the relations:
\begin{enumerate}
\item[(G1)]  
\[
q^{1/2} \,\raisebox{-1cm}{\includegraphics[scale=.5]{images/SR2b}} \quad - 
\quad q^{1/2} \, \raisebox{-1cm}{\includegraphics[scale=.5]{images/SR1b}} = (q-1)^2 \left[
\raisebox{-1cm}{\includegraphics[scale=.5]{images/SR4b}} \quad -  
\quad \raisebox{-1cm}{\includegraphics[scale=.5]{images/SR3b}} \right]
\]

\item[(G2)]  
\[
\raisebox{-1cm}{\includegraphics[scale=.5]{images/G2A}}  \quad
=  \quad \frac{q^{1/2}}{q-1} \,\,
\raisebox{-1cm}{\includegraphics[scale=.5]{images/G2B}}
\]

\item[(G3)]  
\[
\raisebox{-1cm}{\includegraphics[scale=.5]{images/G3A}}  \quad
=  \quad q \,\,
\raisebox{-1cm}{\includegraphics[scale=.5]{images/G3B}}
\]
\end{enumerate}
\end{proposition}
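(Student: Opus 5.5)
The plan is to treat $Y_G$ and $X_G$ separately: $Y_G$ by bookkeeping with the face coloring polynomial and $Z(G)$, and $X_G$ by realizing each graph move as one of the skein relations of Theorem \ref{thm:main}.

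\textbf{The invariant $Y_G$.} Invariance under ambient isotopy is immediate, since both $P(G^*,x)$ and $Z(G)=\chi(S^2\setminus G)$ are topological. I will use $Z(G)=2-\chi(G)$ to track how $Z$ changes under each local modification.
\begin{itemize}
\item \emph{(G3).} Adding an innermost closed circle leaves $\chi(G)$ unchanged, so $Z$ is unchanged. By (P3) at $x=q+1$ the polynomial gains a factor $q$.
\item \emph{(G2).} Replacing an edge by a bigon adds two vertices and two edges, but also adds one disk face. I expect $Z$ to increase by exactly $1$. Then (P2) gives a factor $(q-1)\cdot q^{1/2}/(q-1)^2=q^{1/2}/(q-1)$.
\item \emph{(G1).} The two graphs with trivalent vertices (the third and fourth pictures) have two more vertices and three more edges than the two-arc graphs. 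So their $Z$ is larger by $1$, and their $Y$ carries an extra factor $q^{1/2}/(q-1)^2$. Multiplying (P1) at $x=q+1$ by $y(q)[q^{1/2}/(q-1)^2]^{Z}$, with $Z$ the common value for the two-arc graphs, then yields exactly (G1).
\end{itemize}

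\textbf{The invariant $X_G$.} An ambient isotopy of $G$ induces a Legendrian isotopy of $\widetilde{\Lambda}_G$ and of its satellite $\Lambda_G$, so invariance follows from Theorem \ref{thm:main}. For (G1), I apply (SR2) to the two lower sheets of $\Lambda_G$. Both sheets have Maslov potential $0$, so $r=s=0$, $\eta(0)=1$, and $\delta_{r,s}=1$. The four terms of (SR2) are then exactly the four graphs of (G1), in the same order.

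For (G3), an innermost circle of crossings between the two lower sheets is precisely the front of the first term $\Lambda_1$ of (SR1). Removing the circle gives the second term, two nested sheets with a contractible Reeb chord. Here the potentials are $r=s=0$, so $r\neq s+1$ and the right side of (SR1) vanishes. Since $\eta(1)=-1$ and $\eta(-1)=1$, (SR1) reads $q^{-1/2}X_{\mathrm{circle}}=q^{1/2}X$, which is $X_{\mathrm{circle}}=qX$.

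\textbf{The main obstacle: (G2) for $X_G$.} The factor $q^{1/2}/(q-1)$ is exactly the (SR3) factor, so I will try to exhibit a Legendrian isotopy between two fronts:
\begin{itemize}
\item the weave with an innermost bigon, and
\item an ambient $0$-surgery, in the sense of (SR3), performed on the weave with the bigon replaced by a single edge.
\end{itemize}
Inside the bigon the two sheets are interchanged. The idea is to push one sheet across the other over the bigon, using $A_1^3$-max, $A_3$-max and $A_2A_1$-max moves in the style of the isotopies of \cite{CZ} used in Section \ref{sec:SR2}. This should create a pair of cusp edges connected by a tube, i.e. the $\Lambda_2$ picture of (SR3), whose un-surgered version $\Lambda_1$ has one crossing arc where the bigon was.

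If no clean isotopy appears, my fallback is algebraic. I would apply (G1) to a bigon together with an adjacent edge, producing terms that contain either a loose zig-zag, which vanishes by (SR4), or a circle component, which is handled by (G3). The goal is to solve for the bigon term, exactly as was done for $T_{\mathit{cone}}$ in Section \ref{sec:firstex}.
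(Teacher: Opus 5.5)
\textbf{Where the proposal matches the paper.} Apart from one step, your argument is the paper's. The invariance argument matches. So does the $Y_G$ bookkeeping via (P1)--(P3) and the change in $Z$. (G1) comes from (SR2) with $r=s=0$. (G3) comes from (SR1) with $r=s=0$, so $\delta_{r,s+1}=0$ and $q^{-1/2}X_{\mathrm{circle}}=q^{1/2}X$.

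There is a small slip in the $Y_G$ part. An innermost bigon adds two vertices and \emph{three} edges, not two. The cleanest way to see $\Delta Z=1$ is that $S^2\setminus G$ gains exactly one open disk.

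\textbf{The gap: (G2) for $X_G$.} Your main route is the paper's route: (SR3), with the bigon weave playing the role of $\Lambda_2$. But the input it needs is only announced (``this should create\ldots''), never produced. That input is the fact that inserting an innermost bigon changes the weave by an ambient Legendrian $0$-surgery, and it is the entire geometric content of (G2). The paper does not build this isotopy from the moves of Section~\ref{sec:SR2} either. It quotes it from \cite[Theorem 4.10, Figure 45]{CZ}. Without that citation, or an explicit sequence of moves, the step is unproved.

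\textbf{The fallback does not avoid a geometric input.} Apply (G1) across one edge of the innermost bigon. The four graphs are:
\begin{itemize}
\item[(a)] the bigon graph;
\item[(b)] the smoothing that pairs the four boundary points the way the bigon's vertices do, which is the single-edge graph;
\item[(c)] the other smoothing, which is the single-edge graph plus an innermost circle;
\item[(d)] the other trivalent configuration, a graph with a monogon face whose stem has the same face on both sides.
\end{itemize}
Using (G3) on (c), this yields
\[
X_{\mathrm{bigon}} = \frac{q^{1/2}}{q-1}\,X_{\mathrm{edge}} + X_{\mathrm{monogon}},
\]
so (G2) is equivalent to $X_{\mathrm{monogon}}=0$. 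That vanishing is true. However, proving it independently requires its own Legendrian isotopy exhibiting a zig-zag, so that (SR4) applies, and you assert this without supplying it. You also cannot borrow it from the $m=1$ case of Proposition~\ref{prop:lem2}, because that evaluation of the monogon graph itself uses (G2).
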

Again, the bigon and disk that appear in (G2) and (G3) are required to be innermost.

\begin{proof}  {\bf Proof for $X_G(q)$:}  Modifying $G$ by an ambient isotopy of $S^2$ modifies $\widetilde{\Lambda}_G$ by a Legendrian isotopy in $J^1S^2$.  In turn this modifies $\Lambda_G \subset J^1\R^2$ (which is obtained via the Legendrian satellite construction applied to $\widetilde{\Lambda}_G$ and the unknot $U \subset J^1\R^2$) by a Legendrian isotopy within a Weinstein neighborhood of $U$.  Alternatively, one can use the front projection description from Figure \ref{fig:2Weave}, and check via a Reidemeister move argument that the Legendrian isotopy type of $\Lambda_G$ is preserved when an arc of $G$ is isotoped over the north pole.

The relation (G1) follows from the skein relation (SR2) where the Maslov potential values are $r=s=0$.

The relation (G2) follows from (SR3) together with the observation of \cite[Theorem 4.10, Figure 45]{CZ} that adding a bigon to a graph modifies the Legendrian weave by an ambient $0$-surgery. 

The relation (G3) follows from (SR1) where the terms on the RHS vanish since the crossing sheets of $\Lambda_G$ corresponding to the pictured circle have Maslov potentials $r=s=0$.

\medskip

\noindent {\bf Proof for $Y_G(q)$:}  The face coloring polynomial $P(G^*,x)$ and the Euler characteristic $Z(G) = \chi(S^2 \setminus G)$ are invariant under ambient isotopy of $G$, so $Y_G(q)$ is as well.

Labeling the graphs that appear in (G1) from left to right as $G_1, G_2, G_3, G_4$, we have $Z(G_1) = Z(G_2) = Z(G_3)-1= Z(G_4)-1$.  Using this and (P1) we compute
\begin{align*}
q^{1/2} Y_{G_1}(q) - q^{1/2} Y_{G_2}(q) & = q^{1/2} y(q) \left[\frac{q^{1/2}}{(q-1)^2}\right]^{Z(G_1)} \left[P(G_1^*,q+1)-P(G_2^*,q+1) \right] \\
& = q^{1/2} y(q) \left[\frac{q^{1/2}}{(q-1)^2}\right]^{Z(G_3)-1}\left[P(G_3^*,q+1)-P(G_4^*,q+1) \right] \\
& = (q-1)^2 \left[ Y_{G_3}(q) - Y_{G_4}(q) \right]  
\end{align*}
verifying (G1).

With $G_1$ and $G_2$ denoting the graphs on the left and right side of (G2), the identity is  established by using $Z(G_1) = Z(G_2)+1$ and (P2) as
\begin{align*}
Y_{G_1}(q) &= y(q)\left[\frac{q^{1/2}}{(q-1)^2}\right]^{Z(G_1)} P(G_1^*,q+1)  \\
 & =  	y(q)\left[\frac{q^{1/2}}{(q-1)^2}\right]^{Z(G_2)+1} (q-1)P(G_2^*,q+1) = \frac{q^{1/2}}{q-1} Y_{G_2}(q).
\end{align*}  

The relation (G3) is verified by a similar computation using (P3) and that the value of $Z(G)$ is the same for the two graphs that appear in (G3).

\end{proof}

\begin{proposition} \label{prop:lem2}
The assignments $G \mapsto X_G$ and $G \mapsto Y_G$ are uniquely determined by the properties of Proposition \ref{prop:Gproperties} and the values on the empty graph $X_\emptyset(q)$ and $Y_\emptyset(q)$.
\end{proposition}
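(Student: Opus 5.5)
We argue by induction that any invariant $W$ of isotopy classes of trivalent graphs in $S^2$ (allowing disconnected graphs and vertexless circle components) satisfying (G1)--(G3) is determined by $W_\emptyset(q)$. Equivalently, if $W$ satisfies (G1)--(G3) and $W_\emptyset=0$, then $W\equiv 0$; applying this to $X_G - \frac{X_\emptyset}{Y_\emptyset}Y_G$ (or rather to $X-X'$ for two invariants with equal empty value) gives the claim. Since (G1)--(G3) are linear, differences of solutions are solutions. Note that (G1) can be solved for any one of its four terms because $q^{1/2}$ and $(q-1)^2$ are nonzero, so it rewrites each term as a combination of the other three.

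The induction is on the complexity $c(G)=(V(G),E(G))$ ordered lexicographically, where $V$ is the number of vertices and $E$ the number of edges (closed circles counted as edges). If $V(G)=0$, then $G$ is a disjoint union of embedded circles, and an innermost circle bounds a disk with no other components inside. (G3) then removes it at the cost of a factor $q$, reducing to the empty graph. If $V(G)>0$, we use the standard fact that a trivalent plane graph has a face of degree at most $5$. More precisely, let $F$ be a face whose closure contains a vertex and which is a disk (an innermost face). By Euler-characteristic counting on the sphere, the component containing vertices has a face with at most $5$ sides. For disconnected $G$, first pick an innermost component region so the face considered is a disk bounded by a cycle of edges of one component.

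Given a disk face $F$ with $m$ sides, we proceed by cases. If $m=2$, it is an innermost bigon, and (G2) removes it, decreasing $V$ by $2$. If $m=1$, the loop adjacent to a single vertex contradicts trivalence with a $D_4^-$ vertex; the loop edge plus the third edge could still occur, but it reduces to a bridge situation handled below. If $m\ge 3$, apply (G1) to an edge $e$ of $F$, viewing the pictured disk so that $e$ is the horizontal edge of the ``H'' configuration. Then $G$ equals a combination of the graph $G'$ obtained by the I--H flip on $e$ (the same number of vertices, but $F$ loses one side) and two graphs in which the pair of vertices is removed and replaced by two arcs (fewer vertices). Iterating the flips on $F$ reduces $m$ to $2$ without changing $V$, and then (G2) strictly decreases $V$. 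Thus every term ends with smaller $V$, or same $V$ and a smaller face. To make this a well-founded induction, we use the complexity $(V, m_{\min})$ with $m_{\min}$ the minimal side-count of an innermost disk face. Bridges and loop edges (an edge whose two sides are the same face, which is allowed since faces need not be distinct) are handled by the same (G1) flip, which is allowed even when the regions coincide, or eliminated first by flips.

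The main obstacle is ensuring that the flip moves genuinely decrease a well-founded complexity in all degenerate configurations. These include faces that are not disks for disconnected $G$, edges with the same face on both sides, and flips that may merge or split components. We expect to address this by always choosing an innermost disk face, whose existence follows from $S^2$ being planar and the Jordan curve theorem. We then check that in (G1) the two ``smoothing'' terms have two fewer vertices, while the flipped term reduces the chosen face's side count by one. Because side count is bounded below and (G2)/(G3) apply at count $2$ or at vertexless circles, the recursion terminates at the empty graph.
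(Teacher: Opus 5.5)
Your strategy is the paper's: remove innermost circles with (G3) and innermost bigons with (G2), and use (G1) as an I--H flip on an edge of an innermost polygon. The flip lowers that polygon's vertex count, and the two smoothed terms each have two fewer vertices. (The degree-$\le 5$ face fact is unnecessary, since flips shrink a polygon of any size.) However, the termination argument, which is the whole content of the proof, has two concrete holes.

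\textbf{Monogons.} Trivalence does not exclude them: a vertex may carry a loop plus one further edge $e$. The dumbbell graph in the paper's table of examples is the basic case. The loop cannot be flipped, since it does not sit in an H. The only available flip is on the bridge $e$, and it turns the monogon into a bigon. So your $m_{\min}$ goes \emph{up} from $1$ to $2$, and ``the same (G1) flip'' does not decrease your complexity. What works, as in the paper's $m=1$ step, is a combined move:
\begin{itemize}
\item flip $e$, then immediately apply (G2) to the resulting innermost bigon;
\item the smoothing that pairs the two ends of the loop produces an extra innermost circle, which (G3) removes.
\end{itemize}
Every resulting term then has two fewer vertices.

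\textbf{Circle components.} Your complexity $(V,m_{\min})$ ignores them. With $V>0$ there need not be any innermost disk face touching a vertex. For example, take a theta graph with a small circle in each of its three complementary regions. Then only (G3) applies, and it leaves $(V,m_{\min})$ unchanged while other circles remain.

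The paper instead inducts on $N=V+(\text{number of circle components})$ and counts an innermost circle-bounded disk as a polygon with $m=0$, so (G3) lowers $N$. The price is checking that the two smoothed terms of a flip lower $N$ even though they may create circles. This holds because $e$ lies on an innermost polygon with $m\ge 3$, which rules out the theta configuration, the only one where a smoothing creates two circles.

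Alternatively, keep $V$ as the primary key, so smoothings drop automatically, and insert the number of circles as a secondary key before $m_{\min}$; flips preserve both $V$ and the circle count. Either way, take $F$ to have simple-cycle boundary. Only then does flipping one of its edges lower its side count by exactly one.
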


\begin{proof}  We establish this for $G \mapsto X_G$ via a nested induction argument.  The same proof then applies also to $G \mapsto Y_G$.  

The outer induction is on  $N :=$ the sum of the number of vertices of $G$ and the number of closed curved components of $G$. The value of $X_G$ in the base case $N=0$ is equal to $X_{\emptyset}(q)$.  For the inductive step, for a given $N>0$, assuming the statement for $G$ with lesser values of $N$, we establish the following by induction on $m \geq 0$.

\medskip

\noindent {\bf Inner Inductive Statement:}  The value of $X_G$ is uniquely determined for any graph $G$ that (i) has a total of $N$ vertices and closed curve components and (ii) contains an innermost polygon $P$  with $m$ vertices.

\medskip

In the base case $m=0$ for the inner induction, the innermost polygon with $m=0$ vertices is a disk bounded by a closed curve component.  Thus, (G3) allows us to evaluate $X_G$ in terms of a graph with $N$ smaller by $1$ to which the outer inductive hypothesis (on $N$) applies.  

When $m=1$, the graph $G$ must appear near $P$ as $\raisebox{-.7cm}{\includegraphics[scale=.5]{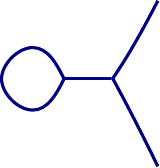}}$, and we can then apply (G1) followed by (G2) and (G3) to evaluate $X_G$ via
\begin{align*}
	\raisebox{-.7cm}{\includegraphics[scale=.5]{images/Mono1}} &=
\raisebox{-.7cm}{\includegraphics[scale=.5]{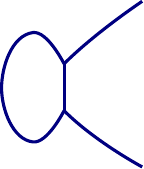}} + \frac{q^{1/2}}{(q-1)^2} \left[
\raisebox{-.7cm}{\includegraphics[scale=.5]{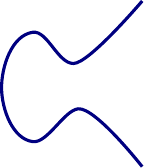}} -
\raisebox{-.7cm}{\includegraphics[scale=.5]{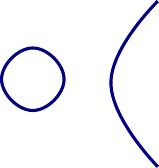}} \right] \\
& = \frac{q^{1/2}}{q-1} \raisebox{-.7cm}{\includegraphics[scale=.5]{images/Mono3}}  + \frac{q^{1/2}}{(q-1)^2} \left[
\raisebox{-.7cm}{\includegraphics[scale=.5]{images/Mono3}} -
q\raisebox{-.7cm}{\includegraphics[scale=.5]{images/Mono3}} \right].
\end{align*}
The value of $X$ on $\raisebox{-.7cm}{\includegraphics[scale=.5]{images/Mono3}}$ is determined by the inductive hypothesis on $N$ as the number of vertices has decreased by $2$ and the number of closed curve components has increased by at most $1$.

When $m=2$, we can use (G2) immediately, and the inductive hypothesis on $N$ then applies.

For $m>2$, we apply (G1) to one of the edges of $P$ to evaluate $X_G$ as
\begin{align*}
	\raisebox{-.7cm}{\includegraphics[scale=.5]{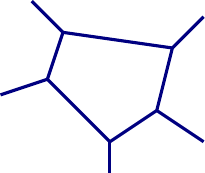}} &=
	\raisebox{-.7cm}{\includegraphics[scale=.5]{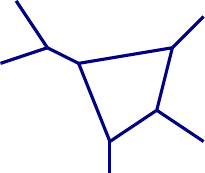}} + \frac{q^{1/2}}{(q-1)^2} \left[
	\raisebox{-.7cm}{\includegraphics[scale=.5]{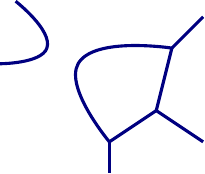}} -
	\raisebox{-.7cm}{\includegraphics[scale=.5]{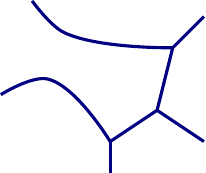}} \right].
\end{align*}
The inductive hypothesis on $m$ applies to the first term on the RHS and the inductive hypothesis on $N$ applies to the remaining terms.

\end{proof}

We are now prepared to prove Theorem \ref{thm:2weaves}.

\begin{proof}[Proof of Theorem \ref{thm:2weaves}]  In view of Propositions \ref{prop:Gproperties} and \ref{prop:lem2}, we need only show that $X_\emptyset(q) = Y_\emptyset(q)$.  To this end, compute
	\begin{align*}
	Y_\emptyset(q) & = y(q) \left[\frac{q^{1/2}}{(q-1)^2} \right]^{Z(\emptyset)} P(\emptyset^*, q+1) \\
	 & = \frac{(q-1)^6}{q^3(q+1)}\left[\frac{q^{1/2}}{(q-1)^2} \right]^{2} (q+1) = \frac{(q-1)^2}{q^2}.
	\end{align*}

For comparison, $X_\emptyset(q) = \aug_{\Lambda_\emptyset}(q)$ where $\Lambda_\emptyset = U \cup U'$ is union of a standard $2$-dimensional Legendrian unknot $U$ with second copy of $U$ that is denoted $U'$ and shifted a small amount in the positive $z$-direction from $U$.   
Using (SR1) with $r=0$ and $s=1$ we get
\[
\aug_{U \cup U'}(q) = q^{-1} \aug_{U \sqcup U}(q)
\]
where $U  \sqcup U$ denotes the $2$-component Legendrian unlink (where the front projections of the two copies of $U$ are completely disjoint).  
Then, applying (\ref{eq:productrule}) and (\ref{eq:Uvalue}) we have
\[
X_\emptyset(q) = q^{-1} \aug_{U \sqcup U}(q) = q^{-1} \aug_{U}(q) \cdot \aug_{U}(q) = q^{-1} (q^{1/2}-q^{-1/2})^2 = \frac{(q-1)^2}{q^2}
\]
which coincides with the value of $Y_\emptyset(q)$.
\end{proof}

\begin{example}  The proof of Proposition \ref{prop:lem2} provides a method to evaluate $X_G(q)=\mathit{Aug}_{\Lambda_{G}}(q)$ via repeated applications of the relations from Proposition \ref{prop:Gproperties} (which in turn derive from the skein relations of Theorem \ref{thm:main}).  Carrying out the process by hand for some graphs with a small number of vertices, and starting from the base case value of $X_\emptyset(q) = \frac{(q-1)^2}{q^2}$ from the above proof we arrive at
\begin{align*}  X_{\fig{images/X2}} &= q X_\emptyset = \frac{(q-1)^2}{q}, \\  & \\
	X_{\fig{images/X3}} &= \frac{q^{1/2}}{q-1} X_{\fig{images/X2}} = q^{1/2}-q^{-1/2}, 	\\  & \\
	X_{\fig{images/X4}} &= q X_{\fig{images/X2}} = (q-1)^2, \\  & \\
	X_{\fig{images/X5}} &= X_{\fig{images/X3}} + \frac{q^{1/2}}{(q-1)^2} \left[ X_{\fig{images/X2}} - X_{\fig{images/X4}}\right] =0, \\  & \\
	X_{\fig{images/X6}} &= \frac{q^{1/2}}{q-1} X_{\fig{images/X3}} = 1, \\  & \\
	X_{\fig{images/X7}} &= X_{\fig{images/X6}} + \frac{q^{1/2}}{(q-1)^2} \left[ X_{\fig{images/X5}}- X_{\fig{images/X3}} \right]	= \frac{q-2}{q-1}.
		\end{align*}
We collect the values along with simplified front projections for each $\Lambda_G$ (cf.,  \cite{TZ,CZ} for the translation between the graphs $G$ and the frontal presentation of $\Lambda_G$) in the following table:

\medskip

\begin{center}
\begin{tabular}{|c|c|c|}
	\hline
	$G$ & $\Lambda_G$ & $\mathit{Aug}_{\Lambda_G}(q)$ \\
	\hline	
	$\raisebox{1.5cm}{\huge $\emptyset$}$ & $\includegraphics[scale=.6]{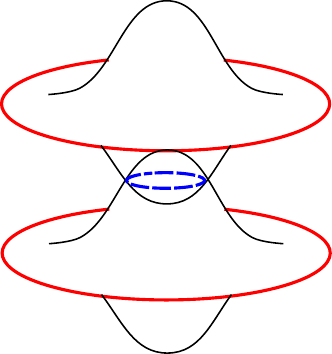}$ & \raisebox{1.5cm}{\huge $\displaystyle \frac{(q-1)^2}{q^2}$}  \\
	\hline	
	$\raisebox{1.5cm}{$\includegraphics{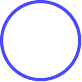}$}$ & $\includegraphics[scale=.6]{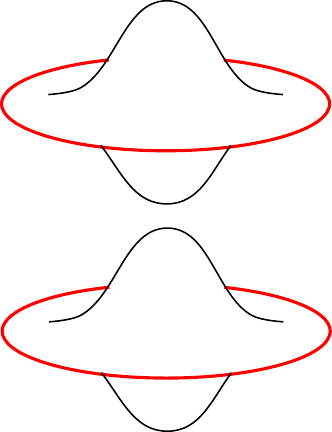}$ &  \raisebox{2cm}{\huge $\displaystyle  \frac{(q-1)^2}{q}$}  \\
	\hline	
	$\raisebox{0cm}{$\includegraphics{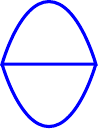}$}$ & $\includegraphics[scale=.6]{images/Y3}$ &  \raisebox{.8cm}{\huge $\displaystyle  \frac{q-1}{q^{1/2}}$}  \\
	\hline	
	$\raisebox{1cm}{$\includegraphics{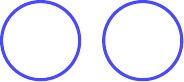}$}$ & $\includegraphics[scale=.6]{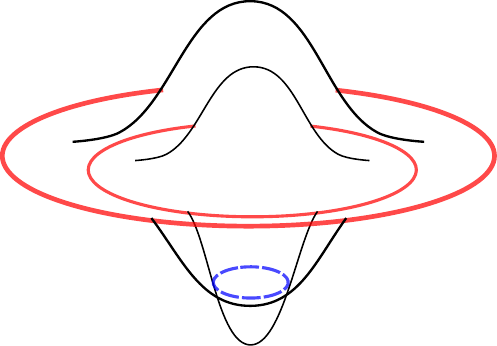}$ & \raisebox{1.5cm}{\huge $(q-1)^2$}  \\
	\hline	
	$\raisebox{.5cm}{\includegraphics{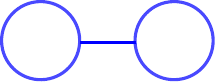}}$ & $\includegraphics{images/Y5}$ &  \raisebox{1cm}{\huge $0$}  \\
	\hline	
	$\includegraphics{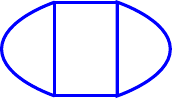}$ & $\includegraphics{images/Y6}$ &  \raisebox{.5cm}{\huge $1$}  \\
	\hline	
	$\includegraphics{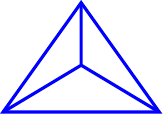}$ & $\includegraphics{images/Y7}$ &  \raisebox{.5cm}{\huge $\displaystyle \frac{q-2}{q-1}$}  \\
	\hline	
	\end{tabular}
\end{center}
\noindent In all pictured front projections for the $\Lambda_G$, the Maslov potential takes only the values $0$ and $1$.

\end{example}

Of course, one can also combine Theorem \ref{thm:2weaves} with existing computations of chromatic polynomials of planar graphs, cf. \cite{Read}, to obtain calculations of $\mathit{Aug}_{\Lambda_G}(q)$.

\section{Appendix: Cellular DGA for Legendrians with cone points}  \label{sec:Cone}

This section establishes an extended version of the cellular DGA that allows for Legendrians with cone point singularities in the front projection; see Theorem \ref{thm:ConeDGA}.  The case of $\Z/2$-coefficients was earlier considered in \cite{RuSu1}.  We then apply this extension to provide the proof of Proposition \ref{prop:ALambda3} that was used earlier
 to establish the skein relation (SR1).  As a preliminary, we review the formulation of the DGA in the presence of swallowtail points that was earlier omitted in the exposition of Section \ref{sec:CWDGA1}.

\subsection{Cellular DGA near swallowtail points}  \label{sec:swDGA} Let $(\Lambda, \Gamma, \mathcal{E}, \xi)$ be a Legendrian surface in $J^1S$ equipped with a collection of geometric cocycles,
$\Gamma = \{\Gamma_i\}$, a compatible polygonal decomposition of $\pi_x(\Lambda)$, $\mathcal{E}$, and a combinatorial spin structure, $\xi$.  Near a swallowtail point, $w$, the base projections of the cusp edges and $w$ separate a small disk $U \subset S$ centered at $\pi_x(w)$ into two regions; we call the region between the two cusp edges where the sheets that border the cusps exist the {\bf swallowtail region}.    Recall from Section \ref{sec:CWDGA1} that the compatible polygonal decomposition $\mathcal{E}$ includes the following additional information:

\begin{itemize}
	\item  At each swallowtail point, $w$, a choice of labeling by $S$ and $T$ is specified for the two $2$-cell corners that border the crossing arc at $w$.
	\end{itemize}
See Figure \ref{fig:STDecorate}.

 \begin{figure}
 	
 	\labellist
 	\small
 	\pinlabel $S$ [b] at 42 80
 	\pinlabel $T$ [t]  at 42 72
 	\pinlabel $w$ [r]  at -2 74
 	 	\endlabellist
 	\centerline{\includegraphics[scale=.7]{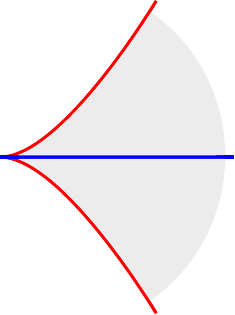} \quad \quad \quad 
 		\labellist
 		\small
 		\pinlabel $T$ [b] at 42 80
 		\pinlabel $S$ [t]  at 42 72
 		\pinlabel $w$ [r]  at -2 74
 		\endlabellist
 		\includegraphics[scale=.7]{images/STDecorate}
 		}
 	
 	\caption{The base projection near a swallowtail point $w$ with the two possible labelings by $S$ and $T$.  The swallowtail region is shaded.
 	}
 	\label{fig:STDecorate}
 \end{figure}

At the $0$-cell, $e^0_\alpha$, that is the projection of $w$, the sheets of $\Lambda$ above $e^0_{\alpha}$, $S_1^\alpha, \ldots, S^\alpha_n$, are totally ordered by descending $z$-coordinate.  Note that the swallowtail point $w$ is itself considered a sheet at $e^0_\alpha$.  This is in contrast to cusp edge points which are not considered sheets; see Definition \ref{def:Sheets} above.  The Maslov potential is extended to the swallowtail point $w$ to agree with the value on the subset of $\Lambda$ that borders $w$ from outside the swallowtail region.  Then, the generators of $\alg^{CW}(\Lambda)$ associated to $e^0_\alpha$ are $a^\alpha_{i,j}$, for all $1 \leq i < j \leq n$, with the usual grading from (\ref{eq:CWgrading}) and differential from (\ref{eq:dA}).   

In addition to the $n\times n$ upper triangular matrix $A= (a_{i,j}^\alpha)$ we need to introduce several  $(n+2)\times(n+2)$ matrices.  We make use of the notations:
\begin{itemize}
\item	$E_{i,j}$ denotes a matrix with $1$ in position $(i,j)$ and all other entries $0$.
\item $\Xi_m$ is  a diagonal matrix with $-1$ in position $(m,m)$ and other diagonal entries $1$.  We extend this notation by $\Xi_{m_1,m_2} :=  \Xi_{m_1}\Xi_{m_2}$ to allow for $-1$'s in more than one diagonal position. 
\item Given an $n\times n$ matrix, $M$, we form an $(n+2)\times(n+2)$ matrix, $\widetilde{M}_{m_1,m_2}$, by inserting two new rows and two new columns in positions $m_1$ and $m_2$ with all entries $0$.
\item $\widehat{M}_{m_1,m_2}$ is $\widetilde{M}_{m_1,m_2}$
with the $0$ entry in position $(m_1,m_2)$ changed to a $1$, i.e., $\widehat{M}_{m_1,m_2}=\widetilde{M}_{m_1,m_2}+E_{m_1,m_2}$.
\item $Q_{m_1, m_2}$ denotes the permutation matrix of a transposition $(m_1 \,m_2)$.  

\end{itemize}

For  an {\it upward} swallowtail point involving sheets in positions $k,k+1,k+2$ within the swallowtail region, we set
\begin{align*}
S &=  \Xi_{k+2} + E_{k+1,k+2} - \sum_{i<k} a^\alpha_{i,k}E_{i,k},  \\
T &=  I+E_{k+1,k+2}, \\
A_T& =  (I-E_{k+1,k+2}) \widehat{A}_{k,k+1} (I + E_{k+1,k+2}), \\
A_S& = Q_{k+1,k+2}A_T Q_{k+1,k+2}.
\end{align*}
For  a {\it downward} swallowtail point involving sheets in positions $l-2,l-1,l$ within the swallowtail region, we set
\begin{align}
	S &=  \Xi_{l-2} + E_{l-2,l-1} + \sum_{l-2<j} a^\alpha_{l-2,j}E_{l,j+2}, \notag  \\
	T &=  I-E_{l-2,l-1}, \notag \\
	A_T &=  (I+E_{l-2,l-1}) \widehat{A}_{l-1,l} (I - E_{l-2,l-1}) ,  \label{eq:ATequation} \\
A_S &= Q_{l-2,l-1}A_T Q_{l-2,l-1}.	\notag
	\end{align}

For a 1-cell, $e^1_\beta$, the boundary matrix, $A_\pm$, associated to a vertex at the swallowtail point is:
\begin{itemize}
	\item $A_T$ assuming (i) $e^1_\beta$ contains the crossing locus adjacent to the swallowtail point and (ii) the choice of ordering of sheets above $e^1_\beta$ matches the ordering above the $T$ side of the crossing locus, cf. Remark \ref{rem:sheetnum}.  If instead, the sheet ordering agrees with the $S$ side, then use $A_S$ 
	\item $\widehat{A}_{k,k+1}$ (resp. $\widehat{A}_{l-1,l}$) in the upward (resp. downward) case when $e^1_\beta$ is any other $1$-cell that lies in the swallowtail region near $w$.
	\item $A$ when $e^1_\beta$ is outside of the swallowtail region near $w$.  This includes the $1$-cells that contain the projection of the cusp arcs adjacent to $w$.
\end{itemize}

For a 2-cell, $e^2_\gamma$, adjacent to the swallowtail point we make the following adjustments:
\begin{itemize}
	\item Let the characteristic map for $e^2_\gamma$ have the form $P_\gamma \rightarrow \overline{e^2_\gamma}$ where $P_\gamma$ is a polygon whose edges parametrize the edges of $\overline{e^2_\gamma}$.  When $e^2_\gamma$ contains an $S$ or $T$ corner at the swallowtail point, we expand the vertex of the corner to a new edge that is labeled $S$ or $T$ and oriented away from the edge of $P_\gamma$ that maps to the crossing arc near $w$.  This produces an {\bf extended polygon} $\widetilde{P}_\gamma$ that we choose an initial and terminal vertex $v_0$ and $v_1$ from.  The differential for the $C$ matrix of $e^2_\gamma$ is then characterized by the formula from (\ref{eq:dC}) with additional terms of $S^{\pm1}$ or $T^{\pm1}$ inserted in to the products of $(\Delta_i+B_i)^{\eta_i}$ at locations where the paths from $v_0$ to $v_1$ pass the new $S$ or $T$ edge.  (The exponent is $+1$ or $-1$ depending whether the orientation of the edge matches the orientation from $v_0$ to $v_1$ or not.)  If $v_0$ is chosen to be the initial endpoint of an $S$ (resp. $T$) edge of $\widetilde{P}_\gamma$, then we use $A_{v_0} = A_S$ (resp. $A_{v_0} = A_T$).  If $v_0$ is the terminal endpoint of either an $S$ or $T$ edge, then we use either $\widehat{A}_{k,k+1}$ or $\widehat{A}_{l-1,l}$ depending on whether $w$ is upward or downward.  The $A_{v_1}$ matrices meet these same specifications.
	\item  When the swallowtail point appears as a vertex of a $2$-cell at a $v_0$ or $v_1$ corner that is inside the swallowtail region but not labelled with $S$ or $T$, then  either $\widehat{A}_{k,k+1}$ or $\widehat{A}_{l-1,l}$ is used depending on whether $w$ is upward or downward.  If the corner is outside the swallowtail region, then we simply use $A$ for $A_{v_0}$ or $A_{v_1}$.	 
\end{itemize}

\subsection{Cellular DGA near cone points}  We expand a bit on the description of the cone point singularity and its desingularization from Section \ref{sec:desingcone} above.  A cone point singularity  $c$ in the front projection of a Legendrian surface $\Lambda_{\mathit{cone}} \subset J^1S$ is a point  $c\in \pi_{xz}(\Lambda_{\mathit{cone}}) \subset S\times \R$ having a local neighborhood  $W \subset S\times \R$
 such that $(W, W \cap \pi_{xz}(\Lambda_{\mathit{cone}}))$ is diffeomorphic to $(V, V \cap C)$ where $V \subset \R^3$ is a neighborhood of $(0,0,0)$ and $C = \{(x_1,x_2,z) ,|\, x_1^2+x_2^2 = z^2\}$ is the standard cone.  See, e.g., the 3rd term of the (SR1) skein relation.  The pre-image of $c$ in $\Lambda_{\mathit{cone}}$ is an embedded $S^1$.  After perturbing $\Lambda_{\mathit{cone}}$ by a small Legendrian isotopy to  $\Lambda_{\mathit{res}}$ (here $\mathit{res}$ stands for ``resolved'') a cone point will resolve into a configuration of four swallowtail points, two downward and two upward, connected by crossing arcs and cusp edges as pictured via the base projection in Figure \ref{fig:Cone} (left); see \cite[Section 3]{Rizell} and \cite[Section 3]{EENS}.  In the base projection, the cusp edges bound  a closed square region $X$ with vertices at the swallowtail points, and the two crossing arcs appear as the diagonals of the square.  In the front projection, suppose $\Lambda_{\mathit{cone}}$ has $n$ sheets near $c$ with sheets $k$ and $k+1$ (numbered from top to bottom in the $z$-direction) meeting at $c$.  Then, $\Lambda_{\mathit{res}}$ will have $n+2$ sheets within the interior of $X$, and along the cusp edges the two sheets in positions $k+1$ and $k+2$ meet.   One of the crossing arcs is an intersection of sheets in positions $k$ and $k+1$ and has its end points at the two downward swallowtail points; and the other is an intersection of sheets in position $k+2$ and $k+3$ having the two upward swallowtail points as endpoints.  See Figure \ref{fig:Cone} where the crossing arc between sheets $k$ and $k+1$ is drawn vertically and labeled with $(k \, k+1)$;  the horizontal crossing arc labeled $(k+2 \, k+3)$ involves sheets $k+2$ and $k+3$.  See also Section \ref{sec:desingcone} above for the $1$-dimensional slices of $\pi_{xz}(\Lambda_{\mathit{res}})$.

\begin{figure}
	
\labellist
\small
\pinlabel $(k\,k+1)$ [l] at 216 198
\pinlabel $(k+2\,k+3)$ [l]  at 216 70
\endlabellist
\centerline{\includegraphics[scale=.7]{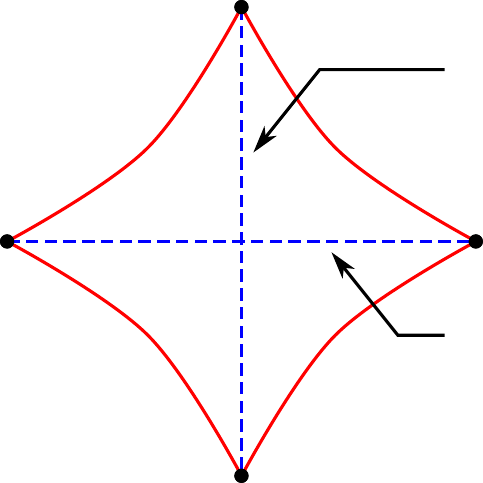} \quad \quad \quad 
\labellist
\small
\pinlabel $U$ [l] at 94 100
\endlabellist
\raisebox{2cm}{$\begin{array}{ccc}
\raisebox{1cm}{\includegraphics[scale=.5]{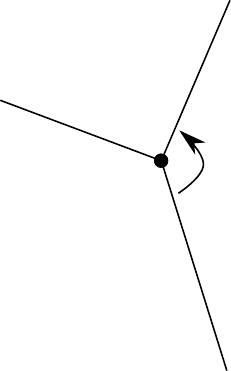}}  & \quad \quad \quad & 
\labellist
\small
\endlabellist
\includegraphics[scale=.5]{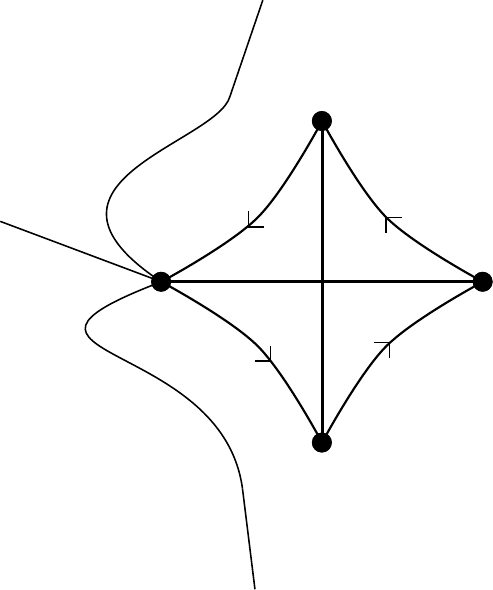}  \\
\mathcal{E}_{\mathit{cone}} &  & \mathcal{E}_{\mathit{res}}
\end{array}$}
}

\caption{(left) The base projection of a desingularized cone point.  (middle and right)  The compatible decompositions $\mathcal{E}_{\mathit{cone}}$ and $\mathcal{E}_{\mathit{res}}$ near the base projection of the cone point and its resolution.  
}
\label{fig:Cone}
\end{figure}

We describe know a version of the cellular DGA that can be applied directly to $\Lambda_{\mathit{cone}}$.
It requires the following choices:

\begin{enumerate}
	\item[(1)]  A compatible polygonal decomposition $\mathcal{E}_{\mathit{cone}}$ of $\pi_{x}(\Lambda_{\mathit{cone}})$ such that the projection of the cone point, $\pi_x(c)$ is contained in the $0$-skeleton.  At $\pi_x(c)$ a corner of a neighboring $2$-cell $e^2_\gamma$ adjacent to $\pi_x(c)$ is selected and labeled with a $U$.  In the extended polygon $\widetilde{P}_\gamma$ for $e^2_\gamma$ (as in Section \ref{sec:swDGA}) we expand the vertex with the $U$ label to an additional edge (that we continue to label with $U$) and choose an orientation for this $U$ edge.
\end{enumerate}
We produce from $\mathcal{E}_{\mathit{cone}}$ a compatible polygonal decomposition for $\Lambda_{\mathit{res}}$,  denoted $\mathcal{E}_{\mathit{res}}$, as follows:  Choose local coordinates on $S$ so that the chosen orientation of the $U$ edge is counter-clockwise around the cone point.
  Expand the $U$ corner at the cone point, and place the square $X$ in the corner with one of its vertices that is an upward swallowtail point located at the previous position of the cone point.  See Figure \ref{fig:Cone} (middle and right) where we have  oriented the $1$-cells appearing along $\partial X$ consistently with the orientation of $U$.

\begin{enumerate}
	\item[(2)] We choose a combinatorial spin structure $\xi$ for $\Lambda_{\mathit{res}}$ that appears near the resolved cone point as pictured in Figure \ref{fig:ConeSpin} (left).
	\item [(3)] We choose a collection of geometric cocycles  for $\Lambda_{\mathit{res}}$ 
	that either has the form $\Gamma = \{\Gamma_1, \ldots, \Gamma_\ell\}$ or $\Gamma =\{\Gamma_1, \ldots, \Gamma_\ell, \lambda\}$  where 
	\begin{itemize}
		\item[(i)] the $\Gamma_i$ curves for $1 \leq i \leq \ell$ all have their base projections disjoint from $X$, and
		\item[(ii)] if it exists, $\lambda$ passes through the $X$ region near the resolved cone point as pictured in Figure \ref{fig:ConeSpin} (right).
	\end{itemize}
	\end{enumerate}

\begin{figure}

\centerline{	
\labellist
\small
\pinlabel $S_{k+1}$ [bl] at 318 226
\pinlabel $S_k$ [br]  at 288 20
\pinlabel $\xi$ [br]  at 92 221
\endlabellist
\includegraphics[scale=.7]{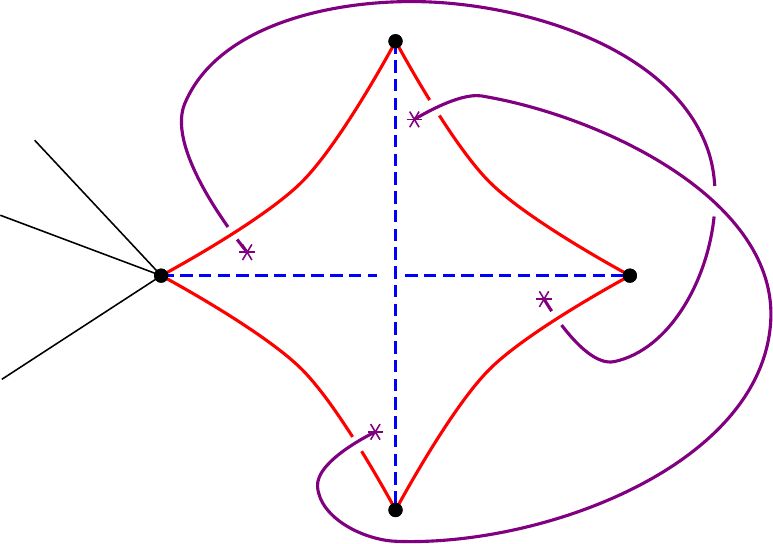}  \quad \quad \quad 
\labellist
\small
\pinlabel $S_{k+1}$ [b] at 92 186
\pinlabel $S_k$ [t]  at 282 36
\pinlabel $\lambda$ [l]  at 156 135
\endlabellist
\includegraphics[scale=.7]{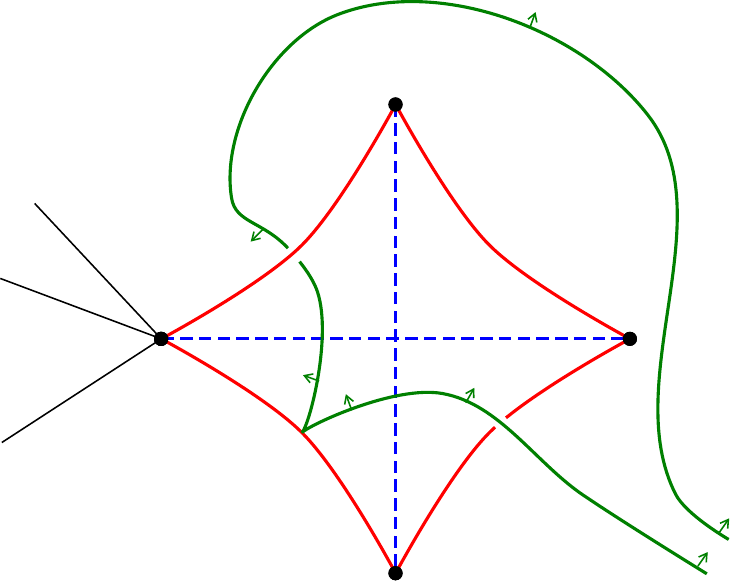}
}

\caption{(left) A combinatorial spin structure.  (right) A geometric cocycle  $\lambda$ passing through the cone point.  Traveling through $X$ beginning at the upper left, $\lambda$ passes sequentially from $S_{k+3}$ to $S_{k+2}$ to $S_{k+1}$ to $S_k$.
}
\label{fig:ConeSpin}
\end{figure}

\begin{remark}
The curve $\lambda$ on $\Lambda_{\mathit{res}}$ is isotopic to the one pictured on  $\Lambda_{\mathit{cone}}$ in Figure \ref{fig:ConeBase} (left).  For simplicity, we may picture this curve in the base projection of $\Lambda_{\mathit{cone}}$ simply as in Figure \ref{fig:ConeBase} (right).  Note that the $180^\circ$ rotation that occurs when passing through the cone point is in the direction specified by the choice of orientation of $U$.  Reversing the choice of orientation of $U$ alters $\lambda$ by summing with a geometric cocycle that is the pre-image of the cone point, cf. the curve $\mu_3$ from Figure \ref{fig:CharMap2}. 
\end{remark}

\begin{figure}
	
	\centerline{	
		\labellist
		\small
		\pinlabel $\lambda$ [l] at 52 22
		\endlabellist
		\raisebox{0cm}{\includegraphics[scale=.8]{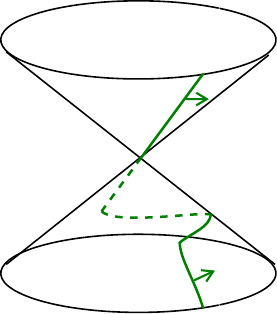}} \quad \quad \quad \quad \quad
		\labellist
		\small
		\pinlabel $\lambda$ [bl] at 150 68
		\pinlabel $U$ [l] at 100 104
		\endlabellist
				\includegraphics[scale=.7]{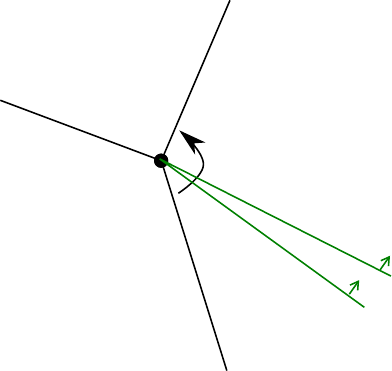} 
		}
	
	\caption{(left) The curve $\lambda$ pictured on $\Lambda_{\mathit{cone}}$.  (right) A schematic presentation of $\lambda$ in the base projection of $\Lambda_{\mathit{cone}}$.  The orientation of $U$ specifies the direction of the  $180^\circ$ rotation that $\lambda$ makes in the lower half of the cone before passing through the cone point.
	}
	\label{fig:ConeBase}
\end{figure}

We can then associate a cellular DGA $\mathcal{A}^{CW}(\Lambda_{\mathit{cone}})$ to $(\Lambda_{\mathit{cone}}, \mathcal{E}_{\mathit{cone}}, \xi, \Gamma)$ as in Section \ref{sec:CWDGA1} above, but with the following modifications made at the cone point.  

\begin{itemize} 
	\item We denote by $\Z[t_1^{\pm1}, \ldots, t_{\ell}^{\pm1}, \lambda^{\pm1}]$ the coefficient ring, where 
\begin{itemize}
	\item[(i)] if $\Gamma = \{\Gamma_1, \ldots, \Gamma_{\ell}, \lambda \}$, we simply use $\lambda$ for the  variable in the coefficient ring corresponding to the cocycle $\lambda$, and
	\item[(ii)] if $\Gamma = \{\Gamma_1, \ldots, \Gamma_{\ell}\}$ we set $\lambda =1$.
\end{itemize}

\item The cone point $0$-cell gives rise to generators $a^0_{i,j}$ for all $1 \leq i < j \leq n$ with $(i,j) \neq (k,k+1)$ where $n$ is the number of sheets of $\Lambda_{\mathit{cone}}$ near the cone point with sheets $k$ and $k+1$ adjacent to the cone point.  Their differentials are characterized by
\[
\partial A_0 = J \cdot (A_0 + (1-\lambda^{-1}) E_{k,k+1})^2.
\] 

\item  The matrix $A_0$ is replaced with $A_0+ (1-\lambda^{-1}) E_{k,k+1}$ when it appears in the formulas for the differential of adjacent $1$-cells or $2$-cells.  

\item  In the differential for the $2$-cell that contains the $U$ corner at the cone point, the matrix
\begin{equation} \label{eq:UmatrixConeDGA}
U= \Theta^\lambda_{k,k+1} + \sum_{i<k} (a_{i,k+1}^0\lambda) E_{i,k} + \sum_{k+1<j} (\lambda a^0_{k,j}) E_{k+1,j}
\end{equation}
is associated to the (oriented) edge $U$.  Here, the notation $\Theta^\lambda_{k,k+1}$ is as in the statement of Proposition \ref{prop:ALambda3}; it denotes a diagonal matrix with $\lambda$ in positions $k$ and $k+1$ and other diagonal entries equal to $1$.

\end{itemize}

\begin{theorem}  \label{thm:ConeDGA}
There is a stable tame isomorphism $\mathcal{A}^{CW}(\Lambda_{\mathit{cone}}) \cong \mathcal{A}^{CW}(\Lambda_{\mathit{res}})$.
\end{theorem}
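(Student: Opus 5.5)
Our plan is to compute $\mathcal{A}^{CW}(\Lambda_{\mathit{res}})$ for the decomposition $\mathcal{E}_{\mathit{res}}$ directly, using the swallowtail conventions of Section \ref{sec:swDGA}, and then cancel, via Proposition \ref{prop:cancel}, all generators coming from the cells inside and on the boundary of the square $X$ except those that match the cone point generators of $\mathcal{A}^{CW}(\Lambda_{\mathit{cone}})$. Away from $X$ the two decompositions, spin structures and cocycles agree, so the generators and differentials coincide there. The comparison is therefore local to the expanded $U$ corner: we must show that after cancellation (i) the $0$-cell at the former cone point position carries the generators $a^0_{i,j}$, $(i,j)\neq(k,k+1)$, with the twisted matrix $A_0+(1-\lambda^{-1})E_{k,k+1}$; (ii) this twisted matrix appears in the boundary matrices of adjacent cells; and (iii) the product of $S$, $T$, $\Delta+B$ factors along the boundary of $X$ inside the differential of the $2$-cell $e^2_\gamma$ reduces to the matrix $U$ of (\ref{eq:UmatrixConeDGA}).

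The cancellations will go in the order of Proposition \ref{prop:triangleCW} and Construction \ref{const:order}. First we handle the $2$-cells inside $X$, the four quadrants cut out by the diagonal crossing arcs. For each quadrant we choose $v_0=v_1$ (Remark \ref{rem:diffs}(1)) so that $\partial C=\pm(\text{boundary } B\text{-matrix})+\cdots$, and cancel $C$ against one bounding $1$-cell's $B$-matrix, exactly as in the proofs of Propositions \ref{prop:ALambda1} and \ref{prop:ALambda4}. The remaining $1$-cells along the diagonals and along $\partial X$ are then cancelled against the $A$-matrices of three of the four swallowtail $0$-cells, using equations of the form $J\partial B=A_+(\Delta+B)-(\Delta+B)A_-$, as in Steps 1--3 of the (SR3) computation. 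After this, only the $0$-cell at the upward swallowtail located at the old cone point survives. Its generators $a_{i,j}$ are indexed by the $n+1$ sheets there, the swallowtail point itself being counted as a sheet; the extra generators involving the swallowtail sheet should cancel in pairs, as in the Proof of (\ref{eq:58iii}), leaving the index set of the cone point. Along the way the entries that involve the crossing sheets $k,k+1$ get replaced by constants. The constant $1-\lambda^{-1}$ should enter through the $\Delta$ matrices, since $\lambda$ crosses sheets $S_{k+3}\to S_k$ inside $X$, and through the fixed $1$ entries of the matrices $A_T$ and $\widehat A$.

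The main obstacle is step (iii) together with the signs. We have to multiply out, in the quotient, the ordered product of the $S^{\pm1}$, $T^{\pm1}$ and $(\Delta_i+B_i)^{\pm1}$ matrices around the four sides of $X$. Each has size $(n+2)$ with the swallowtail insertions, and we must verify that this product collapses to $U$, with $\lambda$ on the diagonal at $k,k+1$ and the entries $a^0_{i,k+1}\lambda$ and $\lambda a^0_{k,j}$. We also have to check that the combinatorial spin structure $\xi$ of Figure \ref{fig:ConeSpin} makes the $\Xi$ factors in $S$ cancel. We would first carry this out with $\Z/2$ coefficients and $\lambda=1$, matching \cite{RuSu1}, and then track $\lambda$ and the signs $J$ and $\Xi$ as a separate bookkeeping pass. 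Once the generators and the differentials of (i)--(iii) agree, the identity map on surviving generators gives a tame isomorphism of the quotient with $\mathcal{A}^{CW}(\Lambda_{\mathit{cone}})$. Combined with Proposition \ref{prop:cancel}, this yields the stable tame isomorphism.
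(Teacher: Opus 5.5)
Your top-level strategy matches the paper: cancel every generator over the closed square $X$ via Proposition \ref{prop:cancel} except the cone-point ones, then check that $a^0_{k,k+1}$ becomes $1-\lambda^{-1}$ and that the product of the side factors of $X$ becomes $U$. However, the cancellation scheme you describe does not close up, and the step that actually produces the twist is missing.

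\emph{The vertex count is wrong.} The vertex at the old cone point carries $n$ sheets, not $n+1$: the three sheets of the swallowtail region collapse to the single sheet $w$. So $A_0$ is $n\times n$ and has exactly one generator more than the cone point, namely $a^0_{k,k+1}$, of degree $0$. A single extra generator cannot be removed by pairing inside $A_0$. Moreover, $\partial A_0=JA_0^2$ has no unit entry to drive a cancellation of the kind used for (\ref{eq:58iii}). So $a^0_{k,k+1}$ must be cancelled against a degree-$1$ generator coming from a $2$-cell.

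\emph{The quadrants cannot be cancelled wholesale against one bounding $1$-cell.} The sizes do not match:
\begin{itemize}
\item each $C_i$ has $\binom{n+2}{2}$ generators;
\item a half-diagonal, lying on a crossing arc, has one fewer, since the crossing entry is absent;
\item a side of $X$, lying on a cusp edge, has only $\binom{n}{2}$.
\end{itemize}
Your second round also forgets the central crossing vertex (the paper's $A_1$), which has $\binom{n+2}{2}-2$ generators and must be cancelled too.

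The leftover crossing-position generators are where the real content of the proof lies. In the paper the sequence is as follows.
\begin{itemize}
\item $c^4_{k,k+1}$ is cancelled against $b^1_{k,k+1}$, giving $b^1_{k,k+1}\doteq a^0_{k,k+1}-1$ and then $b^4_{k+2,k+3}\doteq\lambda$.
\item With this, $\pm\partial c^3_{k+2,k+3}\doteq 1+a^0_{k,k+1}\lambda-\lambda$, so $c^3_{k+2,k+3}$ is cancelled against $a^0_{k,k+1}$. This is exactly how $a^0_{k,k+1}\doteq 1-\lambda^{-1}$ arises.
\item $c^2_{k,k+1}$ turns out to be a cycle and survives the $C_2$/$B_2$ cancellation. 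It is removed only later, against $c^1_{k+1,k+2}$, which requires showing that the $(k,k+2)$-entry of the correction term $G$ is $c^2_{k,k+1}$.
\item The remaining row/column $k+1,k+2$ entries of $C_1$ cancel in pairs among themselves. This, not $A_0$, is where a (\ref{eq:58iii})-style pairing occurs.
\item Only then is $C_1$ cancelled against $B_0$. This yields the formula for $B_0$ which, together with $B_5\doteq B_6\doteq B_7\doteq 0$, makes $(\Delta_7+B_7)(\Delta_6+B_6)(\Delta_5+B_5)(\Delta_0+B_0)\doteq U$.
\end{itemize}
Your remark that $1-\lambda^{-1}$ ``enters through the $\Delta$ matrices and the fixed $1$ entries'' points in this direction but does not supply the mechanism. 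Without these cross-cell cancellations neither your step (i) nor step (iii) can be carried out, so the deferred bookkeeping is in fact the missing core of the argument.
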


\begin{remark}
Theorem \ref{thm:ConeDGA} and its proof extend in the natural way to surfaces with more than one cone point.  In another variant, one can allow for multiple curves $\lambda_1, \lambda_2, \ldots, \lambda_m$ to pass through the cone point appearing as parallel shifts of $\lambda$ from Figure \ref{fig:ConeSpin} when within $X$.  In this case, appearances of $\lambda$ in the above description of $\mathcal{A}(\Lambda_{\mathit{cone}})$ should be replaced with the product $\lambda_1\cdots \lambda_m$.  As indicated above, the case without any $\lambda$ curve is allowed as well and results from putting $\lambda=1$ in the above discussion.
\end{remark}

\begin{example}  We illustrate the use of $\mathcal{A}^{CW}(\Lambda_{\mathit{cone}})$ by giving an alternate computation of the DGA of the cone point torus, $T_{\mathit{cone}}$, from Section \ref{sec:firstex}.  The reader may compare the result with the original computation of the Legendrian DGA of $T_{\mathit{cone}}$ from \cite{Rizell1}.  Using the polygonal decomposition $\mathcal{E}_{\mathit{cone}}$ and geometric cocycles $\mu$ and $\lambda$ pictured as

\labellist
\small
\pinlabel $C_1$ at 40 134
\pinlabel $\lambda$ [b] at 90 132
\pinlabel $B_1$ [b] at 22 88
\pinlabel $B_2$ [l] at 180 82
\pinlabel $\mu$ [b] at 148 50
\pinlabel $A_1$ [l] at 104 92
\pinlabel $A_0$ [r] at -2 82
\endlabellist
\[
\includegraphics[scale=.7]{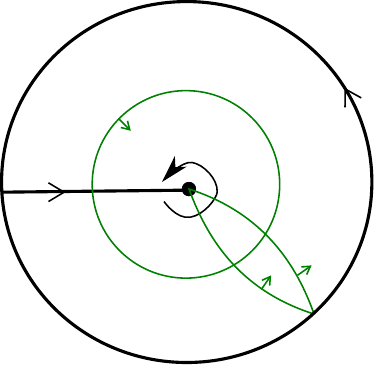} 
\]

\noindent the cellular DGA $\mathcal{A}^{CW}(\Lambda_{\mathit{cone}})$ has coefficient ring $\Z[\lambda^{\pm1}, \mu^{\pm1}]$ and generators:
\begin{center}
	\begin{tabular} {c|c}
		Degree & Generators \\
		\hline & \\
		$1$ & $b:=b^1_{1,2}$ \\
		$2$ & $c:= c^1_{1,2}$ 
		\end{tabular}
\end{center}
Using the Maslov potential with value $0$ (resp. $1$) on the lower (resp. upper) sheet of $T_{\mathit{cone}}$, the sign matrix appearing in (\ref{eq:dB}) and (\ref{eq:dC}) is
\[
J = \left[ \begin{array}{cc} -1 & 0 \\ 0 & 1 \end{array} \right],
\]
and we have the matrix formulas
\begin{align*}
J\cdot \partial B_1 &= (A_1 + (1-\mu^{-1})E_{1,2})( \Delta_1 + B_1) - (\Delta_1 + B_1) A_0  \\
&= \left[ \begin{array}{cc} 0 & 1-\mu^{-1} \\ 0 & 0 \end{array} \right] \left( \left[ \begin{array}{cc} \lambda & 0 \\ 0 & 1 \end{array} \right] + \left[ \begin{array}{cc} 0 & b \\ 0 & 0 \end{array} \right]\right) - \left( \left[ \begin{array}{cc} \lambda & 0 \\ 0 & 1 \end{array} \right] + \left[ \begin{array}{cc} 0 & b \\ 0 & 0 \end{array} \right]\right) \left[ \begin{array}{cc} 0 & 1 \\ 0 & 0 \end{array} \right]  \\
&= \left[ \begin{array}{cc} 0 & 1-\mu^{-1}-\lambda \\ 0 & 0 \end{array} \right]
\end{align*}
and
\begin{align*}
	J \cdot \partial C_1 & = (A_1 + (1-\mu^{-1})E_{1,2}) C_1 + C A_0 + (\Delta_1+B_1)(\Delta_2+B_2) - U (\Delta_1+B_1)  \\
	& = 0 + 0 + \left[ \begin{array}{cc} \lambda & b \\ 0 & 1 \end{array} \right] \left[ \begin{array}{cc} \mu & 0 \\ 0 & \mu \end{array} \right] - \left[ \begin{array}{cc} \mu & 0 \\ 0 & \mu \end{array} \right] \left[ \begin{array}{cc} \lambda & b \\ 0 & 1 \end{array} \right] \\
	& = \left[ \begin{array}{cc} 0 & b \mu - \mu b \\ 0 & 0 \end{array} \right].
\end{align*}
Thus, the differential is given by
\[
\partial b = \lambda + \mu^{-1} -1, \quad \partial c = \mu b - b \mu.
\]
\end{example}

Before proving Theorem \ref{thm:ConeDGA} we deduce  Proposition \ref{prop:ALambda3}  as a consequence. 

\begin{proof}[Proof of Proposition \ref{prop:ALambda3}]
The combinatorial spin structure, $\xi_3$, for $\Lambda_3$ is formed from $\xi_1$ by adding two arcs near the resolved cone point as in Figure \ref{fig:ConeSpin}.  The polygonal decomposition from Figure \ref{fig:CharMap2} can then be used as $\mathcal{E}_{\mathit{cone}}$ with the $U$ edge assigned the counterclockwise orientation around $A_1$ in the coordinates from Figure \ref{fig:CharMap2}.  In the Connected Case, the curve $\lambda_3$ from Figure \ref{fig:CharMap2} is taken to pass through the resolution of the cone point as in Figure \ref{fig:ConeSpin}.  With these choices, the DGA $\mathcal{A}^\mathit{CW}(\Lambda_{\mathit{cone}})$ for $\Lambda_3$ matches the description from Proposition \ref{prop:ALambda3}.
\end{proof} 

\begin{proof}[Proof of Theorem \ref{thm:ConeDGA}]  We orient and label cells within $X$ as in Figure \ref{fig:ConeLabels1} where the choice of $S$ and $T$ corners at swallowtail points is indicated by labeling of edges of the extended polygons for the $2$-cells of $X$.  To prove the theorem we will exhibit a series of applications of Proposition \ref{prop:cancel} that produces a stable tame isomorphic quotient of $\mathcal{A}^{CW}(\Lambda_{\mathit{res}})$ in which all generators associated to the cells of the (closed) square $X$ have been removed excepting those $a^0_{i,j}$ with $(i,j) \neq (k,k+1)$.  Moreover, in this quotient we will see that $a^0_{k,k+1}$ becomes equal to $1-\lambda^{-1}$ and  the product $(\Delta_7 + B_7)(\Delta_6+B_6)(\Delta_5+B_5)(\Delta_0+B_0)$ becomes equal to the matrix $U$ from (\ref{eq:UmatrixConeDGA}).  Thus, the quotient so constructed has both its generators and differentials in agreement with those of $\mathcal{A}^{CW}(\Lambda_{\mathit{cone}})$.

\begin{figure}

\centerline{	
\labellist
\small
\pinlabel $A_0$ [t] at 44 109
\pinlabel $A_5$ [t] at 158 -2
\pinlabel $A_6$ [l] at 276 114
\pinlabel $A_7$ [b] at 158 232
\pinlabel $B_0$ [t] at 102 68
\pinlabel $B_5$ [t] at 212 68
\pinlabel $B_6$ [b] at 212 162
\pinlabel $B_7$ [b] at 102 162
\endlabellist
\includegraphics[scale=.8]{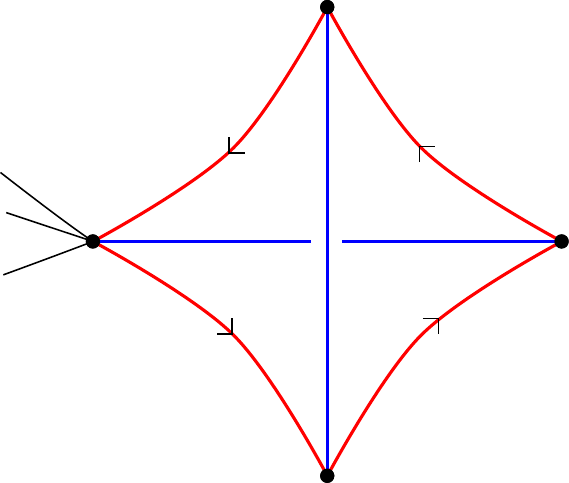}  \quad \quad \quad 
\labellist
\small
\pinlabel $T_4$ [b] at 98 238
\pinlabel $S_4$ [b] at 140 238
\pinlabel $T_1$ [r] at -2 98
\pinlabel $S_1$ [r] at -2 140
\pinlabel $T_3$ [l] at 238 140
\pinlabel $S_3$ [l] at 238 98
\pinlabel $S_2$ [t] at 98 -2
\pinlabel $T_2$ [t] at 140 -2
\pinlabel $C_1$  at 72 72
\pinlabel $C_2$  at 166 72
\pinlabel $C_3$  at 166 166
\pinlabel $C_4$  at 72 166
\pinlabel $B_1$ [t] at 62 116
\pinlabel $B_2$ [l] at 122 62
\pinlabel $B_3$ [b] at 178 120
\pinlabel $B_4$ [r] at 118 178
\pinlabel $A_1$ [tl] at 122 116
\endlabellist
\includegraphics[scale=.8]{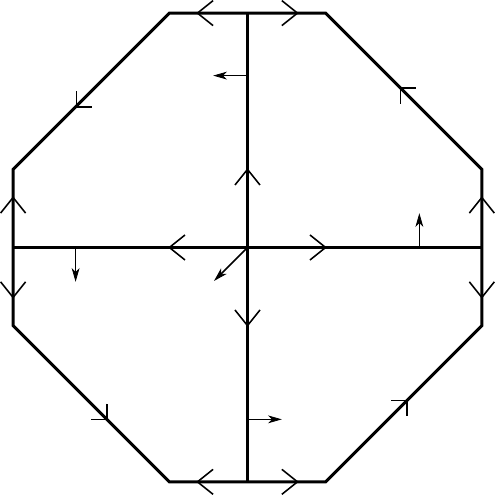}
}

\caption{(left) Labeling of cells around the boundary of $X$.  (right) The extended polygons for the $2$-cells interior to $X$.  The arrows pointing from $B_1, B_2, B_3, B_4, A_1$ to adjacent $2$-cells indicate the ordering of sheets used above these cells.
}
\label{fig:ConeLabels1}
\end{figure}

Recall that applications of Proposition \ref{prop:cancel} rely on an ordering of generators making  $\mathcal{A}^{CW}(\Lambda_{\mathit{res}})$ into a triangular dg-algebra.  Following Construction \ref{const:order}, such orderings arise from a choice of orderings of the cells of $\mathcal{E}_{\mathit{res}}$ in each dimension.  We use an ordering of cells (as usual we use the same notation for a cell and its corresponding matrix of generators) satisfying 
\begin{align}
 & A_5 \succ A_6 \succ A_7 \succ A_1 \succ A_0  \mbox{ for $0$-cells,   and} \notag \\
 & B_7 \succ B_6 \succ B_5 \succ B_0 \succ B_4 \succ B_3 \succ B_2 \succ B_1 \mbox{ for $1$-cells;} \label{eq:ConeOrdering}
\end{align}
 the choice of ordering of $2$-cells is irrelevant.  At a few points in the process we will adjust this ordering by reordering a generator $y$ with $\partial y=0$ to be the smallest element.  (This always results in another valid triangular ordering and allows terms involving $y$ to be absorbed into the $w$ term from the statement of Proposition \ref{prop:cancel}.)  When applying Proposition \ref{prop:cancel}, as above, we often indicate generators that are being cancelled (in the role of  $y_1,\ldots, y_r$ and $z_1, \ldots, z_r$ from Proposition \ref{prop:cancel}) by placing a $\framebox{\color{white}{0}}$ around them or the matrices that contain the canceled generators.  Repeated applications of Proposition \ref{prop:cancel} will produce a sequence of quotients  with generating sets of decreasing size.  We use the notation $\doteq$ to indicate equalities that hold (between equivalence classes) in the currently constructed quotient dg-algebra.

Throughout the proof we will make use of the matrix notations from Section \ref{sec:swDGA}.  In addition, we will use
\begin{itemize}
	\item $\Theta_m = \Theta^\lambda_m$ for a diagonal matrix with $\lambda$ in position $(m,m)$ and other diagonal entries $1$.  This is extended to $\Theta_{m_1,m_2} = \Theta_{m_1} \Theta_{m_2}$ for $m_1<m_2$.
\end{itemize}

\medskip

\noindent {\bf Step 1.}  Cancel the generators around $\partial X$ {\it except for} $A_0$ and $B_0$.

\medskip

We have
\[
J \cdot \partial B_7 = A_0( \Xi_{k+1}\Theta_{k+1} +B_7) - (\Xi_{k+1}\Theta_{k+1} +B_7)A_7
\]
where the left and right pictures of Figure 13 determine the appearance of $\Xi_{k+1}$ and $\Theta_{k+1},$ respectively.
Rewriting as
\[
J \cdot \partial \framebox{$B_7$} = -\Xi_{k+1}\Theta_{k+1} \framebox{$A_7$} + (A_0B_7-B_7A_7) + A_0 \Xi_{k+1}\Theta_{k+1}
\]
allows us to apply Proposition \ref{prop:cancel} to cancel the $b^7_{i,j}$ and $a^7_{i,j}$ generators in pairs.  [Here, comparing with the notations in the statement of Proposition \ref{prop:cancel}, the $b^7_{i,j}$ ordered with $|i-j|$ non-decreasing play the role of $y_1, \ldots, y_r$; the $a^7_{i,j}$ are the $z_1, \ldots, z_r$; the $v$ terms arise from $A_0B_7-B_7A_7$; the $w$ terms arise from $A_0 \Xi_{k+1}\Theta_{k+1}$; and the $\alpha$ coefficients are coming from $-J\Xi_{k+1}\Theta_{k+1}$.]  In the resulting quotient, we have
\begin{equation}
	\label{eq:ConeProof1}
B_7 \doteq 0, \quad \mbox{and} \quad \partial B_7 \doteq 0,
\end{equation}
and the latter identity leads to
\begin{equation}
	\label{eq:ConeProof2}
A_7 \doteq \Xi_{k+1}\Theta_{k+1}^{-1} A_0 \Theta_{k+1} \Xi_{k+1}.
\end{equation}
Similarly, we use
\[
J \cdot \partial \framebox{$B_6$} = A_7(\Xi_{k} + B_6) - (\Xi_{k} + B_6) \framebox{$A_6$}
\]
to cancel the $b^6_{i,j}$ with the $a^6_{i,j}$.  In the resulting quotient we have
\begin{equation}
	\label{eq:ConeProof3}
B_6 \doteq 0 \quad \mbox{and} \quad A_6 \doteq \Xi_kA_7\Xi_{k} \doteq \Xi_{k,k+1} \Theta^{-1}_{k+1}A_0\Theta_{k+1}\Xi_{k,k+1}. 
\end{equation}
Continuing, we apply Proposition \ref{prop:cancel} to cancel
\[
J \cdot \partial \framebox{$B_5$} = A_6(\Xi_{k+1} \Theta_{k} + B_5)- (\Xi_{k+1} \Theta_{k} + B_5)\framebox{$A_5$}
\]
resulting in
\begin{equation} \label{eq:ConeProof4}
B_5 \doteq 0 \quad \mbox{and} \quad A_5 \doteq \Xi_{k} \Theta^{-1}_{k,k+1} A_0 \Theta_{k,k+1} \Xi_k.	
\end{equation}

\medskip

\noindent {\bf Step 2.}  Cancel the following generators within the interior of $X$:
\begin{itemize}
	\item $c^4_{k,k+1}$  with $b^1_{k,k+1}$.
	\item All other $C_4$ generators  with $B_4$.
	\item $c^3_{k+2,k+3}$  with $a^0_{k,k+1}$.
	\item All other $C_3$ generators  with $B_3$.
	\item All $C_2$ generators except for $c^2_{k,k+1}$ with $B_2$.
	\item $B_1$ generators other than $b^1_{k,k+1}$ are cancelled with $A_1$.
\end{itemize} 
This will leave only $c^{2}_{k,k+1}$, $C_1$, $B_0$, and $A_0$ except for $a^0_{k,k+1}$ among generators from $X$.

\medskip
In preparation for the details, we catalog the $S$ and $T$ matrices, enumerated as in Figure \ref{fig:ConeLabels1},  as
\begin{align}
\label{eq:ConeProof5}	T_1 & = T_3 = I + E_{k+2,k+3}, \quad T_2=T_4 = I-E_{k,k+1},  \\
\label{eq:ConeProof6}	S_1 &= \Xi_{k+3} + E_{k+2,k+3} - \sum_{i<k+1} a^0_{i,k+1} E_{i,k+1}, \\
\label{eq:ConeProof7}	S_2 &= \Xi_k + E_{k,k+1} + \sum_{k<j} a^5_{k,j} E_{k+2,j+2} \\
	&  \notag \doteq \Xi_k + E_{k,k+1} + (-\lambda^{-1}a^0_{k,k+1}\lambda)E_{k+2,k+3} + \sum_{k+1<j} (-\lambda^{-1} a^0_{k,j}) E_{k+2,j+2}, \\
\label{eq:ConeProof8}	S_3 &=\Xi_{k+3} + E_{k+2,k+3} - \sum_{i<k+1} a^6_{i,k+1} E_{i,k+1} \\
	 & \notag \doteq \Xi_{k+3} + E_{k+2,k+3}- (a^0_{k,k+1} \lambda) E_{k,k+1} + \sum_{i<k}  (a^0_{i,k+1}\lambda)E_{i,k+1}, \\
\label{eq:ConeProof9}	 S_4&= \Xi_k + E_{k,k+1} + \sum_{k<j} a^7_{k,j} E_{k+2,j+2} \\
	  & \doteq \Xi_k + E_{k,k+1} + (-a^0_{k,k+1} \lambda) E_{k+2,k+3} + \sum_{k+1 < j} a^0_{k,j} E_{k+2,j+2}.
\end{align}

Computing $\partial C_4$ using the endpoints of $B_4$ for $v_0$ and $v_1$, we have
\begin{align}  \label{eq:ConeProof10}
J \cdot \partial C_4 &\doteq (A_7)_{T_4} C_4 + C_4 (Q_{k+2, k+3} A_1 Q_{k+2,k+3}) \\
 & \quad 
+ (I+B_4)-T_4^{-1}(\Theta_{k+3} \Xi_{k+3})^{-1}S_1(Q_{k+2,k+3}(\Theta_{k+2} +B_1)Q_{k+2,k+3}) \notag
\end{align}
where we made use of $(\Theta_{k+3} \Xi_{k+3} + B_7)^{-1} \doteq (\Theta_{k+3} \Xi_{k+3})^{-1}$ from (\ref{eq:ConeProof1}).  [The matrix $(A_7)_{T_4}$ is the $A_T$ matrix from (\ref{eq:ATequation}) for the swallowtail point at $A_7$.  Also, the presence of the permutation matrices $Q_{k+2,k+3}$ is due to the discrepancy in ordering of sheets above $C_4$ and above  $A_1$, $B_1$ (which matches the ordering above $C_1$); see the arrows pointing out of $B_1$ and $A_1$ in Figure \ref{fig:ConeLabels1}.]  
To find $\partial c^4_{k,k+1}$, we will compute the $2 \times 2$ diagonal block that is the intersection of rows and columns $k$ and $k+1$.  In doing so, we use that since all of the multiplicative terms in (\ref{eq:dC}) are upper triangular (including $S$ and $T$ factors) the identity (\ref{eq:dC}) is satisfied by $2 \times 2$ diagonal blocks. [The map from upper triangular matrices to $2\times 2$ upper triangular matrices that extracts the $k,k+1$ diagonal block is a ring homomorphism.]  Keeping in mind that (i) $AC$ and $CA$ terms are $0$ on $2\times 2$ diagonal blocks (being the product of two {\it strictly} upper triangular matrices) and (ii) there is no generator of the form $b^4_{k,k+1}$ due to the crossing arc above $B_4$, we find  
\begin{align*}
\partial \left[\begin{array}{cc} 0 & \pm c^4_{k,k+1} \\ 0 & 0 \end{array} \right] & = \left[\begin{array}{cc} 1 & 0 \\ 0 & 1 \end{array} \right]- \left[\begin{array}{cc} 1 & -1 \\ 0 & 1 \end{array} \right]^{-1}\left[\begin{array}{cc} 1 & 0 \\ 0 & 1 \end{array} \right]\left[\begin{array}{cc} 1 & -a^0_{k,k+1} \\ 0 & 1 \end{array} \right]\left[\begin{array}{cc} 1 & b^{1}_{k,k+1} \\ 0 & 1 \end{array} \right]   \\
 & =\left[\begin{array}{cc} 0 & -1+ a^{0}_{k,k+1}-b^{1}_{k,k+1} \\ 0 & 0 \end{array} \right] 
\end{align*}
which allows us to cancel
\[
\pm\partial \framebox{$c^4_{k,k+1}$} = -\framebox{$b^1_{k,k+1}$} -1 + a^0_{k,k+1}
\]
and obtain
\begin{equation} \label{eq:ConeProof11}
c^4_{k,k+1} \doteq 0 \quad \mbox{and} \quad b^1_{k,k+1} \doteq a^0_{k,k+1} -1.
\end{equation}
At this point, (\ref{eq:ConeProof10}) rewritten in the form $J \cdot \partial \framebox{$C_4$} \doteq \framebox{$B_4$}+ \cdots$   allows us to cancel all remaining $c_{i,j}^4$, $(i,j) \neq (k,k+1)$ with the $b^4_{i,j}$ generators; the $AC$ and $CA$ terms correspond to the $v$ from Proposition \ref{prop:cancel} and the  remaining terms on the RHS (other than $B_4$) correspond to $w$.  In the quotient, we have
\begin{equation} \label{eq:ConeProof12}
C_4 \doteq 0 \quad \mbox{and} \quad I+B_4 \doteq T_4^{-1}(\Theta_{k+3} \Xi_{k+3})^{-1}S_1(Q_{k+2,k+3}(\Theta_{k+2} +B_1)Q_{k+2,k+3})
\end{equation}
where as usual we have deduced the second equality from $\partial C_4 \doteq 0$.
Notice that considering the $2\times2$ diagonal block at position $(k+2,k+3)$, the second identity shows that 
\[
\left[\begin{array}{cc} 1 &  b^4_{k+2,k+3} \\ 0 & 1 \end{array} \right] = \left[\begin{array}{cc} 1 & 0 \\ 0 & 1 \end{array} \right]  \left[\begin{array}{cc} 1 & 0 \\ 0 & -\lambda^{-1} \end{array} \right] \left[\begin{array}{cc} 1 & 1 \\ 0 & -1 \end{array} \right] \left[\begin{array}{cc} 1 & 0 \\ 0 & \lambda \end{array} \right] = \left[\begin{array}{cc} 1 & \lambda \\ 0 & 1 \end{array} \right],
\]
so that
\begin{equation} \label{eq:ConeProof13}
b^4_{k+2,k+3} \doteq \lambda.
\end{equation}

We now turn to $C_3$ where we use the endpoints of $B_3$ as $v_0$ and $v_1$.  The differential, making use of (\ref{eq:ConeProof3}) to replace $(\Xi_k + B_6)^{-1} \doteq (\Xi_k)^{-1}$, satisfies
\begin{align*}
J\cdot \partial C_3 & \doteq (A_6)_{T_3} C_3 + C_3 ( Q_{k,k+1}Q_{k+2,k+3} A_1 Q_{k+2,k+3}Q_{k,k+1}) \\
 & \quad + (1+B_3) - T_3^{-1}(\Xi_k)^{-1}S_4(Q_{k,k+1}(I+B_4)Q_{k,k+1}). 
\end{align*}
Minding that $b^3_{k+2,k+3}$ does not exist, we compute using (\ref{eq:ConeProof13})
\begin{align*}
\partial \left[\begin{array}{cc} 0 & \pm c^3_{k+2,k+3} \\ 0 & 0 \end{array} \right] & \doteq \left[\begin{array}{cc} 1 & 0 \\ 0 & 1 \end{array} \right]- \left[\begin{array}{cc} 1 & -1 \\ 0 & 1 \end{array} \right] \left[\begin{array}{cc} 1 & 0 \\ 0 & 1 \end{array} \right]\left[\begin{array}{cc} 1 & -a^0_{k,k+1}\lambda \\ 0 & 1 \end{array} \right]\left[\begin{array}{cc} 1 & b^4_{k+2,k+3} \\ 0 & 1 \end{array} \right]  \\
 & \doteq \left[\begin{array}{cc} 0 & 1+ a^0_{k,k+1}\lambda-\lambda \\ 0 & 0 \end{array} \right] 
\end{align*}
so that we can cancel
\[
\pm \partial \framebox{$c^3_{k+2,k+3}$} \doteq 1+ \framebox{$a^0_{k,k+1}$}\lambda - \lambda.
\]
We record
\begin{align} \label{eq:ConeProof14}
c^3_{k+2,k+3} \doteq 0 \quad \mbox{and} \quad a^0_{k,k+1} \doteq 1-\lambda^{-1}.
\end{align}
This allows us to eliminate all of the remaining $c^3_{i,j}$ and $b^3_{i,j}$ generators by cancelling $J \cdot \partial \framebox{$C_3$} = \framebox{$B_3$}+ \cdots$.  We then have identities
\begin{equation}  \label{eq:ConeProof14.5}
C_3 \doteq 0 \quad \mbox{and} \quad I+B_3 \doteq T_3^{-1}(\Xi_k)^{-1}S_4(Q_{k,k+1}(I+B_4)Q_{k,k+1}).
\end{equation}
Notice the latter identity (considering the $2\times2$ block at positions $k$ and $k+1$ on the diagonal) provides that
\[
\left[\begin{array}{cc} 1 &  b^3_{k,k+1} \\ 0 & 1 \end{array} \right] \doteq \left[\begin{array}{cc} 1 & 0 \\ 0 & 1 \end{array} \right]^{-1}  \left[\begin{array}{cc} -1 & 0 \\ 0 & 1 \end{array} \right]^{-1}  \left[\begin{array}{cc} -1 & 1 \\ 0 & 1 \end{array} \right] \left[\begin{array}{cc} 1 & 0 \\ 0 & 1 \end{array} \right] = \left[\begin{array}{cc} 1 & -1 \\ 0 & 1 \end{array} \right],
\]
i.e. 
\begin{equation} \label{eq:ConeProof15} 
	b^3_{k,k+1} \doteq -1.
\end{equation}

Next up is $C_2$.  Using the end points of $B_2$ for $v_0$ and $v_1$, we have
\begin{align} \label{eq:ConeProof16}
J\cdot \partial C_2 & \doteq (A_5)_{T_2} C_2 + C_2 ( Q_{k,k+1} A_1 Q_{k,k+1}) \\
 &  \quad + (\Theta_{k}^{-1}+B_2) - T_2^{-1}(\Theta_k\Xi_{k+3})^{-1} S_3 (Q_{k+2,k+3}(I+B_3) Q_{k+2,k+3}).
\end{align}
Computing the $2\times 2$ block at positions $k$ and $k+1$ on the diagonal, and making use of (\ref{eq:ConeProof14}), (\ref{eq:ConeProof15}), and that $b^2_{k,k+1}$ does not exist due to the crossing arc,
shows
\begin{align*}
	\partial \left[\begin{array}{cc} 0 & \pm c^2_{k,k+1} \\ 0 & 0 \end{array} \right] & \doteq \left[\begin{array}{cc} \lambda^{-1} & 0 \\ 0 & 1 \end{array} \right]- \left[\begin{array}{cc} 1 & -1 \\ 0 & 1 \end{array} \right]^{-1} \left[\begin{array}{cc} \lambda & 0 \\ 0 & 1 \end{array} \right]^{-1} \left[\begin{array}{cc} 1 & -a^0_{k,k+1}\lambda \\ 0 & 1 \end{array} \right]\left[\begin{array}{cc} 1 & b^3_{k,k+1} \\ 0 & 1 \end{array} \right] =0.
\end{align*}
As $\partial c^2_{k,k+1} = 0$, we can adjust the triangular ordering of generators (inherited on the current quotient dg-algebra from (\ref{eq:ConeOrdering}) above) by reordering $c:= c^2_{k,k+1}$ to be the smallest element;  note that the triangularity condition from Definition \ref{def:DGAs} is preserved by this reordering.  We can use the form of (\ref{eq:ConeProof16}) as $J\cdot \partial \framebox{$C_2$} = \framebox{$B_2$}+ \cdots$ to cancel all $c^2_{i,j}$ except for $c^2_{k,k+1}$ with $b^2_{i,j}$; here, those terms from $(A_5)_{T_2} C_2 + C_2 ( Q_{k,k+1} A_1 Q_{k,k+1})$ involving $c^2_{k,k+1}$ (resp. $c^2_{i,j}$ with $(i,j) \neq (k,k+1)$) are considered as part of the $w$ term (resp. $v$ term) in the statement of Proposition \ref{prop:cancel}.
In the resulting quotient we have
\begin{equation}  \label{eq:ConeProof17}
C_2 \doteq  c E_{k,k+1}    
\end{equation}
which also implies $\partial C_2 \doteq 0$.  Combined with (\ref{eq:ConeProof16}) this shows
\begin{equation}  \label{eq:ConeProof18}
\Theta^{-1}_k + B_2 \doteq F -G
\end{equation}
with
\begin{equation}  \label{eq:ConeProof19}
F = T_2^{-1}(\Theta_k\Xi_{k+3})^{-1} S_3 (Q_{k+2,k+3}(I+B_3) Q_{k+2,k+3})
\end{equation}
and
\begin{equation}  \label{eq:ConeProof20}
G = (A_5)_{T_2} (cE_{k,k+1}) + (cE_{k,k+1}) ( Q_{k,k+1} A_1 Q_{k,k+1}).
\end{equation}

Next, we turn to $B_1$ and $A_1$.  Note that because of the crossing arcs, $a^1_{k,k+1}$, $a^1_{k+2,k+3}$, and $b^1_{k+2,k+3}$ do not exist.  Moreover,  $b^1_{k,k+1}$ has already been removed from the generating set and, using (\ref{eq:ConeProof11}) and (\ref{eq:ConeProof14}), satisfies $b^1_{k,k+1}\doteq -\lambda^{-1}$.  Thus, the remaining $b^1_{i,j}$ and $a^1_{i,j}$ are in bijection and in the same positions of $B_1$ and $A_1$, so that they can be cancelled via 
\[
J\cdot \partial \framebox{$B_1$} = (A_0)_{T_1}(\Theta_{k+2}+B_1) - (\Theta_{k+2}+B_1)\framebox{$A_1$}.
\]
In the quotient, we have
\begin{equation}  \label{eq:ConeProof21}
B_1 \doteq -\lambda^{-1}E_{k,k+1} \quad \quad \mbox{and} \quad \quad A_1 \doteq (\Theta_{k+2}-\lambda^{-1}E_{k,k+1})^{-1} (A_0)_{T_1} (\Theta_{k+2}-\lambda^{-1}E_{k,k+1}).
\end{equation}

At this point, we take a closer stock of the $F$ and $G$ terms in (\ref{eq:ConeProof18})-(\ref{eq:ConeProof20}).  As a preliminary, using that we now have $B_1 \doteq -\lambda^{-1}E_{k,k+1}$ via (\ref{eq:ConeProof21}) and $a^0_{k,k+1} = 1- \lambda^{-1}$ via (\ref{eq:ConeProof14}) we can update (\ref{eq:ConeProof12}) to 
\begin{align*}
I+B_4 & \doteq T_4^{-1}(\Theta_{k+3} \Xi_{k+3})^{-1}S_1(Q_{k+2,k+3}(\Theta_{k+2}+B_1) Q_{k+2,k+3}) \\
& \doteq (I+E_{k,k+1})\Theta^{-1}_{k+3}\Xi_{k+3}(\Xi_{k+3} +E_{k+2,k+3} - (1-\lambda^{-1})E_{k,k+1} - \sum_{i<k} a^0_{i,k+1} E_{i,k+1}) \\
 & \quad \times(\Theta_{k+3} - \lambda^{-1}E_{k,k+1}) \\
 & = I + \lambda E_{k+2,k+3} - \sum_{i<k} a^0_{i,k+1}E_{i,k+1}
\end{align*}
where the last identity is obtained by carefully multiplying the product out and simplifying.
Using this we can update (\ref{eq:ConeProof14.5}) to become
\begin{align*}
	I+B_3 & \doteq T_3^{-1}\Xi_k^{-1}S_4(Q_{k,k+1}(I+B_4)Q_{k,k+1}) \\
	 &   \doteq (I-E_{k+2,k+3}) \Xi_k(\Xi_k + E_{k,k+1} -(1-\lambda^{-1})\lambda E_{k+2,k+3} + \sum_{k+1<j} a^0_{k,j} E_{k+2,j+2}) \\
	 & \quad \times Q_{k,k+1}(I+\lambda E_{k+2,k+3} - \sum_{i<k} a^0_{i,k+1} E_{i,k+1}) Q_{k,k+1} \\
	 & = I - E_{k,k+1} + \sum_{k+1<j} a^0_{k,j} E_{k+2,j+2} - \sum_{i<k}a^0_{i,k+1} E_{i,k}
\end{align*}
where the last identity is again a direct calculation that requires care.  Now, we are ready to improve (\ref{eq:ConeProof19}) to
\begin{align} \label{eq:ConeProof22}
	F &= T_2^{-1}(\Theta_k\Xi_{k+3})^{-1} S_3 (Q_{k+2,k+3}(I+B_3) Q_{k+2,k+3}) \\  \notag
	& \doteq (I+E_{k,k+1})(\Theta^{-1}_k \Xi_{k+3})\left(\Xi_{k+3}+ E_{k+2,k+3} -(1-\lambda^{-1})\lambda E_{k,k+1} + \sum_{i<k} (a^0_{i,k+1} \lambda) E_{i,k+1}\right) \\ \notag
	& \quad \times Q_{k+2,k+3}(I - E_{k,k+1} + \sum_{k+1<j} a^0_{k,j} E_{k+2,j+2} - \sum_{i<k}a^0_{i,k+1} E_{i,k}) Q_{k+2,k+3} \\ \notag
	& = \Theta^{-1}_k + E_{k+2,k+3} + \sum_{i<k} (a^0_{i,k+1} \lambda) E_{i,k+1} + \sum_{k+1<j} a^0_{k,j} E_{k+3,j+2} \\
 & \quad + \sum_{k+1 <j} a^0_{k,j} E_{k+2,j+2} - \sum_{i<k} a^0_{i,k+1} E_{i,k}. \notag
\end{align}

For the $G$ term we can get by with the following:

\medskip

\noindent {\bf Claim.}  
\begin{itemize}
	\item[(i)]  All entries of the matrix $G$ from (\ref{eq:ConeProof20}) belong to the ideal generated by $c:= c^2_{k,k+1}$. 
	\item[(ii)]  The $(k,k+2)$-entry of $G$ is $c^2_{k,k+1}$.
\end{itemize}

\medskip
Notice that (i) is immediate from (\ref{eq:ConeProof20}).  To verify (ii), for computing the $(k,k+2)$-entry of (\ref{eq:ConeProof20}), the first term, $(A_5)_{T_2}(cE_{k,k+1})$ can be ignored since it can only have non-zero entries in the $(k+1)$ column.  The $(k,k+2)$-entry of the second term, $(cE_{k,k+1})(Q_{k,k+1} A_1 Q_{k,k+1})$ is 
\begin{align*} &  c \cdot \big( \mbox{$(k+1,k+2)$-entry of } Q_{k,k+1} A_1 Q_{k,k+1} \big) \\
=	&   c \cdot \big( \mbox{$(k,k+2)$-entry of } A_1 \big).
\end{align*}
Then, using (\ref{eq:ConeProof21}) we find that
\begin{align*}
A_1 & \doteq (\Theta_{k+2}-\lambda^{-1}E_{k,k+1})^{-1} (A_0)_{T_1} (\Theta_{k+2}-\lambda^{-1}E_{k,k+1}) \\
 & = (\Theta_{k+2}^{-1} + \lambda^{-1}E_{k,k+1})(I-E_{k+2,k+3}) ( (\widetilde{A_0})_{k+1,k+2} + E_{k+1,k+2})(I+E_{k+2,k+3})( \Theta_{k+2} - \lambda^{-1} E_{k,k+1}).
\end{align*}
Upon expanding via the distributive property the only term in position $(k,k+2)$ is from
\[
(\lambda^{-1} E_{k,k+1})(I)(E_{k+1,k+2})(I)(\Theta_{k+2}),
\]
and this term is $\lambda^{-1}\lambda =1$ as required.

\medskip

With the claim established we are now ready for:

\medskip

\noindent {\bf Step 3.}  Cancel 
\begin{itemize}
	\item $c^1_{k+1,k+2}$ with $c^2_{k,k+1}$.
	\item $c^1_{i,k+2}$ with $c^1_{i,k+1}$ for $i<k+1$.
	\item $c^1_{k+1,j}$ with $c^1_{k+2,j}$ for $k+2<j$.
	\item All remaining $C_1$ generators with $B_0$.
\end{itemize}

\medskip

Taking the endpoints of $B_0$ in the extended polygon for $C_1$ as $v_0$ and $v_1$, the differential of $C_1$ satisfies
\begin{align*}
J \cdot \partial C_1 & = (\widehat{A}_5)_{k+1,k+2} C_1 + C_1 (\widehat{A}_0)_{k+1,k+2} + (\Theta^{-1}_{k+1,k+2} \Xi_k + (\widetilde{B}_0)_{k+1,k+2}) \\ & \quad \quad - S_2(Q_{k,k+1}(\Theta^{-1}_k + B_2) Q_{k,k+1})(\Theta_{k+2} + B_1)^{-1} T_1^{-1} \\
& \doteq (\widehat{A}_5)_{k+1,k+2} C_1 + C_1 (\widehat{A}_0)_{k+1,k+2} + (\Theta^{-1}_{k+1,k+2} \Xi_k + (\widetilde{B}_0)_{k+1,k+2}) \\
& \quad \quad - S_2(Q_{k,k+1}(F-G)Q_{k,k+1} )(\Theta_{k+2} -\lambda^{-1} E_{k,k+1})^{-1} T_1^{-1}. 
\end{align*}
First, we see that the $2\times 2$ diagonal block at positions $k+1$ and $k+2$ is
\[
0 + 0 + \left[\begin{array}{cc} \lambda^{-1} & 0 \\ 0 & \lambda^{-1} \end{array} \right] - \left[\begin{array}{cc} 1 & 0 \\ 0 & 1 \end{array} \right]\left[\begin{array}{cc} \lambda^{-1} & -c^2_{k,k+1} \\ 0 & 1 \end{array} \right] \left[\begin{array}{cc} 1 & 0 \\ 0 & \lambda^{-1} \end{array} \right]\left[\begin{array}{cc} 1 & 0 \\ 0 & 1 \end{array} \right]= \left[\begin{array}{cc} 0 & c^2_{k,k+1} \lambda^{-1} \\ 0 & 0 \end{array} \right].
\]
[We used (\ref{eq:ConeProof7}) to evaluate this block of $S_2$; used (\ref{eq:ConeProof22}) and (ii) of the Claim for that of $(Q_{k,k+1}(F-G)Q_{k,k+1})$; computed $(\Theta_{k+2} -\lambda^{-1} E_{k,k+1})^{-1} = \Theta_{k+2}^{-1} + \lambda^{-1} E_{k,k+1}$; and computed $T_1^{-1} =I-E_{k+2,k+3}$ from (\ref{eq:ConeProof5}).]  This allows us to cancel
\[
\pm \partial \framebox{$c^1_{k+1,k+2}$} \doteq \framebox{$c^2_{k,k+1}$}\lambda^{-1}
\]
resulting in
\[
c^1_{k+1,k+2} \doteq c^2_{k,k+1} \doteq 0
\]
and (using (i) of the Claim)
\[
G \doteq 0.
\]
 Combining this with (\ref{eq:ConeProof22}), the last term from $J\cdot \partial C_1$ then becomes
\begin{align*}
 & S_2(Q_{k,k+1}(F-G)Q_{k,k+1} )(\Theta_{k+2} -\lambda^{-1} E_{k,k+1})^{-1} T_1^{-1} \\
\doteq \quad & (\Xi_k + E_{k,k+1} - (\lambda^{-1} a^0_{k,k+1} \lambda) E_{k+2,k+3} + \sum_{k+1<j} (-\lambda^{-1} a^0_{k,j}) E_{k+2,j+2}) \\
& \quad \times \bigg[\Theta^{-1}_{k+1} + E_{k+2,k+3} + \sum_{i<k} (a^0_{i,k+1} \lambda) E_{i,k} + \sum_{k+1<j} a^0_{k,j} E_{k+3,j+2} \\ & \quad \quad + \sum_{k+1<j} a^0_{k,j} E_{k+2,j+2} - \sum_{i<k} a^0_{i,k+1} E_{i,k+1}\bigg] \\
 & \quad \times (\Theta^{-1}_{k+2} + \lambda^{-1}E_{k,k+1})(I - E_{k+2,k+3}) \\
= \quad &\Xi_k \Theta^{-1}_{k+1,k+2} + \sum_{i<k} (a^0_{i,k+1} \lambda) E_{i,k} + \sum_{k+1<j} a^0_{k,j} E_{k+3,j+2}.
\end{align*}
[The last equality follows from carefully expanding via the distributive property; many terms cancel.]
Overall, $J \cdot \partial C_1$  becomes
\begin{align*}
J \cdot \partial C_1 & = (\widehat{A}_5)_{k+1,k+2} C_1 + C_1 (\widehat{A}_0)_{k+1,k+2} + (\Theta^{-1}_{k+1,k+2} \Xi_k + (\widetilde{B}_0)_{k+1,k+2}) \\ & \quad \quad - (\Xi_k \Theta^{-1}_{k+1,k+2} + \sum_{i<k} (a^0_{i,k+1} \lambda) E_{i,k} + \sum_{k+1<j} a^0_{k,j} E_{k+3,j+2}).
\end{align*}
Now,  $(\Theta^{-1}_{k+1,k+2} \Xi_k + (\widetilde{B}_0)_{k+1,k+2}) - (\Xi_k \Theta^{-1}_{k+1,k+2} + \sum_{i<k} (a^0_{i,k+1} \lambda) E_{i,k} + \sum_{k+1<j} a^0_{k,j} E_{k+3,j+2})$ has all entries $0$ in both rows $k+1$ and $k+2$ and also columns $k+1$ and $k+2$.  Thus, for all $i<k+1$ (proceeding in order of decreasing $i$) we can cancel
\[
\pm \partial \framebox{$c^1_{i,k+2}$} \doteq \framebox{$c^1_{i,k+1}$} + \sum_{i<m <k+1} a^5_{i,m} c^1_{m,k+2}
\]
where the $\sum_{i<m <k+1} a^5_{i,m} c^1_{m,k+2}$ terms are taken to be $v$ in Proposition \ref{prop:cancel}.  Similarly, for all $k+2 < j$ we cancel
\[
\pm \partial \framebox{$c^1_{k+1,j}$} \doteq \framebox{$c^1_{k+2,j}$} + \sum_{k+2<m<j} c^1_{k+1,m} a^0_{l-2,j-1}.
\]
In the quotient, we know have that for all $i<k+1$ and $k+2<j$
\[
c^1_{k+1,k+2} \doteq c^1_{i,k+2} \doteq c^1_{i,k+1} \doteq c^1_{k+1,j} \doteq c^1_{k+2,j} \doteq 0,
\]
and at this point the remaining $C_1$ generators are in bijection with the $B_0$ generators so that we can use $J\cdot \framebox{$C_1$} = \framebox{$(\widetilde{B}_0)_{k+1,k+2}$}+ \cdots$ to cancel $c^1_{i,j}$ with $b^0_{\tau(i), \tau(j)}$ for all $i,j$ with $\{i,j\} \cap \{k+1,k+2\} = \emptyset$ where $\tau(m) = \left\{\begin{array}{cr} m, & m < k+1 \\ m-2, & m>k+2 \end{array}\right..$  This results in
$C_1 \doteq 0$ and $\partial C_1 \doteq 0$, allowing us to deduce
\[
(\widetilde{B}_0)_{k+1,k+2} \doteq \sum_{i<k} (a^0_{i,k+1} \lambda) E_{i,k} + \sum_{k+1<j} a^0_{k,j} E_{k+3,j+2}. 
\] 
Removing rows and columns $k+1$ and $k+2$ from this equation shows
\begin{equation}  \label{eq:ConeProof23}
B_0 \doteq \sum_{i<k} (a^0_{i,k+1} \lambda) E_{i,k} + \sum_{k+1<j} a^0_{k,j} E_{k+1,j}.
\end{equation}

At this point, we have successfully cancelled all of the generators from the cells in the closure of the region $X$ (see Figure \ref{fig:ConeLabels1})  except for the $a^0_{i,j}$ with $(i,j) \neq (k,k+1)$.  As explained in the first paragraph of this proof, the argument is now completed by observing that (\ref{eq:ConeProof14}) together with (\ref{eq:ConeProof1}), (\ref{eq:ConeProof3}), (\ref{eq:ConeProof4}), and (\ref{eq:ConeProof23}) provide the required identities
\[
a^0_{k,k+1} \doteq 1-\lambda^{-1}
\]
and
\begin{align*}
& (\Delta_7 + B_7)(\Delta_6+B_6)(\Delta_5+B_5)(\Delta_0+ B_0) \\
\doteq \quad & (\Xi_{k+1}\Theta_{k+1})(\Xi_k)(\Xi_{k+1} \Theta_k)( \Xi_k + B_0)   \\
 \doteq \quad &  \Theta_{k,k+1} + \sum_{i<k} (a^0_{i,k+1} \lambda) E_{i,k} + \sum_{k+1<j} (\lambda a^0_{k,j}) E_{k+1,j}.
\end{align*}

\end{proof}

\end{document}